\DeclareMathAlphabet{\mathpzc}{OT1}{pzc}{m}{it}
\renewcommand{\rpkmark}[1]{#1}
\renewcommand{\rpkmarkmath}[1]{#1}
\renewcommand{\rpkmark}[1]{#1}
\renewcommand{\rpkmarkmath}[1]{#1}
\renewcommand*{\backref}[1]{}
\renewcommand*{\backrefalt}[4]{%
  \ifcase #1 %
    \relax
  \or
    $\uparrow$~#2.
  \else
    $\uparrow$~#2.
  \fi%
}
\begin{document}
\title{{\vskip -1cm}{\huge Va\u{\i}nberg--Br\`{e}gman relative entropy\\and quasinonexpansive operators\vskip 0.2cm}}
\author{Ryshard-Pavel Kostecki\\
{\scriptsize\textit{Research Center for Quantum Information, Slovak Academy of Sciences}}\vspace{-0.15cm}\\{\scriptsize\textit{D\'{u}bravsk\'{a} cesta 9, 84511 Bratislava, Slovakia}}\\{\scriptsize\texttt{kostecki@fuw.edu.pl}}}
\date{{\scriptsize 18 November 2025\footnote{{\footnotesize v2: 16 February 2026.}}}}
\maketitle      
\thispagestyle{empty}
\vspace*{-0.9cm}
\begin{center}
\textit{to the memory of Yuri\u{\i} I. Manin}\\
\vspace*{0.2cm}
\end{center}

\begin{abstract}
{\noindent%
We review the theory of Va\u{\i}nberg--Br\`{e}gman relative entropies and quasinonexpansive operators on reflexive Banach spaces, and obtain several new results. We also develop an extension of this theory to nonreflexive Banach spaces, which is a joint generalisation of the reflexive Banach space approach and the finite-dimensional information geometric approach. In the reflexive case, we study generalised pythagorean inequality, as well as norm-to-norm, uniform, and Lipschitz--H\"{o}lder continuity, of (left and right) entropic projections, proximal maps, and resolvents. We also provide a detailed study of a special (`gauge') family of Va\u{\i}nberg--Br\`{e}gman geometries and operators that is tightly related with the geometric properties of the underlying Banach space norm. The extended theory belongs to the intersection of convex theoretic and homeomorphic approaches to nonlinear analysis. Its models are constructed, using integration theory on order unit spaces, via nonlinear embeddings into reflexive rearrangement invariant spaces. E.g., we compute the exponent parameters of Lipschitz--H\"{o}lder continuity of the extended entropic projections and resolvents, and establish composability of a suitable class of nonlinear quasinonexpansive operators, over normal state spaces of JBW- and W$^*$-algebras, determined by `gauge' Va\u{\i}nberg--Br\`{e}gman geometries over, respectively, nonassociative and noncommutative L$_p$ spaces, and extended via Mazur embeddings. Other examples of extended Va\u{\i}nberg--Br\`{e}gman geometries feature the (commutative and noncommutative) Lozanovski\u{\i} factorisation map, generalised spin factors, finite dimensional base normed spaces, and convex spectral functions on unitarily invariant ideals of compact operators. We also discuss several categories of entropic projections and quasinonexpansive operators naturally appearing in this framework.}%
\end{abstract}
\vspace{5pt}
{\footnotesize
\renewcommand{\contentsname}{\vspace{-30pt}}
\addtocontents{toc}{\protect\begin{multicols}{2}}
\tableofcontents
\addtocontents{toc}{\protect\end{multicols}}
\addtocontents{toc}{\vspace{-30pt}}
}
\newpage
 \newgeometry{
 a4paper,
 total={210mm,297mm},
 left=20mm,
 right=20mm,
 top=20mm,
 bottom=20mm,
 }
\section{Introduction}\label{section.introduction}
\subsection{Va\u{\i}nberg--Br\`{e}gman geometry...}
A property of a Banach space will be said to be \textit{linear norm-geometric} if{}f it is invariant under a linear isometry into another Banach space. For any Banach space $(X,\n{\cdot}_X)$, both $\n{\cdot}_X$ and $\n{\cdot}^2_X$ are convex functions. Various linear norm-geometric properties of a Banach space  $(X,\n{\cdot}_X)$ which quantify convexity (resp., differentiability) of its norm are dual to the corresponding linear norm-geometric properties quantifying differentiability (resp., convexity) of a norm of a Banach dual space $(X^\star,\n{\cdot}_{X^\star})$. One of the key features of nonlinear convex analysis on Banach spaces is a generalisation of these relationships from the pairs ($\n{\cdot}_X$, $\n{\cdot}_{X^\star}$) of Banach dual norms to the Mandelbrojt--Fenchel dual pairs ($\Psi$, $\Psi^\lfdual$) of convex functions, acting, respectively, on $(X,\n{\cdot}_X)$ and $(X^\star,\n{\cdot}_{X^\star})$. 

A special role in a passage from norm geometry to convex geometry on a Banach space $(X,\n{\cdot}_X)$ is played by the convex functions $\Psi_\varphi(x):=\int_0^{\n{x}_X}\dd t\,\varphi(x)$ $\forall x\in X$ (with $\varphi$ positive, strictly increasing, continuous, $\varphi(0)=0$, and $\lim_{t\ra\infty}\varphi(t)=\infty$; such $\varphi$ is called a \textit{gauge})\footnote{Statements of this paragraph hold also for a more general class of \textit{quasigauges}, defined as nondecreasing functions $\varphi:\RR^+\ra[0,\infty]$ such that $\varphi\not\equiv0$ and $\exists s>0$ $\lim_{t\ra^+s}\varphi(t)<\infty$. However, in this case, each of the linear norm-geometric properties of $\n{\cdot}_X$ requires to impose some additional conditions on $\varphi$ (see Proposition \ref{prop.psi.quasigauge.geometry}), so it is more straightforward to discuss the key ideas of $\Psi_\varphi$ while assuming that $\varphi$ is a gauge.}, which are generating duality mappings $j_\varphi$ through their subdifferential: $\partial\Psi_\varphi=:j_\varphi:X\ra 2^{X^\star}$. The functions  $\Psi_\varphi$ can be seen as a generalisation of $\frac{1}{2}\n{\cdot}^2_X$ ($=\Psi_\varphi$ with $\varphi(t)=t$), allowing to characterise linear norm-geometric properties of an underlying Banach space in terms of properties of $\Psi_\varphi$ (instead of properties of $\n{\cdot}_X$). In general, the following triad is equivalent: convexity (resp., differentiability) properties of $\n{\cdot}_X$, convexity (resp., differentiability) properties of $\Psi_\varphi$, monotonicity (resp., continuity) properties of $j_\varphi$ (see Proposition \ref{prop.psi.varphi.geometry}). Since linearity of $j_\varphi$ for $\varphi(t)=t$ is equivalent with $(X,\n{\cdot}_X)$ being a Hilbert space \cite[Prop. 2]{Golomb:Tapia:1972}, $j_\varphi$ can be seen as a generically nonlinear map. For Banach spaces $(X,\n{\cdot}_X)$ with Gateaux differentiable $\n{\cdot}_X$, the corresponding duality mappings $j_\varphi$ are functions (i.e. singleton-valued maps), and take the particularly useful form $j_\varphi=\DG\Psi_\varphi:X\ra X^\star$ (with $\DG$ denoting Gateaux derivative). 

In essence, the Va\u{\i}nberg--Br\`{e}gman geometry on a reflexive Banach space $(X,\n{\cdot}_X)$ takes two further steps: moving from the specific Gateaux differentiable convex function $\Psi_\varphi:X\ra\RR^+$ to any Gateaux differentiable convex function $\Psi:X\ra\,]-\infty,\infty]$, and moving from the (symmetric) metric distance $d_{\n{\cdot}_X}(x,y):=\n{x-y}_X$ $\forall x,y\in X$ to the (asymmetric) Va\u{\i}nberg--Br\`{e}gman functional \cite[Eqn. (8.5)]{Vainberg:1956}
\begin{equation}
D_\Psi(x,y):=\Psi(x)-\Psi(y)-\duality{x-y,\DG\Psi(y)}_{X\times X^\star}\;\forall(x,y)\in X\times\intefd{\Psi},
\label{eqn.bregman.function.introduction}
\end{equation}
with $D_\Psi(x,y):=\infty$ $\forall y\in X\setminus\intefd{\Psi}$, $\duality{\cdot,\cdot}_{X\times X^\star}$ denoting the Banach space duality, $\efd(\Psi)$ denoting a domain of finiteness of $\Psi$, and $\INT$ denoting a topological interior operator on the subsets of $X$ with respect to the topology of $\n{\cdot}_X$. (Due to this asymmetry, most of objects in the Va\u{\i}nberg--Br\`{e}gman geometry exist in two, left and right, versions.) As a result, it provides an alternative setting of geometric properties on $X$, quantified in terms of a convex function $\Psi$ instead of $\n{\cdot}_X$. In particular (see Table \ref{table.comparison.metric.bregman}): $D_\Psi$ acts as an analogue of $(d_{\n{\cdot}_X})^2$; left and right $D_\Psi$-projections (onto left and right $D_\Psi$-Chebysh\"{e}v sets $K$, respectively),
\begin{equation}
y\mapsto\LPPP^{D_\Psi}_K(y):=\arginff{x\in K}{D_\Psi(x,y)}\;\;\mbox{ and }\;\;y\mapsto\RPPP^{D_\Psi}_K(y):=\arginff{x\in K}{D_\Psi(y,x)},
\end{equation} respectively, act as analogues of metric projections (onto Chebysh\"{e}v sets $K$)
\begin{equation}
y\mapsto\PPP_K^{d_{\n{\cdot}_X}}(y):=\arginff{x\in K}{\n{x-y}_X};
\end{equation}
left and right $D_\Psi$-(quasi)nonexpansive operators act as analogues of $\n{\cdot}_X$-(quasi)nonexpansive operators, etc. For $(X,\n{\cdot}_X)$ given by a Hilbert space and $\Psi=\frac{1}{2}\n{\cdot}_X^2$ one has $D_\Psi(x,y)=\frac{1}{2}\n{x-y}_X^2$, hence metric and Va\u{\i}nberg--Br\`{e}gman geometry coincide in this case, however it is no longer so in more general cases.\footnote{Strictly speaking, the Va\u{\i}nberg--Br\`{e}gman theory on reflexive Banach spaces is a generalisation of the theory of metric projections and norm-nonexpansive operators on Hilbert spaces, and is analogous to (and is generally \textit{better behaved}, see \cite[\S{}\S4--5, \S{}\S7--8]{Alber:1993}, than) a corresponding theory of metric projections and norm-nonexpansive operators on Banach spaces. In particular, while metric projections on Hilbert space as well as left and right $D_\Psi$-projections on reflexive Banach spaces satisfy the pythagorean theorem (see Table \ref{table.comparison.metric.bregman}), it is not so for metric projections on reflexive Banach spaces.} Each choice of $\Psi$ provides a specific ``probing'' of the structure of a Banach space, and it also establishes a particular convention of statistical inference on it (e.g., $\Psi=\frac{1}{2}\n{\cdot}_X^2$ corresponds to $D_\Psi$-projections encoding the optimal estimation on $(X,\n{\cdot}_X)$ in the sense of least squares). The class of Va\u{\i}nberg--Br\`{e}gman geometries determined by $\Psi=\Psi_\varphi$ provides thus an intermediate stage between the Banach space norm geometry (characterised by the properties of $\Psi_\varphi$) and the Va\u{\i}nberg--Br\`{e}gman geometry in general. Beyond the realms of $\Psi=\Psi_\varphi$, the properties of Va\u{\i}nberg--Br\`{e}gman geometry are no longer directly related to linear norm-geometric characteristics of the Banach space. However, it remains a rich geometric theory on its own, with a deep role played in convex nonlinear analysis on Banach spaces, since the formula \eqref{eqn.bregman.function.introduction}, defining $D_\Psi$, essentially encapsulates the first order Taylor expansion of the convex function $\Psi$ (and, thus, the knowledge about the global minimum of $\Psi$).\footnote{According to \cite[p. 69]{Boyd:Vandenberghe:2004}: \cytat{[t]his is perhaps the most important property of convex functions, and explains some of the remarkable properties of convex functions and convex optimization problems}.}

Br\`{e}gman and Chencov have independently discovered a characteristic geometric property of additive decomposition of a certain family of relative entropies under entropic projection onto a (suitably understood) affine (resp., convex) closed subset $K\subseteq M$, called a \textit{generalised pythagorean equation} (resp., \textit{inequality}). Chencov \cite[Eqns. (11), (15), Thm. 1]{Chencov:1968}  discovered it for $D_1$ and $\RPPP^{D_1}_K$,
\begin{equation}
	D_1(\omega,\phi)\geq D_\Psi(\omega,\RPPP^{D_1}_K(\omega))+D_\Psi(\RPPP^{D_1}_K(\omega),\phi)\;\;\forall(\omega,\phi)\in M\times K.
\label{right.pyth.D.1}
\end{equation}
where $D_1(\omega,\phi):=\int\mu(\phi-\omega+\omega\log\frac{\omega}{\phi})$ is the Kullback--Leibler information \cite[Eqn. (2.4)]{Kullback:Leibler:1951}, $\omega,\phi\in(L_1(\X,\mu))^+$, $(\X,\mu)$ is a localisable measure space, $M=(S(L_1(\X,\mu),\n{\cdot}_1))^+$ is a (not necessarily finite-dimensional) set of probability densities, and $K$ is given by the finite-dimensional convex set of exponential families, i.e. for $m\in\NN$, a convex closed subset $\Theta\subseteq\RR^m$, $p_0(\xx)\in M$, and $q_i(\xx)\in M$ $\forall i\in\{1,\ldots,m\}$,
\begin{equation}
K=\left\{p(\xx,\theta)\in M\st p(\xx,\theta)=\frac{p_0(\xx)\exp(\sum_{i=1}^m q_i(\xx)\theta_i)}{\int_\X\mu\, p_0(\xx)\exp(\sum_{i=1}^m q_i(\xx)\theta_i)},\;\theta=(\theta_1,\ldots,\theta_m)\in\Theta\right\}.
\label{eqn.exponential.family.chencov}
\end{equation}
On the other hand, Br\`{e}gman \cite[Lem. 1]{Bregman:1966} (=\cite[Lem. 1, \S2.2]{Bregman:1966:PhD}) discovered it for $D_\Psi$ and $\LPPP^{D_\Psi}_K$,
\begin{equation}
	D_\Psi(\omega,\phi)\geq D_\Psi(\omega,\LPPP^{D_\Psi}_K(\phi))+D_\Psi(\LPPP^{D_\Psi}_K(\phi),\phi)\;\;\forall(\omega,\phi)\in K\times M,
\label{gen.pyth.intro}
\end{equation}
where $D_\Psi$ is a Va\u{\i}nberg--Br\`{e}gman functional, and $K$ is a convex closed subset of $M=\RR^n$. In both cases, the passage from convex to affine $K$ implies replacing $\geq$ by $=$. The fact that $D_1$ belongs to the family $D_\Psi$  for atomic finite $(\X,\mu)$ was established in \cite[p. 1021]{Bregman:1966} (=\cite[p. 15]{Bregman:1966:PhD}). A generalisation of right pythagorean inequality \eqref{right.pyth.D.1} for $D_\Psi$ and $\RPPP^{D_\Psi}_K$,
\begin{equation}
	D_\Psi(\omega,\phi)\geq D_\Psi(\omega,\RPPP^{D_\Psi}_K(\omega))+D_\Psi(\RPPP^{D_\Psi}_K(\omega),\phi)\;\;\forall(\omega,\phi)\in M\times K,
\label{right.pyth.D.Psi}
\end{equation}
where $M$ is a reflexive Banach space $(X,\n{\cdot}_X)$, and $K\subseteq X$ is such that $\DG\Psi(K)$ is convex and closed in $X^\star$, has been obtained in \cite[Prop. 4.11]{MartinMarquez:Reich:Sabach:2012}. Under some suitable conditions on $\Psi$, left (resp., right) pythagorean inequality \textit{characterises} left (resp., right) $D_\Psi$-projections, cf. Proposition \ref{prop.left.pythagorean} (resp., \ref{prop.right.pythagorean}).

Given the nonlinearity of $\LPPP^{D_\Psi}_K$ and $\RPPP^{D_\Psi}_K$, as well as asymmetry and nonquadraticity of $D_\Psi$, the left and right generalised pythagorean equations are a highly remarkable generalisation of the ancient equation $a^2+b^2=c^2$, as well as its cartesian and hilbertian analogues,
\begin{equation}
\n{x-y}^2_\H=\n{x-\PPP^{d_{\n{
\cdot}_\H}}_K(y)}^2_\H+\n{\PPP^{d_{\n{
\cdot}_\H}}_K(y)-y}^2_\H\;\forall (x,y)\in K\times\H,
\end{equation}
for any convex closed subset $K$ of a Hilbert space $\H$. As shown in Table \ref{table.comparison.metric.bregman}, this generalisation extends to a wide range of structures. 
\begin{table}[!h]
{\scriptsize 
\noindent 
\setcellgapes{2pt}
\makegapedcells
\begin{tabularx}{\textwidth}{|>{\raggedright}p{2.55cm}||>{\raggedright}p{5.2cm}|>{\raggedright\arraybackslash}*{1}{X|}}\hline
geometry
&
norm
&
Va\u{\i}nberg--Br\`{e}gman
\\\hline
domain
&
Hilbert space $(\H,\s{\cdot,\cdot}_\H)$
&
reflexive Banach space $(X,\n{\cdot}_X)$
\\\hline\hline
convex function
&
$\frac{1}{2}\n{\cdot}_\H^2$
&
$\Psi$
\\\hline
orthogonality functional
&
$\s{\cdot,\cdot}_\H$
&
$\duality{\cdot,\DG\Psi(\cdot)}_{X\times X^\star}$
\\\hline
relative quantification
&
$(d_{\n{\cdot}_\H}(x,y))^2:=\n{x-y\,}_\H^2$ $\forall x,y\in\H$
&
$D_\Psi(x,y)$ $\forall x,y\in X$
\\\hline
cosine equation
&
$\n{x-z}_\H^2=\n{x-y}_\H^2+\n{y-z}_\H^2-2\s{x-y,z-y}_\H\;\forall x,y,z\in\H$
&
$D_\Psi(z,x)=D_\Psi(z,y)+D_\Psi(y,x)-\duality{z-y,\DG\Psi(x)-\DG\Psi(y)}_{X\times X^\star}\;\forall x,y,z\in X$
\\\hline
projection
&
$\PPP_K^{d_{\n{\cdot}_\H}}:=\arginff{x\in K}{\n{x-\,\cdot\,}_\H}$ 
&
$\LPPP^{D_\Psi}_K:=\arginff{x\in K}{D_\Psi(x,\,\cdot\,)}$ $\forall x\in X$,
\\
&$=\arginff{x\in K}{\n{x-\,\cdot\,}_\H^2}$ $\forall x\in\H$
&$\RPPP^{D_\Psi}_K:=\arginff{x\in K}{D_\Psi(\,\cdot\,,x)}$ $\forall x\in X$
\\\hline
orthogonal decomposition
&
$\left\{
\begin{array}{l}
\hspace{-0.2cm}P_L^{d_{\n{\cdot}_\H}}+P_{L^\bot}^{d_{\n{\cdot}_\H}}=\II_\H\\
\hspace{-0.25cm}\s{y,P^{d_{\n{\cdot}_\H}}_{L^\bot}(x)}_\H=0\;
\forall (x,y)\in \H\times L,
\end{array}
\right.$

$\left\{
\begin{array}{l}
\hspace{-0.2cm}\PPP^{d_{\n{\cdot}_\H}}_K+\PPP^{d_{\n{\cdot}_\H}}_{K^\circ}=\II_\H\\
\hspace{-0.25cm}\s{\PPP^{d_{\n{\cdot}_\H}}_K(x),\PPP^{d_{\n{\cdot}_\H}}_{K^\circ}(x)}_\H\!=0\;\forall x\in\H,
\end{array}
\right.$ 
for linear subspace $L\subseteq\H$, convex closed cone $K\subseteq\H$, $L^\bot:=\{y\in\H\mid\s{x,y}_\H=0\;\forall x\in L\}$, $K^\circ:=\{y\in\H\mid\s{x,y}_\H\leq0\;\forall x\in K\}$
&
$\left\{
\begin{array}{l}
\hspace{-0.2cm}\LPPP^{D_\Psi}_K+(\DG\Psi)^{\inver}\circ\hat{\PPP}^{\Psi^\lfdual}_{K^\circ}\circ\DG\Psi=\id_X\\
\hspace{-0.25cm}\duality{\LPPP^{D_\Psi}_K(x),\hat{\PPP}^{\Psi^\lfdual}_{K^\circ}\circ\DG\Psi(x)}_{X\times X^\star}=0\;\forall x\in X,
\end{array}
\right.$
\hspace{-0.2cm}$\left\{
\begin{array}{l}\hspace{-0.2cm}\hat{\PPP}^\Psi_{(\DG\Psi(C))^\circ}+\RPPP^{D_\Psi}_C=\id_X\\
\hspace{-0.25cm}\duality{(\DG\Psi^\lfdual)^{\inver}\circ\RPPP^{D_\Psi}_C(y),\hat{\PPP}^\Psi_{(\DG\Psi(C))^\circ}(y)}_{X\times X^\star}\!=0\;
\forall y\in X,
\end{array}
\right.$
for convex closed cone $K\subseteq X$, convex closed cone $\DG\Psi(C)\subseteq X^\star$, $K^\circ:=\{y\in X^\star\mid\duality{x,y}_{X\times X^\star}\leq0\;\forall x\in K\}$, $\hat{\PPP}^{\Psi^\lfdual}_{K^\circ}(y):=\arginff{z\in K^\circ}{\Psi^\lfdual(y-z)}$ $\forall y\in X^\star$; if $K$ (resp., $\DG\Psi(C)$) is replaced by a linear subspace $L\subseteq X$ (resp., $\DG\Psi(L)\subseteq X^\star$), then $(\cdot)^\circ$ is replaced by $(\cdot)^\bot$, where $M^\bot:=\{y\in X^\star\mid\duality{x,y}_{X\times X^\star}=0\;\forall x\in M\}$
\\\hline
pythagorean theorem for projections
&
$\n{x-y}^2_\H\geq\n{x-\PPP_K^{d_{\n{\cdot}_\H}}(y)}_\H^2+\n{\PPP_K^{d_{\n{\cdot}_\H}}(y)-y}_\H^2$ $\forall (x,y)\in K\times\H$ $\forall$ Chebysh\"{e}v $K$
&
$D_\Psi(x,y)\geq D_\Psi(x,\LPPP_K^{D_\Psi}(y))+D_\Psi(\LPPP_K^{D_\Psi}(y),y)$ $\forall(x,y)\in K\times X$ $\forall$ left $D_\Psi$-Chebysh\"{e}v $K$, $D_\Psi(x,y)\geq D_\Psi(x,\RPPP_K^{D_\Psi}(x))+D_\Psi(\RPPP_K^{D_\Psi}(x),y)$ $\forall(x,y)\in X\times K$ $\forall$ right $D_\Psi$-Chebysh\"{e}v $K$
\\\hline
completely nonexpansive maps
&
$\n{T(x)-T(y)}_\H\leq\n{x-y}_\H$ $\forall x,y\in K$
&
$D_\Psi(T(x),T(y))\leq D_\Psi(x,y)$ $\forall x,y\in K$
\\\hline
quasinonexpansive maps
&
$\n{x-T(y)}_\H\leq\n{x-y}_\H$ $\forall (x,y)\in \Fix(T)\times K$
&
$D_\Psi(x,T(y))\leq D_\Psi(x,y)$ $\forall(x,y)\in\Fix(T)\times K$,
$D_\Psi(T(x),y)\leq D_\Psi(x,y)$ $\forall(x,y)\in K\times\Fix(T)$
\\\hline
proximal maps
&
$\prox^{d_{\n{\cdot}_\H}}_{\lambda,f}:=\arginff{x\in\efd(f)}{f(x)-\frac{\lambda}{2}\n{x-\,\cdot\,}^2}$
&
$\lprox^{D_\Psi}_{\lambda,f}:=\arginff{x\in\efd(f)\cap\efd(\Psi)}{f(x)+\lambda D_\Psi(x,\,\cdot\,)}$, $\rprox^{D_\Psi}_{\lambda,f}:=\arginff{x\in\efd(f)\cap\intefd{\Psi}}{f(x)+\lambda D_\Psi(\,\cdot\,,x)}$
\\\hline
pythagorean theorem for proximal maps
&
$\n{x-y}^2_\H\geq\n{x-\prox^{d_{\n{\cdot}_\H}}_{\lambda,f}(y)}_\H^2+\n{\prox^{d_{\n{\cdot}_\H}}_{\lambda,f}(y)-y}_\H^2$ $\forall (x,y)\in\Fix(\prox^{d_{\n{\cdot}_\H}}_{\lambda,f})\times\H$, $\Fix(\prox^{d_{\n{\cdot}_\H}}_{\lambda,f})=\arginff{x\in\efd(f)}{f(x)}$ 
&
$D_\Psi(x,y)\geq D_\Psi(x,\lprox^{D_\Psi}_{\lambda,f}(y))+D_\Psi(\lprox^{D_\Psi}_{\lambda,f}(y),y)$ $\forall(x,y)\in\Fix(\lprox^{D_\Psi}_{\lambda,f})\times\intefd{\Psi}$, $\Fix(\lprox^{D_\Psi}_{\lambda,f})=\intefd{\Psi}\cap\arginff{x\in \efd(f)}{f(x)}$, $D_\Psi(x,y)\geq D_\Psi(x,\rprox^{D_\Psi}_{\lambda,f}(x))+D_\Psi(\rprox^{D_\Psi}_{\lambda,f}(x),y)$ $\forall(x,y)\in \intefd{\Psi}\times\Fix(\rprox^{D_\Psi}_{\lambda,f})$, $\Fix(\rprox^{D_\Psi}_{\lambda,f})=\DG\Psi^\lfdual(\lprox^{D_{\Psi^\lfdual}}_{f\circ\DG\Psi^\lfdual})$
\\\hline
resolvents
&
$\res_{\lambda T}:=(\id_X+\lambda T)^{\inver}$
&
$\lres^{\Psi}_{\lambda T}:=(\DG\Psi+\lambda T)\circ\DG\Psi$, $\rres^{\Psi}_{\lambda T}:=(\id_{X^\star}+\lambda T\circ\DG\Psi^\lfdual)^{\inver}$
\\\hline
pythagorean theorem for resolvents
&
$\n{x-y}^2_\H\geq\n{x-\res_{\lambda T}(y)}_\H^2+\n{\res_{\lambda T}(y)-y}_\H^2$ $\forall (x,y)\in\Fix(\res_{\lambda T})\times\H$, $\Fix(\res_{\lambda T})=T^\inver(0)$
&
$D_\Psi(x,y)\geq D_\Psi(x,\lres^\Psi_{\lambda T}(y))+D_\Psi(\lres^\Psi_{\lambda T}(y),y)\;\forall(x,y)\in\Fix(\lres^\Psi_{\lambda T})\times\intefd{\Psi}$, $\Fix(\lres^\Psi_{\lambda T})=\intefd{\Psi}\cap T^\inver(0)$, $D_{\Psi^\lfdual}(x,y)\geq D_{\Psi^\lfdual}(x,\rres^\Psi_{\lambda T}(x))+D_{\Psi^\lfdual}(\rres^\Psi_{\lambda T}(x),y)\;\forall(x,y)\in\intefd{\Psi^\lfdual}\times\Fix(\rres^\Psi_{\lambda T})$, $\Fix(\rres^{\Psi}_{\lambda T})=\DG\Psi(\intefd{\Psi}\cap T^\inver(0))$
\\\hline
\end{tabularx}
}
\caption{{\small Comparison of metric geometry on a Hilbert space and Va\u{\i}nberg--Br\`{e}gman geometry on a reflexive Banach space. For interpretation of $\duality{\cdot,\DG\Psi(\cdot)}_{X\times X^\star}$ as orthogonality in a Gateaux differentiable reflexive Banach space $(X,\n{\cdot}_X)$ for $\Psi=\Psi_\varphi$, see Remark \ref{remark.varphi}.\ref{remark.varphi.xiii}. For  comparison of characterisations of metric projections in Hilbert and Banach spaces with characterisations of left and right $D_\Psi$-projections, including their respective continuity properties, see Remark \ref{remark.varphi}.\ref{remark.varphi.iii}. For $Z\in\{\H,X\}$ and $T:X\ra 2^{X}$, $\Fix(T):=\{x\in Z\mid T(x)=x\}$. We denote $P_L^{d_{\n{\cdot}_{\H}}}:=\PPP_L^{d_{\n{\cdot}_{\H}}}$ for a linear subspace $L\subseteq\H$, since in such case $\PPP_L^{d_{\n{\cdot}_{\H}}}$ coincide with the bounded linear projection operators on $\H$.}}
\label{table.comparison.metric.bregman}
\end{table}
\subsection{...and quasinonexpansive operators}
Let $(X,\n{\cdot}_X)$ be a reflexive Banach space. In general, a map $T:K\ra\intefd{\Psi}$ with $\varnothing\neq K\subseteq\intefd{\Psi}\subseteq X$ is said to be \textit{left} (resp., \textit{right}) \textit{$D_\Psi$-quasinonexpansive} on $\Fix(T):=\{x\in K\mid T(x)=x\}$ if{}f
\begin{align}
D_\Psi(x,T(y))&\leq D_\Psi(x,y)\;\;\forall(x,y)\in\Fix(T)\times K\\
\mbox{(resp., }D_\Psi(T(x),y)&\leq D_\Psi(x,y)\;\;\forall(x,y)\in K\times\Fix(T)\mbox{)}.
\end{align}
Let $\lambda\in\,]0,\infty[$. An important example of left (resp., right) $D_\Psi$-quasinonexpansive maps is provided by left (resp., right) $D_\Psi$-resolvents of monotone operators $W:X\ra 2^{X^\star}$ \cite[Lem. 1]{Eckstein:1993} \cite[Def. 3.7]{Bauschke:Borwein:Combettes:2003} (resp., \cite[Def. 5.3]{MartinMarquez:Reich:Sabach:2012}),
\begin{align}
\lres^{\Psi}_{\lambda W}&:=(\DG\Psi+\lambda W)\circ\DG\Psi,\\
\mbox{(resp., }\rres^{\Psi}_{\lambda W}&:=(\id_{X^\star}+\lambda W\circ\DG\Psi^\lfdual)^{\inver}\mbox{)}.
\end{align}
Their special case,\footnote{Strictly speaking, $\rprox^{D_\Psi}_{\lambda,f}$ is a map $X\ra 2^X$, while $\rres^{\Psi}_{\lambda W}$ is a map $X^\star\ra 2^{X^\star}$, so the former cannot be a special case of the latter. However, under some mild conditions on $\Psi$, \eqref{eqn.resolvent.left.right} and \eqref{eqn.prox.left.right} allow for the above conceptual simplification. Apart from simplification of terminology, this perspective proves to be useful in applications. E.g., in Proposition \ref{prop.varphi.resolvent.uniform.continuity}.\ref{prop.varphi.resolvent.uniform.continuity.iv} we show that $\rprox^{D_\Psi}_{\lambda,f}$ and $\rres^{\Psi}_{\lambda W}$, for $\Psi=\Psi_{\varphi_{1,\beta}}$, exhibit exactly the same value $t$ of $t$-Lipschitz--H\"{o}lder continuity. On the other hand, while the generalised pythagorean theorem for $\rprox^{D_\Psi}_{\lambda,f}$ features $D_\Psi$, its variant for  $\rres^{\Psi}_{\lambda W}$ features $D_{\Psi^\lfdual}$.} given by the left (resp., right) $D_\Psi$-proximal maps for suitable $f:X\ra\,]-\infty,\infty]$ \cite[Eqn. (13)]{Censor:Zenios:1992} \cite[Def. 3.16]{Bauschke:Borwein:Combettes:2003} (resp., \cite[Def. 3.7]{Bauschke:Combettes:Noll:2006} \cite[Def. 3.3]{Laude:Ochs:Cremers:2020}),
\begin{align}
\lprox^{D_\Psi}_{\lambda,f}&:=\arginff{x\in\efd(f)\cap\efd(\Psi)}{f(x)+\lambda D_\Psi(x,\,\cdot\,)}\\
\mbox{(resp., }\rprox^{D_\Psi}_{\lambda,f}&:=\arginff{x\in\efd(f)\cap\intefd{\Psi}}{f(x)+\lambda D_\Psi(\,\cdot\,,x)}\mbox{)},
\end{align}
generalises left (resp., right) $D_\Psi$-projections. Quite noticeably, it turns out that the left $D_\Psi$-resolvents (resp., left $D_\Psi$-proximal maps) also satisfy the generalised pythagorean theorem \cite[Lem. 1]{Eckstein:1993} \cite[Props. 3.3.(i), 3.13.(iv).(b), 3.21.(vi), 3.22.(ii).(b), 3.23.(v).(b), Cor. 3.25]{Bauschke:Borwein:Combettes:2003}
\begin{align}
D_\Psi(x,y)&\geq D_\Psi(x,\lres^\Psi_{\lambda W}(y))+D_\Psi(\lres^\Psi_{\lambda W}(y),y)\;\;\forall(x,y)\in\Fix(\lres^\Psi_{\lambda W})\times\intefd{\Psi};\\
\hspace{-0.2cm}\mbox{(resp., }D_\Psi(x,y)&\geq D_\Psi(x,\lprox^{D_\Psi}_{\lambda,f}(y))+D_\Psi(\lprox^{D_\Psi}_{\lambda,f}(y),y)\;\;\forall(x,y)\in\Fix(\lprox^{D_\Psi}_{\lambda,f})\times\intefd{\Psi}\mbox{)}.
\end{align}
In Proposition \ref{prop.gen.pyth.thm.rres} (resp., \ref{prop.gen.pyth.thm.rprox}) we complete this theoretical landscape, establishing the generalised pythagorean theorem for right $D_\Psi$-resolvents (resp., right $D_\Psi$-proximal maps)

For $\varnothing\neq K\subseteq\intefd{\Psi}\subseteq X$, $T:K\ra\intefd{\Psi}$, and $\cl$ denoting a topological closure operator on the subsets of $X$ with respect to the topology of $\n{\cdot}_X$, consider a topological generalisation of $\Fix(T)$ given by \cite[p. 313]{Reich:1996}
\begin{equation}
\aFix(T):=\{x\in\cl(K)\st\exists\{x_n\in K\mid n\in\NN\},\;x_n\mbox{ converges weakly to }x,\;\lim_{n\ra\infty}\n{x_n-T(x_n)}_X\}.
\end{equation}
$T:K\ra\intefd{\Psi}$ such that
\begin{align}
D_\Psi(x,T(y))&\leq D_\Psi(x,y)\;\forall(x,y)\in\aFix(T)\times K\\
\mbox{(resp., }D_\Psi(T(x),y)&\leq D_\Psi(x,y)\;\forall(x,y)\in K\times\aFix(T)\mbox{)}
\end{align}
and, for any $y\in\aFix(T)$ and any bounded $\{x_n\in K\mid n\in\NN\}$, \cite[Def. 3.2]{Censor:Reich:1996} \cite[pp. 313--314]{Reich:1996} (resp., \cite[Def. 2.3.(iv)]{MartinMarquez:Reich:Sabach:2012}) 
\begin{align}
\lim_{n\ra\infty}(D_\Psi(y,x_n)-D_\Psi(y,T(x_n)))=0\;\;&\limp\;\;
\lim_{n\ra\infty}D_\Psi(T(x_n),x_n)=0\label{eqn.left.afix.qnexp}\\
\mbox{(resp., }\lim_{n\ra\infty}(D_\Psi(x_n,y)-D_\Psi(T(x_n),y))=0
\;\;&\limp\;\;\lim_{n\ra\infty}D_\Psi(x_n,T(x_n))=0\mbox{)},\label{eqn.right.afix.qnexp}
\end{align}
will be called a \underline{l}eft (resp., \underline{r}ight) \textit{\underline{s}trongly} $D_\Psi$-\underline{q}uasinonexpansive map, while the set of all such maps will be denoted by $\lsq(\Psi,K)$ (resp., $\rsq(\Psi,K)$). Condition \eqref{eqn.left.afix.qnexp} (resp., \eqref{eqn.right.afix.qnexp}) is essentially a topological version of the left (resp., right) generalised pythagorean theorem. Under some conditions on $\Psi$, cf. Definition \ref{def.LSQ.RSQ.compositional} and Proposition \ref{prop.lsq.rsq.old}.\ref{prop.lsq.rsq.old.i}--\ref{prop.lsq.rsq.old.ii}, the suitable finite subsets of $\lsq(\Psi,K)$ (resp., $\rsq(\Psi,K)$) admit composition \cite[Lems. 1, 2]{Reich:1996} \cite[Prop. 3.3]{MartinMarquez:Reich:Sabach:2013:BSN} (resp., \cite[Props. 4.4, 6.6]{MartinMarquez:Reich:Sabach:2013:BSN}): for any set $\{T_i:K\ra K\mid T_i\in\lsq(\Psi,K)$ (resp., $\rsq(\Psi,K)$), $i\in\{1,\ldots,m\}, m\in\NN\}$,
\begin{equation}
\textstyle\bigcap_{i=1}^m\aFix(T_i)\neq\varnothing\neq\aFix(T_m\circ\cdots\circ T_1)\;\;\limp\;\;T_m\circ\cdots\circ T_1\in\lsq(\Psi,K)\mbox{ (resp., }\rsq(\Psi,K)\mbox{)},
\end{equation}
with $\aFix(T_m\circ\cdots\circ T_1)\subseteq\bigcap_{i=1}^m\aFix(T_i)$. Such $\Psi$ will be called \textit{LSQ-compositional} (resp., \textit{RSQ-compositional}). Under some additional conditions on LSQ-compositional (resp., RSQ-compositional) $\Psi$, cf. Proposition \ref{prop.lsq.rsq.old}.\ref{prop.lsq.rsq.old.iii}--\ref{prop.lsq.rsq.old.iv}, $\LPPP_C^{D_{\Psi}}:K\ra\intefd{\Psi}$ belongs to $\lsq(\Psi,K)$ with $\aFix(\LPPP_C^{D_\Psi})=\Fix(\LPPP_C^{D_\Psi})=C$, and $\lres_T^\Psi\in\lsq(\Psi,X)$ (resp., $\RPPP^{D_\Psi}_C: K\ra\intefd{\Psi}$ belongs to $\rsq(\Psi,K)$ with $\aFix(\RPPP_C^{D_\Psi})=\Fix(\RPPP_C^{D_\Psi})=C$, and $\rres_T^\Psi\in\rsq(\Psi^\lfdual,X^\star)$). Such $\Psi$ will be called \textit{LSQ-adapted} (resp., \textit{RSQ-adapted})\footnote{More precisely, the abstract notion of an RSQ-adapted $\Psi$ is not sufficient for $\rres_T^\Psi\in\rsq(\Psi^\lfdual,X^\star)$, however the latter property is implied by all of the known sufficient conditions for RSQ-adaptedness of $\Psi$.}.

In the context of information theory (or statistical inference), $D_\Psi(\omega,\phi)$ can be interpreted as a quantification of relative information content of the information state $\omega$ with respect to the information state $\phi$. Hence, the generalised pythagorean equation \eqref{gen.pyth.intro} can be interpreted as a nonlinear additive decomposition of  quantification $D_\Psi(\omega,\phi)$ of an ``information gain'' (or an ``uncertainty loss'') from $\phi$ to $\omega$ into $D_\Psi(\LPPP^{D_\Psi}_K(\phi),\phi)$, interpreted as quantification of an ``information gain due to learning of constraints'' $K$, and $D_\Psi(\omega,\LPPP^{D_\Psi}_K(\phi))$, interpreted as quantification of an ``uncertainty loss within the constraints''. Analogous interpretation holds for right $D_\Psi$-projections, left and right $D_\Psi$-proximity maps,  as well as left and right $D_\Psi$-resolvents. Thus, left and right pythagorean inequalities allow for an additive decomposition of information contained in ``data'' into ``signal'' plus ``noise'' under a vast range of nonlinear quasinonexpansive operators. Due to \eqref{eqn.left.afix.qnexp}--\eqref{eqn.right.afix.qnexp}, the quasinonexpansive maps in $\lsq(\Psi,K)$ and $\rsq(\Psi,K)$ can be seen as a suitable topological generalisation of such inferences.
\subsection{New results}
Our work has five interconnected layers: 1) new results for $D_\Psi$ on reflexive Banach spaces $(X,\n{\cdot}_X)$; 2) construction of suitable categories of left and right $D_\Psi$-projections as well as of $\lsq(\Psi,K)$ and $\rsq(\Psi,K)$ maps; 3) new results for $D_{\Psi_\varphi}$ on reflexive Banach spaces $(X,\n{\cdot}_X)$; 4) developing the theory of extended Va\u{\i}nberg--Br\`{e}gman functionals $D_{\ell,\Psi}$ and quasinonexpansive maps over nonreflexive Banach spaces, together with the corresponding categories; 5) deriving a range of results for $D_\Psi$ and $D_{\ell,\Psi}$ in particular models, with a main focus on (nonreflexive) preduals of W$^*$- and JBW-algebras.

\subsubsection{Reflexive setting}

We contribute to the general theory of $D_\Psi$ on reflexive Banach space $(X,\n{\cdot}_X)$ with several new results, including an extension of a characterisation of right $D_\Psi$-projections by means of generalised pythagorean inequality to not necessarily finite $\Psi$ (Proposition \ref{prop.right.pythagorean}.\ref{prop.right.pythagorean.i}), proving a generalised pytha\-go\-re\-an inequality for right $D_\Psi$-proximal maps (Proposition \ref{prop.gen.pyth.thm.rprox}) and right $D_\Psi$-resolvents (Proposition \ref{prop.gen.pyth.thm.rres}), providing the right version of Al'ber's generalised orthogonal decompositions (Proposition \ref{prop.alber.decomposition}.\ref{prop.alber.decomposition.ii}), and establishing sufficient conditions for: norm-to-norm continuity of right $D_\Psi$-projections (Proposition \ref{prop.continuity.psi}.\ref{prop.continuity.psi.iii}--\ref{prop.continuity.psi.iv}) and right $D_\Psi$-proximal maps (Proposition \ref{prop.norm.continuity.left.right.prox}.\ref{prop.norm.continuity.left.right.prox.ii}); Lipschitz--H\"{o}lder continuity of left and right $D_\Psi$-resolvents (Proposition \ref{prop.lipschitz.hoelder.left.resolvent}), left and right $D_\Psi$-proximal maps (Proposition \ref{prop.lipschitz.hoelder.left.resolvent}), as well as left and right $D_\Psi$-projections (Corollary \ref{cor.Lipschitz.Hoelder.projections.Psi}).

\subsubsection{Categories}

Under some restrictions, the sets of left and right $D_\Psi$-projections, as well as the sets $\lsq(\Psi,K)$ and $\rsq(\Psi,K)$, can be transformed into suitable categories (Definitions \ref{def.cats.breg.proj} and \ref{def.LSQ.RSQ.cats}). Taking closed convex sets in $(X,\n{\cdot}_X)$ as objects, left $D_{\Psi}$-projections onto such sets as morphisms, and defining the composition of morphisms as a left $D_\Psi$-projection onto the intersection of constraint sets of the composed left $D_\Psi$-projections, we obtain the category $\lCvx(\Psi)$. By restriction of objects to affine closed sets, we obtain the category $\lAff(\Psi)$. By taking objects given by $\DG\Psi$-convex $\DG\Psi$-closed sets in $(X,\n{\cdot}_X)$ (i.e. sets such that their images in $(X^\star,\n{\cdot}_{X^\star})$ under the map $\DG\Psi:X\ra X^\star$ are convex and closed), morphisms given by their right $D_\Psi$-projections, and defining composition of two morphisms as a right $D_\Psi$-projection onto the intersection of the constraint sets of these morphisms, we obtain the category $\rbarCvx(\Psi)$. Its restriction to $\DG\Psi$-affine $\DG\Psi$-closed sets is denoted $\rbarAff(\Psi)$. These categories are well defined provided that we admit an empty set as an object and an empty arrow, $\ulcorner\varnothing\urcorner$, as a morphism available between any two objects. The superscript $^\subseteq$ will denote a restriction of the above categories to the case when the composition of morphisms is different from $\ulcorner\varnothing\urcorner$ only when the codomain of second projection is a subset of the codomain of first projection. Assuming analogous conditions on the $\aFix(T)$ sets of left strongly $D_\Psi$-quasinonexpansive maps, and restricting them to be convex and closed, gives rise to the category $\LSQcvxsub$. An analogous construction for right strongly $D_\Psi$-quasinonexpansive maps gives a category $\RbarSQcvxsub$. Hence, LSQ-compositionality (resp., RSQ-compositionality) of $\Psi$ is a condition allowing to define the category $\LSQcvxsub$ (resp., $\RbarSQcvxsub$). If $\Psi$ is LSQ-adapted (resp., RSQ-adapted), then there is an embedding functor from $\lCvx^\subseteq(\Psi)$ to $\LSQcvxsub$ (resp., from $\rbarCvx^\subseteq(\Psi)$ to $\RbarSQcvxsub$), as well as a functor right adjoint to it, which assigns to each $T$ its fixed point set $\Fix(T)$ (Definition \ref{def.LSQ.RSQ.functors}.\ref{def.LSQ.RSQ.functors.iii}--\ref{def.LSQ.RSQ.functors.vi} and Proposition \ref{prop.adjointness.cvx}.\ref{prop.adjointness.cvx.ii}--\ref{prop.adjointness.cvx.iii}). The categories $\lCvx(\Psi)$ and $\rbarCvx(\Psi)$ are equivalent, and the same is true for $\LSQcvxsub(\Psi)$ and $\RbarSQcvxsub(\Psi)$, since the second element of each of these pairs is defined by means of the first one, through the Euler--Legendre transformation implemented by $\DG\Psi$ (Propositions \ref{cor.functors.breg.proj}.\ref{cor.functors.breg.proj.i} and \ref{prop.adjointness.cvx}.\ref{prop.adjointness.cvx.i}).

\subsubsection{Gauges and quasigauges $\varphi$}

For $D_\Psi$ with $\Psi=\Psi_\varphi$ and a gauge $\varphi$, we provide the first systematic study of the properties of this functional, obtaining an array of new results, including the sufficient conditions on linear norm-geometric properties of $(X,\n{\cdot}_X)$ for: zone consistency and characterisation of left and right $D_{\Psi_\varphi}$-projections by a generalised pythagorean inequality (Proposition \ref{prop.left.right.psi.varphi}); norm-to-norm continuity of left and right $D_{\Psi_\varphi}$-projections (Proposition \ref{prop.continuity}), left and right $D_{\Psi_\varphi}$-proximal maps (Proposition \ref{prop.varphi.prox.norm.continuity}), and left and right $D_{\Psi_\varphi}$-resolvents (Proposition \ref{prop.varphi.resolvent.norm.continuity}), as well as uniform continuity and Lipschitz--H\"{o}lder continuity of left and right $D_{\Psi_\varphi}$-projections (Proposition \ref{prop.varphi.uniform.continuity}) and of left and right $D_{\Psi_\varphi}$-proximal maps and $D_{\Psi_\varphi}$-resolvents (Proposition \ref{prop.varphi.resolvent.uniform.continuity}); LSQ-compositionality, LSQ-adaptedness, RSQ-compositionality, and RSQ-adaptedness of $\Psi_\varphi$ (Proposition \ref{prop.varphi.compositional}). (See Table \ref{table.cases.i-v} for more detailed discussion of, and structural view on, these notions and results.) Furthermore, we characterise the Euler--Legendre property of $\Psi_\varphi$ by strict convexity and Gateaux differentiability of $(X,\n{\cdot}_X)$ (Proposition \ref{prop.legendre}). We also deliver several new results for $\Psi=\Psi_\varphi$ with a quasigauge $\varphi$, which has never been considered before in the context of $D_\Psi$, including a characterisation of the Euler--Legendre property of $\Psi_\varphi$ (Proposition \ref{prop.legendre.quasigauge}) and sufficient conditions on $\varphi$ for a characterisation of left and right $D_{\Psi_\varphi}$-projections by the respective generalised pythagorean theorem (Proposition \ref{prop.left.right.psi.quasigauge}). Taken together, these results establish a strong bridge between the Va\u{\i}nberg--Br\`{e}gman and norm geometries of reflexive Banach spaces. They also play a key role in providing nontrivial functional analytic and operator algebraic models of the (reflexive and extended) Va\u{\i}nberg--Br\`{e}gman geometry in Section \ref{section.models}.

\subsubsection{Extension $\ell$}

While the generalised pythagorean equation \eqref{right.pyth.D.Psi} has the same form as, and is completely analogous to, \eqref{gen.pyth.intro}, the original result \eqref{right.pyth.D.1} of Chencov is more subtle, containing an additional layer of abstraction. More specifically, \eqref{right.pyth.D.Psi}, provided in \cite[Prop. 4.11]{MartinMarquez:Reich:Sabach:2012} and Proposition \ref{prop.right.pythagorean}, deals with $M$ given by the reflexive Banach space $(X,\n{\cdot}_X)$, with convexity and closure specified, via $\DG\Psi$, in terms of structure of $(X^\star,\n{\cdot}_{X^\star})$, and it is also applicable to a special case of $D_1$ defined on $\RR^n$. On the other hand, Chencov's result deals with $D_1$ over $(L_1(\X,\mu),\n{\cdot}_1)$, which is a nonreflexive Banach space, and it specifies convexity not in terms of the linear structure of $L_1(\X,\mu)$, but in terms of exponential families \eqref{eqn.exponential.family.chencov}: for a fixed $n\in\NN$, it uses a nonlinear exponential coordinate map from an $n$-dimensional subset of $L_1(\X,\mu)$ to a convex closed subset of a reflexive space $\RR^n$. The same issue of a ``suitable definition'' of convexity and closure of the sets for a generalised pythagorean theorem over a nonreflexive Banach space appeared in \cite[Prop. 8.1, Prop. 8.2]{Jencova:2005}, for a family $D_\gamma$ of relative entropies over (nonreflexive) preduals $(\N_\star,\n{\cdot}_{\N_\star})$ of arbitrary W$^*$-algebras $\N$: in order to establish the generalised pythagorean theorem for left $D_\gamma$-projections on subsets $K$ of $(\N_\star,\n{\cdot}_{\N_\star})$, the convexity and closure of $K$ was specified in terms of the structure of (reflexive) noncommutative $(L_{1/\gamma}(\N),\n{\cdot}_{1/\gamma})$ space with $\gamma\in\,]0,1[$, via Mazur maps $\ell_\gamma:\N_\star\ra L_{1/\gamma}(\N)$ (cf. Corollary \ref{cor.d.gamma}.\ref{cor.d.gamma.ii} and Remark \ref{remark.lp.breg.proj}.\ref{remark.lp.breg.proj.i} for details).

In order to extend applicability of the Va\u{\i}nberg--Br\`{e}gman geometry beyond the class of reflexive Banach spaces $(X,\n{\cdot}_X)$, we generalise the above observations, and consider a bijective map $\ell$ from a subset $U$ of a (generally not reflexive) Banach space $(Y,\n{\cdot}_Y)$ to a subset $\ell(U)$ of $X$. In this sense, $\ell$ establishes a global nonlinear coordinate system on $U$, modelled in $X$. Given $\ell$ and $\Psi$, the \textit{extended Va\u{\i}nberg--Br\`{e}gman functional} on $U$ is defined by $D_{\ell,\Psi}(\phi,\psi):=D_\Psi(\ell(\phi),\ell(\psi))$. The suitable properties of the subsets of $U$ become expressed in terms of the corresponding properties of their $\ell$-embeddings into $(X,\n{\cdot}_X)$, with the convex analytic results proved for $D_\Psi$ on $(X,\n{\cdot}_X)$ pulled back into the corresponding results for $D_{\ell,\Psi}$. Hence,  consideration of convexity (resp., affinity; closure; $\DG\Psi$-convexity; $\DG\Psi$-affinity; $\DG\Psi$-closure) of sets in $(X,\n{\cdot}_X)$ leads us to consideration of $\ell$-convexity (resp., $\ell$-affinity; $\ell$-closure; $(\DG\Psi\circ\ell)$-convexity; $(\DG\Psi\circ\ell)$-affinity; $(\DG\Psi\circ\ell)$-closure) of sets in $(Y,\n{\cdot}_Y)$, and dealing with the left $D_{\ell,\Psi}$-projections onto $\ell$-convex $\ell$-closed sets, right $D_{\ell,\Psi}$-projections onto $(\DG\Psi\circ\ell)$-convex $(\DG\Psi\circ\ell)$-closed sets, sets $\lsq(\ell,\Psi,C)$ and $\rsq(\ell,\Psi,C)$ of left and right strongly $D_{\ell,\Psi}$-quasinonexpansive maps, respectively, etc. (In particular, we obtain categories $\lCvx(\ell,\Psi)$ and $\rbarCvx(\ell,\Psi)$ (as well as their affine and $^\subseteq$- subcategories), $\LSQcvxsub(\ell,\Psi)$ and $\RbarSQcvxsub(\ell,\Psi)$, and the corresponding functorial relationships between them.)

In other words, each choice of $(\ell,\Psi)$ sets up a specific way of probing the relationship between quantitative and geometric properties of a subset $U$ of $(Y,\n{\cdot}_Y)$ in terms of the Va\u{\i}nberg--Br\`{e}gman geometry of $\ell(U)$ in $(X,\n{\cdot}_X)$, i.e. as perceived through the lenses of a nonlinear embedding (`coordinate system') $\ell:U\ra X$ and a convex function $\Psi:X\ra\,]-\infty,\infty]$ (`loss/bias criterion of statistical discrimination'). This way, by moving from $D_\Psi$ to $D_{\ell,\Psi}=D_\Psi\circ(\ell,\ell)$, we provide a partial reconcilliation between (arbitrary dimensional) Chencov's and Va\u{\i}nberg--Br\`{e}gman approaches by means of a setting, which we call the \textit{extended Va\u{\i}nberg--Br\`{e}gman geometry}. 

While the results on uniqueness and existence of $D_\Psi$-projections can be pulled back without any additional topological assumptions on $\ell$ (since bijectivity of $\ell$ allows to use the topology induced by the norm topology of $(X,\n{\cdot}_X)$), providing sufficient conditions for the stability of $D_{\ell,\Psi}$-projections, expressed in terms of their norm-to-norm continuity or uniform continuity, requires to impose the corresponding continuity conditions on $\ell$. As a result, the best behaved sector of the extended Va\u{\i}nberg--Br\`{e}gman geometry (which we identify as `well adapted models') belongs to an intersection of the convex analysis on reflexive Banach spaces with the nonlinear homeomorphic theory of (arbitrary) Banach spaces. In particular, if $\ell$ is norm-to-norm continuous, then the $\ell$-closed sets in $(X,\n{\cdot}_X)$ coincide with the sets closed with respect to the topology of $\n{\cdot}_X$. Hence, the only inevitable price to pay for the extension from $D_\Psi$ to $D_{\ell,\Psi}$ is the replacement of convexity by $\ell$-convexity.
\subsection{Models}
A pair $(\ell,\Psi)$ will be called a \textit{model} of an extended Va\u{\i}nberg--Br\`{e}gman geometry if{}f it satisfies the following axioms: $D_\Psi$ is an information (i.e. $D_\Psi(x,y)=0$ $\iff$ $x=y$), left and right $D_\Psi$-projections (onto, respectively, left and right $D_\Psi$-Chebysh\"{e}v subsets of $(X,\n{\cdot}_X)$) exist, and they satisfy the corresponding generalised pythagorean inequalities. For $\Psi=\Psi_\varphi$, further strengthening of the model (allowing more axioms to be satisfied) is implemented by strengthening of the geometric properties of the corresponding norm $\n{\cdot}_X$, as shown in Table \ref{table.cases.i-v}.
\begin{table}[!h]
{\small\ \\
\noindent
\begin{tabularx}{\textwidth}{p{1.0cm}|*{1}{X}|p{3.3cm}}
&
&
\textit{geometry of }$(X,\n{\cdot}_X)$
\\
\textit{case}&
\textit{properties of the Va\u{\i}nberg--Br\`{e}gman geometry (satisfied in a given case)}
&
\textit{for $\Psi=\Psi_\varphi$}
\\\hline\hline
I
&
$D_\Psi$ and $D_{\Psi^\lfdual}$ are informations, left and right $D_\Psi$-projections are single-valued and are characterised by the respective generalised pythagorean inequalities, left and right $D_\Psi$-resolvents are single-valued and satisfy the respective generalised pythagorean inequalities
&
\vspace{0.97cm}
$\bSC\cap\bG\cap\bR$
\\
&
&\hspace{0.7cm}$\hookuparrow$
\\
II
&
left and right $D_\Psi$-projections are norm-to-norm continuous + all of above
&
$\bSC\cap\bRRS\cap\bF\cap\bR$
\\
&
&\hspace{0.7cm}$\hookuparrow$
\\
III$^{\mathrm{R}}$/ /III$^{\mathrm{L}}$
&
(norm-to-norm continuity of right $D_\Psi$-resolvents + LSQ- and RSQ-com\-po\-sability/norm-to-norm continuity of left $D_\Psi$-resolvents) + all of above
&
$\bSC\cap\bRRS\cap\bUF$/ $/\bUC\cap\bF$
\\
&
&\hspace{0.7cm}$\hookuparrow$
\\
IV
&
RSQ-adaptedness + all of above
&
$\bUC\cap\bUF$
\\
&
&\hspace{0.7cm}$\hookuparrow$
\\
V$^{\mathrm{L}}$/V$^{\mathrm{R}}$
&
left/right $D_\Psi$-projections and left/right $D_\Psi$-resolvents are uniformly continuous on bounded subsets + all of above
&
{$\frac{1}{\beta}$-$\bUC\cap\bUF$/\hspace{0.5cm}\;\;\;\;\;\;\;} {/$\bUC\cap\frac{1}{\beta}$-$\bUF$}
\\
&
&\hspace{0.7cm}$\hookuparrow$
\\
VI
&
left and right $D_\Psi$-projections and $D_\Psi$-resolvents are Lipschitz--H\"{o}lder continuous + all of above
&
$\frac{1}{\beta}$-$\bUC\cap\frac{1}{\gamma}$-$\bUF$
\\
II$^{\mathrm{b}}$
&
LSQ-adaptedness + all of case I
&
$\bSC\cap\bUF$
\end{tabularx}
}
\caption{{\small Different cases of Va\u{\i}nberg--Br\`{e}gman geometric axioms, and the corresponding conditions on the Banach geometry of $(X,\n{\cdot}_X)$ which are sufficient for these axioms to hold for $\Psi=\Psi_\varphi$ with a gauge $\varphi$. Notation: $\bSC$ := strict convexity, $\bG$ := Gateaux differentiability, $\bR$ := reflexivity, $\bRRS$ := the Radon--Riesz--Shmul'yan property, $\bF$ := Fr\'{e}chet differentiability, ($\frac{1}{\beta}$-)$\bUF$ := ($\frac{1}{\beta}$-)uniform Fr\'{e}chet differentiability, ($\frac{1}{\beta}$-)$\bUC$ := ($\frac{1}{\beta}$-)uniform convexity. These results are obtained in Propositions \ref{prop.left.right.psi.varphi}, \ref{prop.continuity}, \ref{prop.varphi.compositional}, \ref{prop.varphi.resolvent.norm.continuity}, \ref{prop.varphi.resolvent.uniform.continuity}, and \ref{prop.varphi.uniform.continuity}. While for any model $(\ell,\Psi)$ we require left and right $D_\Psi$-projections to \textit{satisfy} the respective generalised pythagorean inequalities, case I strengthens this to a requirement of \textit{characterisation}. Case II$^{\mathrm{b}}$ is included in case III$^{\mathrm{R}}$, but not in case III$^{\mathrm{L}}$.}}
\label{table.cases.i-v}
\end{table}

While the definition of an extended Va\u{\i}nberg--Br\`{e}gman model does not impose any additional conditions on a bijection $\ell:U\ra\ell(U)\subseteq X$, this is not longer so for its further refinements. Borrowing a terminology from \cite[p. 231]{Dubuc:1979}, we will call a model $(\ell,\Psi)$, not necessarily with $\Psi=\Psi_\varphi$, to be \textit{well adapted} (resp., \textit{uniformly well adapted}; \textit{Lipschitz--H\"{o}lder well adapted}) if{}f it satisfies all axioms of case IV (resp., V; VI) and $\ell$ is a norm-to-norm (resp., uniform; Lipschitz--H\"{o}lder)  homeomorphism. For $(\ell,\Psi=\Psi_\varphi)$ with a gauge $\varphi$ it turns out that the properties of case I--IV models do not depend on the choice of $\varphi$, but (essentially due to Proposition \ref{prop.psi.varphi.geometry}.\ref{prop.psi.varphi.geometry.vii}--\ref{prop.psi.varphi.geometry.viii}) such dependence appears for case V$^\mathrm{L}$/V$^\mathrm{R}$ (so, also for case VI) models (Propositions \ref{prop.varphi.resolvent.uniform.continuity} and \ref{prop.varphi.uniform.continuity}). Hence, a passage from norm-to-norm continuity of left and right $D_{\Psi_\varphi}$-projections, $D_{\Psi_\varphi}$-proximal maps, and $D_{\Psi_\varphi}$-resolvents to their uniform continuity includes becoming sensitive to the properties of a particular $\varphi$. More specifically, case V$^{\mathrm{L}}$ and case V$^{\mathrm{R}}$ models are specified only for the particular family of $\varphi$, given by $\varphi(t)=\varphi_{1,\beta}(t):=t^{1/\beta-1}$. Furthermore, since uniform homeomorphy of unit balls of two Banach spaces does not imply uniform homeomorphy of these spaces, a passage to uniformly well adapted models amounts, in practice (and for $\Psi=\Psi_\varphi$), to restriction of considerations to the extended Va\u{\i}nberg--Br\`{e}gman geometry of a unit ball (and its subsets). On the other hand, under generalisation from gauges to quasigauges, there is already a split of case I models into left and right cases (I$^\mathrm{L}$/I$^\mathrm{R}$), dependently on the particular properties of a quasigauge (Proposition \ref{prop.left.right.psi.quasigauge}).

The examples of well adapted case IV models are given by $(\ell_\gamma,\Psi_\varphi)$, with an arbitrary gauge $\varphi$ and the Mazur map $\ell_\gamma(\phi):=\phi^\gamma$, for $\gamma\in\,]0,1[$, mapping preduals of arbitrary W$^*$-algebras $\N$ and semifinite JBW-algebras $A$ into the corresponding $L_{1/\gamma}$ spaces over these algebras (Propositions \ref{prop.dope.gauge.gamma.wstar} and \ref{prop.dope.gauge.gamma.jbw}). The resulting extended Va\u{\i}nberg--Br\`{e}gman geometries over the respective preduals of W$^*$- and JBW-algebras depend only on the choice of $\gamma$ and $\varphi$. From the perspective of elementary differential and convex geometric properties of the Banach space norm, the structure of $(L_{1/\gamma}(\N),\n{\cdot}_{1/\gamma})$ and $(L_{1/\gamma}(A,\tau),\n{\cdot}_{1/\gamma})$ spaces does not exhibit major variability in the range of $\gamma\in\,]0,1[\setminus\{\frac{1}{2}\}$, and does not depend on the type of $\N$ and $A$.\footnote{In comparison, as we show in the sequel work \cite{Kostecki:2026:Orlicz} (cf. \cite{Kostecki:2025:Poznan} for an announcement of some of its main results), the extended Va\u{\i}nberg--Br\`{e}gman geometries $(\ell_\orlicz,\Psi_\varphi)$, induced over preduals $\N_\star$ of semifinite W$^*$-algebras $\N$, via Kaczmarz maps $\ell_\orlicz$, from the geometry of $p$-Amemiya norm on noncommutative Orlicz spaces, $(L_{\orlicz}(\N,\tau),\n{\cdot}_{\orlicz,p})$ with $p\in[1,\infty]$, exhibit variability over the type of $\N$, over the geometric properties of the Orlicz function $\orlicz$, and over $p\in\{\{1\},\,]1,\infty[,\,\{\infty\}\}$. In this context, two main virtues of providing an independent analysis for noncommutative and nonassociative $L_{1/\gamma}$ spaces in the current paper are: to have the range of results available for type III (i.e. not semifinite) W$^*$-algebras and for semifinite JBW-algebras.} In consequence, some qualitative differences between the extended Va\u{\i}nberg--Br\`{e}gman geometries for different choices of $(\gamma,\varphi)$ show up only at the level of case V$^{\mathrm{L}}$/V$^{\mathrm{R}}$ and case VI models, through dependence of the uniform and Lipschitz--H\"{o}lder continuity of $j_\varphi$ and $\ell_\gamma$ on the available values $(p,q)$ of $p$-uniform convexity and $q$-uniform Fr\'{e}chet differentiability of $(L_{1/\gamma}(\N),\n{\cdot}_{1/\gamma})$ and $(L_{1/\gamma}(A,\tau),\n{\cdot}_{1/\gamma})$. Case V$^{\mathrm{L}}$ (resp., V$^{\mathrm{R}}$) models are given by $(\ell_\gamma,\Psi_\varphi)$ with $L_{1/\gamma}$ spaces over arbitrary W$^*$-algebras and semifinite JBW-algebras for $\varphi=\varphi_{1,\beta}$ with $(\gamma,\beta)\in(]0,\frac{1}{2}]\times\,]0,\gamma])\cup([\frac{1}{2},1[\times\,]0,\frac{1}{2}])$ (resp., $(\gamma,\beta)\in(]0,\frac{1}{2}]\times[\frac{1}{2},1[)\cup([\frac{1}{2},1[\,\times[\gamma,1[)$), cf. Proposition \ref{prop.uniform.Lp} and Corollary \ref{cor.r.unif.cont.proj.na.Lp}. Among the case V$^{\mathrm{L}}$ and case V$^{\mathrm{R}}$ models we specify a class of uniformly and Lipschitz--H\"{o}lder well adapted models, by restriction of $\ell_\gamma$ to a unit ball of the respective predual. For case VI models we calculate the \rpkmark{modulus of continuity} (more specifically, the value of $t$ for $t$-Lipschitz--H\"{o}lder continuity) of left and right $D_{\ell_\gamma,\Psi_{\varphi_{1,\beta}}}$-projections, as well as left $D_{\ell_\gamma,\Psi_{\varphi_{1,\beta}}}$-resolvents.

We give also three different examples of models with $\Psi=\Psi_\varphi$ for a gauge $\varphi$, but with $\ell$ different from the Mazur (as well as Kaczmarz) map. Proposition \ref{prop.oscr} provides well adapted (resp., uniformly and Lipschitz--H\"{o}lder well adapted) case V$^{\mathrm{L}}$ and V$^{\mathrm{R}}$ models, with $(X,\n{\cdot}_X)$ given by uniformly convex and uniformly Fr\'{e}chet differentiable (resp., $p$-uniformly convex and $q$-uniformly Fr\'{e}chet differentiable) Banach function space over a localisable measure space $(\X,\mu)$, with $\ell$ given by an inverse $\ell_X$ of the Lozanovski\u{\i} factorisation map ${\ell_X}^{\inver}$, and the domain of $\ell_X$ given by the unit sphere $S(L_1(\X,\mu),\n{\cdot}_1)$. Proposition \ref{prop.nc.oscr} provides an analogue of this result for $(X,\n{\cdot}_X)$ given by uniformly convex and uniformly Fr\'{e}chet differentiable (resp., $p$-uniformly convex and $q$-uniformly Fr\'{e}chet differentiable) noncommutative rearrangement invariant Banach space over type I$_n$ W$^*$-algebra $\N$ with $n\in\NN$ (i.e. $\N=\BH$ for $\dim\H=n$, and $(X,\n{\cdot}_X)$ is a space $\MNC$ of $n\times n$ matrices over $\CC$, equipped with a unitary invariant matrix norm). Proposition \ref{prop.gen.spin.factors} provides case I model for preduals of generalised spin factors $(X^\star\oplus\RR,\n{\cdot}_{X^\star\oplus\RR})$, with $\ell$ given by a mapping $\ell_{/\RR}$ from a base of a base normed space $(X\oplus\RR,\n{\cdot}_{X\oplus\RR})$ into a unit ball of a reflexive Banach space $(X,\n{\cdot}_X)$. This result goes beyond the realms of W$^*$- and JBW-algebras (since a generalised spin factor is a JBW-algebra if{}f $(X,\n{\cdot}_X)$ is a Hilbert space), and provides an example of application of our framework to the general base normed spaces (in particular, Proposition \ref{prop.gen.spin.factors} characterises the presence of Alfsen--Shultz spectral duality between $(X\oplus\RR,\n{\cdot}_{X\oplus\RR})$ and $(X^\star\oplus\RR,\n{\cdot}_{X^\star\oplus\RR})$ by the condition that $\Psi_\varphi$ is Euler--Legendre, which is equivalent to Gateaux differentiability and strict convexity of $(X,\n{\cdot}_X)$).

Finally, we specify also some examples of case I models featuring $\Psi\neq\Psi_\varphi$. Proposition \ref{prop.hilbert.operator.psi} deals with self-adjoint parts of preduals of arbitrary W$^*$-algebras and with preduals of semifinite JBW-algebras using $\ell=\ell_{1/2}$ (with a codomain $\H$ denoting, respectively, either a self-adjoint part of a noncommutative $L_2$ space or a nonassociative $L_2$ space) with $\Psi(x):=\frac{1}{2}\s{Tx,x}_\H$, where $T$ is a continuous linear map satisfying $\exists\lambda>0$ $\forall x,y\in\H$ $\s{Tx-Ty,y-x}_\H\geq\lambda\n{x-y}^2_\H$. Proposition \ref{prop.finite.spectral.EL.case.I} deals with preduals of type I$_n$ W$^*$-algebras, with $n\in\NN$, using $\Psi$ given by a spectral convex Euler--Legendre function on the space of finite dimensional self-adjoint matrices (with three formerly known examples of such functions given in Example \ref{ex.spectral.convex}.\ref{ex.spectral.convex.i}--\ref{ex.spectral.convex.iii}, and two new examples given in Corollary \ref{cor.finite.spectral.EL}.\ref{cor.finite.spectral.EL.iv}--\ref{cor.finite.spectral.EL.v}). Proposition \ref{prop.BRLZ.spectral.EL} provides an extension of this approach to Schatten classes $(\schatten(\H))^\sa$ of self-adjoint compact operators on a separable Hilbert space $\H$ (with $((\schatten(\H))^\sa,\n{\cdot}_{(\schatten(\H))^\sa})$ uniformly convex and uniformly Fr\'{e}chet differentiable and $\ell$ given by the inverse $\ell_{(\schatten(\H))^\sa}$ of Lozanovski\u{\i} factorisation map), resulting in models $(\ell,\Psi)$ on ${\N_\star}^\sa=((\BH)^\sa)_\star$ such that $\Psi$ is Euler--Legendre, while it is not assumed to be $\Psi_\varphi$. (However, while this construction is a natural extension of already established results, we are currently missing examples of $\Psi\neq\Psi_\varphi$ at this level of generality.)
\subsection{Plan of the paper}\label{section.plan}
Section \ref{section.background} covers background definitions and properties that are extensively used in the rest of this paper. Section \ref{section.background.convex} introduces key notions from convex analysis on Banach spaces. Section \ref{section.background.bregman.projections} presents the properties of left and right Va\u{\i}nberg--Br\`{e}gman projections and quasinonexpansive maps, used in Section \ref{section.convex.new.general}. Section \ref{section.gauge.functions} discusses the properties of Banach norm geometry as characterised by $\Psi_\varphi$, which will be used in Section \ref{section.convex.new.varphi}.

Section \ref{section.convex.new} contains new results in the Va\u{\i}nberg--Br\`{e}gman theory on reflexive Banach spaces together with the theory of extended Va\u{\i}nbeg--Br\`{e}gman geometries on arbitrary Banach spaces. In Section \ref{section.convex.new.general} we establish results applicable to arbitrary Va\u{\i}nberg--Br\`{e}gman functionals on reflexive Banach spaces. Section \ref{section.convex.new.varphi} is concerned with the special case, when $\Psi$ is given by an integral of a gauge (or quasigauge) function $\varphi$, i.e. $\Psi=\Psi_\varphi$. Section \ref{section.convex.new.d.ell.psi} encapsulates the results of Sections \ref{section.convex.new.general} and \ref{section.convex.new.varphi} into the general setting of extended Va\u{\i}nberg--Br\`{e}gman geometry, based on composition of $D_\Psi$ on reflexive Banach space $(X,\n{\cdot}_X)$ with a nonlinear homeomorphism $\ell$ from a subset of an arbitrary Banach space $(Y,\n{\cdot}_Y)$. Section \ref{section.categories.d.psi.proj} provides construction of suitable categories of left and right $D_{\ell,\Psi}$-projections, as well as categories of left and right strongly $D_{\ell,\Psi}$-quasinonexpansive maps, together with some elementary results about functors between them. 

Section \ref{section.models} applies the results of Section \ref{section.convex.new} to the case when $(Y,\n{\cdot}_Y)$ is (in principle) a base normed space, and $(X,\n{\cdot}_X)$ is a suitable reflexive space constructed over $(Y,\n{\cdot}_Y)$. Nearly all of results in Section \ref{section.models} are obtained in a setting when $(Y,\n{\cdot}_Y)$ is a predual of an arbitrary W$^*$-algebra or of a semifinite JBW-algebra and $(Y,\n{\cdot}_Y)$ is, respectively, noncommutative or nonassociative rearrangement invariant space over this algebra. In particular, in Section \ref{section.L.gamma.Mazur} we consider $\Psi=\Psi_\varphi$ with $(X,\n{\cdot}_X)$ given by noncommutative and nonassociative $(L_{1/\gamma},\n{\cdot}_{1/\gamma})$ spaces for $\gamma\in\,]0,1[$, and $\ell$ given by the corresponding Mazur maps $\ell_\gamma$. To show the flexibility of the framework (and its applicability for the general statistical theory on base normed spaces), in Section \ref{section.other.models.Psi.varphi} we provide an example beyond JBW-algebraic setting, with $(Y,\n{\cdot}_Y)$ given by preduals of generalised spin factors. Apart from this, Section \ref{section.other.models.Psi.varphi} contains also an example of $\ell$ given by the Lozanovski\u{\i} uniform homeomorphism $\ell_X$, which goes far beyond the realms of Mazur (resp., Kaczmarz) maps into $L_{1/\gamma}$ (resp., Orlicz) spaces, and is applicable to embedding of $S(L_1(\X,\mu),\n{\cdot}_1)$ into \textit{any} uniformly convex and uniformly Fr\'{e}chet differentiable function space over a localisable measure space $(\X,\mu)$. We also provide an analogue of this result for preduals of type I$_n$ W$^*$-algebras $\N$. While the results of Sections \ref{section.L.gamma.Mazur}--\ref{section.other.models.Psi.varphi} rely on an assumption $\Psi=\Psi_\varphi$, in Section \ref{section.lewis.type.models} we provide few examples of models with $\Psi\neq\Psi_\varphi$.

There is essentially no new results in Section \ref{section.background},\footnote{With a possible exceptions of Corollaries \ref{cor.proj.adaptedness} and \ref{cor.r.unif.convex.j.lambda}, which are quite straightforward, but we have not found them in the literature.} yet we provide some new definitions, which package formerly known properties into suitable objects (e.g., the notions of left/right pythagorean $D_\Psi$ and LSQ/RSQ-adapted/compositional $\Psi$). Contrary to that, all definitions and propositions/corollaries in Sections \ref{section.convex.new} and \ref{section.models} are new, either completely or in their (extended) range of generality (with the exception of Propositions \ref{prop.norm.continuity.left.right.prox}.\ref{prop.norm.continuity.left.right.prox.i}, \ref{prop.alber.decomposition}.\ref{prop.alber.decomposition.i}+\ref{prop.alber.decomposition.iii}, \ref{prop.varphi.uniform.continuity}.\ref{prop.varphi.uniform.continuity.i}, \ref{prop.oscr}.\ref{prop.oscr.i}, \ref{prop.nc.oscr}.\ref{prop.nc.oscr.i}, \ref{prop.weakly.compact.base}.\ref{prop.weakly.compact.base.iv}, \ref{prop.hilbert.operator.psi}.\ref{prop.hilbert.operator.psi.i}--\ref{prop.hilbert.operator.psi.iv}, and equivalence of \ref{prop.gen.spin.factors.1} and \ref{prop.gen.spin.factors.2} in Proposition \ref{prop.gen.spin.factors}). The detailed discussion of relationships with the formerly known results is provided in the remarks at the end of each subsection.
\section{Background definitions and properties}\label{section.background}
In what follows, $(X,\n{\cdot}_X)$ will denote a Banach space over $\KK\in\{\RR,\CC\}$ \cite[\S1]{Banach:1922}, $B(X,\n{\cdot}_X):=\{x\in X\mid \n{x}_X\leq1\}$, $S(X,\n{\cdot}_X):=\{x\in X\mid\n{x}_X=1\}$. If $\lambda\in\RR$ and $Z\subseteq X$, then $\lambda Z:=\cup_{x\in Z}\{\lambda x\}$. $(X^\star,\n{\cdot}_{X^\star})$ will denote a Banach space of continuous linear functions $X\ra\KK$, equipped with a norm $\n{y}_{X^\star}:=\sup\{\ab{y(x)}\mid\n{x}_X\leq1\}$ $\forall y\in X^\star$ \cite[p. 62]{Helly:1921}, and will be called a \df{Banach dual} of $(X,\n{\cdot}_X)$ (with respect to a bilinear duality $\duality{x,y}_{X\times X^\star}:=y(x)\in\KK$ $\forall x\in X$ $\forall y\in X^\star$). $(X,\n{\cdot}_X)$ is called \df{reflexive} \cite[pp. 219--220]{Hahn:1927} if{}f $J_X:X\ni x\mapsto\duality{x,\,\cdot\,}_{X\times X^\star}\in X^\star{}^\star$ is an isometric isomorphism. By default, unless stated otherwise, all references to continuity and closure/openness of sets will be understood in the sense of the norm topology of an underlying Banach space. In particular, given $(X,\n{\cdot}_X)$, for any $Y\subseteq X$, $\INT(Y)$ (resp., $\cl(Y)$) will denote a topological interior (resp., closure) of $Y$ with respect to the topology of $\n{\cdot}_X$. If $(X,\n{\cdot}_X)$ and $(Y,\n{\cdot}_Y)$ are Banach spaces, then $(X,\n{\cdot}_X)\sqsubseteq(Y,\n{\cdot}_Y)$ will denote a continuous embedding $X\subseteq Y$. (In more general situation, the reference to a particular norm will be provided in a subscript of $\cl$ and $\INT$.) Given Banach spaces $(X,\n{\cdot}_X)$ and $(Y,\n{\cdot}_Y)$, $Z\subseteq X$, $W\subseteq Y$, a function $f:Z\ra W$ is said to be: \df{uniformly continuous} on $Z$ if{}f
\begin{equation}
\forall\epsilon_1>0\;\exists\epsilon_2>0\;\forall x,y\in Z\;\;\n{x-y}_X<\epsilon_2\;\limp\;\n{f(x)-f(y)}_Y<\epsilon_1;
\end{equation}
\df{$t$-Lipschitz--H\"{o}lder continuous}\footnote{This condition, both for $t=1$ and $t<1$ has been introduced first by Lipschitz, respectively, in \cite[Eqn. (2)]{Lipschitz:1876} and \cite[Eqn. (2$^\star$)]{Lipschitz:1876}, published 6 years before H\"{o}lder's \cite[pp. 17--18]{Hoelder:1882}.} on $Z$ for $t\in\,]0,\infty[$ (called an \df{exponent} of $f$) if{}f
\begin{equation}
\exists c>0\;\;\forall x,y\in Z\;\;\n{f(x)-f(y)}_Y\leq c\n{x-y}_X^t;
\label{eqn.t.hoelder}
\end{equation}
\df{Lipschitz continuous} on $Z$ if{}f it satisfies \eqref{eqn.t.hoelder} with $t=1$. If $t\in\,]0,1]$ and $f$ is $t$-Lipschitz--H\"{o}lder continuous on $Z$, then it is uniformly continuous on $Z$ and $r$-Lipschitz-H\"{o}lder continuous on $Z$ $\forall r\in\,]0,t[$. For any Banach spaces $(X,\n{\cdot}_X)$ and $(Y,\n{\cdot}_Y)$, any uniform homeomorphism $\alpha:S(Y,\n{\cdot}_Y)\ra S(X,\n{\cdot}_X)$ extends to a uniform homeomorphism \cite[Prop. 2.9]{Odell:Schlumprecht:1994}
\begin{equation}
B(Y,\n{\cdot}_Y)\ni x\mapsto
\left\{
\begin{array}{ll}
\n{x}_Y\alpha\left(\frac{x}{\n{x}_Y}\right)\in B(X,\n{\cdot}_X)
&\st x\in B(Y,\n{\cdot}_Y)\setminus\{0\}\\
0&\st x=0.
\end{array}
\right.
\label{eqn.homeo.ext}
\end{equation}
Conversely \cite[p. 197]{Benyamini:Lindenstrauss:2000}, for any uniform homeomorphism $\alpha:B(Y,\n{\cdot}_Y)\ra B(X,\n{\cdot}_X)$ there exists a corresponding uniform homeomorphism
\begin{equation}
S(Y,\n{\cdot}_Y)\ni x\mapsto\frac{\alpha(x)}{\n{\alpha(x)}_X}\in S(X,\n{\cdot}_X).
\label{eqn.unif.homeo.induced.on.spheres}
\end{equation}
If $\alpha:S(Y,\n{\cdot}_Y)\ra S(X,\n{\cdot}_X)$ (resp., $\alpha:B(Y,\n{\cdot}_Y)\ra B(X,\n{\cdot}_X)$) is $t$-Lipschitz--H\"{o}lder continuous, then the map \eqref{eqn.homeo.ext} (resp., \eqref{eqn.unif.homeo.induced.on.spheres}) is $t$-Lipschitz--H\"{o}lder continuous \cite[Lem. 3.1]{Adzhiev:2014:I}.

$\Psi:X\ra\,]-\infty,\infty]$ will be called: \df{proper} if{}f $\efd(\Psi):=\{x\in X\mid\Psi(x)\neq\infty\}\neq\varnothing$; \df{coercive} if{}f $\lim_{\n{x}_X\ra\infty}\Psi(x)=\infty$; \df{supercoercive} if{}f $\lim_{\n{x}_X\ra\infty}\frac{\Psi(x)}{\n{x}_X}=\infty$; \df{lower semicontinuous} if{}f $\{x\in X\mid\Psi(x)\leq\lambda\}$ is closed $\forall\lambda\in\RR$ if{}f $f(x)\leq\liminf_{y\ra x}f(y)$; \df{convex} if{}f
\begin{equation}
x\neq y\;\limp\;\Psi(\lambda x+(1-\lambda)y)\leq\lambda\Psi(x)+(1-\lambda)\Psi(y)\;\;\forall x,y\in\efd(\Psi)\;\forall\lambda\in\,]0,1[
\label{eqn.convex.function}
\end{equation}
(this is equivalent to the definition based on the same inequality, with quantifiers changed to $\forall x,y\in X$ $\forall\lambda\in[0,1]$, with the conventions $\infty+\infty\equiv\infty$, $0\cdot\infty\equiv\infty$, and without $x\neq y$ assumption); \df{strictly convex} if{}f \eqref{eqn.convex.function} holds under the same quantifiers and with $\leq$ replaced by $<$. The set of all proper, convex, lower semicontinuous functions $\Psi:X\ra\,]-\infty,\infty]$ will be denoted by $\pcl(X,\n{\cdot}_X)$. If $\Psi\in\pcl(X,\n{\cdot}_X)$, then $\Psi$ is continuous on $\intefd{\Psi}$ \cite[Cor. 7C]{Rockafellar:1966:level}. For any $K\subseteq X$, an \df{indicator function} of $K$ on $X$ is given by $\iota_K(x):=\left\{\begin{array}{ll}
0&\st x\in K\\
\infty&\st x\not\in K
\end{array}\right.$ $\forall x\in X$. If $K$ is nonempty, convex, and closed, then $\iota_K\in\pcl(X,\n{\cdot}_X)$ \cite[p. 2897]{Moreau:1962:prox} \cite[p. 23]{Rockafellar:1963}.

For convenience of notation, from now on, and until the end of Section \ref{section.background} (as well as in entire Section \ref{section.convex.new}), we will assume that $\KK=\RR$ (all results and formulas of those Sections are applicable for the case $\KK=\CC$ under replacement of $\duality{\cdot,\cdot}_{X\times X^\star}$ by $\re\duality{\cdot,\cdot}_{X\times X^\star}$). Conventions $\inf\varnothing=\infty$ and $0\notin\NN$ will be applied everywhere. For any set $Z$, $2^Z$ will denote the set of all subsets of $Z$.
\subsection{Convex analytic preliminaries}\label{section.background.convex}
\subsubsection{Differentiability and Mandelbrojt--Fenchel duality}
For any $T:X\ra 2^{X^\star}$, $\efd(T):=\{x\in X\mid T(x)\neq\varnothing\}$. The \df{subdifferential} of a proper $\Psi:X\ra\,]-\infty,\infty]$ is \cite[Def. 2-G]{Rockafellar:1963} \cite[Eqn. (1)]{Moreau:1963} \cite[Def. 4]{Minty:1964}
\begin{equation}
\partial\Psi(x):=\{y\in X^\star\mid\Psi(z)-\Psi(x)\geq\duality{z-x,y}_{X\times X^\star}\;\forall z\in X\}\;\;\forall x\in X.
\label{eqn.subdiff}
\end{equation}
Hence, $\partial\Psi(x)=\varnothing$ $\forall x\in X\setminus\efd(\Psi)$ and $\efd(\partial\Psi)=\{x\in\efd(\Psi)\mid\partial\Psi(x)\neq\varnothing\}$. If $\Psi:X\ra\,]-\infty,\infty]$ is proper, then the \df{right Gateaux derivative} of $\Psi$ at $x\in\efd(\Psi)$ in the direction $h\in X$ reads \cite[p. 53]{Ascoli:1932}
\begin{equation}
	\efd(\Psi)\times X\ni(x,h)\mapsto\DG_+\Psi(x,h):=\lim_{t\ra^+0}(\Psi(x+th)-\Psi(x))/t\in\,]-\infty,\infty],
\label{eqn.right.gateaux}
\end{equation}
and it exists $\forall h\in X$. If $\Psi\in\pcl(X,\n{\cdot}_X)$, then $\DG_+\Psi(x,\,\cdot\,)$ is Lipschitz continuous and finite $\forall x\in\intefd{\Psi}$ (cf., e.g., \cite[Cor. 1.1.6]{Butnariu:Iusem:2000}). $\Psi\in\pcl(X,\n{\cdot}_X)$ is called \df{Gateaux differentiable} at $x\in\intefd{\Psi}$ \cite[p. 311]{Gateaux:1914} if{}f $\DG_+\Psi(x,y)=-\DG_+\Psi(x,-y)$ $\forall y\in X$. In such case $\DG_+\Psi(x,\,\cdot\,)$ is linear, so it defines a bounded linear operator $\DG_+\Psi(x,y)=:\duality{y,\DG\Psi(x)}_{X\times X^\star}$ $\forall y\in X$ \cite[Def. 3]{Vainberg:1952}. (If $n\in\NN$ and $x\in X=\RR^n$, then $\DG\Psi(x)=\grad\Psi(x):=(\frac{\partial}{\partial x^1},\ldots,\frac{\partial}{\partial x^n})\Psi(x)$.) A set of all $\Psi\in\pcl(X,\n{\cdot}_X)$ that are Gateaux differentiable on $\intefd{\Psi}\neq\varnothing$ will be denoted $\pclg(X,\n{\cdot}_X)$. If $\Psi\in\pcl(X,\n{\cdot}_X)$ is Gateaux differentiable at $x\in\intefd{\Psi}$, then $\partial\Psi(x)=\{\DG\Psi(x)\}$ \cite[p. 20]{Moreau:1963:etude}. Combined with \eqref{eqn.subdiff}, this gives
\begin{equation}
	\Psi(x)-\Psi(y)-\duality{x-y,\DG\Psi(y)}_{X\times X^\star}\geq0\;\;\forall(x,y)\in X\times\intefd{\Psi}.
\label{eqn.bregman.from.subdiff}
\end{equation}

$\Psi\in\pcl(X,\n{\cdot}_X)$ will be called: \df{Fr\'{e}chet differentiable} at $x\in\intefd{\Psi}$ if{}f \cite[p. 808]{Frechet:1925} \cite[p. 309]{Frechet:1925:ENS} it is Gateaux differentiable at $x$ and $\DG\Psi(x)$ is uniformly continuous on $S(X,\n{\cdot}_X)$; \df{uniformly Gateaux differentiable} on $\varnothing\neq K\subseteq\efd(\Psi)$ \cite[p. 4]{Shioji:1994} (=\cite[p. 643]{Shioji:1995}) \cite[Def. 1]{Aze:Penot:1995} \cite[p. 207]{Zalinescu:2002} if{}f there exists $g:[0,\infty[\,\ra[0,\infty]$ with $g(0)=0$ and $\lim_{t\ra^+0}\frac{g(t)}{t}=0$ such that
\begin{equation}
\forall x\in K\;\forall y\in X\;\forall\lambda\in\,]0,1[\;\;\Psi(x)+\lambda(1-\lambda)g(\n{y}_X)\geq(1-\lambda)\Psi(x-\lambda y)+\lambda(x+(1-\lambda)y);
\end{equation}
\df{essentially Gateaux differentiable} \cite[Def. 5.2.(i), Thm. 5.6]{Bauschke:Borwein:Combettes:2001} if{}f $\intefd{\Psi}\neq\varnothing$ and $\partial\Psi(x)=\{*\}$ $\forall x\in\efd(\partial\Psi)$; \df{essentially strictly convex} \cite[Def. 5.2.(ii)]{Bauschke:Borwein:Combettes:2001} if{}f $\Psi$ is strictly convex on every convex subset of $\efd(\partial\Psi)$ and
\begin{equation}
	\exists\varepsilon>0\;\;\forall x\in\efd((\partial\Psi)^{\inver})\;\;\sup\{\n{(\partial\Psi)^{\inver}(x+\varepsilon y)}_X\mid y\in X,\;\n{y}_X\leq1\}<\infty,
\end{equation}
where $(\partial\Psi)^{\inver}(y):=\{x\in X\mid y\in\partial\Psi(x)\}$; \df{Euler--Legendre} \cite[Def. 5.2.(iii)]{Bauschke:Borwein:Combettes:2001} if{}f it is essentially Gateaux differentiable and essentially strictly convex. If $\Psi\in\pcl(X,\n{\cdot}_X)$ and $\efd(\Psi)=X$, then $\Psi$ will be called: \df{uniformly Fr\'{e}chet differentiable} on $X$ (resp., on $\varnothing\neq K\subseteq X$) \cite[p. 4]{Shioji:1994} (=\cite[p. 643]{Shioji:1995}) \cite[Thm. 3.5.6]{Zalinescu:2002} if{}f $\Psi$ is Fr\'{e}chet differentiable at any $x\in X$ (resp., $x\in K$), and $\DG\Psi$ is uniformly continuous on $X$ (resp., $\DG\Psi(x)(h)$ exists in uniform convergence $\forall(x,h)\in K\times S(X,\n{\cdot}_X)$); \df{uniformly Fr\'{e}chet differentiable on bounded subsets} of $X$ \cite[p. 221]{Zalinescu:2002} if{}f $\Psi$ is uniformly Fr\'{e}chet differentiable on $\lambda B(X,\n{\cdot}_X)$ $\forall\lambda>0$. If $\Psi\in\pcl(X,\n{\cdot}_X)$ and $\efd(\Psi)\neq\{*\}$, then $\Psi$ is called \df{uniformly convex} at $x\in\efd(\Psi)$ (resp., on $X$; on bounded subsets of $X$) \cite[p. 231]{Asplund:1967:averaged} \cite[Def. 2.1]{Zalinescu:1983} (resp., \cite[p. 997]{Levitin:Polyak:1966} \cite[Def. 1]{Vladimirov:Nesterov:Chekanov:1978}; \cite[p. 221]{Zalinescu:2002})\footnote{For equivalence of different formulas used in these definitions, see \cite[Rem. 2.1]{Zalinescu:1983} and \cite[Lem. 2.2]{Butnariu:Iusem:Zalinescu:2003}.} if{}f 
\begin{equation}
\forall t\in\,]0,\infty[\;\;\inf\left\{\textstyle\frac{1}{2}\Psi(x)+\textstyle\frac{1}{2}\Psi(y)-\Psi\left(\frac{x+y}{2}\right)\mid y\in\efd(\Psi),\;\n{y-x}_X=t\right\}>0
\end{equation}
(resp., with $x\in\efd(\Psi)$ replaced by: $y,x\in\efd(\Psi)$; $y\in\efd(\Psi)$, $x\in\lambda B(X,\n{\cdot}_X)$ (together with $\forall\lambda>0$ condition stated outside of $\inf\{\ldots\}$)).

If $\Psi\in\pcl(X,\n{\cdot}_X)$ is Fr\'{e}chet differentiable at $x\in\intefd{\Psi}$, then $\DG\Psi(x)$ will be denoted by $\DF\Psi(x)$. If $\Psi\in\pcl(X,\n{\cdot}_X)$ is essentially Gateaux differentiable, then $\Psi\in\pclg(X,\n{\cdot}_X)$ \cite[Thm. 5.6.(iv)--(v)]{Bauschke:Borwein:Combettes:2001}. If $\varnothing\neq C\subseteq X$ is open and convex, and $\Psi:C\ra\RR$ is convex, continuous, and Gateaux differentiable on $C$, then ($\Psi$ is Fr\'{e}chet differentiable on $C$ if{}f $\DG\Psi$ is norm-to-norm continuous) \cite[Prop. 2.8]{Phelps:1989}.

For a proper $\Psi:X\ra\,]-\infty,\infty]$,
\begin{equation}
X^\star\ni y\mapsto\Psi^\lfdual(y):=\sup_{x\in X}\{\duality{x,y}_{X\times X^\star}-\Psi(x)\}\in\,]-\infty,\infty],
\label{eqn.fenchel.dual}
\end{equation}
which will be called the \df{Mandelbrojt--Fenchel dual} of $\Psi$ \cite[Eqn. (1)]{Mandelbrojt:1939} \cite[p. 75]{Fenchel:1949} \cite[p. 8]{Moreau:1962},\footnote{\label{footnote.Mandelbrojt.Fenchel}\cite[Eqn. (1)]{Mandelbrojt:1939} considered convex $\Psi:\RR\ra\RR$ and $X=\RR$, while \cite[p. 75]{Fenchel:1949} considered convex and lower semicontinuous $\Psi:K\ra[0,\infty]$ with $\lim_{x\ra y}\Psi(x)=\infty$ $\forall y\in\cl(K)\setminus\INT(K)$, as well as $\sup_{x\in K}\{\ldots\}$ instead of $\sup_{x\in X}\{\ldots\}$, for a convex $\varnothing\neq K\subseteq\RR^n$. \cite[Thm. (p. 977)]{Mandelbrojt:1939} contains an error, asserting that $\Psi^\lfdual$ is a convex real valued function for \textit{any} convex function $\Psi:\RR\ra\RR$ (cf. \cite[Footn. 1]{Fenchel:1949}). \cite[p. 8]{Moreau:1962} introduced \eqref{eqn.fenchel.dual} in a full form. In subsequent references we list first Fenchel's result for $K\subseteq\RR^n$ (if available), and then its generalisation to $(X,\n{\cdot}_X)$.} satisfies $\Psi^\lfdual\in\pcl(X^\star,\n{\cdot}_{X^\star})$ \cite[Thm. 5]{Hoermander:1954} \cite[p. 9]{Moreau:1962} (cf. also \cite[Thm. 3.6]{Broendsted:1964}). If $\Psi\in\pcl(X,\n{\cdot}_X)$, then $(\Psi^\lfdual)^\lfdual|_{J_X(X)}=\Psi$ \cite[Thm. (p. 75)]{Fenchel:1949} \cite[Thm. 3.13]{Broendsted:1964}. Furthermore, by \eqref{eqn.fenchel.dual}, \df{Fenchel inequality} holds \cite[p. 75]{Fenchel:1949} \cite[p. 13]{Broendsted:1964}:
\begin{equation}
\Psi(x)+\Psi^\lfdual(y)-\duality{x,y}_{X\times X^\star}\geq0\;\;\forall(x,y)\in X\times X^\star,
\label{eqn.fenchel.ineq}
\end{equation}
with $=$ attained if{}f $y\in\partial\Psi(x)$. If $\Psi\in\pclg(X,\n{\cdot}_X)$ then $\DG\Psi:\intefd{\Psi}\ra\DG\Psi(\intefd{\Psi})$ is a bijection, with $(\DG\Psi)^{\inver}=J_X^{\inver}\circ\DG\Psi^\lfdual$. If $(X,\n{\cdot}_X)$ is reflexive, and $\Psi\in\pcl(X,\n{\cdot}_X)$ is essentially Gateaux differentiable, then $\DG\Psi(\intefd{\Psi})=\intefd{\Psi^\lfdual}$ and $(\DG\Psi)^{\inver}=\DG\Psi^\lfdual$. 

If $(X,\n{\cdot}_X)$ is reflexive, then $\Psi\in\pcl(X,\n{\cdot}_X)$ is essentially Gateaux differentiable if{}f $\Psi^\lfdual$ is essentially strictly convex \cite[Thm. 5.4]{Bauschke:Borwein:Combettes:2001}, hence \cite[Cor. 5.5]{Bauschke:Borwein:Combettes:2001} $\Psi$ is Euler--Legendre if{}f $\Psi^\lfdual$ is Euler--Legendre. Furthermore, if $(X,\n{\cdot}_X)$ is reflexive, and $\Psi\in\pcl(X,\n{\cdot}_X)$, then $\Psi$ is Euler--Legendre if{}f $\Psi\in\pclg(X,\n{\cdot}_X)$, $\efd(\DG\Psi)=\intefd{\Psi}$, $\Psi^\lfdual\in\pclg(X^\star,\n{\cdot}_{X^\star})$, $\efd(\DG\Psi^\lfdual)=\intefd{\Psi^\lfdual}$ \cite[\S2.1]{Reich:Sabach:2009}. $\Psi\in\pcl(X,\n{\cdot}_X)$ is uniformly Gateaux differentiable (resp., uniformly convex) on $X$ if{}f $\Psi^\lfdual$ is uniformly convex (resp., uniformly Gateaux differentiable) on $X^\star$ \cite[Cor. 2.8]{Aze:Penot:1995}. If $(X,\n{\cdot}_X)$ is reflexive, $\Psi\in\pcl(X,\n{\cdot}_X)$, and $\intefd{\Psi}\neq\varnothing$ (resp., $\intefd{\Psi^\lfdual}\neq\varnothing$), then $\Psi$ is uniformly Fr\'{e}chet differentiable on $\intefd{\Psi}$ (resp., uniformly convex on $X$) if{}f $\Psi^\lfdual$ is uniformly convex on $X^\star$ (resp., uniformly Fr\'{e}chet differentiable on $\intefd{\Psi^\lfdual}$ \cite[Thm. 2.2]{Zalinescu:1983}.
\subsubsection{Va\u{\i}nberg--Br\`{e}gman functional}
Dependently on a purpose, the \df{Va\u{\i}nberg--Br\`{e}gman functional} on $(X,\n{\cdot}_X)$ is defined either as \cite[Eqn. (2)]{Butnariu:Iusem:1997}
\begin{equation}
D_\Psi^+:X\times X\ni(x,y)\mapsto
        \left\{
                \begin{array}{ll}
                        \Psi(x)-\Psi(y)-\DG_+\Psi(y;x-y)
												&\st y\in\efd(\Psi)\\
                        \infty
												&\st\mbox{otherwise}
                \end{array}
        \right.\in[0,\infty],
\label{eqn.bregman.plus}
\end{equation}
for any $\Psi\in\pcl(X,\n{\cdot}_X)$, or as \cite[Eqn. (8.5)]{Vainberg:1956} 
\begin{equation}
D_\Psi:X\times X\ni(x,y)\mapsto
        \left\{
                \begin{array}{ll}
                        \Psi(x)-\Psi(y)-\duality{x-y,\DG\Psi(y)}_{X\times X^\star}
												&\st y\in\intefd{\Psi}\\
                        \infty
												&\st\mbox{otherwise}
                \end{array}
        \right.\in[0,\infty],
\label{eqn.vainberg.bregman}
\end{equation}
for any $\Psi\in\pclg(X,\n{\cdot}_X)$. (Nonnegativity of a codomain of $D^+_\Psi$ and $D_\Psi$ follows from convexity of $\Psi$.) 

Definition \eqref{eqn.vainberg.bregman} implies $\forall\Psi,\Psi_1,\Psi_2\in\pclg(X,\n{\cdot}_X)$ $\forall x,y\in\intefd{\Psi}$ $\forall z,w\in X$ $\forall\lambda_1,\lambda_2\geq0$ $\forall\lambda_3,\lambda_4\in\RR$
\begin{equation}
\Psi_3(x):=\lambda_3x+\lambda_4\;\;\limp\;\;D_{\lambda_1\Psi_1+\lambda_2\Psi_2+\Psi_3}=\lambda_1D_{\Psi_1}+\lambda_2D_{\Psi_2}\;\;\textup{\cite[p. 16]{Bregman:1966:PhD}},
\label{eqn.affine.scaling}
\end{equation}
\begin{align}
D_\Psi(x,y)+D_\Psi(y,x)&=\duality{x-y,\DG\Psi(x)-\DG\Psi(y)}_{X\times X^\star}\;\;{\textup{\cite[p. 328]{Censor:Lent:1981}}},
\label{eqn.symmetric.bregman}\\
D_\Psi(z,x)&=D_\Psi(z,y)+D_\Psi(y,x)-\duality{z-y,\DG\Psi(x)-\DG\Psi(y)}_{X\times X^\star}\;\;{\textup{\cite[Lem. 3.1]{Chen:Teboulle:1993}}},
\label{eqn.generalised.cosine}\\
D_\Psi(z,x)+D_\Psi(w,y)&=D_\Psi(z,y)+D_\Psi(w,x)-\duality{z-w,\DG\Psi(x)-\DG\Psi(y)}_{X\times X^\star}\;\;{\textup{\cite[Rem. 3.5]{Bauschke:Lewis:2000}}}.
\label{eqn.quadruple.property}
\end{align}
Equations \eqref{eqn.affine.scaling}--\eqref{eqn.quadruple.property} hold also for $D_\Psi$ (resp., $\duality{\cdot,\DG\Psi(y)}_{X\times X^\star}$) replaced by $D^+_\Psi$ (resp., $\DG_+\Psi(y,\,\cdot\,)$). Equation \eqref{eqn.symmetric.bregman} implies the formula for the class of Va\u{\i}nberg--Br\`{e}gman functionals that are symmetric with respect to interchange of variables: $\forall x,y\in\intefd{\Psi}$
\begin{equation}
D_\Psi(x,y)=D_\Psi(y,x)\;\;\iff\;\;D_\Psi(x,y)=\textstyle\frac{1}{2}\duality{x-y,\DG\Psi(x)-\DG\Psi(y)}_{X\times X^\star}.
\end{equation}

Comparison of \eqref{eqn.vainberg.bregman} with \eqref{eqn.bregman.from.subdiff}, while applying the equality case of \eqref{eqn.fenchel.ineq}, gives
\begin{equation}
D_\Psi(x,y)=\Psi(x)+\Psi^\lfdual(\DG\Psi(y))-\duality{x,\DG\Psi(y)}_{X\times X^\star}\;\;\forall(x,y)\in X\times\intefd{\Psi}.
\label{eqn.bregman.fenchel}
\end{equation}
When equipped with an additional condition, $D_\Psi(x,y)=\infty$ $\forall(x,y)\in X\times(X\setminus\intefd{\Psi})$, \eqref{eqn.bregman.fenchel} becomes a definition of $D_\Psi$ equivalent to \eqref{eqn.vainberg.bregman}. If $\Psi^\lfdual$ is Gateaux differentiable on $\varnothing\neq\DG\Psi(\intefd{\Psi})\subseteq\intefd{\Psi^\lfdual}$, then \cite[Lem. 7.3]{Bauschke:Borwein:Combettes:2001} \cite[Lem. 3.2]{Luo:Meng:Wen:Yao:2019}
\begin{equation}
D_\Psi(x,y)=D_{\Psi^\lfdual}(\DG\Psi(y),\DG\Psi(x))\;\;\forall x,y\in\intefd{\Psi}.
\label{eqn.bregman.fenchel.duality}
\end{equation}
$D_\Psi$ is said to be \df{jointly convex} \cite[\S1]{Bauschke:Borwein:2001} if{}f $(x,y)\mapsto D_\Psi(x,y)$ is convex on $\intefd{\Psi}\times\intefd{\Psi}$.

$\Psi\in\pcl(X,\n{\cdot}_X)$ is called: \df{totally convex} at $x\in\efd(\Psi)$ if{}f \cite[2.2]{Butnariu:Censor:Reich:1997} \cite[p. 62]{Butnariu:Iusem:1997} 
\begin{equation}
\nu_\Psi(x,t):=\inf\{D^+_\Psi(y,x)\mid y\in\efd(\Psi),\;\n{y-x}_X=t\}>0\;\;\forall t\in\,]0,\infty[\,,
\end{equation}
or, equivalently \cite[Prop. 2.2]{Resmerita:2004}, if{}f
\begin{equation}
\lim_{n\ra\infty}D^+_\Psi(y_n,x)=0\;\limp\;\lim_{n\ra\infty}\n{y_n-x}_X=0\;\;\;\forall\{y_n\in\efd(\Psi)\mid n\in\NN\};
\end{equation}
\df{totally convex on bounded subsets} of $X$ \cite[Lem. 2.1.2]{Butnariu:Iusem:2000} if{}f
\begin{equation}
\inf\{\nu_\Psi(x,t)\mid x\in Y\cap\efd(\Psi)\}>0\;\;\forall t\in\,]0,\infty[\;\;\forall\mbox{ bounded }\varnothing\neq Y\subseteq X.
\label{eqn.totally.convex.bounded}
\end{equation}
If $\efd(\Psi)\neq\{*\}$, then \eqref{eqn.totally.convex.bounded} is equivalent \cite[Prop. 4.2]{Butnariu:Iusem:Zalinescu:2003} to \df{sequential consistency} of $\Psi$, defined as \cite[Def. 2.1.(vi)]{Censor:Lent:1981} \cite[Cor. 4.9.(iii)]{Butnariu:Iusem:1997} $\forall\{y_n\in\efd(\Psi)\mid n\in\NN\}$ $\forall$ bounded $\{x_n\in\efd(\Psi)\mid n\in\NN\}$
\begin{equation}
\lim_{n\ra\infty}D_\Psi(y_n,x_n)=0\;\;\limp\;\;\lim_{n\ra\infty}\n{y_n-x_n}_X=0.
\end{equation}
\subsubsection{Monotone maps}
If $(X,\n{\cdot}_X)$ is a Banach space, then the \df{graph} of $T:X\ra 2^{X^\star}$ is given by $\Graph(T):=\{(x,y)\in X\times X^\star\mid y\in T(X)\}$, while $T^\inver(y):=\{x\in X\mid y\in T(x)\}$ $\forall y\in X^\star$. If, furthermore, $\varnothing\neq K\subseteq\efd(T)$, then $T$ is called: \df{monotone} on $K$ if{}f \cite[Thm. 7]{Vainberg:Kachurovskii:1959} \cite[Eqn. (1)]{Kachurovskii:1960} \cite[p. 0]{Zarantonello:1960} \cite[p. 341]{Minty:1962}
\begin{equation}
\duality{x-y,v-w}_{X\times X^\star}\geq0\;\forall x,y\in K\;\forall v\in T(x)\;\forall w\in T(y);
\end{equation}
\df{strictly monotone} on $K$ if{}f
\begin{equation}
x\neq y\;\;\limp\;\;\duality{x-y,v-w}_{X\times X^\star}>0\;\forall x,y\in K\;\forall v\in T(x)\;\forall w\in T(y);
\end{equation}
\df{strongly monotone} on $K$ if{}f \cite[p. 12]{Zarantonello:1960} $\exists\lambda>0$
\begin{equation}
\duality{x-y,v-w}_{X\times X^\star}\geq\lambda\n{x-y}^2_X\;\forall x,y\in K\;\forall v\in T(x)\;\forall w\in T(y);
\end{equation}
\df{$f$-uniformly monotone} on $K$ if{}f \cite[p. 203]{Vainberg:1961} there exists a strictly increasing $f:\RR^+\ra\RR^+$ with $f(0)=0$ such that
\begin{equation}
\duality{x-y,v-w}_{X\times X^\star}\geq\n{x-y}_Xf(\n{x-y}_X)\;\forall x,y\in K\;\forall v\in T(x)\;\forall w\in T(y);
\label{eqn.f.uniformly.monotone}
\end{equation}
\df{maximally monotone} on $K$ if{}f it is monotone on $K$ and its graph is not contained in the graph of any other map from $X$ to $2^{X^\star}$ that is monotone on $K$. For any Banach space $(X,\n{\cdot}_X)$, if $f\in\pcl(X,\n{\cdot}_X)$, then $\partial f$ is maximally monotone \cite[Thm. 4]{Rockafellar:1969} \cite[Thm. A]{Rockafellar:1970:maximal}.
\subsubsection{Remarks and examples}
\begin{remark}\label{remark.EL.Bregman.definitions}
\begin{enumerate}[nosep,label=(\roman*)]
\item\label{remark.EL.Bregman.definitions.i} Transformation $\dd(z(x,y)-px-qy)=-x\dd p-y\dd q$, with $p=\frac{\partial z(x,y)}{\partial x}$ and $q=\frac{\partial z(x,y)}{\partial y}$, was introduced by Euler \cite[Probl. 11 (Part I)]{Euler:1770} and Legendre \cite[p. 347]{Legendre:1787}. Since Legendre's work appeared 17 years later then Euler's, it seems to be quite adequate to include Euler in the terminology. Under generalisation to $\RR^n$ with $n\in\NN$, the \df{Euler--Legendre transformation} $(\Psi,\theta)\mapsto(\Psi^\legendre,\eta)$ of a strictly convex and differentiable function $\Psi:\RR^n\ra\RR$ is defined by 
\begin{equation}
\left\{
\begin{array}{rl}
\eta:=&\!\!\!\!\grad\Psi(\theta)\;\forall\theta\in\RR^n\\
\Psi^{\legendre}(\eta):=&\!\!\!\!\duality{\eta,\theta}_{\RR^n\times\RR^n}-\Psi(\theta)=\duality{\eta,(\grad\Psi)^{\inver}(\eta)}_{\RR^n\times\RR^n}-\Psi((\grad\Psi)^{\inver})(\eta))\;\forall\eta\in\RR^n,
\end{array}
\right.
\end{equation}
with its inverse given by 
\begin{equation}
\left\{
\begin{array}{rl}
\theta=&\!\!\!\!\grad(\Psi^\legendre)(\eta)\\
\Psi=&\!\!\!\!\Psi^\legendre{}^\legendre.
\end{array}
\right.
\end{equation}
If one restricts the domain of $\Psi$ to an open set $U\subseteq\RR^n$ and allows $\Psi$ to take infinite values, there appears a question about the optimal conditions to be imposed on $\Psi$ to guarantee  uniqueness of transformation in both directions, while preserving structural symmetry between $\Psi$ and $\Psi^\legendre$. Following a remark in \cite[p. 77]{Fenchel:1949}, Rockafellar \cite[Thm. C-K]{Rockafellar:1963} \cite[Thm. 1]{Rockafellar:1967} showed that if $\varnothing\neq U\subseteq\RR^n$ is open and convex, while $\Psi:U\ra\,]-\infty,\infty]$ is strictly convex, differentiable on $U$, and satisfies
\begin{equation}
\lim_{t\ra^+0}\textstyle\frac{\dd}{\dd t}\Psi(tx+(1-t)y)=-\infty\;\;\forall(x,y)\in U\times(\cl(U)\setminus U),
\end{equation}
then $\grad\Psi$ is a bijection on $U$, $\grad(\Psi^\lfdual)=(\grad\Psi)^{\inver}$ on $\grad\Psi(U)$, and $\Psi^\lfdual$ satisfies on $\grad(U)$ the same conditions as $\Psi$ on $U$. The notion of (Euler--)Legendre function introduced in \cite[Def. 5.2.(iii)]{Bauschke:Borwein:Combettes:2001} reduces to Rockafellar's for $X=\RR^n$ \cite[Thm. 5.11.(iii)]{Bauschke:Borwein:Combettes:2001}. See \cite[\S3]{Borwein:Vanderwerff:2001} for an extension of essentially Gateaux differentiable and Euler--Legendre functions to nonreflexive Banach spaces, as well as to other types of differentiability. On the other hand, strengthening to essential Fr\'{e}chet differentiability and, correspondingly, to Fr\'{e}chet--(Euler--)Legendre functions was developed in \cite{Borwein:Vanderwerff:2010:Frechet,Stroemberg:2011,Volle:Zalinescu:2013}.
\item\label{remark.EL.Bregman.definitions.ii} For $y\in\intefd{\Psi}$, the function \eqref{eqn.vainberg.bregman} was introduced by Va\u{\i}nberg in \cite[Eqn. (8.5)]{Vainberg:1956}{}\footnote{Numbered as Eqn. (8.4) in the English translation of this book.}, already at the Banach space level of generality (and, as such, it was further discussed in a series of works by Va\u{\i}nberg \cite[Lem. 1]{Vainberg:1965} \cite[Lem. 6.1]{Vainberg:1970} \cite[Eqn (0.1)]{Vainberg:1972} and Kachurovski\u{\i} \cite[Thm. 3]{Kachurovskii:1966}). For $X=\RR$, \eqref{eqn.vainberg.bregman} appeared independently in \cite[Eqn. (4.1)]{Brunk:Ewing:Utz:1957}, in the context of the problem of minimisation of
\begin{equation}
D^\mu_\Psi(x,y):=\int_\X\mu(\xx)D_\Psi(x(\xx),y(\xx)),
\end{equation}
for $x,y:\X\ra\RR$ with $\X\subseteq\RR^n$ and $n\in\NN$, over a measure space $(\RR^n,\mho_{\mathrm{Borel}}(\RR^n),\mu)$, considered in \cite[Thm. 4.2]{Brunk:Ewing:Utz:1957}. Independently, \eqref{eqn.vainberg.bregman} appeared in \cite[p. 1021]{Bregman:1966} and \cite[Eqn. (1.4)]{Bregman:1967} (=\cite[Eqn. (2.1)]{Bregman:1966:PhD}) for $X=\RR^n$ (with a convex set $\varnothing\neq C\subseteq\RR^n$, and with $\Psi:C\ra\RR$ differentiable on $C$ and strictly convex), where it was used in the context of minimisation of $D_\Psi$. Attribution of the name `Br\`{e}gman' to $D_\Psi$ and $D_\Psi^+$  goes back to \cite[\S2]{Censor:Lent:1981}. Correspondingly, it is fair to call $D^\mu_\Psi$ the \df{Brunk--Ewing--Utz functional}. It has been further investigated in \cite{Jones:Trutzer:1989,Jones:Byrne:1990}, and, under generalisation to any countably finite measure space, in \cite{Csiszar:1995,Csiszar:Matus:2008,Csiszar:Matus:2009,Csiszar:Matus:2012:kybernetika,Csiszar:Matus:2016}.
\item\label{remark.EL.Bregman.definitions.iii} The function \eqref{eqn.vainberg.bregman}, and its related properties, can be further generalised to $D^\partial_\Psi$ by replacing $\DG\Psi$ by a function $\DDD^\partial\Psi$, defined as a selection from the set $\partial\Psi$ ranging over $\efd(\Psi)$ (instead of assuming that this set is globally a singleton). See \cite{Kiwiel:1997:free,Kiwiel:1997:proximal,Kazimierski:2010,Burger:2016,Sprung:2019} for further discussion of this direction.
\item\label{remark.EL.Bregman.definitions.iv} If $k\in\NN$, $\Psi_i:\RR^{n_i}\ra\,]-\infty,\infty]$ is Euler--Legendre (resp., totally convex with $\bigcap_{i=1}^k\efd(\Psi_i)\neq\varnothing$) $\forall n_i\in\NN$ $\forall i\in\{1,\ldots,k\}$ and $\lambda_i>0$ $\forall i\in\{1,\ldots,k\}$, then $\Psi:\prod_{i=1}^k\RR^{n_i}\ra\,]-\infty,\infty]$, defined by $\Psi=\sum_{i=1}^k\lambda_i\Psi_i$, is Euler--Legendre \cite[Cor. 5.13]{Bauschke:Borwein:1997} (resp., totally convex \cite[Prop. 1.2.7]{Butnariu:Iusem:2000}). In such case $D_\Psi=\sum_{i=1}^k\lambda_iD_{\Psi_i}$ \cite[Lem. 3.1]{Censor:Elfving:1994}. Examples \ref{ex.EL.totalconvex.Rn.Psi}.\ref{ex.EL.totalconvex.Rn.Psi.i}--\ref{ex.EL.totalconvex.Rn.Psi.v} provide special cases of both these theorems in action, while $\Psi$ in Example \ref{ex.EL.totalconvex.Rn.Psi}.\ref{ex.EL.totalconvex.Rn.Psi.vi} is both Euler--Legendre and totally convex, but not decomposable into a weighted sum of $\Psi_i$.
\end{enumerate}
\end{remark}

\begin{example}\label{ex.EL.totalconvex.Rn.Psi}
Let $n\in\NN$.
\begin{enumerate}[nosep,label=(\roman*)]
\item\label{ex.EL.totalconvex.Rn.Psi.i} Let $\Psi(x)=\sum_{i=1}^n\gamma\ab{x_i}^{1/\gamma}=:\gamma\n{x}_{1/\gamma}^{1/\gamma}$ on $X=\efd(\Psi)=\RR^n$, $\gamma\in\,]0,1[$. This implies $\Psi^\lfdual(y)=(1-\gamma)\n{y}_{1/(1-\gamma)}^{1/(1-\gamma)}$ \cite[p. 106]{Rockafellar:1970} \cite[Ex. 2]{Eckstein:1993}. $\Psi$ is Euler--Legendre \cite[Ex. 6, Cor. 5.13]{Bauschke:Borwein:1997}. From $\grad\Psi(x)=\sum_{i=1}^n\sgn(x_i)\ab{x_i}^{1/\gamma-1}$ outside of the points where $x_j=0$ for some $j\in\{1,\ldots,n\}$, it follows that $\forall x\in\RR^n$ $\forall y\in\RR^n\setminus\{(y_1,\ldots,y_n)\in\RR^n\mid\exists i\in\{1,\ldots,n\}\;y_i=0\}$
\begin{equation}
D_\Psi(x,y)=\sum_{i=1}^n\left(\gamma\ab{x_i}^{1/\gamma}-\gamma\ab{y_i}^{1/\gamma}-(x_i-y_i)\ab{y_i}^{1/\gamma-1}\sgn(y_i)\right).
\end{equation}
 $D_\Psi$ is jointly convex only for $\gamma\in\,]0,\frac{1}{2}]$ \cite[Ex. 4.2]{Bauschke:Borwein:2001}.
\item\label{ex.EL.totalconvex.Rn.Psi.ii} Let $\Psi(x)=\sum_{i=1}^n(x_i\log(x_i)-x_i)$ if $x\geq0$ and $\Psi(x)=\infty$ otherwise (with $0\log0\equiv0$). This implies $\intefd{\Psi}=(\RR^n)^+_0:=\{x\in\RR^n\mid x_i>0\;\forall i\in\{1,\ldots,n\}\}$, $\grad\Psi(x)=\log(x)$, and $\Psi^\lfdual(y)=\exp(y)$ (cf., e.g., \cite[p. 105]{Rockafellar:1970}). $\Psi$ is Euler--Legendre \cite[Ex. 6.5, Cor. 5.13]{Bauschke:Borwein:1997}. $D_\Psi$ is equal to \cite[p. 1021]{Bregman:1966} (=\cite[p. 15]{Bregman:1966:PhD}) the finite dimensional denormalised \df{Kullback--Leibler information} \cite[Eqn. (2.4)]{Kullback:Leibler:1951},
\begin{equation}
D_{\Psi}(x,y)=\left\{\begin{array}{ll}\sum_{i=1}^n(y_i-x_i+x_i(\log(x_i)-\log(y_i)))
&\st(x,y)\in(\RR^n)^+\times(\RR^n)^+_0\\
\infty&\st\,\mbox{otherwise.}\end{array}\right.
\end{equation}
$D_\Psi$ is jointly convex (cf., e.g., \cite[p. 34]{Bauschke:Borwein:2001}) and satisfies $D_\Psi(\lambda x,\lambda y)=\lambda D_\Psi(x,y)$ $\forall\lambda>0$. $\Psi$ is totally convex on $\intefd{\Psi}$ \cite[Prop. 2.5.(i)]{Butnariu:Censor:Reich:1997} \cite[Prop. 9]{Borwein:Reich:Sabach:2011} and supercoercive. For any nonempty, closed, convex $K\subseteq\intefd{\Psi}$, $\LPPP^{D_\Psi}_K$ is continuous \cite[Lem. 3.1]{Butnariu:Censor:Reich:1997}.\footnote{For the notions of left $D_\Psi$-projection $\LPPP^{D_\Psi}_K$ and right $D_\Psi$-Chebysh\"{e}v set, see Definition \ref{def.d.projections}.\ref{def.d.projections.i}--\ref{def.d.projections.ii}.}   Convex closed subsets $C\subseteq\RR^n$ with $C\cap\intefd{\Psi}$ are right $D_\Psi$-Chebysh\"{e}v \cite[Ex. 2.16.(ii)]{Bauschke:Noll:2002}. For $n=2$, $C=\{(\ee^\lambda,\ee^{2\lambda})\mid\lambda\in[0,1]\}$ is a nonconvex right $D_\Psi$-Chebysh\"{e}v set with convex $(\grad\Psi)(C)$ \cite[Ex. 7.5]{Bauschke:Wang:Ye:Yuan:2009}. 
\item\label{ex.EL.totalconvex.Rn.Psi.iii} Let $\Psi(x)=-\sum_{i=1}^n\log(x_i)$ on $\efd(\Psi)=(\RR^n)^+_0$ and $\Psi(x)=\infty$ otherwise.\footnote{The works of Burg, often referenced in this context, consider only the continuous analogue of this $\Psi$, given by $x\mapsto-\int_{t_1}^{t_2}\dd t\log(x(t))$, $t_1,t_2\in\RR$ \cite[Slide 6]{Burg:1967} \cite[p. 1]{Burg:1975}.} This gives $\grad\Psi(x)=-\frac{1}{x}$ and $\Psi^\lfdual(y)=-\sum_{i=1}^n\log(-y_i)-n$ on $\efd(\Psi^\lfdual)=\,]-\infty,0[^n$, and thus \cite[Eqn. (57)]{Censor:Lent:1987} $D_\Psi$ is equal to the \df{Pinsker information} \cite[Eqn. (4)]{Pinsker:1960:DAN} \cite[Eqn. (10.5.4)]{Pinsker:1960}\footnote{As opposed to Pinsker's \cite[Eqn. (4)]{Pinsker:1960:DAN} and \cite[Eqn. (10.5.4)]{Pinsker:1960}, which are featuring \eqref{eqn.pinsker.relative.entropy} explicitly, the paper by Itakura and Saito, published 8 years later and often referenced for introduction of this $D_\Psi$, contains only a formula $2\log(2\pi)+\frac{1}{2\pi}\int_{-\pi}^{\pi}\dd t(\log(y(t))+\frac{x(t)}{y(t)})$ \cite[Eqn. (7)]{Itakura:Saito:1968}.},
\begin{equation}
D_\Psi(x,y)=\sum_{i=1}^n\left(-\log\frac{x_i}{y_i}+\frac{x_i}{y_i}-1\right)\;\;\forall(x,y)\in(\RR^n)^+_0\times(\RR^n)^+_0.
\label{eqn.pinsker.relative.entropy}
\end{equation}
$\Psi$ is Euler--Legendre \cite[Ex. 6.7, Cor. 5.13]{Bauschke:Borwein:1997} \cite[\S8.1]{Reem:Reich:DePierro:2019}. $D_\Psi$ is not jointly convex \cite[Thm. 3.11.(i), Ex. 3.14]{Bauschke:Borwein:2001}. It satisfies $D_\Psi(\lambda x,\lambda y)=D_\Psi(x,y)$ $\forall\lambda>0$.
\item\label{ex.EL.totalconvex.Rn.Psi.iv} Let $\Psi(x)=\sum_{i=1}^n(x_i\log(x_i)+(1-x_i)\log(1-x_i))$ on $\efd(\Psi)=[0,1]^n$ and $\Psi(x)=\infty$ otherwise \cite[Eqn. (60)]{Kapur:1972}. $\Psi^\lfdual(y)=\log(1+\exp(y))$ on $\efd(\Psi^\lfdual)=\RR^n$, and $(\grad\Psi(y))_i=\log(\frac{y_i}{1-y_i})$. $\Psi$ is Euler--Legendre \cite[Ex. 6.6, Cor. 5.13]{Bauschke:Borwein:1997} and totally convex \cite[Prop. 11]{Borwein:Reich:Sabach:2011}. The resulting Va\u{\i}nberg--Br\`{e}gman functional reads \cite[p. 142]{Kivinen:Warmuth:1999:boosting}
\begin{equation}
D_\Psi(x,y)=\sum_{i=1}^n\left(x_i\log\left(\frac{x_i}{y_i}\right)+(1-x_i)\log\left(\frac{1-x_i}{1-y_i}\right)\right),
\end{equation}
and it is jointly convex \cite[Ex. 3.5]{Bauschke:Borwein:2001}. Convex closed subsets $C\subseteq X$ with $C\cap\intefd{\Psi}$ are right $D_\Psi$-Chebysh\"{e}v \cite[Ex. 2.16.(iii)]{Bauschke:Noll:2002}. 
\item\label{ex.EL.totalconvex.Rn.Psi.v} Let 
\begin{equation}
\Psi(x)=\Psi_\alpha(x):=\left\{\begin{array}{ll}
\frac{1}{\alpha-1}\sum_{i=1}^n(x_i^\alpha-1)&\st x\in[0,\infty[^n,\;\alpha\in\,]0,1[\\
\frac{1}{1-\alpha}\sum_{i=1}^n(x_i^\alpha-1)&\st x\in\,]0,\infty[^n,\;\alpha\in\,]-\infty,0[\\
\infty&\st\mbox{otherwise}
\end{array}
\right. \textup{\cite[Eqn. (37)]{Reem:Reich:DePierro:2019}}.
\label{eqn.Psi.alpha}
\end{equation}

$\Psi_\alpha$ is Euler--Legendre \cite[\S7.2]{Reem:Reich:DePierro:2019}, and gives \cite[Eqn. (38)]{Reem:Reich:DePierro:2019}
\begin{equation}
\hspace{-0.3cm}D_{\Psi_\alpha}(x,y)=\left\{
\begin{array}{ll}
\frac{1}{1-\alpha}\sum_{i=1}^n(-x_i^\alpha+(1-\alpha)y_i^\alpha+\alpha y_i^{\alpha-1}x_i)
&\st(x,y)\in(\RR^n)^+\times(\RR^n)_0^+,\;\alpha\in\,]0,1[\\
\frac{1}{\alpha-1}\sum_{i=1}^n(-x_i^\alpha+(1-\alpha)y_i^\alpha+\alpha y_i^{\alpha-1}x_i)
&\st(x,y)\in(\RR^n)_0^+\times(\RR^n)_0^+,\;\alpha\in\,]-\infty,0[\\
\infty&\st\mbox{otherwise}.
\end{array}
\right.
\label{eqn.D.Psi.alpha}
\end{equation}
$-\frac{2^{\alpha-1}}{2^{\alpha-1}-1}(\alpha-1)\Psi_\alpha$ with $\alpha>0$ (resp., $-\Psi_\alpha$ with $\alpha\in\RR$) was introduced in \cite[Thm. 1]{Havrda:Charvat:1967} (resp., \cite[Eqn. (1)]{Tsallis:1988}). Denoting by $D_{\hat{\Psi}_\alpha}$ the first case in \eqref{eqn.D.Psi.alpha}, $\frac{\alpha-1}{\alpha}D_{\hat{\Psi}_\alpha}$ with $\alpha>1$ (resp., $(1-\alpha)D_{\hat{\Psi}_\alpha}$ for $\alpha\in\,]-\infty,0[\,\cup\,]0,1[$; $D_{\hat{\Psi}_\alpha}$ for $\alpha\in\,]0,1[$) was introduced in \cite[Eqn. (7)]{Jones:Trutzer:1989} (resp., \cite[Thm. 4]{Csiszar:1991} and \cite[Eqns. (1.7), (1.8), (1.12)]{Csiszar:1995}; \cite[Ex. 3.1.3]{Teboulle:1992}). Since $D_\Psi$ is invariant under addition of affine function to $\Psi$, and scales linearly under positive linear scaling of $\Psi$, there are several closely related functions $\Psi$, giving rise to the same $D_{\Psi_\alpha}$ (or $D_{\hat{\Psi}_\alpha}$), up to a positive scaling. For example, $\Psi(x)=\frac{1}{\alpha(\alpha-1)}\sum_{i=1}^n(x_i^\alpha-\alpha x_i+\alpha-1)$ \cite[Eqn. (2.1)]{Liese:Vajda:1987} or $\Psi(x)=\frac{1}{\alpha-1}\sum_{i=1}^n(x_i^\alpha-\alpha x_i)$ \cite[Ex. 3.1.3]{Teboulle:1992}. In particular, $\Psi(x)=\frac{1}{\alpha(1-\alpha)}\sum_{i=1}^nx^\alpha$ is Euler--Legendre for $(x,\alpha)\in(\RR^n\times\,]1,\infty[)\cup((\RR^n)^+\times\,]0,1[)\cup((\RR^n)_0^+\times\,]-\infty,0])\cup(-(\RR^n)^+\times\,]0,1[)\cup(-(\RR^n)_0^+\times\,]-\infty,0[)$ \cite[Thm. 5]{Woo:2017}.
\item\label{ex.EL.totalconvex.Rn.Psi.vi} Let $\Psi(x)=\frac{1}{2}\n{x}^2_{1/\gamma}=\frac{1}{2}\left(\sum_{i=1}^n\ab{x_i}^{1/\gamma}\right)^{2\gamma}$ on $\efd(\Psi)=\RR^n$ for $\gamma\in[\frac{1}{2},1[$ \cite[Eqn. (III.2.2)]{Nemirovskii:Yudin:1979} \cite[\S8]{BenTal:Margalit:Nemirovskii:2001}. This gives $(\grad\Psi(y))_i=\sign(y_i)\ab{y_i}^{1/\gamma-1}\n{y}_{1/\gamma}^{2-1/\gamma}$ outside the points where $y_i=0$, and thus \cite[p. 1784]{Kivinen:Warmuth:Hassibi:2006} $\forall(x,y)\in\RR^n\times(\RR^n\setminus\{(y_1,\ldots,y_n)\in\RR^n\mid\exists i\in\{1,\ldots,n\}\;y_i=0\})$
\begin{equation}
D_\Psi(x,y)=\textstyle\frac{1}{2}\n{x}^{2}_{1/\gamma}-\textstyle\frac{1}{2}\n{y}^2_{1/\gamma}-\n{y}^{2-1/\gamma}_\gamma\sum_{i=1}^n(x_i-y_i)\sign(y_i)\ab{y_i}^{1/\gamma-1}.
\label{eqn.Kivinen.Warmuth.Hassibi.D.Psi}
\end{equation}
$\Psi$ is strictly convex \cite[Lem. 8.1]{BenTal:Margalit:Nemirovskii:2001} (and this implies total convexity), as well as supercoercive and Euler--Legendre \cite[\S11.3]{Reem:Reich:DePierro:2019}. 
\end{enumerate}
\end{example}

\begin{example}\label{ex.spectral.convex}
Given a Hilbert space $\H$ with $\dim\H\in\NN$, $\K:=(\schatten_2(\H))^\sa:=\{x\in\BH\mid\sqrt{\tr_\H(x^*x)}\leq\infty,\;x=x^*\}$ with $\dim\K=:n$, equipped with an inner product $\s{x,y}_\K:=\tr_\H(xy)$ $\forall x,y\in\K$, becomes a real Hilbert space of $n\times n$ self-adjoint matrices. Let $S_n$ denote the group of all $n\times n$ permutation matrices $\RR^n\ra\RR^n$ (representing the group of all bijections of the set $\{1,\ldots,n\}$ into itself), called a \df{symmetric group}, and let $U_n$ denote the group of all $n\times n$ unitary matrices $\K\ra\K$, called a \df{unitary group}. For $x\in\K$, let $\lewis(x):=(\lewis_1(x),\ldots,\lewis_n(x))\in\RR^n$ denote the vector of eigenvalues of $x$ ordered nonincreasingly. For any $n\times n$ matrix $x$ (resp., for $x\in\RR^n$), let $\diag(x)$ denote the diagonal matrix with elements given by a diagonal of $x$ (resp., by elements of $x$). Given $C\subseteq\RR^n$, $\lewis^{\inver}(C):=\{x\in\K\mid\lewis(x)\in C\}$ is called a \df{spectral set}. If $s(C)=C$ $\forall s\in S_n$, then $\lewis^{\inver}(C)$ is closed (resp., convex) if{}f $C$ is closed (resp., convex) \cite[Thm. 8.4]{Lewis:1996:group}. A function $f:\RR^n\ra\,]-\infty,\infty]$ will be called \df{symmetric}, and $f\circ\lewis:\K\ra\,]-\infty,\infty]$ will be called \df{spectral}, if{}f $f(s(x))=f(x)$ $\forall s\in S_n$. For any symmetric $f$, $f\circ\lewis$ is convex if{}f $f$ is convex \cite[Thm. (p. 276)]{Davis:1957}. Furthermore, for any symmetric $f$ \cite[Thm. 2.3, Cors. 2.4, 3.2, 3.3]{Lewis:1996:hermitian} \cite[Thms. 8.1, 5.4]{Lewis:1996:group} \cite[Fact 7.14, Prop. 7.19.(ii)]{Bauschke:Borwein:1997}: 
\begin{enumerate}[nosep,label=\arabic*)]
\item\label{ex.spectral.convex.1} $f^\lfdual$ is symmetric; 
\item\label{ex.spectral.convex.2} $f\in\pcl(\RR^n)$ if{}f $f\circ\lewis\in\pcl(\K)$; $(f\circ\lewis)(u^*xu)=(f\circ\lewis)(x)$ $\forall x\in\H$ $\forall u\in U_n$; 
\item\label{ex.spectral.convex.3} $(f\circ\lewis)^\lfdual=f^\lfdual\circ\lewis$; 
\item\label{ex.spectral.convex.4} $\efd(f\circ\lewis)=\lewis^{\inver}(\efd(f))$; 
\item\label{ex.spectral.convex.5} $\intefd{f\circ\lewis}=\lewis^{\inver}(\intefd{f})$;
\item\label{ex.spectral.convex.6} if $f\in\pcl(\RR^n)$, then $f\circ\lewis$ is differentiable at $x$ if{}f $f$ is differentiable at $\lewis(x)$; 
\item\label{ex.spectral.convex.7} $f$ is essentially strictly convex (resp., essentially Gateaux differentiable; Euler--Legendre) if{}f $f\circ\lewis$  is essentially strictly convex (resp., essentially Gateaux differentiable; Euler--Legendre); 
\item\label{ex.spectral.convex.8} \sloppy if $f\in\pcl(\RR^n)$ and $f\circ\lambda$ is differentiable at $y\in\intefd{f\circ\lewis}$ then $\grad(f\circ\lewis)(y)=v(\diag(\grad f(\lewis(y))))v^*$ $\forall v\in U_n$ such that $v^*yv=\diag(\lewis(y))$, and $\grad(f\circ\lewis)(u^*yu)=u^*\grad(f\circ\lewis)(y)u$ $\forall u\in U_n$.
\end{enumerate}
In consequence, $D_\Psi(x,y)=D_\Psi(u^*xu,u^*yu)$ $\forall x\in\K$ $\forall y\in\intefd{\Psi}$ $\forall u\in U_n$ for all spectral Euler--Legendre $\Psi$ \cite[Cor. 7.21]{Bauschke:Borwein:1997}.

The examples of symmetric Euler--Legendre functions are:
\begin{enumerate}[nosep,label=(\roman*)]
\item\label{ex.spectral.convex.i} $f(x)=\sum_{i=1}^n\gamma\ab{x_i}^{1/\gamma}=:\gamma\n{x}^{1/\gamma}_{1/\gamma}$ on $\efd(f)=\intefd{f}=\RR^n$ for $\gamma\in\,]0,1[$, which gives \cite[p. 171]{Lewis:1996:hermitian} spectral Euler--Legendre $(f\circ\lewis)(\xi)=\gamma\n{\lewis(\xi)}^{1/\gamma}_{1/\gamma}=\sum_{i=1}^n\gamma\ab{\lewis_i(\xi)}^{1/\gamma}=\gamma\tr_\H(\ab{\xi}^{1/\gamma})=\gamma\n{\xi}_{1/\gamma}^{1/\gamma}$ on $\efd(f\circ\lewis)=\schatten_2(\H)$. Under a restriction of a domain of $\zeta$ to $(\schatten_2(\H))_0^+=\{x\in\K\mid x\mbox{ is positive definite}\}$, the corresponding Va\u{\i}nberg--Br\`{e}gman functional reads
\begin{equation}
D_{f\circ\lewis}(\xi,\zeta)=\tr_\H(\gamma\ab{\xi}^{1/\gamma}-\gamma\zeta^{1/\gamma}-(\xi-\zeta)\zeta^{1/\gamma-1})\;\forall(\xi,\zeta)\in(\schatten_2(\H))^\sa\times(\schatten_2(\H))_0^+;
\end{equation}
\item\label{ex.spectral.convex.ii} $f(x)=\sum_{i=1}^n(x_i\log(x_i)-x_i)$ if $x\geq0$ and $f(x)=\infty$ otherwise, which gives \cite[Ex. 7.29]{Bauschke:Borwein:1997} spectral Euler--Legendre $(f\circ\lewis)(\xi)=\tr_\H(\xi\log\xi-\xi)$ with $\efd(f\circ\lewis)=(\schatten_2(\H))^+=\{x\in\K\mid x\mbox{ is positive semi-definite}\}$, $\intefd{f\circ\lewis}=(\schatten_2(\H))^+_0$, and $\grad(f\circ\lewis)(\xi)=\log(\xi)$. The corresponding Va\u{\i}nberg--Br\`{e}gman functional reads  \cite[Def. 1]{Umegaki:1961}
\begin{equation}
D_{f\circ\lewis}(\xi,\zeta)=\tr_\H(\xi(\log\xi-\log\zeta)-\xi-\zeta)\;\;\forall(\xi,\zeta)\in(\schatten_2(\H))^+\times(\schatten_2(\H))^+_0;
\end{equation}
\item\label{ex.spectral.convex.iii} $f(x)=-\sum_{i=1}^n\log(x_i)$ on $\efd(f)=(\RR^n)^+_0$ and $f(x)=\infty$ otherwise, which gives \cite[pp. 170--171]{Lewis:1996:hermitian} spectral Euler--Legendre $(f\circ\lewis)(\xi)=-\log\det(\xi)$ $\forall\xi\in\efd(f\circ\lewis)=(\schatten_2(\H))^+_0$ and $(f\circ\lewis)(\xi)=\infty$ otherwise (cf., e.g., \cite[Thm. 3.2.(iv)]{Nesterov:Nemirovskii:1989}). It satisfies $\grad(f\circ\lewis)(\xi)=-\xi^{-1}$. Its Mandelbrojt--Fenchel dual is $(f\circ\lewis)^\lfdual(\xi)=-n-\log\det(-\xi)$ on $\efd((f\circ\lewis)^\lfdual)=-(\schatten_2(\H))^+_0$ \cite[p. 171]{Lewis:1996:hermitian}, and satisfies $\grad((f\circ\lewis)^\lfdual)(\xi)=-\xi^{-1}$. The corresponding Va\u{\i}nberg--Br\`{e}gman functional reads \cite[\S5]{James:Stein:1961}
\begin{equation}
D_{f\circ\lewis}(\xi,\zeta)=\s{\xi,\zeta^{-1}}_\K-\log\det(\xi\zeta^{-1})-n=h(\zeta^{-1/2}\xi\zeta^{-1/2})-n,
\end{equation}
for $h(\xi):=\tr_\K(\xi)-\log\det(\xi)$.
\end{enumerate}
\end{example}
\subsection{Va\u{\i}nberg--Br\`{e}gman projections and quasinonexpansive maps}\label{section.background.bregman.projections}
\subsubsection{Projections}
\begin{definition}\label{def.information}
For any set $Z$, $D:Z\times Z\ra[0,\infty]$ will be called an \df{information} on $Z$ (and $-D$ will be called a \df{relative entropy} on $Z$) if{}f $D(x,y)=0\iff x=y$ $\forall x,y\in Z$.
\end{definition}

\begin{proposition}\label{prop.information}
Let $\Psi\in\pcl(X,\n{\cdot}_X)$. Then:
\begin{enumerate}[nosep,label=(\roman*)]
\item\label{prop.information.i}if $\Psi\in\pclg(X,\n{\cdot}_X)$, then $D_\Psi$ is an information on $X$ if{}f $\Psi$ is strictly convex on $\intefd{\Psi}$ \textup{\cite[Prop. 1.1.9]{Butnariu:Iusem:2000}};
\item\label{prop.information.ii} if $(X,\n{\cdot}_X)$ is reflexive and $\Psi$ is essentially strictly convex, then $D^+_\Psi$ is an information on $X$ \textup{\cite[Lem. 7.3.(vi)]{Bauschke:Borwein:Combettes:2001}}.
\end{enumerate}
\end{proposition}

\begin{definition}\label{def.d.projections}
Let $\Psi\in\pclg(X,\n{\cdot}_X)$, $y\in\intefd{\Psi}$, and $K\subseteq X$ with $\varnothing\neq K\cap\intefd{\Psi}$.
\begin{enumerate}[nosep,label=(\roman*)]
\item\label{def.d.projections.i} If the set $\arginff{x\in K}{D_\Psi(x,y)}$ is a singleton, then its element will be denoted $\LPPP^{D_\Psi}_K(y)$, and called a \df{left $D_\Psi$-projection} of $y$ onto $K$ \textup{\cite[p. 1019]{Bregman:1966} \cite[\S1.II, \S2.2]{Bregman:1966:PhD} \cite[p. 620]{Bregman:1967}}, while $K$ will be called a \df{left $D_\Psi$-Chebysh\"{e}v} set \textup{\cite[Def. 3.28]{Bauschke:Borwein:Combettes:2003}}.
\item\label{def.d.projections.ii} If $K\subseteq\intefd{\Psi}$ and the set $\arginff{x\in K}{D_\Psi(y,x)}$ is a singleton, then its element will be denoted $\RPPP^{D_\Psi}_K(y)$, and called a \df{right $D_\Psi$-projection} of $y$ onto $K$ \textup{\cite[Def. 3.1, Lem. 3.5]{Bauschke:Noll:2002}}, while $K$ will be called a \df{right $D_\Psi$-Chebysh\"{e}v} set \textup{\cite[Def. 1.7]{Bauschke:Macklem:Wang:2011}}.
\item\sloppy\label{def.d.projections.iii} $\LPPP^{D_\Psi}_K$ (resp., $\RPPP^{D_\Psi}_K$) will be called \df{zone consistent} (with respect to the class of sets $K$ which are under consideration) \textup{\cite[Def. 3.1.(i)]{Censor:Lent:1981}} if{}f $\LPPP^{D_\Psi}_K(\intefd{\Psi})\subseteq\intefd{\Psi}$ (resp., $\RPPP^{D_\Psi}_K(\intefd{\Psi})\subseteq\intefd{\Psi}$) for any $K$ (in the given class).
\end{enumerate}
\end{definition}

\begin{definition}\label{def.pythagorean}
Let $\Psi\in\pclg(X,\n{\cdot}_X)$, $\varnothing\neq K\subseteq X$. $D_\Psi$ will be called:
\begin{enumerate}[nosep,label=\alph*)]
\item\label{def.pythagorean.a} \df{left pythagorean} on $K$ if{}f $K$ is left $D_\Psi$-Chebysh\"{e}v and, for any $x\in\intefd{\Psi}$ and any $w\in K$, the following conditions are equivalent:
\begin{enumerate}[nosep,label=(\roman*)]
\item\label{def.pythagorean.a.i} $w=\LPPP^{D_\Psi}_K(x)$;
\item\label{def.pythagorean.a.ii} $w$ is the unique solution of the variational inequality
\begin{equation}
\duality{z-y,\DG\Psi(x)-\DG\Psi(z)}_{X\times X^\star}\geq0\;\;\forall y\in K;
\label{eqn.left.pyth.ii}
\end{equation}
\item\label{def.pythagorean.a.iii} $w$ is the unique solution of the variational inequality
\begin{equation}
	D_\Psi(y,z)+D_\Psi(z,x)\leq D_\Psi(y,x)\;\;\forall y\in K;
\label{eqn.left.pyth.iii}
\end{equation}
\end{enumerate}
\item\label{def.pythagorean.b} \df{right pythagorean} on $K$ if{}f $K$ is right $D_\Psi$-Chebysh\"{e}v and, for any $x\in\intefd{\Psi}$ and any $w\in K$, the following conditions are equivalent:
\begin{enumerate}[nosep,label=(\roman*)]
\item\label{def.pythagorean.b.i} $w=\RPPP^{D_\Psi}_K(x)$;
\item\label{def.pythagorean.b.ii} $w$ is the unique solution of the variational inequality
\begin{equation}
\duality{x-z,\DG\Psi(x)-\DG\Psi(y)}_{X\times X^\star}\geq0\;\;\forall y\in K;
\end{equation}
\item\label{def.pythagorean.b.iii} $w$ is the unique solution of the variational inequality
\begin{equation}
D_\Psi(x,z)+D_\Psi(z,y)\leq D_\Psi(x,y)\;\;\forall y\in K.
\label{eqn.right.pyth.ineq}
\end{equation}
\end{enumerate}
\end{enumerate}
\end{definition}

\begin{proposition}\label{prop.left.pythagorean}
Let $(X,\n{\cdot}_X)$ be reflexive, $\Psi\in\pclg(X,\n{\cdot}_X)$ and $\varnothing\neq K\subseteq X$ be closed and convex. Then:
\begin{enumerate}[nosep,label=(\roman*)]
\item\label{prop.left.pythagorean.i} $K$ is left $D_\Psi$-Chebysh\"{e}v, and $D_\Psi$ is left pythagorean on $K$ if any of (generally, inequivalent) conditions holds:
\begin{enumerate}[nosep,label=\alph*)]
\item\label{prop.left.pythagorean.i.a} $\Psi$ is totally convex on $\efd(\Psi)$, $K\subseteq\intefd{\Psi}$ \textup{\cite[Prop. 2.1.5]{Butnariu:Iusem:2000}+\cite[Prop. 4.1.(i)]{Butnariu:Resmerita:2006}(=\cite[Cor. 4.4]{Butnariu:Resmerita:2006})}; or
\item\label{prop.left.pythagorean.i.b} $\Psi$ is strictly convex on $\efd(\Psi)$ and supercoercive, $K\cap\intefd{\Psi}\neq\varnothing$ \textup{\cite[Prop. 2.2]{Alber:Butnariu:1997}+\cite[Prop. 4.1.(ii)]{Butnariu:Resmerita:2006}}; or
\item\label{prop.left.pythagorean.i.c} $\Psi$ is Euler--Legendre, $K\cap\intefd{\Psi}\neq\varnothing$ \textup{\cite[Prop. 3.16]{Bauschke:Borwein:1997}+\cite[Thm. 7.8]{Bauschke:Borwein:Combettes:2001}(=\cite[Cor. 3.35]{Bauschke:Borwein:Combettes:2003})};
\end{enumerate}
\item\label{prop.left.pythagorean.ii} if any of the conditions \ref{prop.left.pythagorean.i}.\ref{prop.left.pythagorean.i.a}--\ref{prop.left.pythagorean.i}.\ref{prop.left.pythagorean.i.c} holds, and $K$ is affine, then 
\begin{equation}
	D_\Psi(x,\LPPP^{D_\Psi}_K(y))+D_\Psi(\LPPP^{D_\Psi}_K(y),y)=D_\Psi(x,y)\;\;\forall(x,y)\in K\times\intefd{\Psi};
\label{eqn.left.pyth}
\end{equation}
\item\label{prop.left.pythagorean.iii} if (condition \ref{prop.left.pythagorean.i}.\ref{prop.left.pythagorean.i.c} holds), or (condition \ref{prop.left.pythagorean.i}.\ref{prop.left.pythagorean.i.a} or condition \ref{prop.left.pythagorean.i}.\ref{prop.left.pythagorean.i.b} holds, and $K\subseteq\intefd{\Psi}$), then $\LPPP^{D_\Psi}_K$ is zone consistent \textup{\cite[Cor. 7.9]{Bauschke:Borwein:Combettes:2001}}.
\end{enumerate}
\end{proposition}

\begin{corollary}\label{cor.information}
If any of the conditions \ref{prop.left.pythagorean.i.a}--\ref{prop.left.pythagorean.i.c} in Proposition \ref{prop.left.pythagorean}.\ref{prop.left.pythagorean.i} holds, then $D_\Psi$ is an information on $X$.
\end{corollary}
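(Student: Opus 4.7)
The plan is to argue by contradiction. Suppose $D_\Psi(x, y) = 0$ for some $y \in \intefd{\Psi}$ and some $x \in X$ with $x \neq y$. Since $D_\Psi(x, y) < \infty$ forces $\Psi(x) < \infty$, one has $x \in \efd(\Psi)$, and by convexity of $\efd(\Psi)$ the line segment joining $y$ and $x$ is contained in $\efd(\Psi)$. The function $D_\Psi(\cdot, y) = \Psi(\cdot) - \Psi(y) - \duality{\cdot - y, \DG\Psi(y)}_{X \times X^\star}$ is nonnegative on $X$ and convex (as the sum of $\Psi$ and an affine function); since $D_\Psi(y, y) = D_\Psi(x, y) = 0$, convexity and nonnegativity force $D_\Psi(ty + (1-t)x, y) = 0$ for every $t \in [0, 1]$.

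The next step is to invoke the Chebysh\"{e}v (uniqueness) property of left $D_\Psi$-projections guaranteed by Proposition \ref{prop.left.pythagorean}.(i). Since $\intefd{\Psi}$ is open around $y$, I would pick $s \in (0, 1]$ small enough so that the closed segment $K_s := \{y + t(x - y) \mid t \in [0, s]\}$ lies inside $\intefd{\Psi}$. Then $K_s$ is nonempty, closed, convex, and contained in $\intefd{\Psi}$, which subsumes both the intersection hypothesis $K_s \cap \intefd{\Psi} \neq \varnothing$ needed in cases b) and c) and the containment hypothesis $K_s \subseteq \intefd{\Psi}$ needed in case a). Hence under any of the three assumptions in Proposition \ref{prop.left.pythagorean}.(i), $K_s$ is left $D_\Psi$-Chebysh\"{e}v, so $\arginff{z \in K_s}{D_\Psi(z, y)}$ is a singleton.

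From the opening paragraph, however, $D_\Psi(\cdot, y)$ vanishes identically on $K_s$, so both $y$ and $y + s(x - y)$ (which both lie in $K_s$) achieve the infimum $0$ over $K_s$. Uniqueness of the minimiser forces $y + s(x - y) = y$ and hence $x = y$ (since $s > 0$), contradicting the assumption $x \neq y$.

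The main obstacle, such as it is, lies in choosing a single witness set $K_s$ that satisfies the domain hypotheses of all three cases of Proposition \ref{prop.left.pythagorean}.(i) simultaneously; the closed segment through $y$ in the direction of $x$, shrunk to lie inside $\intefd{\Psi}$, does the job, and no separate case analysis of total convexity, strict convexity with supercoercivity, or the Euler--Legendre property is required.
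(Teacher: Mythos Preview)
Your proof is correct and takes a genuinely different route from the paper's. The paper argues case by case: condition b) gives strict convexity of $\Psi$ on $\efd(\Psi)$, hence on $\intefd{\Psi}$, and Proposition~\ref{prop.information}.(i) applies; condition c) gives essential strict convexity (part of the Euler--Legendre definition), and Proposition~\ref{prop.information}.(ii) applies; condition a) uses the cited fact that total convexity on $\efd(\Psi)$ implies strict convexity on $\intefd{\Psi}$, reducing again to Proposition~\ref{prop.information}.(i). So the paper's proof is a reduction to the known characterisations of when $D_\Psi$ is an information, treating each hypothesis separately.

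Your argument bypasses Proposition~\ref{prop.information} and any case analysis entirely: you exploit directly the left $D_\Psi$-Chebysh\"{e}v conclusion of Proposition~\ref{prop.left.pythagorean}.(i) itself, applied to a short segment $K_s\subseteq\intefd{\Psi}$, and derive a contradiction with uniqueness of the minimiser. This is more self-contained (no external references to Butnariu--Iusem or Bauschke--Borwein--Combettes are needed) and structurally cleaner, since the single witness set $K_s$ works under all three hypotheses simultaneously. The paper's approach, on the other hand, makes explicit the common mechanism behind a), b), c) --- each implies a form of strict convexity --- and so ties the corollary back into the standard characterisation Proposition~\ref{prop.information}, which is informative in its own right.
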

\begin{proof}
The condition of strict convexity on $\efd(\Psi)$ in Proposition \ref{prop.left.pythagorean}.\ref{prop.left.pythagorean.i}.\ref{prop.left.pythagorean.i.b} implies Proposition \ref{prop.information}.\ref{prop.left.pythagorean.i}. The condition of \ref{prop.left.pythagorean}.\ref{prop.left.pythagorean.i}.\ref{prop.left.pythagorean.i.c} implies Proposition \ref{prop.information}.\ref{prop.left.pythagorean.ii}, which in turn implies that $D_\Psi$ is an information on $X$. Since total convexity on $\efd(\Psi)$ implies strict convexity on $\intefd{\Psi}$ \cite[Prop. 3.1.(i)]{Butnariu:Iusem:1997}, the condition of Proposition \ref{prop.left.pythagorean}.\ref{prop.left.pythagorean.i}.\ref{prop.left.pythagorean.i.a} implies Proposition \ref{prop.information}.\ref{prop.left.pythagorean.i}.
\end{proof}

\begin{proposition}\label{prop.Luo.Meng.Wen.Yao} \textup{\cite[Lem. 3.2]{Luo:Meng:Wen:Yao:2019}}
If $\Psi\in\pclg(X,\n{\cdot}_X)$, $\Psi^\lfdual$ is Gateaux differentiable on $\varnothing\neq\DG\Psi(\intefd{\Psi})\subseteq\intefd{\Psi^\lfdual}$, $K\subseteq\intefd{\Psi}$, and $\DG\Psi(K)$ is convex and closed, then
\begin{equation}
	\RPPP^{D_\Psi}_K(x)=(J_X)^{\inver}\circ\DG\Psi^\lfdual\circ\LPPP^{D_{\Psi^\lfdual}}_{\DG\Psi(K)}\circ\DG\Psi(x)\;\;\forall x\in\intefd{\Psi}.
\label{eqn.left.to.right}
\end{equation}
\end{proposition}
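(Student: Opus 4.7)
The plan is to reduce the right $D_\Psi$-projection problem to a left $D_{\Psi^\lfdual}$-projection problem on the dual side by means of the Fenchel duality identity \eqref{eqn.bregman.fenchel.duality} and the bijectivity of $\DG\Psi$. Concretely, under the stated hypotheses $\DG\Psi:\intefd{\Psi}\to\DG\Psi(\intefd{\Psi})$ is a bijection with inverse $(\DG\Psi)^{\inver}=J_X^{\inver}\circ\DG\Psi^\lfdual$ (recalled in the paragraph preceding \eqref{eqn.bregman.fenchel.duality}), and the identity
\begin{equation}
D_\Psi(x,z)=D_{\Psi^\lfdual}(\DG\Psi(z),\DG\Psi(x))\;\;\forall x,z\in\intefd{\Psi}
\end{equation}
is available because $\Psi^\lfdual$ is Gateaux differentiable on the set $\DG\Psi(\intefd{\Psi})\subseteq\intefd{\Psi^\lfdual}$, which is exactly the hypothesis of \eqref{eqn.bregman.fenchel.duality}.

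First I would fix an arbitrary $x\in\intefd{\Psi}$ and rewrite the objective defining the right $D_\Psi$-projection: since $K\subseteq\intefd{\Psi}$, Fenchel duality yields
\begin{equation}
\inf_{z\in K}D_\Psi(x,z)=\inf_{z\in K}D_{\Psi^\lfdual}(\DG\Psi(z),\DG\Psi(x)).
\end{equation}
Because $\DG\Psi$ restricted to $\intefd{\Psi}$ is a bijection onto $\DG\Psi(\intefd{\Psi})$, the substitution $w=\DG\Psi(z)$ transforms this into
\begin{equation}
\inf_{w\in\DG\Psi(K)}D_{\Psi^\lfdual}(w,\DG\Psi(x)),
\end{equation}
and a point $z\in K$ is a minimizer on the left if{}f $\DG\Psi(z)\in\DG\Psi(K)$ is a minimizer on the right.

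Next, the set $\DG\Psi(K)$ is a subset of $\DG\Psi(\intefd{\Psi})\subseteq\intefd{\Psi^\lfdual}$ by hypothesis, and it is convex and closed by hypothesis; together with Gateaux differentiability of $\Psi^\lfdual$ on this set, this places us in the setting where the left $D_{\Psi^\lfdual}$-projection is defined, so whenever the unique minimizer exists it equals $\LPPP^{D_{\Psi^\lfdual}}_{\DG\Psi(K)}(\DG\Psi(x))$. Finally I would apply $(\DG\Psi)^{\inver}=J_X^{\inver}\circ\DG\Psi^\lfdual$ to both sides of the identity $\DG\Psi(\RPPP^{D_\Psi}_K(x))=\LPPP^{D_{\Psi^\lfdual}}_{\DG\Psi(K)}(\DG\Psi(x))$ to conclude \eqref{eqn.left.to.right}.

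The proof is essentially a bookkeeping exercise; the only point requiring a little care is the coherence of the uniqueness of minimizers across the bijection, which follows immediately because $\DG\Psi$ is a bijection on $\intefd{\Psi}$ preserving the value of the objective. Since no topological or continuity argument is needed beyond what is already encoded in the hypotheses, there is no substantial obstacle — the work lies entirely in setting up the correct duality identification.
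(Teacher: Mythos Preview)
Your argument is correct and is precisely the natural route via the duality identity \eqref{eqn.bregman.fenchel.duality}. Note that the paper does not supply its own proof of this proposition: it is stated as a background result cited from \cite[Lemm.~3.2]{Luo:Meng:Wen:Yao:2019} (the same reference from which \eqref{eqn.bregman.fenchel.duality} is drawn), so your derivation is in fact the expected one.
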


\begin{remark}\label{remark.projections}
\begin{enumerate}[nosep,label=(\roman*)]
\item\label{remark.projections.i} The notion of a \df{Chebysh\"{e}v set}, defined as a subset $K$ of a Banach space $(X,\n{\cdot}_X)$ such that $\arginff{x\in K}{\n{x-y}_X}=\{*\}=:\{\PPP^{d_{\n{\cdot}_X}}_K(y)\}$ $\forall y\in X$, was introduced in \cite[\S{}A2]{Klee:1953} (implicitly) and \cite[p. 17]{Efimov:Stechkin:1958} (explicitly). This name refers to Chebysh\"{e}v's paper \cite{Chebyshev:1859}, where first nontrivial examples of such sets were considered. See Remarks \ref{remark.varphi}.\ref{remark.varphi.iii}, \ref{remark.varphi}.\ref{remark.varphi.vii}, and \ref{remark.varphi}.\ref{remark.varphi.ix} for further discussion.
\item\label{remark.projections.ii} In principle, Definitions \ref{def.d.projections} and \ref{def.pythagorean} could be formulated more generally, by dropping an assumption $\Psi\in\pclg(X,\n{\cdot}_X)$ and with $D_\Psi$ (resp., $\DG\Psi$; $\intefd{\Psi}$) replaced either by $D^+_\Psi$ (resp., $\DG_+\Psi$; $\efd(\Psi)$) or by $D^\partial_\Psi$ (resp., $\DDD^\partial\Psi$; $\efd(\Psi)$). (This is the reason for putting Gateaux differentiability into brackets in the third paragraph of Section \ref{section.introduction}.) While this is a tempting possibility in the context of a general axiomatic scheme, as well as in the light of references listed in Remark \ref{remark.EL.Bregman.definitions}.\ref{remark.EL.Bregman.definitions.iii}, there are currently no substantial geometric results available for such a degree of generality.
\item\label{remark.projections.iii} Both left and right $D_\Psi$-projections are idempotent: if $K$ is left (resp., right) $D_\Psi$-Chebysh\"{e}v, then $\LPPP^{D_\Psi}_K\circ\LPPP^{D_\Psi}_K=\LPPP^{D_\Psi}_K$ (resp.,  $\RPPP^{D_\Psi}_K\circ\RPPP^{D_\Psi}_K=\RPPP^{D_\Psi}_K$). 
\item\label{remark.projections.iv} The naming convention of Definition \ref{def.information} follows Wiener's dictum that the \cytat{amount of information is the negative of the quantity usually defined as entropy} \cite[p. 76]{Wiener:1948}, and agrees with: R\'{e}nyi's \cytat{measure of the amount of information} \cite[p. 554]{Renyi:1961}, Umegaki's definition of \cytat{information} on state spaces of type I W$^*$-algebras as $D_1(\rho,\sigma)=\tr_\H(\rho(\log\rho-\log\sigma))$ \cite[Def. 1]{Umegaki:1961} \cite[Def. 1]{Umegaki:1962}, Csisz\'{a}r's definition of \cytat{relative information} $D_\fff$ in \cite[p. 86]{Csiszar:1963}, as well as with the sign, ordering, and naming conventions used throughout \cite{Bratteli:Robinson:1979}. It also avoids terminological confusions: when, e.g., both $-\tr_\H(\rho\log\rho)$ and $D_1(\rho,\sigma)$ are called an `entropy' (sign ambiguity); when both $D(\rho,\sigma)$ and $D(\rho,\sigma)+D(\sigma,\rho)$ are called a `divergence' (both \cite[p. 81]{Kullback:Leibler:1951} and \cite[Def. 2]{Umegaki:1962} explicitly distinguish between $D_1$ and \cytat{divergence}, the latter defined as a symmetrisation $D_1(\rho,\sigma)+D_1(\sigma,\rho)$ \cite[Eqn. (1)]{Jeffreys:1946})\footnote{Furthermore, the notion of \cytat{divergence} has already other meanings in differential calculus and in renormalisation.}; when any $D$ as well as only a symmetric $D$ satisfying triangle inequality are called a `distance'. Mathematically, our definition of \cytat{information} coincides with the definition of a \cytat{contrast functional} in \cite[p. 794]{Eguchi:1983} and of a \cytat{distance} in \cite[p. 161]{Csiszar:1995}, with the property (I) of $D$ in \cite[p. 1019]{Bregman:1966} (=\cite[p. 5]{Bregman:1966:PhD}), and can be seen as turning \cite[Lem. 2.1]{Censor:Lent:1981} into an axiom.
\item\label{remark.projections.v} First special case of left $D_\Psi$-projection for nonsymmetric $D_\Psi$, with $D_\Psi$ given by the Kullback--Leibler information (i.e. $D_\Psi$ on $\RR^n\times\RR^n$ with $\Psi(x_i)=\sum_{i=1}^n(x_i\log(x_i)-x_i)$ and $n\in\NN$), was independently introduced in \cite[p. 32]{Sanov:1957} and \cite[Ch. 3.2]{Kullback:1959}. First special case of right $D_\Psi$-projection for nonsymmetric $D_\Psi$, with $D_\Psi$ given by the Kullback--Leibler information, was introduced in \cite[Eqn. (16)]{Chencov:1968} (cf. also \cite[Def. 22.2]{Chencov:1972}). First instance of a right pythagorean (in)equality \eqref{eqn.right.pyth.ineq}, together with its interpretation as \cytat{nonsymmetrical analogue of the theorem of Pythagoras}, was established in \cite[Thm. 1]{Chencov:1968} for the Kullback--Leibler information. The corresponding special case of \eqref{eqn.left.pyth.iii} was first considered implicitly in \cite[p. 1021]{Bregman:1966} and explicitly in \cite[Thm. 2.2]{Csiszar:1975}.
\end{enumerate}
\end{remark}
\subsubsection{Quasinonexpansive maps}
\begin{definition}\label{def.fixed.point.set}
For any $(X,\n{\cdot}_X)$, let $\varnothing\neq K\subseteq X$, $T:K\ra X$. Then:
\begin{enumerate}[nosep,label=(\roman*)]
\item\label{def.fixed.point.set.i} $x\in K$ is called a \df{fixed point} of $T$ if{}f $T(x)=x$; a set of all fixed points of $T$ will be denoted $\Fix(T)$;
\item\label{def.fixed.point.set.ii} $x\in\cl(K)$ is called an \df{asymptotic fixed point} of $T$ if{}f there exists $\{x_n\in K\mid n\in\NN\}$ which converges weakly to $x$, and $\lim_{n\ra\infty}\n{x_n-T(x_n)}_X=0$ \textup{\cite[p. 313]{Reich:1996}}. The set of all asymptotic fixed points of $T$ will be denoted $\aFix(T)$.
\end{enumerate}
\end{definition}

\begin{definition}\label{def.nonexpansive.maps}
Let $\varnothing\neq K\subseteq\intefd{\Psi}$, $\Psi\in\pclg(X,\n{\cdot}_X)$, $T:K\ra\intefd{\Psi}$ will be called:
\begin{enumerate}[nosep,label=(\roman*)]
\item\label{def.nonexpansive.maps.i} \df{completely nonexpansive} with respect to $\Psi$ and $K$ if{}f \textup{\cite[2.1.7]{Butnariu:Iusem:2000}}
\begin{equation}
D_\Psi(T(x),T(y))\leq D_\Psi(x,y)\;\;\forall x,y\in K;
\end{equation}
\item\label{def.nonexpansive.maps.ii} \df{left strongly quasinonexpansive} with respect to $\Psi$ and $K$, if{}f \textup{\cite[Def. 3.2]{Censor:Reich:1996} \cite[pp. 313--314]{Reich:1996}} $\varnothing\neq\aFix(T)\subseteq\efd(\Psi)$,
\begin{equation}
D_\Psi(y,T(x))\leq D_\Psi(y,x)\;\;\forall(y,x)\in\aFix(T)\times K,
\end{equation}
and, for any $y\in\aFix(T)$ and any bounded $\{x_n\in K\mid n\in\NN\}$, 
\begin{equation}
\lim_{n\ra\infty}(D_\Psi(y,x_n)-D_\Psi(y,T(x_n)))=0\;\;\limp\;\;
\lim_{n\ra\infty}D_\Psi(T(x_n),x_n)=0;
\end{equation}
\item\label{def.nonexpansive.maps.iii} \df{right strongly quasinonexpansive} with respect to $\Psi$ and $K$ if{}f \textup{\cite[Def. 2.3.(iv)]{MartinMarquez:Reich:Sabach:2012}} $\varnothing\neq\aFix(T)\subseteq\intefd{\Psi}$,
\begin{equation}
D_\Psi(T(x),y)\leq D_\Psi(x,y)\;\;\forall(x,y)\in K\times\aFix(T),
\end{equation}
and, for any $y\in\aFix(T)$ and any bounded $\{x_n\in K\mid n\in\NN\}$,
\begin{equation}
\lim_{n\ra\infty}(D_\Psi(x_n,y)-D_\Psi(T(x_n),y))=0
\;\;\limp\;\;
\lim_{n\ra\infty}D_\Psi(x_n,T(x_n))=0;
\end{equation}
\item\label{def.nonexpansive.maps.iv} \df{left firmly nonexpansive} with respect to $\Psi$ and $K$ if{}f \textup{\cite[Def. 3]{Brohe:Tossings:2000}} \textup{\cite[Def. 3.4, Prop. 3.5.(iv)]{Bauschke:Borwein:Combettes:2003}} $\forall x,y\in K$
\begin{equation}
D_\Psi(T(x),T(y))+D_\Psi(T(y),T(x))+D_\Psi(T(x),x)+D_\Psi(T(y),y)\leq D_\Psi(T(x),y)+D_\Psi(T(y),x);
\end{equation}
\item\label{def.nonexpansive.maps.v} \df{right firmly nonexpansive} with respect to $\Psi$ and $K$ if{}f \textup{\cite[Def. 2.3.(i$^*$)]{MartinMarquez:Reich:Sabach:2012}} $\forall x,y\in K$
\begin{equation}
D_\Psi(T(x),T(y))+D_\Psi(T(y),T(x))+D_\Psi(x,T(x))+D_\Psi(y,T(x))\leq D_\Psi(x,T(y))+D_\Psi(y,T(x)).
\end{equation}
\end{enumerate}
The set of all left (resp., right) strongly quasinonexpansive maps with respect to $\Psi$ and $K$ will be denoted $\lsq(\Psi,K)$ (resp., $\rsq(\Psi,K)$). The set of all left (resp., right) firmly nonexpansive maps with respect to $\Psi$ and $K$ will be denoted $\lfn(\Psi,K)$ (resp., $\rfn(\Psi,K)$). The set of all completely nonexpansive maps with respect to $\Psi$ and $K$ will be denoted $\cn(\Psi,K)$.
\end{definition}

\begin{definition}\label{def.LSQ.RSQ.compositional}
If $(X,\n{\cdot}_X)$ is reflexive, then $\Psi\in\pclg(X,\n{\cdot}_X)$ will be called \df{LSQ-compositional} (resp., \df{RSQ-compositional}) \df{on} $\varnothing\neq K\subseteq\intefd{\Psi}$ if{}f, for any set $\{T_i:K\ra K\mid T_i\in\lsq(\Psi,K)$ (resp., $\rsq(\Psi,K)$), $i\in\{1,\ldots,m\}, m\in\NN\}$ such that $\bigcap_{i=1}^m\aFix(T_i)\neq\varnothing$:
\begin{enumerate}[nosep,label=(\roman*)]
\item\label{def.LSQ.RSQ.compositional.i} $\aFix(T_m\circ\cdots\circ T_1)\subseteq\bigcap_{i=1}^m\aFix(T_i)$;
\item\label{def.LSQ.RSQ.compositional.ii} if $\aFix(T_m\circ\cdots\circ T_1)\neq\varnothing$, then $T_m\circ\cdots\circ T_1\in\lsq(\Psi,K)$ (resp., $\rsq(\Psi,K)$).
\end{enumerate}
\sloppy The set $\lsq(\Psi,K)$ (resp., $\rsq(\Psi,K)$) will be called \df{composable} if{}f $\Psi$ is LSQ-(resp., RSQ-)com\-po\-si\-tio\-nal on $K$. $\Psi$ will be called \df{LSQ-compositional} (resp., \df{RSQ-com\-po\-si\-tio\-nal}) if{}f it is LSQ-(resp., RSQ-)compositional on any $K\subseteq\intefd{\Psi}$.
\end{definition}

\begin{definition}\label{def.LSQ.RSQ.preadapted}
If $(X,\n{\cdot}_X)$ is reflexive, then $\Psi\in\pclg(X,\n{\cdot}_X)$ will be called:
\begin{enumerate}[nosep,label=(\roman*)]
\item\label{def.LSQ.RSQ.preadapted.i} \df{LSQ-preadapted} on a set $\varnothing\neq K\subseteq\intefd{\Psi}$ iff $T\in\lfn(\Psi,K)$ $\limp$ ($T\in\lsq(\Psi,K)$, $\aFix(T)=\Fix(T)$, and $\Fix(T)$ is convex and closed); 
\item\label{def.LSQ.RSQ.preadapted.ii} \df{RSQ-preadapted} on a set $\varnothing\neq K\subseteq\intefd{\Psi}$ iff $T\in\rfn(\Psi,K)$ $\limp$ ($T\in\rsq(\Psi,K)$, $\aFix(T)=\Fix(T)$, and $\DG\Psi(\Fix(T))$ is convex and closed).
\end{enumerate}
 \end{definition}

\begin{proposition}\label{prop.lsq.rsq.old}
If $(X,\n{\cdot}_X)$ is reflexive and $\Psi\in\pclg(X,\n{\cdot}_X)$, then:
\begin{enumerate}[nosep,label=(\roman*)]
\item\label{prop.lsq.rsq.old.i} $\Psi$ is LSQ-compositional if any of (generally, inequivalent) conditions holds:
\begin{enumerate}[nosep,label=\alph*)]
\item\label{prop.lsq.rsq.old.i.a} $\Psi:X\ra\RR$ is uniformly convex on $X$, $\DG\Psi$ is (bounded and uniformly continuous) on bounded subsets of $X$ \textup{\cite[Lems. 1, 2]{Reich:1996}}; or
\item\label{prop.lsq.rsq.old.i.b} $\Psi:X\ra\RR$ is (bounded, uniformly Fr\'{e}chet differentiable, totally convex) on bounded subsets of $X$, $\DG(\Psi^\lfdual)$ is bounded on bounded subsets of $\efd(\Psi^\lfdual)=X^\star$ \textup{\cite[Prop. 3.3]{MartinMarquez:Reich:Sabach:2013:BSN}};
\end{enumerate}
\item\label{prop.lsq.rsq.old.ii} $\Psi$ is RSQ-compositional if any of (generally, inequivalent) conditions holds:
\begin{enumerate}[nosep,label=\alph*)]
\item\label{prop.lsq.rsq.old.ii.a} $\Psi:X\ra\RR$ is (bounded, uniformly Fr\'{e}chet differentiable, totally convex) on bounded subsets of $X$ \textup{\cite[Prop. 4.4]{MartinMarquez:Reich:Sabach:2013:BSN}}; or
\item\label{prop.lsq.rsq.old.ii.b} $\efd(\Psi^\lfdual)=X^\star$, $\Psi$ is Euler--Legendre, $\Psi^\lfdual$ is totally convex on bounded subsets of $X^\star$, $\DG\Psi$ is (bounded and uniformly continuous) on bounded subsets of $\intefd{\Psi}$, $\DG\Psi^\lfdual$ is (bounded and uniformly continuous) on bounded subsets of $X^\star$ \textup{\cite[Prop. 6.6]{MartinMarquez:Reich:Sabach:2013:BSN}};
\end{enumerate}
\item\label{prop.lsq.rsq.old.iii} $\Psi:X\ra\RR$ is LSQ-preadapted on any convex closed $\varnothing\neq K\subseteq X$ if $\Psi$ is Euler--Legendre and (bounded and uniformly Fr\'{e}chet differentiable) on bounded subsets of $X$ \textup{\cite[Lems. 15.5, 15.6]{Reich:Sabach:2011}+\cite[Rem. 2.1.3]{Sabach:2012}};
\item\label{prop.lsq.rsq.old.iv} $\Psi$ is RSQ-preadapted on any $\varnothing\neq K\subseteq X$ if $\Psi:X\ra\RR$ is Euler--Legendre, and any of (generally, inequivalent) conditions holds:
\begin{enumerate}[nosep,label=\alph*)]
\item\label{prop.lsq.rsq.old.iv.a} $\Psi$ is uniformly continuous on bounded subsets of $X$, $\DG\Psi$ is weakly sequentially continuous\footnote{For any Banach space $(X,\n{\cdot}_X)$, $f:X\ra X^\star$ is called \df{weakly sequentially continuous} if{}f a weak convergence of $\{x_n\in X\mid n\in\NN\}$ to $x\in X$ implies a weak convergence of $\{f(x_n)\mid n\in\NN\}$ to $f(x)$.} \textup{\cite[Props. 3.3, 3.6]{MartinMarquez:Reich:Sabach:2012}}; or
\item\label{prop.lsq.rsq.old.iv.b} $\Psi^\lfdual$ is (uniformly Fr\'{e}chet differentiable and bounded) on bounded subsets of $\intefd{\Psi^\lfdual}\neq\varnothing$, $\DG\Psi$ is uniformly continuous on bounded subsets of $X$ \textup{\cite[Prop. 3.3, Rem. 3.7]{MartinMarquez:Reich:Sabach:2012}};
\end{enumerate}
\item\label{prop.lsq.rsq.old.v} if $\Psi$ is Euler--Legendre, $\varnothing\neq K\subseteq\intefd{\Psi}$, $T:K\ra\intefd{\Psi}$, $\DG\Psi$ and $\DG\Psi^\lfdual$ are (uniformly continuous and bounded) on bounded subsets of $\intefd{\Psi}$ and $\intefd{\Psi^\lfdual}\neq\varnothing$, respectively, then $T\in\rsq(\Psi,K)$ if{}f $\DG\Psi\circ T\circ\DG\Psi^\lfdual\in\lsq(\Psi,\DG\Psi(K))$ \textup{\cite[Fact 6.5]{MartinMarquez:Reich:Sabach:2013:BSN}}.
\end{enumerate}
\end{proposition}

\begin{definition}\label{def.resolvent.prox}
Let $(X,\n{\cdot}_X)$ be a Banach space, $\Psi\in\pclg(X,\n{\cdot}_X)$, $\lambda\in\,]0,\infty[$. 
\begin{enumerate}[nosep,label=(\roman*)]
\item\label{def.resolvent.prox.i} If $T:X\ra 2^{X^\star}$ and $\Graph(T)\neq\varnothing$, then the \df{left} (resp., \df{right}) \df{$D_\Psi$-resolvent} of $\lambda T$ is defined as \textup{\cite[Lem. 1]{Eckstein:1993} \cite[Def. 3.7]{Bauschke:Borwein:Combettes:2003}} (resp., \textup{\cite[Def. 5.3]{MartinMarquez:Reich:Sabach:2012}})
\begin{align}
\lres^\Psi_{\lambda T}&:=(\DG\Psi+\lambda T)\circ\DG\Psi:X\ra 2^X\\
\mbox{(resp., }\rres^\Psi_{\lambda T}&:=(\id_{X^\star}+\lambda T\circ\DG\Psi^\lfdual)^{\inver}:X^\star\ra 2^{X^\star}\mbox{)}.
\end{align}
\item\label{def.resolvent.prox.ii} If $f:X\ra\,]-\infty,\infty]$ is proper, then the \df{left} (resp., \df{right}) \df{$D_\Psi$-proximal map} of index $\lambda$ is defined as \textup{\cite[Eqn. (13)]{Censor:Zenios:1992} \cite[Def. 3.16]{Bauschke:Borwein:Combettes:2003}} (resp., \textup{\cite[Def. 3.7]{Bauschke:Combettes:Noll:2006} \cite[Def. 3.3]{Laude:Ochs:Cremers:2020}})
\begin{align}
\lprox^{D_\Psi}_{\lambda,f}:X\ni y&\mapsto\arginff{x\in\efd(f)\cap\efd(\Psi)}{f(x)+\lambda D_\Psi(x,y)}\in 2^X\\
\mbox{(resp., }\rprox^{D_\Psi}_{\lambda,f}:X\ni y&\mapsto\arginff{x\in\efd(f)\cap\intefd{\Psi}}{f(x)+\lambda D_\Psi(y,x)}\in 2^X\mbox{)},
\end{align}
whenever the argument of $\inf\{\ldots\}$ is finite.
\end{enumerate}
\end{definition}

\begin{proposition}\label{prop.prox.res.left.right}
Let $(X,\n{\cdot}_X)$ be reflexive and let $\Psi\in\pclg(X,\n{\cdot}_X)$ be Euler--Legendre. Then:
\begin{enumerate}[nosep,label=(\roman*)]
\item\label{prop.prox.res.left.right.i} if $T:X\ra2^{X^\star}$, then
\begin{equation}
\rres^{\Psi}_T=\DG\Psi\circ\lres_T^\Psi\circ\DG\Psi^\lfdual
\label{eqn.resolvent.left.right}
\end{equation}
with $\efd(\rres^{\Psi}_T)=\DG\Psi(\efd(\lres^\Psi_T))$ and $\ran(\rres^{\Psi}_T)=\DG\Psi(\ran(\lres^\Psi_T))$ \textup{\cite[Lem. 5.4]{MartinMarquez:Reich:Sabach:2012}};
\item\label{prop.prox.res.left.right.ii} if $f:X\ra\,]-\infty,\infty]$ is proper and satisfies $\efd(f)\cap\intefd{\Psi}\neq\varnothing$, then \textup{\cite[Lem. 3.4]{Laude:Ochs:Cremers:2020}+\cite[Prop. 3.23.(v).(b)]{Bauschke:Borwein:Combettes:2003}}\footnote{While \cite[Lem. 3.4]{Laude:Ochs:Cremers:2020} is stated and proved for $X=\RR^n$, its extension from $\RR^n$ to reflexive $(X,\n{\cdot}_X)$ is straightforward, with exactly the same proof, due to \cite[Prop. 3.23.(v).(b)]{Bauschke:Borwein:Combettes:2003}.}%
\begin{equation}
\rprox^{D_\Psi}_{\lambda,f}(x)=\DG\Psi^\lfdual\circ\lprox^{D_{\Psi^\lfdual}}_{\lambda,f\circ\DG\Psi^\lfdual}\circ\DG\Psi(x)\;\;\forall x\in\intefd{\Psi}.
\label{eqn.prox.left.right}
\end{equation}
\end{enumerate}
\end{proposition}

\begin{proposition}\label{prop.proj.prox.res}
Let $(X,\n{\cdot}_X)$ be a Banach space, let $\Psi\in\pclg(X,\n{\cdot}_X)$, and let $\varnothing\neq K\subseteq X$ be such that $K\cap\efd(\Psi)\neq\varnothing$ and ($K\cap\efd(\Psi)\subseteq\intefd{\Psi}$ or $K\cap\efd(\partial\Psi)\subseteq\intefd{\Psi}$ or $K\subseteq\intefd{\Psi}$ or $\efd(\Psi)$ is open or ($\intefd{\Psi}\cap K\neq\varnothing$ and $\Psi$ is essentially Gateaux differentiable)).
\begin{enumerate}[nosep,label=(\roman*)]
\item\label{prop.proj.prox.res.i} If $\lambda\in\,]0,\infty[$, $f\in\pcl(X,\n{\cdot}_X)$, and $K=\efd(f)$, then $\lprox^{D_\Psi}_{\lambda,f}=\lres^\Psi_{\lambda\partial f}$ \textup{\cite[Props. 3.22.(ii).(a), 3.23]{Bauschke:Borwein:Combettes:2003}}.
\item\label{prop.proj.prox.res.ii} If $K$ is closed and convex, and $K\cap\intefd{\Psi}\neq\varnothing$, then $\LPPP^{D_\Psi}_K=\lprox^{D_\Psi}_{1,\iota_K}=\lres^\Psi_{\partial\iota_K}$ with $\Fix(\lres^{\Psi}_{\partial\iota_K})=K\cap\intefd{\Psi}$ \textup{\cite[Props. 3.32, 3.33]{Bauschke:Borwein:Combettes:2003}}.
\end{enumerate}
\end{proposition}

\begin{proposition}\label{prop.res.pythagorean}
Let $(X,\n{\cdot}_X)$ be a Banach space, let $\Psi\in\pclg(X,\n{\cdot}_X)$, and let $T:X\ra 2^{X^\star}$ be monotone. Then:
\begin{enumerate}[nosep,label=(\roman*)]
\item\label{prop.res.pythagorean.i} $\lres^\Psi_{\lambda T}\in\lfn(\Psi,X)$ with  $\efd(\lres^\Psi_{\lambda T})\subseteq\intefd{\Psi}\supseteq\ran(\lres^\Psi_{\lambda T})$ and $\Fix(\lres^\Psi_{\lambda T})=\intefd{\Psi}\cap T^\inver(0)$ \textup{\cite[Prop. 3.8.(i)--(iii),(iv).(a)]{Bauschke:Borwein:Combettes:2003}}\footnote{\label{footnote.lambda.T.independence}This reference assumes $\lambda=1$, however the proofs of the corresponding properties do not change under generalisation to $\lambda\in\,]0,\infty[$, since if $T:X\ra 2^{X^\star}$ is monotone, then $\lambda T$ is monotone, $\efd(T)=\efd(\lambda T)$, and $(\lambda T)^\inver(0)=T^\inver(0)$.};
\item\label{prop.res.pythagorean.ii} if $\ran(\DG\Psi)\subseteq\ran(\DG\Psi+T)$ and $\Psi$ is strictly convex on $\intefd{\Psi}$, then \textup{\cite[Prop. 3.8.(iv).(b)--(c)]{Bauschke:Borwein:Combettes:2003}} $\lres^\Psi_T$ is single-valued on $\efd(\lres^\Psi_T)$, $\Fix(\lres^\Psi_T)$ is convex, and \textup{\cite[Lem. 1]{Eckstein:1993} \cite[Prop. 3.3.(i)]{Bauschke:Borwein:Combettes:2003}}
\begin{equation}
D_\Psi(x,y)\geq D_\Psi(x,\lres^\Psi_T(y))+D_\Psi(\lres^\Psi_T(y),y)\;\;\forall(x,y)\in\Fix(\lres^\Psi_T)\times\intefd{\Psi};
\label{eqn.resolvent.pythagorean.theorem}
\end{equation}
\item\label{prop.res.pythagorean.iii} if $(X,\n{\cdot}_X)$ is reflexive, $\lambda\in\,]0,\infty[$, $T$ is maximally monotone with $\efd(T)\subseteq\intefd{\Psi}$, $\Psi$ is Euler--Legendre, and $\efd(\Psi^\lfdual)=X^\star$, then $\lres^\Psi_{\lambda T}$ is single-valued on $\efd(\lres^\Psi_{\lambda T})$, and \eqref{eqn.resolvent.pythagorean.theorem} holds for $\lres^\Psi_T$ replaced by $\lres^\Psi_{\lambda T}$ \textup{\cite[Prop. 3.13.(iv).(b)]{Bauschke:Borwein:Combettes:2003}};
\item\label{prop.res.pythagorean.iv} if $(X,\n{\cdot}_X)$ is reflexive, $\lambda\in\,]0,\infty[$, $f\in\pcl(X,\n{\cdot}_X)$, $\Psi$ is Euler--Legendre, and $\intefd{\Psi}\cap\efd(f)\neq\varnothing$, then $\lprox^{D_\Psi}_{\lambda,f}$ is single-valued on $\efd(\lprox^{D_\Psi}_{\lambda,f})=\intefd{\Psi}$ and satisfies \eqref{eqn.resolvent.pythagorean.theorem} with $\lres^\Psi_T$ replaced by $\lprox^{D_\Psi}_{\lambda,f}$, and with $\Fix(\lprox^{D_\Psi}_{\lambda,f})=\intefd{\Psi}\cap\arginff{x\in X}{f(x)}$ \textup{\cite[Props. 3.21.(vi), 3.22.(ii).(b), 3.23.(v).(b), Cor. 3.25]{Bauschke:Borwein:Combettes:2003}};
\item\sloppy\label{prop.res.pythagorean.v} if $(X,\n{\cdot}_X)$ is reflexive, $\lambda\in\,]0,\infty[$, $\intefd{\Psi}\cap\efd(T)\neq\varnothing$, $\Psi$ is strictly convex on $\intefd{\Psi}$ and Euler--Legendre, then $\efd(\rres^\Psi_{\lambda T})\subseteq\intefd{\Psi^\lfdual}\supseteq\ran(\rres^\Psi_{\lambda T})$, $\Fix(\rres^\Psi_{\lambda T})=\DG\Psi(\intefd{\Psi}\cap T^\inver(0))$, $\rres^\Psi_{\lambda T}$ is single-valued on $\efd(\rres^\Psi_{\lambda T})$, and $\rres^\Psi_{\lambda T}\in\rfn(\Psi^\lfdual,X^\star)$ \textup{\cite[Lem. 5.4, Prop. 5.5]{MartinMarquez:Reich:Sabach:2012}}\textup{\footref{footnote.lambda.T.independence}}.
\end{enumerate}
\end{proposition}

\begin{definition}\label{def.adaptedness}
For reflexive $(X,\n{\cdot}_X)$, $\Psi\in\pclg(X,\n{\cdot}_X)$, and $\varnothing\neq K\subseteq\intefd{\Psi}$, $\Psi$ will be called:
\begin{enumerate}[nosep,label=(\roman*)]
\item\label{def.adaptedness.i} \df{LSQ-adapted} on $K$ if{}f, for any convex and closed $\varnothing\neq C\subseteq K$, $\LPPP^{D_\Psi}_C:K\ra\intefd{\Psi}$ belongs to $\lsq(\Psi,K)$, with $\aFix(\LPPP^{D_\Psi}_C)=\Fix(\LPPP^{D_\Psi}_C)=C$;
\item\label{def.adaptedness.ii} \df{RSQ-adapted} on $K$ if{}f, for any $\varnothing\neq C\subseteq K$ such that $\DG\Psi(C)$ is convex and closed, $\RPPP^{D_\Psi}_C:K\ra\intefd{\Psi}$ belongs to $\rsq(\Psi,K)$, with $\aFix(\RPPP^{D_\Psi}_C)=\Fix(\RPPP^{D_\Psi}_C)=C$.
\end{enumerate}
\end{definition}

\begin{corollary}\label{cor.proj.adaptedness}
Let $(X,\n{\cdot}_X)$ be reflexive, $\Psi\in\pclg(X,\n{\cdot}_X)$, $\varnothing\neq K\subseteq\intefd{\Psi}$.
\begin{enumerate}[nosep,label=(\roman*)]
\item\label{cor.proj.adaptedness.i} If $\Psi$ is LSQ-preadapted on $K$, then:
\begin{enumerate}[nosep,label=\alph*)]
\item\label{cor.proj.adaptedness.i.a} $\Psi$ is LSQ-adapted on $K$;
\item\label{cor.proj.adaptedness.i.b} if $K=X$, then $\lres_T^\Psi\in\lsq(\Psi,X)$ for any monotone $T:X\ra2^{X^\star}$.
\end{enumerate}
\item\label{cor.proj.adaptedness.ii} If $\Psi$ is RSQ-preadapted on $K$, then:
\begin{enumerate}[nosep,label=\alph*)]
\item\label{cor.proj.adaptedness.ii.a} if $\DG\Psi$ is bounded on bounded subsets of $\intefd{\Psi}$, and $\DG\Psi^\lfdual$ is bounded on bounded subsets of $\intefd{\Psi^\lfdual}$, then $\Psi$ is RSQ-adapted;
\item\label{cor.proj.adaptedness.ii.b} if $K=X$, $T:X\ra 2^{X^\star}$ is monotone, $\intefd{\Psi}\cap\efd(T)\neq\varnothing$, $\Psi$ is strictly convex on $\intefd{\Psi}$ and Euler--Legendre, then $\rres_T^\Psi\in\rsq(\Psi^\lfdual,X^\star)$.
\end{enumerate}
\end{enumerate}
\end{corollary}
\begin{proof}
\begin{enumerate}[nosep]
\item[(i)] Follows from Propositions \ref{prop.res.pythagorean}.\ref{prop.res.pythagorean.i} and \ref{prop.proj.prox.res}.\ref{prop.proj.prox.res.ii}.
\item[(ii)] Follows from \ref{cor.proj.adaptedness.i}, Proposition \ref{prop.Luo.Meng.Wen.Yao}, \cite[Fact 6.2]{MartinMarquez:Reich:Sabach:2013:BSN}\footnote{This result disproves an earlier claim in \cite[Prop. 2.7.(iv)]{MartinMarquez:Reich:Sabach:2012}, which stated the same consequence, but without assuming that $\DG\Psi$ (resp., $\DG\Psi^\lfdual$) is bounded on bounded subsets of $\intefd{\Psi}$ (resp., $\intefd{\Psi^\lfdual}$), and without an explicit proof.}, and Proposition \ref{prop.res.pythagorean}.\ref{prop.res.pythagorean.v}. 
\end{enumerate}
\end{proof}

\begin{definition}\label{def.lsq.rsq.adapted}
If $(X,\n{\cdot}_X)$ is reflexive, and $\Psi\in\pclg(X,\n{\cdot}_X)$ together with $\varnothing\neq C\subseteq K\subseteq\intefd{\Psi}$ satisfy the conditions of Corollary \ref{cor.proj.adaptedness}.\ref{cor.proj.adaptedness.i}.\ref{cor.proj.adaptedness.i.a} (resp., \ref{cor.proj.adaptedness}.\ref{cor.proj.adaptedness.ii}.\ref{cor.proj.adaptedness.ii.a}), then $\LPPP^{D_\Psi}_C$(resp., $\RPPP^{D_\Psi}_C$) will be said to be \df{adapted}.
\end{definition}

\begin{remark}\label{rem.quasinonexpansive}
\begin{enumerate}[nosep,label=(\roman*)]
\item\label{rem.quasinonexpansive.i} In general, without some additional conditions, neither $\LPPP^{D_\Psi}_K$ nor $\RPPP^{D_\Psi}_K$ will belong to $\cn(\Psi,K)$. Consider $\Psi$ given in Example \ref{ex.EL.totalconvex.Rn.Psi}.\ref{ex.EL.totalconvex.Rn.Psi.ii}. If $(\RR^n)^+_1:=\{x\in(\RR^n)^+\mid\n{x}_1=1\}$, and $x,y\in(\RR^n)^+_0$ such that $\n{x}_1=\n{y}_1<1$, then $D_\Psi(\LPPP^{D_\Psi}_{(\RR^n)^+_1}(x),\LPPP^{D_\Psi}_{(\RR^n)^+_1}(y))>D_\Psi(x,y)$, which implies that $\LPPP^{D_\Psi}_{(\RR^n)^+_1}$ is not an element of $\cn(\Psi)$ \cite[2.1.7]{Butnariu:Iusem:2000}. Furthermore, there is no $\hat{\Psi}:\RR^n\ra\,]-\infty,\infty]$ such that $D_\Psi(x,y)=D_{\hat{\Psi}}(y,x)$ $\forall x,y\in\intefd{\Psi}$ \cite[Prop. 3.3]{Bauschke:Noll:2002}, hence left and right $D_\Psi$-projections not only do not coincide, but also have to be considered as a priori independent notions.
\item\label{rem.quasinonexpansive.ii} The difference in the strength of assumptions imposed to obtained analogous behaviour of left and right $D_\Psi$-projections/proximal maps/resolvents is caused by the limitation of the current  knowledge about right $D_\Psi$-projections: in practice, all of known results in the reflexive Banach space setting are obtained by the Euler--Legendre transformation of the corresponding properties of their left $D_\Psi$ variants. The most blatant manifestation of this approach is the use of Euler--Legendre transformation of $\lres^{\Psi}_{\lambda T}$ for the purpose of a \textit{definition} of $\rres^{\Psi}_{\lambda T}$. However, it is important to remember, that the above approach does not cover the whole possible range of the right Va\u{\i}nberg--Br\`{e}gman theory. (Cf. also Remarks \ref{remark.psi.results}.\ref{remark.psi.results.v}, \ref{remark.categorical}.\ref{remark.categorical.i}, and \ref{remark.categorical}.\ref{remark.categorical.v} for a further discussion of this theme.)
\item\label{rem.quasinonexpansive.iii} For $(X,\n{\cdot}_X)$ given by the Hilbert space, and $\Psi=\frac{1}{2}\n{\cdot}^2_X$, $\lres^\Psi_T=\res^\Psi_T:=(T+\id_X)^\inver$ was introduced in \cite[Lem. 2]{Zarantonello:1960} and \cite[Cor. (p. 344)]{Minty:1962}, $\lprox^{D_\Psi}_{1,f}=\rprox^{D_\Psi}_{1,f}=\prox^{d_{\n{\cdot}_{\H}}}_{1,f}:y\mapsto\arginff{x\in\efd(f)}{f(x)+\frac{1}{2}\n{x-y}_X^2}$ was introduced in \cite[p. 2897]{Moreau:1962:prox} \cite[p. 1069]{Moreau:1963:prox}, while the corresponding $\prox^{d_{\n{\cdot}_{\H}}}_{\lambda,f}=\lprox^{D_\Psi}_{\lambda,f}=\rprox^{D_\Psi}_{\lambda,f}$ for $\lambda\in\,]0,\infty[$ appeared in \cite[p. 539]{Attouch:1977}. For a Banach space $(X,\n{\cdot}_X)$ and $\Psi=\frac{1}{2}\n{\cdot}_X^2$, $\lres^\Psi_{\lambda T}$ was introduced first in \cite[Eqn. (1)]{Kassay:1985}. 
\item\label{rem.quasinonexpansive.iv} For $(X,\n{\cdot}_X)$ given by the Hilbert space $(\H,\s{\cdot,\cdot}_\H)$, and $\Psi=\frac{1}{2}\n{\cdot}^2_\H$, the definition of $\lfn(\Psi,K)$ operators takes the form \cite[Def. 6]{Browder:1967}
\begin{equation}
\s{x-y,T(x)-T(y)}_\H\geq\n{T(x)-T(y)}_\H^2\;\;\forall x,y\in K.
\end{equation}
\end{enumerate}
\end{remark}
\subsection{Quasigauge functions and Banach space geometry}\label{section.gauge.functions}
\subsubsection{Banach space geometry}
A Banach space $(X,\n{\cdot}_X)$ is said to satisfy the \df{Radon--Riesz--Shmul'yan property}\footnote{In the literature it is usually called either the \df{Radon--Riesz property} \cite[\S3]{Leonard:1976}, or an \df{$H$-property} \cite[Def. 2 (Ch. 7)]{Day:1958}, or the \df{Kadec--Klee property} \cite[p. 119]{Diestel:1975}. It was first considered by Radon \cite[p. 1363]{Radon:1913} and Riesz \cite[p. 182]{Riesz:1929} for $(L_{1/\gamma}(\X,\mu),\n{\cdot}_{1/\gamma})$, $\gamma\in\,]1,\infty[$. For the general Banach spaces it was first introduced and studied by Shmul'yan in \cite[Thm. 5]{Shmulyan:1939:geometrical}. Kadec \cite[p. 13]{Kadec:1958} explicitly refers to this work of Shmul'yan, while Klee \cite[pp. 25--27]{Klee:1960} explicitly refers to this paper of Kadec.} \cite[p. 1363]{Radon:1913} \cite[p. 182]{Riesz:1929} \cite[Thm. 5]{Shmulyan:1939:geometrical} if{}f, for any $\{x_n\in X\mid n\in\NN\}$, convergence of $x_n$ to $x\in X$ in weak topology together with $\lim_{n\ra\infty}\n{x_n}_X=\n{x}_X$ implies $\lim_{n\ra\infty}\n{x_n-x}_X=0$. A Banach space $(X,\n{\cdot}_X)$ is called: \df{strictly convex} \cite[p. 39]{Frechet:1925:affines} \cite[p. 404]{Clarkson:1936} \cite[p. 178]{Krein:1938} if{}f
\begin{equation}
	\forall x,y\in X\setminus\{0\}\;\;\n{x+y}_X=\n{x}_X+\n{y}_X\;\;\limp\;\;\exists\lambda>0\;\;y=\lambda x,
\end{equation}
which is equivalent \cite[Thm. 1.(1)]{Ruston:1949} with 
\begin{equation}
	\forall x,y\in S(X,\n{\cdot}_X)\;\;x\neq y\;\;\limp\;\;\textstyle\frac{1}{2}\n{x+y}_X<1;
\end{equation}
\df{Gateaux differentiable} \cite[p. 78]{Mazur:1933} if{}f $\n{\cdot}_X$ is Gateaux differentiable at every $x\in X\setminus\{0\}$ (or, equivalently, at every $x\in S(X,\n{\cdot}_X)$), which is equivalent \rpkmark{\cite{Mazur:1933,Mazur:1933:schwache}} to each point of $S(X,\n{\cdot}_X)$ having a unique supporting hyperplane\footnote{More specifically, if $x\in X\setminus\{0\}$, then $\DG\n{x}_X$ exists if{}f $\n{x}_XS(X,\n{\cdot}_X)$ has a unique supporting hyperplane at $x$.}, i.e. 
\[
	\forall x\in S(X,\n{\cdot}_X)\;\;\exists! y(x)\in X^\star\;\;(y(x))(x)=\n{y(x)}_{X^\star}\n{x}_X=\n{x}_X;
\]
\df{uniformly convex} \cite[Def. 1]{Clarkson:1936} if{}f
\begin{equation}
	\forall\epsilon_1>0\;\;\exists\epsilon_2>0\;\;\forall x,y\in S(X,\n{\cdot}_X)\;\;\n{x-y}_X\geq\epsilon_1\;\;\limp\;\;\textstyle\frac{1}{2}\n{x+y}_X\leq1-\epsilon_2;
\end{equation}
\df{uniformly Fr\'{e}chet differentiable} if{}f any of the equivalent properties hold:
\begin{enumerate}[nosep,label=(\roman*)]
\item\label{ufd.prop.i} \cite[Thm. 77.1]{Nakano:1951:book} $\forall\epsilon_1>0$ $\exists\epsilon_2>0$ $\forall x,y\in X$ $(\n{x}_X=1$, $\n{y}_X\leq\epsilon_2)$ $\limp$ $\n{x+y}_X+\n{x-y}_X\leq2+\epsilon_1\n{y}_X$,
\item\label{ufd.prop.ii} \cite[p. 375]{Day:1944} $\forall\epsilon_1>0$ $\exists\epsilon_2>0$ $\forall x,y\in S(X,\n{\cdot}_X)$ $\n{x-y}_X\leq\epsilon_1$ $\limp$ $1-\frac{1}{2}\n{x+y}_X\leq\epsilon_2\n{x-y}_X$,
\item\label{ufd.prop.iii} \cite[p. 645]{Shmulyan:1940:diff} the limit $\DG\n{h}_X(x)$ exists in uniform convergence as $x$ and $h$ vary over $S(X,\n{\cdot}_X)$;
\end{enumerate}
\df{Fr\'{e}chet differentiable} \cite[p. 129]{Mazur:1933:schwache} if{}f $\n{\cdot}_X$ is Fr\'{e}chet differentiable at every $x\in X\setminus\{0\}$ (or, equivalently, at every $x\in S(X,\n{\cdot}_X)$), i.e. for any fixed $x\in X\setminus\{0\}$ (or $x\in S(X,\n{\cdot}_X)$) $\DG\n{h}_X(x)$ exist in uniform convergence $\forall h\in S(X,\n{\cdot}_X)$; 
\df{locally uniformly convex} \cite[Def. 0.2]{Lovaglia:1955} if{}f
\begin{equation}
\forall\epsilon_1>0\;\;\forall x\in S(X,\n{\cdot}_X)\;\;\exists\epsilon_2>0\;\;\forall y\in S(X,\n{\cdot}_X)\;\;\n{x-y}_X\geq\epsilon_1\;\limp\;\textstyle\frac{1}{2}\n{x+y}_X\leq1-\epsilon_2;
\end{equation}
\df{$r$-uniformly convex} for $r\in[2,\infty[$ \cite[Def. 2)]{Assouad:1975} \cite[p. 468]{Ball:Carlen:Lieb:1994} if{}f
\begin{equation}
\exists\lambda>0\;\forall x,y\in X\;\;\n{x+y}_X^r+\n{x-y}_X^r\geq2(\n{x}_X^r+\n{\lambda^{-1}y}_X^r),
\label{eqn.r.unif.convex}
\end{equation}
or, equivalently, if{}f \cite[Def. 4.1, Thm. 4.3]{Reich:Xu:2003}
\begin{equation}
\exists\lambda>0\;\forall x,y\in X\;\;\n{x+y}_X^r+\lambda\n{x-y}_X^r\leq2^{r-1}(\n{x}_X^r+\n{y}_X^r),
\label{eqn.r.unif.convex.reich.xu}
\end{equation}
or, equivalently \textup{\cite[Thm. 2.5]{Woyczynski:1975}} \textup{\cite[Prop. 7]{Ball:Carlen:Lieb:1994}}, if{}f $\exists c>0$ $\delta(X,\n{\cdot}_X;\epsilon)\geq c\epsilon^r$, where \cite[Def. 1]{Clarkson:1936} \cite[p. 375]{Day:1944}
\begin{equation}
\rpkmarkmath{]0,2]}\ni\epsilon\mapsto\delta(X,\n{\cdot}_X;\epsilon):=\inf\left\{1-\textstyle\frac{1}{2}\n{x+y}_X\mid x,y\in S(X,\n{\cdot}_X),\,\n{x-y}_X\rpkmarkmath{\geq}\epsilon\right\}\in[0,1];
\end{equation}
\df{$r$-uniformly Fr\'{e}chet differentiable} for $r\in\,]1,2]$ \cite[Def. 1)]{Assouad:1975} \cite[p. 468]{Ball:Carlen:Lieb:1994} if{}f
\begin{equation}
\exists\lambda>0\;\forall x,y\in X\;\;\n{x+y}_X^r+\n{x-y}_X^r\leq2(\n{x}_X^r+\n{\lambda y}_X^r),
\label{eqn.r.unif.frechet}
\end{equation}
or, equivalently \cite[Thm. 2.2]{Reich:Xu:2003}, if{}f \cite[Def. 1]{Trubnikov:1987}
\begin{equation}
\exists\lambda>0\;\forall x,y\in X\;\;\n{x+y}_X^r+\lambda\n{x-y}_X^r\geq2^{r-1}(\n{x}_X^r+\n{y}_X^r),
\label{eqn.r.unif.frechet.trubnikov}
\end{equation}
or, equivalently \textup{\cite[Thm. 2.4]{HoffmannJoergensen:1974}} \textup{\cite[Lem. 1]{Assouad:1974}} \textup{\cite[Prop. 7]{Ball:Carlen:Lieb:1994}}, if{}f $\exists c>0$ $\rho(X,\n{\cdot}_X;\epsilon)\leq c\epsilon^r$, where \cite[p. 241]{Lindenstrauss:1963}
\begin{align}
\rpkmarkmath{]0,\infty[\,}\ni\epsilon\mapsto\rho(X,\n{\cdot}_X;\epsilon):=&\sup\left\{\textstyle\frac{1}{2}(\n{x+y}_X+\n{x-y}_X)-1\mid x,y\in X,\,\n{x}=1,\,\n{y}_X=\epsilon\right\}\nonumber\\
=&\sup\left\{\textstyle\frac{1}{2}(\n{x+\epsilon y}_X+\n{x-\epsilon y}_X)-1\mid x,y\in S(X,\n{\cdot}_X)\right\}\in\RR^+.
\end{align}
\subsubsection{Quasigauge $\varphi$}
A strictly increasing and continuous function $\varphi:\RR^+\ra\RR^+$ such that $\varphi(0)=0$ and $\lim_{t\ra\infty}\varphi(t)=\infty$ \cite[p. 407]{Beurling:Livingston:1962}\footnote{Under a weakening of `strictly increasing' to `nondecreasing', the corresponding function, called a \df{$\boldsymbol{\varphi}$-function}, is used in the Orlicz space theory since \cite[p. 349]{Matuszewska:1960}. In the context of duality map, this weakening, joined with dropping the condition $\lim_{t\ra\infty}\varphi(t)=\infty$, has been considered in \cite[Def. (p. 200)]{Asplund:1967}. All of these functions are special cases of a quasigauge.} will be called a \df{gauge} \cite[p. 348]{Browder:1965:multivalued}. A nondecreasing function $\varphi:\RR^+\ra[0,\infty]$ satisfying $\varphi\not\equiv0$ and $\exists u\in\RR^+$ $\lim_{t\ra^+u}\varphi(t)<\infty$ \cite[p. 367]{Zalinescu:1983} will be called a \df{quasigauge}. For any Banach space $(X,\n{\cdot}_X)$ and any quasigauge $\varphi$, a \df{duality map} on $(X,\n{\cdot}_X)$ is defined by \cite[Def. (p. 200)]{Asplund:1967} \cite[Eqn. (A.4)]{Zalinescu:1983}
\begin{equation}
j_\varphi:X\ni x\mapsto\{y\in X^\star\mid\duality{x,y}_{X\times X^\star}=\n{x}_X\n{y}_{X^\star},\;\lim_{t\ra^-\n{x}_X}\varphi(t)\leq\n{y}_{X^\star}\leq\lim_{t\ra^+\n{x}_X}\varphi(t)\}\subseteq X^\star,
\label{eqn.j.quasigauge.varphi.definition}
\end{equation}
with the convention $\lim_{t\ra^-0}\varphi(t)\equiv\varphi(0)$. For any gauge $\varphi$, \eqref{eqn.j.quasigauge.varphi.definition} turns into \cite[p. 407]{Beurling:Livingston:1962}
\begin{equation}
j_\varphi:X\ni x\mapsto\{y\in X^\star\mid\duality{x,y}_{X\times X^\star}=\n{x}_X\n{y}_{X^\star},\;\n{y}_{X^\star}=\varphi(\n{x}_X)\}\subseteq X^\star,
\label{eqn.j.varphi.definition}
\end{equation}
which is said to be \df{normalised}, and denoted as $j$, if{}f $\varphi(t)=t$ \cite[p. 35]{Klee:1953} \cite[p. 211]{Vainberg:1961}. For any quasigauge $\varphi$, let $j_\varphi^\star$ denote a duality map on $(X^\star,\n{\cdot}_{X^\star})$. Then, for any gauge \cite[Prop. 3]{DePrima:Petryshyn:1971}
\begin{equation}
j_\varphi=(j_{\varphi^{\inver}}^\star)^{\inver}\circ J_X\mbox{ and }(j_\varphi)^{\inver}= (J_X)^{\inver}\circ j_{\varphi^{\inver}}^\star.
\label{eqn.j.j.star.inverse}
\end{equation}
If $\varphi_1$ and $\varphi_2$ are gauges, then \cite[Prop. I.3.1.g)]{Cioranescu:1974}
\begin{equation}
	\varphi_2(\n{x}_X)j_{\varphi_1}(x)=\varphi_1(\n{x}_X)j_{\varphi_2}(x)\;\forall x\in X.
\label{eqn.two.gauges}
\end{equation}
For any quasigauge $\varphi$, 
\begin{equation}
X\ni x\mapsto\Psi_\varphi(x):=\int_0^{\n{x}_X}\dd t\,\varphi(t)\in\RR^+
\label{eqn.Psi.varphi}
\end{equation}
satisfies $\Psi_\varphi\in\pcl(X,\n{\cdot}_X)$ \cite[p. 281]{Zalinescu:1984}, $\Psi_\varphi(0)=0$ \cite[p. 368]{Zalinescu:1983}, as well as \cite[p. 200]{Asplund:1967} \cite[Prop. A.3.(i)]{Zalinescu:1983}
\begin{equation}
j_\varphi(x)=\partial\Psi_\varphi(x)\;\forall x\in X\setminus\{0\}\;\;\mbox{with}\;\;j_\varphi(0)=0.
\label{eqn.asplund}
\end{equation}
If $\varphi$ is a gauge, then $\Psi_\varphi$ is continuous (cf., e.g., \cite[Thm. 3.7.2.(i)]{Zalinescu:2002}). 

As an example, for $\alpha\in\,]0,\infty[$ and $\beta\in\,]0,1[$, $\varphi_{\alpha,\beta}(t):=\frac{1}{\alpha}t^{{1/\beta}-1}$ is a gauge. Application of \eqref{eqn.Psi.varphi} and \eqref{eqn.two.gauges} gives
\begin{equation}
\Psi_{\varphi_{\alpha,\beta}}(x)=\textstyle\frac{\beta}{\alpha}\n{x}_X^{1/\beta},\;\;j_{\varphi_{\alpha,\beta}}(x)=\textstyle\frac{1}{\alpha}\n{x}_X^{1/\beta-2}j(x)\;\;\forall x\in X.
\label{eqn.varphi.alpha.beta.Psi.j.varphi}
\end{equation}

For any nondecreasing function $f:\RR^+\ra[0,\infty]$, its \df{right} (resp., \df{left}) \df{inverse} function reads 
\begin{align}
\RR^+\ni t\mapsto f^\meet(t)&:=\sup\{s\geq0\mid f(s)\leq t\}=\inf\{s\geq0\mid f(s)>t\}\in[0,\infty]\\
\mbox{(resp., }\RR^+\ni t\mapsto f^\join(t)&:=\sup\{s\geq0\mid f(s)<t\}=\inf\{s\geq0\mid f(s)\geq t\}\in[0,\infty]\mbox{)}.
\end{align}
In general, $f^\join\leq f^\meet$ (cf., e.g., \cite[Eqn. (8)]{Klement:Mesiar:Pap:1999}). If $f:\RR^+\ra[0,\infty]$ is strictly increasing and continuous on $\RR^+$, then $f^\meet=f^{\inver}=f^\join$, where $f^{\inver}$ is an inverse function in the standard sense, i.e. $f\circ f^{\inver}=\id_{\RR^+}=f^{\inver}\circ f$ (cf., e.g., \cite[p. 5]{Klement:Mesiar:Pap:1999} and \cite[Rem. 1.(1)]{Embrechts:Hofert:2013}).

For any $f:\RR\ra[0,\infty]$, its \df{Young--Birnbaum--Orlicz dual} \cite[p. 226]{Young:1912} \cite[Eqn. (5)]{Birnbaum:Orlicz:1931} reads
\begin{equation}
\RR\ni y\mapsto f^\young(y):=\sup\{x\ab{y}-f(x)\mid x\geq0\}\in[0,\infty].
\end{equation}
If $f:\RR^+\ra[0,\infty]$ is proper, convex on $\efd(f)$, satisfies $f(0)=0$, $f\not\equiv0$, and is left continuous at $\sup(\efd(f))$ (i.e. $\lim_{t\ra^-\sup(\efd(f))}f(t)=\sup(\efd(f))$), then it is called a \df{Young function} \cite[\S2]{Zaanen:1949}. If $f$ is a Young function, then $f^{\young\young}=f^\young$ (cf., e.g., \cite[Prop. 2.4.5]{Harjulehto:Haestoe:2019}).
\subsubsection{Characterisation of Banach space geometry by $\Psi_\varphi$}
\begin{remark} 
In order to stress that the properties of $\Psi_\varphi$ depend on a choice of the specific norm $\n{\cdot}_X$ on $X$, while avoiding the notation ``$\Psi_{\varphi,\n{\cdot}_X}$'' (in order to avoid dealing with such terms as ``$\LPPP^{D_{\ell_\orlicz,\Psi_{\varphi_{\alpha,\beta},\n{\cdot}_{\orlicz,p}}}}_C$''), we will sometimes use the notation $\Psi_\varphi:(X,\n{\cdot}_X)\ra\RR^+$.
\end{remark}

\begin{proposition}\label{prop.psi.varphi.geometry}
Let $(X,\n{\cdot}_X)$ be a Banach space, $\varphi$ a gauge, and  $\Psi_\varphi:(X,\n{\cdot}_X)\ra\RR^+$. Then:
\begin{enumerate}[nosep,label=(\roman*)]
\item\label{prop.psi.varphi.geometry.i} $(X,\n{\cdot}_X)$ is a Hilbert space if{}f $j$ is linear \textup{\cite[Prop. 2]{Golomb:Tapia:1972}};
\item\label{prop.psi.varphi.geometry.ii} $(X,\n{\cdot}_X)$ is strictly convex:
\begin{enumerate}[nosep,label=\alph*)]
\item\label{prop.psi.varphi.geometry.ii.a} if{}f $\Psi_\varphi$ is strictly convex \textup{\cite[Prop. A.3.(iii)]{Zalinescu:1983}};
\item\label{prop.psi.varphi.geometry.ii.b} if{}f $j_\varphi$ is strictly monotone on $X$ \textup{\cite[Thm. 1]{DePrima:Petryshyn:1971}} (cf. \textup{\cite[Thm. 1]{Petryshyn:1970}} for $\varphi(t)=t$);
\end{enumerate}
\item\label{prop.psi.varphi.geometry.iii} $(X,\n{\cdot}_X)$ is Gateaux differentiable:
\begin{enumerate}[nosep,label=\alph*)]
\item\label{prop.psi.varphi.geometry.iii.a} if{}f $\Psi_\varphi$ is Gateaux differentiable on $X$ \textup{\cite[Prop. A.3.(ii)]{Zalinescu:1983}};
\item\label{prop.psi.varphi.geometry.iii.b} if{}f $j_\varphi$ is single-valued on $X$ \textup{\cite[Prop. A.3.(ii)]{Zalinescu:1983}} (cf. \textup{\cite[Rems. 5, 8]{Mazur:1933}} \textup{\cite[p. 130]{Mazur:1933:schwache}} \textup{\cite[Cor. 4.8]{Cudia:1964}} for $\varphi(t)=t$);
\end{enumerate}
\item\label{prop.psi.varphi.geometry.iv} $(X,\n{\cdot}_X)$ is Fr\'{e}chet differentiable:
\begin{enumerate}[nosep,label=\alph*)]
\item\label{prop.psi.varphi.geometry.iv.a} if{}f $\Psi_\varphi$ is Fr\'{e}chet differentiable on $X$ \textup{\cite[Prop. 3.7.4.(ii)]{Zalinescu:2002}};
\item\label{prop.psi.varphi.geometry.iv.b} if{}f $j_\varphi$ is single-valued and norm-to-norm continuous on $X$ \textup{\cite[Thm. II.2.9]{Cioranescu:1974}} (cf. \textup{\cite[Cor. 4.12]{Cudia:1964}} for $\varphi(t)=t$);
\end{enumerate}
\item\label{prop.psi.varphi.geometry.v} $(X,\n{\cdot}_X)$ is uniformly Fr\'{e}chet differentiable:
\begin{enumerate}[nosep,label=\alph*)]
\item\label{prop.psi.varphi.geometry.v.a} if{}f $\Psi_\varphi$ is uniformly Fr\'{e}chet differentiable on bounded subsets of $X$ \textup{\cite[Thm. 3.7.4.(iii)]{Zalinescu:2002}};
\item\label{prop.psi.varphi.geometry.v.b} if{}f $j_\varphi$ is single-valued and uniformly continuous on bounded subsets of $X$ \textup{\cite[Thm. II.2.10]{Cioranescu:1974}} (cf. \textup{\cite[Cor. 4.12]{Cudia:1964}} for $\varphi(t)=t$);
\end{enumerate}
\item\label{prop.psi.varphi.geometry.vi} $(X,\n{\cdot}_X)$ is uniformly convex:
\begin{enumerate}[nosep,label=\alph*)]
\item\label{prop.psi.varphi.geometry.vi.a} if{}f $\Psi_\varphi$ is uniformly convex on bounded subsets of $X$ \textup{\cite[Thm. 4.1.(ii)]{Zalinescu:1983}};
\item\label{prop.psi.varphi.geometry.vi.b} if{}f $\Psi_\varphi^\lfdual$ is uniformly Fr\'{e}chet differentiable on bounded subsets of $X^\star$ \textup{\cite[Thm. 3.7.9.(iv)]{Zalinescu:2002}};
\item\label{prop.psi.varphi.geometry.vi.c} if{}f $(j_\varphi)^{\inver}$ is single-valued and uniformly continuous on bounded subsets of $X^\star$  \textup{\cite[Cor. 4.2]{Zalinescu:1983}};
\item\label{prop.psi.varphi.geometry.vi.d} if{}f $j$ is $f$-uniformly monotone on $B(X,\n{\cdot}_X)$ \textup{\cite[Thm. 1]{Pruess:1981}} \textup{\cite[Rem. 2]{Xu:Roach:1991}} \textup{\cite[Cor. 3.(iii)]{Xu:1991}};
\item\label{prop.psi.varphi.geometry.vi.e} if{}f \textup{\cite[Thm. 3]{Bynum:1971}} $\forall t\in\,]0,2]$
\begin{equation}
\inf\{\duality{x-y,v-w}_{X\times X^\star}\mid x,y\in S(X,\n{\cdot}_X),\;v\in j(x),\;w\in j(y),\;\n{x-y}_X\geq t\}>0;
\end{equation}
\end{enumerate}
\item\label{prop.psi.varphi.geometry.vii} $(X,\n{\cdot}_X)$ is $r$-uniformly convex for $r\in[2,\infty[$:
\begin{enumerate}[nosep,label=\alph*)]
\item\label{prop.psi.varphi.geometry.vii.a} if{}f $\Psi_\varphi$ with $\varphi(t)=rt^{r-1}$ is uniformly convex on $X$ \textup{\cite[Thm. 1]{Xu:1991}};
\item\label{prop.psi.varphi.geometry.vii.b} if{}f $j_{\widetilde{\varphi}}$ with $\widetilde{\varphi}(t)=t^{r-1}$ is $f$-uniformly monotone on $X$ with $f(t)=\widetilde{\varphi}(t)$ \textup{\cite[Cor. 1.(ii)]{Xu:1991}}.
\end{enumerate}
In particular, $(X,\n{\cdot}_X)$ is $2$-uniformly convex:
\begin{enumerate}[nosep,label=\alph*)]
\setcounter{enumii}{2}
\item\label{prop.psi.varphi.geometry.vii.c} if{}f $j$ is strongly monotone on $X$ \textup{\cite[p. 203]{Xu:Roach:1991}} (cf. \textup{\cite[Prop. 2.11]{Poffald:Reich:1986}});
\item\label{prop.psi.varphi.geometry.vii.d} if{}f $j^\star$ is single-valued and Lipschitz continuous on $X^\star$ \textup{\cite[Thm. (p. 62)]{Zemek:1991}};
\end{enumerate}
\item\label{prop.psi.varphi.geometry.viii} $(X,\n{\cdot}_X)$ is $r$-uniformly Fr\'{e}chet differentiable for $r\in\,]1,2]$:
\begin{enumerate}[nosep,label=\alph*)]
\item\label{prop.psi.varphi.geometry.viii.a} if{}f $\Psi_\varphi$ with $\varphi(t)=rt^{r-1}$ is uniformly Fr\'{e}chet differentiable on $X$ \textup{\cite[Thm. 10]{Shioji:1994}} (= \textup{\cite[Thm. 6.4]{Shioji:1995}}) \textup{\cite[Thm. 2.2]{Borwein:Guirao:Hajek:Vanderwerff:2009}};
\item\label{prop.psi.varphi.geometry.viii.b} if{}f $j_\varphi$ with $\varphi(t)=rt^{r-1}$ is single-valued and $(r-1)$-Lipschitz--H\"{o}lder continuous on $X$ \textup{\cite[Thm. 2.2]{Borwein:Guirao:Hajek:Vanderwerff:2009}};
\item\label{prop.psi.varphi.geometry.viii.c} if{}f $j_{\tilde{\varphi}}$ with $\tilde{\varphi}(t)=t^{r-1}$ is single-valued and $(r-1)$-Lipschitz--H\"{o}lder continuous on $X$ \textup{\cite[Thm. 2.4]{HoffmannJoergensen:1974}} \textup{\cite[Rem. 5]{Xu:Roach:1991}};
\item\label{prop.psi.varphi.geometry.viii.d} if{}f, for $\widetilde{\varphi}(t)=t^{r-1}$, \textup{\cite[Cor. 1']{Xu:1991}}
\begin{equation}
\exists c>0\;\forall x,y\in X\;\forall v\in j_{\widetilde{\varphi}}(x)\;\forall w\in j_{\widetilde{\varphi}}(y)\;\;\duality{x-y,v-w}_{X\times X^\star}\leq c\n{x-y}_X^r.
\end{equation}
\end{enumerate}
In particular, $(X,\n{\cdot}_X)$ is $2$-uniformly Fr\'{e}chet differentiable:
\begin{enumerate}[nosep,label=\alph*)]
\setcounter{enumii}{4}
\item\label{prop.psi.varphi.geometry.viii.e} if{}f $j$ is single-valued and Lipschitz continuous on $X$ \textup{\cite[Lem. 2.4.(iv)]{Fabian:Whitfield:Zizler:1983}} \textup{\cite[Thm. (p. 62)]{Zemek:1991}};
\item\label{prop.psi.varphi.geometry.viii.f} if{}f $j^\star$ is strongly monotone on $X$ \textup{\cite[Thm. (p. 62)]{Zemek:1991}};
\end{enumerate}
\item\label{prop.psi.varphi.geometry.ix} $(X,\n{\cdot}_X)$ is locally uniformly convex if{}f $\Psi_\varphi$ is uniformly convex at each $x\in X$ \textup{\cite[Thm. 4.1.(i)]{Zalinescu:1983}} (cf. \textup{\cite[p. 232]{Asplund:1967:averaged}} for $\varphi(t)=t$);
\item\label{prop.psi.varphi.geometry.x} $(X,\n{\cdot}_X)$ is reflexive if{}f $j_\varphi$ is surjective (i.e. $X^\star=\bigcup_{x\in X}\ran(j_\varphi(x))$) \textup{\cite[Thm. III.7]{deFigueiredo:1967}} (cf. also \textup{\cite[Prop. A.3.(iv)]{Zalinescu:1983}});
\item\label{prop.psi.varphi.geometry.xi} if $(X,\n{\cdot}_X)$ is reflexive, then ($(X,\n{\cdot}_X)$ is strictly convex and has the Radon--Riesz--Shmul'yan property) if{}f $\Psi_\varphi$ is totally convex) \textup{\cite[Thm. 3.1, Thm. 3.3]{Resmerita:2004}};
\item\label{prop.psi.varphi.geometry.xii} if $(X,\n{\cdot}_X)$ is locally uniformly convex, then $\Psi_\varphi$ is totally convex on $X$ \textup{\cite[Thm. 3.1, Cor. 3.4]{Resmerita:2004}};
\item\label{prop.psi.varphi.geometry.xiii} $(X,\n{\cdot}_X)$ has the Radon--Riesz--Shmul'yan property if{}f at least one single-valued section $T$ of $j$ satisfies
\begin{equation}
\left\{
\begin{array}{l}
\lim_{n\ra\infty}\duality{x_n-x,T(x_n)-T(x)}_{X\times X^\star}=0\\
\lim_{n\ra\infty}\n{T(x_n)}_{X^\star}=\n{T(x)}_{X^\star}\;\limp\;\lim_{n\ra\infty}\duality{x,T(x_n)}_{X\times X^\star}=\duality{x,T(x)}_{X\times X^\star}
\end{array}
\right.
\end{equation}
for every $\{x_n\in X\mid n\in\NN\}$ with $x_n$ convergent to $x$ in the weak topology \textup{\cite[Prop. 1]{Petryshyn:1970}}.
\end{enumerate}
\end{proposition}

\begin{corollary}\label{cor.r.unif.convex.j.lambda}
$(X,\n{\cdot}_X)$ is $r$-uniformly convex for $r\in[2,\infty[$ if{}f $j_{\varphi_\lambda}$ is $f$-uniformly monotone with $f(t)=\varphi_\lambda(t)$, where $\varphi_\lambda(t)=\lambda t^{r-1}$ and $\lambda\in\,]0,\infty[$.
\end{corollary}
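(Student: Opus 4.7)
The plan is to reduce this corollary directly to Proposition \ref{prop.psi.varphi.geometry}.(vii).b), which already establishes the claimed equivalence in the special case $\lambda=1$, i.e.\ with $\widetilde{\varphi}(t)=t^{r-1}$. The bridge between the two statements is the scaling identity for duality maps induced by different gauges, recorded as equation \eqref{eqn.two.gauges}.

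First I would apply \eqref{eqn.two.gauges} with $\varphi_1=\varphi_\lambda$ and $\varphi_2=\widetilde{\varphi}$, obtaining $\n{x}_X^{r-1}j_{\varphi_\lambda}(x)=\lambda\n{x}_X^{r-1}j_{\widetilde{\varphi}}(x)$ for every $x\in X$; dividing by $\n{x}_X^{r-1}$ for $x\neq 0$ and using $j_{\varphi_\lambda}(0)=\{0\}=j_{\widetilde{\varphi}}(0)$ (which follows from \eqref{eqn.j.varphi.definition} since $\varphi_\lambda(0)=\widetilde{\varphi}(0)=0$) yields the set-level scaling
\begin{equation}
j_{\varphi_\lambda}(x)=\lambda\,j_{\widetilde{\varphi}}(x)\quad\forall x\in X.
\end{equation}
Next I would insert this into the $f$-uniform monotonicity condition \eqref{eqn.f.uniformly.monotone}: for arbitrary $x,y\in X$ and $v'\in j_{\varphi_\lambda}(x)$, $w'\in j_{\varphi_\lambda}(y)$, one has $v'=\lambda v$ and $w'=\lambda w$ for some $v\in j_{\widetilde{\varphi}}(x)$, $w\in j_{\widetilde{\varphi}}(y)$, so that $\duality{x-y,v'-w'}_{X\times X^\star}=\lambda\duality{x-y,v-w}_{X\times X^\star}$. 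Since $\n{x-y}_X\varphi_\lambda(\n{x-y}_X)=\lambda\n{x-y}_X\widetilde{\varphi}(\n{x-y}_X)$, the inequality defining $f$-uniform monotonicity of $j_{\varphi_\lambda}$ with $f=\varphi_\lambda$ is obtained from the one for $j_{\widetilde{\varphi}}$ with $f=\widetilde{\varphi}$ by multiplying both sides by $\lambda$, and conversely. Combining this equivalence with Proposition \ref{prop.psi.varphi.geometry}.(vii).b) yields the corollary.

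The argument is essentially bookkeeping: the main (and only) subtlety is to verify that the scaling identity \eqref{eqn.two.gauges} really transfers cleanly between set-valued duality maps and the $f$-uniform monotonicity inequality, which it does because $\lambda>0$ is a positive scalar, so set multiplication by $\lambda$ is a bijection of subsets of $X^\star$ and preserves the supremum/infimum structure of \eqref{eqn.f.uniformly.monotone}.
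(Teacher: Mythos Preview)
Your proof is correct and follows essentially the same route as the paper's: the paper also reduces to Proposition~\ref{prop.psi.varphi.geometry}.(vii).b) via the scaling identity \eqref{eqn.two.gauges}, obtaining $j_{\varphi_\lambda}=\lambda j_{\widetilde{\varphi}}$ and then cancelling the common factor $\lambda$ on both sides of \eqref{eqn.f.uniformly.monotone}. You simply spell out the set-valued details more carefully.
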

\begin{proof}
Follows from Proposition \ref{prop.psi.varphi.geometry}.\ref{prop.psi.varphi.geometry.vii}, together with \eqref{eqn.two.gauges} giving $j_{\varphi_\lambda}=\lambda j_{\widetilde{\varphi}}$, and cancellation of $\frac{1}{\lambda}$ on both sides of \eqref{eqn.f.uniformly.monotone}.
\end{proof}

\begin{corollary}\label{cor.j.varphi.bijection}
For any gauge $\varphi$ and any Banach space $(X,\n{\cdot}_X)$:
\begin{enumerate}[nosep,label=(\roman*)]
\item\label{cor.j.varphi.bijection.i} $j_\varphi$ is a bijection if{}f $(X,\n{\cdot}_X)$ is reflexive, strictly convex, and Gateaux differentiable \textup{\cite[Cor. b) (p. 105)]{Cioranescu:1974}};
\item\label{cor.j.varphi.bijection.ii} $j_\varphi$ is a norm-to-norm homeomorphism if{}f $(X,\n{\cdot}_X)$ reflexive, strictly convex, Fr\'{e}chet differentiable, and has the Radon--Riesz--Shmul'yan property \textup{\cite[Lem. 2.(i)]{Kolomy:1984}} (and, for $\varphi(t)=t$, \textup{\cite[Cor. (p. 189)]{Giles:1971}}).
\end{enumerate}
\end{corollary}
\begin{proof}
By Propositions \ref{prop.psi.varphi.geometry}.\ref{prop.psi.varphi.geometry.iii} and \ref{prop.psi.varphi.geometry}.\ref{prop.psi.varphi.geometry.x}, $(X,\n{\cdot}_X)$ is Gateaux differentiable and reflexive if{}f $j_\varphi$ is single-valued and surjective. By reflexivity of $(X,\n{\cdot}_X)$, strict convexity of $(X,\n{\cdot}_X)$ is equivalent with Gateaux differentiability of $(X^\star,\n{\cdot}_{X^\star})$, which is equivalent with single-valuedness and surjectivity of $j_{\varphi^{\inver}}^\star$. By reflexivity of $(X,\n{\cdot}_X)$ and \eqref{eqn.j.j.star.inverse}, $j_\varphi=(j_{\varphi^{\inver}}^\star)^{\inver}$. Hence, $j_\varphi$ is a bijection. Norm-to-norm homeomorphy of $j_\varphi$ follows the same way, by Proposition \ref{prop.psi.varphi.geometry}.\ref{prop.psi.varphi.geometry.iv}, and the fact \cite[Thm. 3.9]{Anderson:1960} that, if $(X,\n{\cdot}_X)$ is reflexive, then $(X^\star,\n{\cdot}_{X^\star})$ is Fr\'{e}chet differentiable if{}f ($(X,\n{\cdot}_X)$ is strictly convex and has the Radon--Riesz--Shmul'yan property).
\end{proof}

\begin{proposition}\label{prop.psi.quasigauge.geometry}
For any quasigauge $\varphi$ and any Banach space $(X,\n{\cdot}_X)$:
\begin{enumerate}[nosep,label=(\roman*)]
\item\label{prop.psi.quasigauge.geometry.i} ($(X,\n{\cdot}_X)$ is Gateaux differentiable and $\varphi$ is continuous on $[0,\sup(\efd(\varphi))[$) if{}f $j_\varphi$ is single-valued on $\intefd{\Psi_\varphi}=\INT(\sup(\efd(\varphi))B(X,\n{\cdot}_X))$ \textup{\cite[Prop. A.3.(ii)]{Zalinescu:1983}};
\item\label{prop.psi.quasigauge.geometry.ii} ($(X,\n{\cdot}_X)$ is strictly convex and $\varphi$ is strictly increasing on $\efd(\varphi)$) if{}f $\Psi_\varphi$ is strictly convex on $\efd(\Psi_\varphi)$ \textup{\cite[Prop. A.3.(iii)]{Zalinescu:1983}} \textup{\cite[Thm. 2.1.(viii)]{Zalinescu:1984}};
\item\label{prop.psi.quasigauge.geometry.iii} ($(X,\n{\cdot}_X)$ is locally uniformly convex and $\varphi$ is strictly increasing on $\efd(\varphi)$) if{}f $\Psi_\varphi$ is uniformly convex at any $x\in\intefd{\Psi_\varphi}=\INT(\sup(\efd(\varphi))B(X,\n{\cdot}_X))$ \textup{\cite[Thm. 4.1.(i)]{Zalinescu:1983}};
\item\label{prop.psi.quasigauge.geometry.iv} ($(X,\n{\cdot}_X)$ is uniformly convex and $\varphi$ is strictly increasing on $\efd(\varphi)$) if{}f $\Psi_\varphi$ is uniformly convex on $\lambda S(X,\n{\cdot}_X)$ $\forall\lambda\in\,]0,\sup(\efd(\varphi))]$ \textup{\cite[Thm. 4.1.(ii)]{Zalinescu:1983}};
\item\label{prop.psi.quasigauge.geometry.v} ($(X,\n{\cdot}_X)$ is reflexive and $\lim_{t\ra\infty}\varphi(t)=\infty$) if{}f $j_\varphi$ is surjective \textup{\cite[Prop. A.3.(iv)]{Zalinescu:1983}}.
\end{enumerate}
\end{proposition}
\section{Va\u{\i}nberg--Br\`{e}gman relative entropies, quasinonexpansive maps, and their extensions}\label{section.convex.new}
\subsection{$D_\Psi$ with $\Psi\in\pclg(X,\n{\cdot}_X)$ and reflexive $(X,\n{\cdot}_X)$}\label{section.convex.new.general}
\begin{proposition}\label{prop.right.pythagorean}
Let $(X,\n{\cdot}_X)$ be reflexive, $\Psi\in\pclg(X,\n{\cdot}_X)$, $\Psi^\lfdual$ Gateaux differentiable on $\varnothing\neq\DG\Psi(\intefd{\Psi})\subseteq\intefd{\Psi^\lfdual}$, $\varnothing\neq K\subseteq\intefd{\Psi}$, $\DG\Psi(K)$ closed and convex. If any of the following (generally, inequivalent) conditions holds:
\begin{enumerate}[nosep,label=\alph*)]
\item\label{prop.right.pythagorean.a} $\Psi^\lfdual$ is totally convex on $\efd(\Psi^\lfdual)$; or
\item\label{prop.right.pythagorean.b} $\Psi^\lfdual$ is strictly convex on $\efd(\Psi^\lfdual)$ and supercoercive; or
\item\label{prop.right.pythagorean.c} $\Psi^\lfdual$ is Euler--Legendre,
\end{enumerate}
then:
\begin{enumerate}[nosep,label=(\roman*)]
\item\label{prop.right.pythagorean.i} $K$ is right $D_\Psi$-Chebysh\"{e}v, and $D_\Psi$ is right pythagorean on $K$;
\item\label{prop.right.pythagorean.ii} $\RPPP_K^{D_\Psi}$ is zone consistent;
\item\label{prop.right.pythagorean.iii} $D_{\Psi^\lfdual}$ is an information on $X^\star$ (and, in the case c), $D_\Psi$ is an information on $X$);
\item\label{prop.right.pythagorean.iv} if $\DG\Psi(K)$ is affine, then
\begin{equation}
	D_\Psi(x,y)=D_\Psi(x,\RPPP^{D_\Psi}_K(x))+D_\Psi(\RPPP^{D_\Psi}_K(x),y)\;\;\forall(x,y)\in\intefd{\Psi}\times K.
\label{eqn.right.pyth.eqn}
\end{equation}
\end{enumerate} 
\end{proposition}
\begin{proof}
\begin{enumerate}[nosep]
\item[(i)] Proposition \ref{prop.left.pythagorean}.\ref{prop.left.pythagorean.i}, by application of \eqref{eqn.left.to.right}, implies that $K$ is right $D_\Psi$-Che\-by\-sh\"{e}v. As for $D_\Psi$ being right pythagorean, the proof is exactly the same as a proof of \cite[Prop. 4.11]{MartinMarquez:Reich:Sabach:2012}, with the following changes, extending the range of its validity: (1) an extension from $\Psi:X\ra\RR$ to $\Psi:X\ra\,]-\infty,\infty]$ is provided by using \eqref{eqn.left.to.right} instead of \cite[Prop. 7.1, Eqn. (79)]{Bauschke:Wang:Ye:Yuan:2009} (=\cite[Eqn. (28)]{MartinMarquez:Reich:Sabach:2012}), together with imposing the condition $\DG\Psi(\intefd{\Psi})\subseteq\intefd{\Psi^\lfdual}$, instead of $\efd(\Psi)=X$, as in \cite[Prop. 4.11]{MartinMarquez:Reich:Sabach:2012}, or instead of the Euler--Legendre property of $\Psi$, as in \cite[Prop. 7.1]{Bauschke:Wang:Ye:Yuan:2009}; (2) the assumption of total convexity of $\Psi^\lfdual$ on $\efd(\Psi^\lfdual)$ is weakened by allowing the alternative assumptions of the Euler--Legendre property or of (strict convexity on $\intefd{\Psi^\lfdual}$ and supercoercivity)  due to Proposition \ref{prop.left.pythagorean}.\ref{prop.left.pythagorean.i}. 
\item[(i')] \sloppy Alternatively, one may use \cite[Thm. 3.12]{Luo:Meng:Wen:Yao:2019} (relaxing its condition $\DG\Psi(\intefd{\Psi})=\intefd{\Psi^\lfdual}$ to one-sided inclusion from left to right, because we do not require characterisation of convexity of $\DG\Psi(K)$) and fulfill its condition of right $D_\Psi$-proximinality by Proposition \ref{prop.left.pythagorean}.\ref{prop.left.pythagorean.i} to obtain equivalence of Definition \ref{def.pythagorean}.\ref{def.pythagorean.b}.\ref{def.pythagorean.b.i} and \ref{def.pythagorean}.\ref{def.pythagorean.b}.\ref{def.pythagorean.b.ii}. \eqref{eqn.left.to.right} gives equivalence of \eqref{eqn.left.pyth.ii} and \eqref{eqn.left.pyth.iii}.
\item[(ii)] Follows from the assumption $K\subseteq\intefd{\Psi}$.
\item[(iii)] Follows directly from assumptions on $\Psi^\lfdual$ and Corollary \ref{cor.information}.
\item[(iv)] Follows from \eqref{eqn.right.pyth.ineq} in the same way as \eqref{eqn.left.pyth} follows from \eqref{eqn.left.pyth.iii}.
\end{enumerate}
\end{proof}

\begin{corollary}\label{corollary.dual}
Let $(X,\n{\cdot}_X)$ be reflexive, $\Psi\in\pclg(X,\n{\cdot}_X)$, $\Psi^\lfdual$ Gateaux differentiable on $\varnothing\neq\DG\Psi(\intefd{\Psi})\subseteq\intefd{\Psi^\lfdual}$. If $\varnothing\neq K\subseteq\intefd{\Psi}$ is convex, $\DG\Psi$-convex, closed, $\DG\Psi$-closed, and any of the following (generally, inequivalent) conditions holds, then $D_\Psi$ is left and right pythagorean on $K$:
\begin{enumerate}[nosep,label=\alph*)]
\item\label{corollary.dual.a} $\Psi$ is totally convex on $\efd(\Psi)$, $\Psi^\lfdual$ is strictly convex on $\efd(\Psi^\lfdual)$ and supercoercive; or
\item\label{corollary.dual.b} $\Psi$ is strictly convex on $\efd(\Psi)$ and supercoercive, $\Psi^\lfdual$ is totally convex on $\efd(\Psi^\lfdual)$; or
\item\label{corollary.dual.c} $\Psi$ is strictly convex on $\efd(\Psi)$ and supercoercive, $\Psi^\lfdual$ is strictly convex on $\efd(\Psi^\lfdual)$ and supercoercive; or
\item\label{corollary.dual.d} $\Psi$ is Euler--Legendre.
\end{enumerate}
Furthermore, if $\Psi$ is Fr\'{e}chet differentiable on $\intefd{\Psi}$, then an assumption of $\DG\Psi$-closure of $C$ is obsolete.
\end{corollary}
\begin{proof}
Follows directly from Propositions \ref{prop.left.pythagorean} and \ref{prop.right.pythagorean}. The case of ($\Psi$ totally convex on $\efd(\Psi)$ and $\Psi^\lfdual$ totally convex on $\efd(\Psi^\lfdual)$) reduces to d), because, for reflexive $(X,\n{\cdot}_X)$, total convexity of $\Psi$ on $\efd(\Psi)$ implies its essential strict convexity \cite[Prop. 2.1]{Resmerita:2004} \cite[Prop. 2.13]{Butnariu:Resmerita:2006} (the converse is not true \cite[p. 3]{Resmerita:2004}). For norm-to-norm continuity of $\DF\Psi$ on $\intefd{\Psi}$ see, e.g., \cite[Cor. 3.3.6]{Zalinescu:2002}.
\end{proof}

\begin{proposition}\label{prop.gen.pyth.thm.rprox}
Let $(X,\n{\cdot}_X)$ be reflexive, $\lambda\in\,]0,\infty[$, let $\Psi\in\pclg(X,\n{\cdot}_X)$ be Euler--Legendre, let $f:X\ra\,]-\infty,\infty]$ satisfy $f\circ\DG\Psi^\lfdual\in\pcl(X^\star,\n{\cdot}_{X^\star})$ and $\intefd{\Psi^\lfdual}\cap\efd(f\circ\DG\Psi^\lfdual)\neq\varnothing$. Then $\rprox^{D_\Psi}_{\lambda,f}$ is single-valued on $\intefd{\Psi}$ and satisfies
\begin{equation}
D_\Psi(x,y)\geq D_\Psi(x,\rprox^{D_\Psi}_{\lambda,f}(x))+D_\Psi(\rprox^{D_\Psi}_{\lambda,f}(x),y)\;\;\forall(x,y)\in\intefd{\Psi}\times\Fix(\rprox^{D_\Psi}_{\lambda,f}),
\label{eqn.gen.pyth.thm.rprox}
\end{equation}
where $\Fix(\rprox^{D_\Psi}_{\lambda,f})=\DG\Psi^\lfdual\circ(\intefd{\Psi^\lfdual}\cap\arginff{x\in X^\star}{f\circ\DG\Psi^\lfdual(x)})$.
\end{proposition}
\begin{proof}
\sloppy Follows from Proposition \ref{prop.res.pythagorean}.\ref{prop.res.pythagorean.iv}, by application of \eqref{eqn.bregman.fenchel.duality}, Proposition \ref{prop.prox.res.left.right}.\ref{prop.prox.res.left.right.ii}, $\Fix(\rprox^{D_\Psi}_{\lambda,f})=\DG\Psi^\lfdual(\Fix(\lprox^{D_{\Psi^\lfdual}}_{\lambda,f\circ\DG\Psi^\lfdual}))$ (which follows from \cite[Prop. 2.7.(iii)]{MartinMarquez:Reich:Sabach:2012}), and $\efd(\lprox^{D_{\Psi^\lfdual}}_{\lambda,f\circ\DG\Psi^\lfdual})=\intefd{\Psi^\lfdual}$ (which follows from \cite[Prop. 3.21.(vi)]{Bauschke:Borwein:Combettes:2003}).
\end{proof}

\begin{proposition}\label{prop.gen.pyth.thm.rres}
If $(X,\n{\cdot}_X)$ is reflexive and $\Psi\in\pclg(X,\n{\cdot}_X)$ is strictly convex on $\intefd{\Psi}$ and Euler--Legendre, then:
\begin{enumerate}[nosep,label=(\roman*)]
\item if $T:X\ra 2^{X^\star}$ is monotone and $\ran(\DG\Psi)\subseteq\ran(\DG\Psi+T)$, then $\rres^\Psi_T$ is single-valued on $\efd(\rres^\Psi_T)\subseteq\intefd{\Psi^\lfdual}$, and
\begin{equation}
D_{\Psi^\lfdual}(x,y)\geq D_{\Psi^\lfdual}(x,\rres^\Psi_T(x))+D_{\Psi^\lfdual}(\rres^\Psi_T(x),y)\;\;\forall(x,y)\in\intefd{\Psi^\lfdual}\times\Fix(\rres^\Psi_T),
\label{eqn.gen.pyth.thm.rres}
\end{equation}
with $\Fix(\rres^{\Psi}_T)=\DG\Psi(\intefd{\Psi}\cap T^\inver(0))$;
\item  if $T:X\ra 2^{X^\star}$ is maximally monotone, $\efd(T)\subseteq\intefd{\Psi}$, $\efd(\Psi^\lfdual)=X^\star$, and $\lambda\in\,]0,\infty[$, then $\rres^\Psi_{\lambda T}$ is single-valued on $\efd(\rres^\Psi_{\lambda T})\subseteq\intefd{\Psi^\lfdual}$, and
\begin{equation}
D_{\Psi^\lfdual}(x,y)\geq D_{\Psi^\lfdual}(x,\rres^\Psi_{\lambda T}(x))+D_{\Psi^\lfdual}(\rres^\Psi_{\lambda T}(x),y)\;\;\forall(x,y)\in\intefd{\Psi^\lfdual}\times\Fix(\rres^\Psi_{\lambda T}),
\label{eqn.gen.pyth.thm.rres}
\end{equation}
with $\Fix(\rres^{\Psi}_{\lambda T})=\DG\Psi(\intefd{\Psi}\cap T^\inver(0))$.
\end{enumerate}
\end{proposition}
\begin{proof}
Follows from Propositions \ref{prop.res.pythagorean}.\ref{prop.res.pythagorean.ii}, \ref{prop.res.pythagorean}.\ref{prop.res.pythagorean.iii}, and \ref{prop.res.pythagorean}.\ref{prop.res.pythagorean.v}, by application of \eqref{eqn.bregman.fenchel.duality}, Proposition \ref{prop.prox.res.left.right}.\ref{prop.prox.res.left.right.i}, and $\Fix(\rres^{\Psi}_{T})=\DG\Psi(\Fix(\lres^{\Psi}_{T}))$ (which follows from \cite[Prop. 2.7.(iii)]{MartinMarquez:Reich:Sabach:2012}).
\end{proof}

\begin{proposition}\label{prop.continuity.psi}
Let $(X,\n{\cdot}_X)$ be reflexive, $\varnothing\neq K\subseteq\intefd{\Psi}$, and $\Psi\in\pclg(X,\n{\cdot}_X)$ be Fr\'{e}chet differentiable on $\intefd{\Psi}$. Then:
\begin{enumerate}[nosep,label=(\roman*)]
\item\label{prop.continuity.psi.i} if $K$ is convex and closed, $\Psi$ is totally convex on $\efd(\Psi)$ and supercoercive, then $\LPPP^{D_\Psi}_K$ is norm-to-norm continuous on $\intefd{\Psi}$, while $\inf_{y\in K}\{D_\Psi(y,\,\cdot\,)\}$ is continuous on $\intefd{\Psi}$;
\item\label{prop.continuity.psi.ii} if $K$ is convex and closed, $\Psi$ is totally convex on bounded subsets of $X$, supercoercive, and Euler--Legendre, then $\LPPP^{D_\Psi}_K$ is norm-to-norm continuous on $\intefd{\Psi}$;
\item\label{prop.continuity.psi.iii} if $\DG\Psi(K)$ is convex and closed, $\varnothing\neq\DG\Psi(\intefd{\Psi})\subseteq\intefd{\Psi^\lfdual}$, $\Psi^\lfdual$ is totally convex on $\efd(\Psi^\lfdual)$, Fr\'{e}chet differentiable on $\intefd{\Psi^\lfdual}$, and supercoercive, then $\RPPP^{D_\Psi}_K$ is norm-to-norm continuous on $\intefd{\Psi}$;
\item\label{prop.continuity.psi.iv} if $\DG\Psi(K)$ is convex and closed, $\Psi^\lfdual$ is totally convex on bounded subsets of $X^\star$, Euler--Legendre, Fr\'{e}chet differentiable on $\intefd{\Psi^\lfdual}$, and supercoercive, then $\RPPP^{D_\Psi}_K$ is norm-to-norm continuous on $\intefd{\Psi}$.
\end{enumerate}
\end{proposition}
\begin{proof}
\begin{enumerate}[nosep]
\item[(i)] By \cite[Props. 4.2, 4.3]{Resmerita:2004}, if $\Psi\in\pcl(X,\n{\cdot}_X)$ is totally convex on $\efd(\Psi)$ and Fr\'{e}chet differentiable on $\intefd{\Psi}\neq\varnothing$, and if the set $\{y\in K\mid D_\Psi(x,y)\leq\lambda\}$ is bounded $\forall\lambda\in\,]0,\infty[$ (or, equivalently, by Corollary \ref{cor.information}, $\forall\lambda\in[0,\infty[$) $\forall y\in K$, then $x\mapsto\inf_{y\in K}\{D_\Psi(y,x)\}$ is continuous on $\intefd{\Psi}$ and $\LPPP^{D_\Psi}_K:\intefd{\Psi}\ra K$ is norm-to-norm continuous on $\intefd{\Psi}$. By \cite[Lem. 7.3.(vii)]{Bauschke:Borwein:Combettes:2001}, if $\Psi\in\pcl(X,\n{\cdot}_X)$ is supercoercive, and $x\in\intefd{\Psi}\neq\varnothing$, then $D^+_\Psi(x,\,\cdot\,)$ is coercive, which is equivalent (cf. \cite[Defs. 2.10, 4.1.(B3).(ii)]{Bauschke:Borwein:1997}) with boundedness of $\{y\in\intefd{\Psi}\mid D^+_\Psi(x,y)\leq\lambda\}$ $\forall\lambda\in[0,\infty[$ $\forall x\in\intefd{\Psi}$.
\item[\hypertarget{prop.continuity.psi.0}{(0)}] We will use the following fact: the Fr\'{e}chet differentiability of $\Psi\in\pclg(X,\n{\cdot}_X)$ on $\intefd{\Psi}$ (resp., $\Psi^\lfdual\in\pclg(X^\star,\n{\cdot}_{X^\star})$ on $\intefd{\Psi^\lfdual}$) is equivalent with norm-to-norm continuity of $\DG\Psi=\DF\Psi$ (resp.,  $\DG\Psi^\lfdual=\DF\Psi^\lfdual$) \cite[Prop. 2.8]{Phelps:1989} \cite[Cor. 3.3.6]{Zalinescu:2002}.
\item[(ii)] By \cite[Thm. 3.4.(ii)]{ZamaniEskandani:Azarmi:Raeisi:2020}, if $\Psi\in\pclg(X,\n{\cdot}_X)$ is totally convex on bounded subsets of $X$, supercoercive, and Euler--Legendre, and $\varnothing\neq K\subseteq\intefd{\Psi}$ is convex and closed, then the map \cite[Def. 1]{Pang:Naraghirad:Wen:2014}
\begin{equation}
\intefd{\Psi^\lfdual}\ni y\mapsto\arginff{x\in K}{D_\Psi(x,\DG\Psi^\lfdual(y))}\in K
\end{equation}
is norm-to-norm continuous. Setting $y=\DG\Psi(z)$ for $z\in\intefd{\Psi}$, and using Fr\'{e}chet differentiability of $\Psi$ on $\intefd{\Psi}$ together with \hyperlink{prop.continuity.psi.0}{(0)}, gives a result, since a composition of norm-to-norm continuous functions is norm-to-norm continuous. 
\item[(iii)] Follows from \ref{prop.continuity.psi.i} and an application of \eqref{eqn.left.to.right}, taking into account \hyperlink{prop.continuity.psi.0}{(0)}, so \eqref{eqn.left.to.right} is a composition of norm-to-norm continuous functions.
\item[(iv)] Follows from \ref{prop.continuity.psi.ii} in the same way as \ref{prop.continuity.psi.iii} follows from \ref{prop.continuity.psi.i}.
\end{enumerate}
\end{proof}

\begin{proposition}\label{prop.lsq.rsq.new}
If $(X,\n{\cdot}_X)$ is reflexive, $\Psi\in\pclg(X,\n{\cdot}_X)$, $\Psi^\lfdual$ is supercoercive, $\Psi$ is uniformly Fr\'{e}chet differentiable on bounded subsets of $\intefd{\Psi}$, then $\intefd{\Psi}=\efd(\Psi)=X$, and:
\begin{enumerate}[nosep,label=(\roman*)]
\item\label{prop.lsq.rsq.new.i} $\Psi$ is LSQ-compositional if any of (generally, inequivalent) conditions holds:
\begin{enumerate}[nosep,label=\alph*)]
\item\label{prop.lsq.rsq.new.i.a} $\Psi$ is totally convex on $X$; or
\item\label{prop.lsq.rsq.new.i.b} $\Psi$ is totally convex on bounded subsets of $X$ and is supercoercive;
\end{enumerate}
\item\label{prop.lsq.rsq.new.ii} $\Psi$ is RSQ-compositional if any of (generally, inequivalent) conditions holds:
\begin{enumerate}[nosep,label=\alph*)]
\item\label{prop.lsq.rsq.new.ii.a} $\Psi$ is totally convex on bounded subsets of $X$; or
\item\label{prop.lsq.rsq.new.ii.b} $\Psi$ is Euler--Legendre and supercoercive, $\Psi^\lfdual$ is (totally convex and uniformly Fr\'{e}chet differentiable) on bounded subsets of $\efd(\Psi^\lfdual)$;
\end{enumerate}
\item\label{prop.lsq.rsq.new.iii} $\Psi$ is LSQ-adapted on any closed and convex $\varnothing\neq K\subseteq X$ (and $\LPPP^{D_\Psi}_C$ is adapted for any $\varnothing\neq C\subseteq K$ with convex and closed $C$) if $\Psi$ is Euler--Legendre;
\item\label{prop.lsq.rsq.new.iv} $\Psi$ is RSQ-adapted on any $\varnothing\neq K\subseteq X$ (and $\RPPP^{D_\Psi}_C$ is adapted for any $\varnothing\neq C\subseteq K$ with convex and closed $\DG\Psi(C)$) if $\Psi$ is supercoercive and Euler--Legendre, and $\Psi^\lfdual$ is uniformly Fr\'{e}chet differentiable on bounded subsets of $\intefd{\Psi^\lfdual}\neq\varnothing$;
\item\label{prop.lsq.rsq.new.v} ($T\in\rsq(\Psi,K)$ if{}f $\DG\Psi\circ T\circ\DG\Psi^\lfdual\in\lsq(\Psi,\DG\Psi(K))$) if $\Psi$ is Euler--Legendre and supercoercive, $\varnothing\neq K\subseteq\intefd{\Psi}$, $T:K\ra\intefd{\Psi}$, and $\Psi^\lfdual$ is uniformly Fr\'{e}chet differentiable on bounded subsets of $\intefd{\Psi^\lfdual}\neq\varnothing$.
\end{enumerate}
\end{proposition}
\begin{proof}
\begin{enumerate}[nosep]
\item[\hypertarget{prop.lsq.rsq.new.0}{(0)}] For any $\Psi\in\pcl(X,\n{\cdot}_X)$, by \cite[Lem. 3.6.1]{Zalinescu:2002}, $\Psi^\lfdual$ is supercoercive if{}f ($\efd(\Psi)=X$ and $\Psi$ is bounded on bounded subsets). By \cite[Prop. 1.1.11]{Butnariu:Iusem:2000}, if $f:X\ra\RR$ is continuous and convex, then ($\partial f$ is bounded on bounded subsets of $\efd(\partial f)$ if{}f $f$ is bounded on bounded subsets of $X$). Hence, since $\intefd{\Psi}=\efd(\Psi)=X$ and $\Psi\in\pclg(X,\n{\cdot}_X)$, $\Psi$ is continuous and convex, and $\DG\Psi$ is bounded on bounded subsets of $\efd(\DG\Psi)=X$.
\item[(i).a)] By \cite[Prop. 3.6.3]{Zalinescu:2002} (cf. \cite[Prop. 2.1]{Reich:Sabach:2009}), $\Psi$ is (bounded and uniformly Fr\'{e}chet differentiable) on bounded subsets if{}f ($\Psi$ is Fr\'{e}chet differentiable on $X=\efd(\Psi)$ and $\DG\Psi$ is uniformly continuous on bounded subsets). By \cite[Prop. 2.3]{Butnariu:Iusem:Zalinescu:2003}, if $x\in\efd(\Psi)$ and $\Psi$ is Fr\'{e}chet differentiable at $x$, then $(\Psi$ is totally convex at $x$ if{}f $\Psi$ is uniformly convex at $x$). The rest follows from Proposition \ref{prop.lsq.rsq.old}.\ref{prop.lsq.rsq.old.i}.\ref{prop.lsq.rsq.old.i.a}, taking \hyperlink{prop.lsq.rsq.new.0}{(0)} into account.
\item[(i).b)] Since $\Psi^\lfdual\in\pcl(X,\n{\cdot}_X)$, it is (convex and) continuous on $\intefd{\Psi^\lfdual}=\efd(\Psi^\lfdual)=X^\star$ \cite[Cor. 7C]{Rockafellar:1966:level}. Hence, by \hyperlink{prop.lsq.rsq.new.0}{(0)}, applied to $\Psi^\lfdual$ instead of $\Psi$, $\DG(\Psi^\lfdual)$ is bounded on bounded subsets of $\efd(\Psi^\lfdual)=X^\star$ if{}f $\Psi$ is supercoercive. The rest follows from Proposition \ref{prop.lsq.rsq.old}.\ref{prop.lsq.rsq.old.i}.\ref{prop.lsq.rsq.old.i.b}.
\item[(ii)--(v)] Follow, respectively, from Proposition \ref{prop.lsq.rsq.old}.\ref{prop.lsq.rsq.old.ii}, \ref{prop.lsq.rsq.old}.\ref{prop.lsq.rsq.old.iii}, \ref{prop.lsq.rsq.old}.\ref{prop.lsq.rsq.old.iv}.\ref{prop.lsq.rsq.old.iv.b}, \ref{prop.lsq.rsq.old}.\ref{prop.lsq.rsq.old.v}, and Corollary \ref{cor.proj.adaptedness}, by the same technique as above.
\end{enumerate}
\end{proof}

\begin{proposition}\label{prop.norm.continuity.left.right.prox}
Let $\lambda\in\,]0,\infty[$, let $(X,\n{\cdot}_X)$ be reflexive, let $\Psi\in\pclg(X,\n{\cdot}_X)$ be strictly convex on $\intefd{\Psi}$, let $f\in\pcl(X,\n{\cdot}_X)$ be bounded from below, $\lim_{\n{x}_X\ra\infty}f(x)=\infty$, $\efd(f)\cap\efd(\Psi)\neq\varnothing$ and ($\efd(f)\cap\efd(\Psi)\subseteq\intefd{\Psi}$ or $\efd(\Psi)$ is open or $\efd(f)\subseteq\intefd{\Psi}$ or ($\intefd{\Psi}\cap\efd(f)\neq\varnothing$ and $\Psi$ is essentially Gateaux differentiable)). Then:
\begin{enumerate}[nosep,label=(\roman*)]
\item\label{prop.norm.continuity.left.right.prox.i} $\lprox^{D_\Psi}_{\lambda,f}$ is single-valued and norm-to-norm continuous on $X$;
\item\label{prop.norm.continuity.left.right.prox.ii} if $\Psi$ is Fr\'{e}chet differentiable on $\intefd{\Psi}$ and Euler--Legendre, and $\Psi^\lfdual$ is Fr\'{e}chet differentiable on $\intefd{\Psi^\lfdual}$, then $\rprox^{D_\Psi}_{\lambda,f}$ is single-valued and norm-to-norm continuous on $\intefd{\Psi}$.
\end{enumerate}
\end{proposition}
\begin{proof}
\begin{enumerate}[nosep]
\item[(i)] Follows from \cite[p. 186, Cor. 4.2]{Chen:Kan:Song:2012}, using the criteria for single-valuedness of $\lprox^{D_\Psi}_{\lambda,\Psi}$ provided by \cite[Props. 3.22.(ii).(d), 3.23]{Bauschke:Borwein:Combettes:2003}.
\item[(ii)] Follows from \ref{prop.norm.continuity.left.right.prox.i}, using Proposition \ref{prop.prox.res.left.right}.\ref{prop.prox.res.left.right.ii} together with the fact \hyperlink{prop.continuity.psi.0}{(0)} in the proof of Proposition \ref{prop.continuity.psi}.
\end{enumerate}
\end{proof}

\begin{proposition}\label{prop.lipschitz.hoelder.left.resolvent}
Let $(X,\n{\cdot}_X)$ be reflexive, $\lambda\in\,]0,\infty[$, $r\in\,]1,\infty[$, $s\in\,]0,1]$, let $\Psi\in\pclg(X,\n{\cdot}_X)$, $\efd(\Psi)=X$, and $\Psi^\lfdual\in\pclg(X^\star,\n{\cdot}_{X^\star})$. 
\begin{enumerate}[nosep,label=(\roman*)]
\item\label{prop.lipschitz.hoelder.left.resolvent.i} If $T:X\ra 2^{X^\star}$ is maximally monotone, $f\in\pcl(X,\n{\cdot}_X)$, and
 $\DG\Psi$ is $s$-Lipschitz--H\"{o}lder continuous on $X$ and $g$-uniformly monotone on $X$ with $g(t)=rt^{r-1}$, then $\lres^\Psi_{\lambda T}$ and $\lprox^{D_\Psi}_{\lambda,f}$ are single-valued and $\frac{s}{r-1}$-Lipschitz--H\"{o}lder continuous on $X$, while $\rres^\Psi_{\lambda T}$ is single-valued and $\frac{s^2}{(r-1)^2}$-Lipschitz--H\"{o}lder continuous on $X^\star$.
\item\label{prop.lipschitz.hoelder.left.resolvent.ii} If $f:X\ra\,]-\infty,\infty]$ satisfies $f\circ\DG\Psi^\lfdual\in\pcl(X^\star,\n{\cdot}_{X^\star})$, $w\in\,]0,1]$, $\DG\Psi$ is $w$-Lipschitz--H\"{o}lder continuous on $X$, and
 $\DG\Psi^\lfdual$ is $s$-Lipschitz--H\"{o}lder continuous on $X^\star$ and $g$-uniformly monotone on $X^\star$ with $g(t)=rt^{r-1}$, then $\rprox^{D_\Psi}_{\lambda,f}$ is single-valued and $\frac{s^2w}{r-1}$-Lipschitz--H\"{o}lder continuous on $X$.
\end{enumerate}
\end{proposition}
\begin{proof}
By \cite[Cor. 6.4]{Reem:Reich:2018}, $(\DG\Psi+\lambda T)^\inver$ is single-valued and $\frac{1}{r-1}$-Lipschitz--H\"{o}lder continuous on $X^\star$. Furthermore, we use Proposition \ref{prop.prox.res.left.right}.\ref{prop.prox.res.left.right.i}. The result for $\lres^{\Psi}_{\lambda T}$ and $\rres^{\Psi}_{\lambda T}$ follows from the fact that the composition of $r_1$-Lipschitz--H\"{o}lder map with $r_2$-Lipschitz--H\"{o}lder map, with both maps defined over all space, is $r_1r_2$-Lipschitz--H\"{o}lder map $\forall r_1,r_2\in\,]0,1]$. For $\lprox^{D_\Psi}_{\lambda,f}$ we use additionally Proposition \ref{prop.proj.prox.res}.\ref{prop.proj.prox.res.i}, while for $\rprox^{D_\Psi}_{\lambda,f}$ we use also Proposition \ref{prop.prox.res.left.right}.\ref{prop.prox.res.left.right.ii}.
\end{proof}

\begin{corollary}\label{cor.Lipschitz.Hoelder.projections.Psi}
Let $(X,\n{\cdot}_X)$ be reflexive, $r\in\,]1,\infty[$, $s\in\,]0,1]$, $\Psi\in\pclg(X,\n{\cdot}_X)$, $\efd(X)=\RR$, $\Psi^\lfdual\in\pclg(X^\star,\n{\cdot}_{X^\star})$, and $\varnothing\neq K\subseteq\intefd{\Psi}$. Then:
\begin{enumerate}[nosep,label=(\roman*)]
\item\label{cor.Lipschitz.Hoelder.projections.Psi.i} if $\DG\Psi$ is $s$-Lipschitz--H\"{o}lder continuous on $X$ and $g$-uniformly monotone on $X$ with $g(t)=rt^{r-1}$, and $K$ is convex and closed, then $\LPPP^{D_\Psi}_K$ is $\frac{s}{r-1}$-Lipschitz--H\"{o}lder continuous on $X$;
\item\label{cor.Lipschitz.Hoelder.projections.Psi.ii} if $w\in\,]0,1]$, $\DG\Psi$ is $w$-Lipschitz--H\"{o}lder continuous on $X$, $\DG\Psi^\lfdual$ is $s$-Lipschitz--H\"{o}lder continuous on $X^\star$ and $g$-uniformly monotone on $X^\star$ with $g(t)=rt^{r-1}$, $\varnothing\neq\DG(\intefd{\Psi})\subseteq\intefd{\Psi^\lfdual}$, and $\DG\Psi(K)\subseteq\intefd{\Psi^\lfdual}$ is convex and closed, then $\RPPP^{D_\Psi}_K$ is $\frac{s^2w}{r-1}$-Lipschitz--H\"{o}lder continuous on $X$.
\end{enumerate}
\end{corollary}
\begin{proof}
\begin{enumerate}[nosep]
\item[(i)] Follows from Propositions \ref{prop.lipschitz.hoelder.left.resolvent}.\ref{prop.lipschitz.hoelder.left.resolvent.i} and \ref{prop.proj.prox.res}.\ref{prop.proj.prox.res.ii}.
\item[(ii)] Follows from Propositions \ref{prop.lipschitz.hoelder.left.resolvent}.\ref{prop.lipschitz.hoelder.left.resolvent.ii}, \ref{prop.prox.res.left.right}.\ref{prop.prox.res.left.right.ii}, and \ref{prop.proj.prox.res}.\ref{prop.proj.prox.res.ii}.
\end{enumerate}
\end{proof}

\begin{lemma}\label{lemma.uFd.uc.duality}
If $\Psi\in\pcl(X,\n{\cdot}_X)$, as well as $\efd(\Psi)=X$ and $\efd(\Psi^\lfdual)\neq\{*\}$ (resp., $\efd(\Psi^\lfdual)=X^\star$ and $\efd(\Psi)\neq\{*\}$), then:
\begin{enumerate}[nosep,label=(\roman*)]
\item\label{lemma.uFd.uc.duality.i} $\Psi$ is uniformly Fr\'{e}chet differentiable (resp., uniformly convex) on $X$ if{}f $\Psi^\lfdual$ is uniformly convex (resp., uniformly Fr\'{e}chet differentiable) on $X^\star$;
\item\label{lemma.uFd.uc.duality.ii} if $\Psi$ is uniformly Fr\'{e}chet differentiable on bounded subsets (resp., uniformly convex on bounded subsets) of $X$, and supercoercive, then $\Psi^\lfdual$ is uniformly convex on bounded subsets (resp., uniformly Fr\'{e}chet differentiable on bounded subsets) of $X^\star$.
\end{enumerate}
\end{lemma}
\begin{proof}
Let $\Psi\in\pcl(X,\n{\cdot}_X)$. By \cite[Cor. 2.8]{Aze:Penot:1995}, if $\efd(\Psi)\neq\{*\}$ (resp., $\efd(\Psi^\lfdual)\neq\{*\}$), then $\Psi$ is uniformly convex (resp., uniformly Gateaux differentiable) on $X$ if{}f $\Psi^\lfdual$ is uniformly Gateaux differentiable (resp., uniformly convex) on $X^\star$. By \cite[p. 207]{Zalinescu:2002} (cf. \cite[p. 4]{Shioji:1994}(=\cite[p. 643]{Shioji:1995})), $\Psi$ is uniformly Fr\'{e}chet differentiable on any $\varnothing\neq K\subseteq\efd(\Psi)$ if{}f $\Psi$ is uniformly Gateaux differentiable on $K$. Setting $\efd(\Psi)=X$ gives \ref{lemma.uFd.uc.duality.i}. \ref{lemma.uFd.uc.duality.ii} follows directly from \ref{lemma.uFd.uc.duality.i} and \cite[Prop. 3.6.2.(i)]{Zalinescu:2002}.
\end{proof}

\begin{proposition}\label{prop.alber.decomposition}
Let $(X,\n{\cdot}_X)$ be reflexive, $\varnothing\neq K\subseteq X$, $\Psi\in\pclg(X,\n{\cdot}_X)$ be supercoercive and strictly convex, $\Psi^\lfdual\in\pclg(X^\star,\n{\cdot}_{X^\star})$ be supercoercive and strictly convex. For any convex set $C\subseteq X$, let $C^\circ:=\{y\in X^\star\mid\duality{x,y}_{X\times X^\star}\leq0\;\;\forall x\in C\}$. Then:
\begin{enumerate}[nosep,label=(\roman*)]
\item\label{prop.alber.decomposition.i} if $\Psi:X\ra\RR$, $\Psi^\lfdual(0)=0$, $(\DG\Psi^\lfdual)(0)=0$, $(\DG\Psi^\lfdual)(-y)=-(\DG\Psi^\lfdual)(y)$ $\forall y\in X^\star$, $\varnothing\neq K\subset X$ is a closed convex cone with a vertex at $0\in X$, then
\begin{equation}
\forall x\in X\;\;
\left\{
\begin{array}{l}
x = (\DG\Psi)^{\inver}\circ\hat{\PPP}^{\Psi^\lfdual}_{K^\circ}\circ\DG\Psi(x)+\LPPP^{D_\Psi}_K(x)\\
\duality{\LPPP^{D_\Psi}_K(x),\hat{\PPP}^{\Psi^\lfdual}_{K^\circ}\circ\DG\Psi(x)}_{X\times X^\star}=0,
\end{array}
\right.
\label{eqn.left.alber.decomposition}
\end{equation}
where 
\begin{equation}
\left\{
\begin{array}{l}
\hat{\PPP}^{\Psi^\lfdual}_{K^\circ}(y):=\arginff{z\in K^\circ}{\Psi^\lfdual(y-z)}\;\forall y\in X^\star\\
\hat{\PPP}^{\Psi^\lfdual}_{K^\circ}\circ\hat{\PPP}^{\Psi^\lfdual}_{K^\circ}(y)=\hat{\PPP}^{\Psi^\lfdual}_{K^\circ}(y)\;\forall y\in X^\star;
\end{array}
\right.
\end{equation}
\item\label{prop.alber.decomposition.ii} if $\Psi^\lfdual:X^\star\ra\RR$, $\Psi(0)=0$, $(\DG\Psi)(0)=0$, $(\DG\Psi)(-x)=-(\DG\Psi)(x)$ $\forall x\in X$, $\varnothing\neq\DG\Psi(K)\subset X^\star$ is a closed convex cone with a vertex at $0\in X^\star$, then
\begin{equation}
\forall y\in X\;\;
\left\{
\begin{array}{l}
y=\hat{\PPP}^\Psi_{(\DG\Psi(K))^\circ}(y)+\RPPP^{D_\Psi}_K(y)\\
\duality{(\DG\Psi^\lfdual)^{\inver}\circ\RPPP^{D_\Psi}_K(y),\hat{\PPP}^\Psi_{(\DG\Psi(K))^\circ}(y)}_{X\times X^\star}=0.
\end{array}
\right.
\label{eqn.right.alber.decomposition}
\end{equation}
\end{enumerate}
Furthermore: 
\begin{enumerate}[nosep,label=(\roman*)]
\setcounter{enumi}{2}
\item\label{prop.alber.decomposition.iii} if $\varnothing\neq K\subseteq X$ is a linear subspace instead of a closed convex cone, then \eqref{eqn.left.alber.decomposition} holds under replacement of $K^\circ$ with $K^\bot:=\{y\in X^\star\mid\duality{x,y}_{X\times X^\star}=0\;\forall x\in K\}$;
\item\label{prop.alber.decomposition.iv} if $\varnothing\neq\DG\Psi(K)\subseteq X^\star$ is a linear subspace  instead of a closed convex cone, then \eqref{eqn.right.alber.decomposition} holds under replacement of $(\DG\Psi(K))^\circ$ with $(\DG\Psi(K))^\bot$.
\end{enumerate}
\end{proposition}
\begin{proof}
\begin{enumerate}[nosep]
\item[(i)] This is \cite[Thm. 3.19]{Alber:2007}.
\item[(iii)] This is  \cite[Rem. 3.20]{Alber:2007}.
\item[(ii)+(iv)] Follows directly from \ref{prop.alber.decomposition.i} and \ref{prop.alber.decomposition.iii} combined with Proposition \ref{prop.Luo.Meng.Wen.Yao}.
\end{enumerate}
\end{proof}

\begin{remark}\label{remark.psi.results}
\begin{enumerate}[nosep,label=(\roman*)]
\item\label{remark.psi.results.i} Proposition \ref{prop.right.pythagorean}.\ref{prop.right.pythagorean.i} is a generalisation of \cite[Prop. 4.11]{MartinMarquez:Reich:Sabach:2012}, as indicated in the proof.
\item\label{remark.psi.results.ii} Regarding Corollary \ref{corollary.dual}, left and right projections onto sets which are both convex and $\DG\Psi$-convex (despite that $\DG\Psi$ is not an affine map) were considered earlier in \cite[p. 11, Probl. 3]{Bauschke:Macklem:Wang:2011}.
\item\label{remark.psi.results.iii} Proposition \ref{prop.continuity.psi}.\ref{prop.continuity.psi.i} provides a (new) special case of \cite[Props. 4.2, 4.3]{Resmerita:2004}, which is more suitable for our purposes, because it allows to derive Proposition \ref{prop.continuity.psi}.\ref{prop.continuity.psi.iii}, as well as Propositions \ref{prop.continuity}.\ref{prop.continuity.psi.ii} and \ref{prop.continuity}.\ref{prop.continuity.psi.iii}. $\Psi\in\pclg(X,\n{\cdot}_X)$ totally convex on $\efd(\Psi)$, such that $D_\Psi(x,\,\cdot\,)$ is coercive on $\intefd{\Psi}$ $\forall x\in\efd(\Psi)$ is called a \textit{Br\`{e}gman function} in \cite[Def. 2.1.1]{Butnariu:Iusem:2000}, and it provides a Banach space generalisation (and also a weakening) of the notion of Br\`{e}gman function introduced in \cite[Def. 2.1]{Censor:Lent:1981} (cf. also \cite[\S4]{Bauschke:Borwein:1997}). See \cite[Def. 4.2]{Reem:Reich:DePierro:2019} for further generalisation and discussion of this notion.
\item\label{remark.psi.results.iv} The direct relationship between the differing assumptions of Propositions \ref{prop.continuity.psi}.\ref{prop.continuity.psi.i} (resp., \ref{prop.continuity.psi}.\ref{prop.continuity.psi.iii}) and \ref{prop.continuity.psi}.\ref{prop.continuity.psi.ii} (resp., \ref{prop.continuity.psi}.\ref{prop.continuity.psi.iv}) is not clear at this level of generality. However, in a special case of $\Psi=\Psi_\varphi$, the former variants are essentially more general than the latter, see Remark \ref{remark.varphi}.\ref{remark.varphi.vi}.
\item\label{remark.psi.results.v} Proposition \ref{prop.continuity.psi}.\ref{prop.continuity.psi.iii} is essentially new in the Banach space setting. For $X=\RR^n$, $\varnothing\neq K\subseteq X$ convex closed, $K\cap\intefd{\Psi}\neq\varnothing$, $\Psi$ Euler--Legendre, $\Psi\in\mathrm{C}^2(\intefd{\Psi})$, $D_\Psi$ jointly convex, $D_\Psi(x,\,\cdot\,)$ strictly convex on $\intefd{\Psi}$ and coercive $\forall x\in\intefd{\Psi}$, norm-to-norm continuity of $\RPPP^{D_\Psi}_C$ has been established in \cite[Cor. 3.7]{Bauschke:Noll:2002}. While coerciveness of $D_\Psi(x,\,\cdot\,)$ follows from supercoerciveness of $\Psi$ \cite[Lem. 7.3.(viii)]{Bauschke:Borwein:Combettes:2001}, the rest of these conditions is noticeably different from the assumptions of Proposition \ref{prop.continuity.psi}.\ref{prop.continuity.psi.iii}.
\item\label{remark.psi.results.vi} The reason why Proposition \ref{prop.lsq.rsq.new}.\ref{prop.lsq.rsq.new.iv} omits case \ref{prop.lsq.rsq.old.iv.a} of Proposition \ref{prop.lsq.rsq.old}.\ref{prop.lsq.rsq.old.iv} will be explained in the Remark \ref{remark.varphi}.\ref{remark.varphi.x}.
\item\label{remark.psi.results.vii} Generalised pythagorean equations \eqref{eqn.right.pyth.eqn} and \eqref{eqn.left.pyth} are special cases of the generalised cosine equation \eqref{eqn.generalised.cosine}, obtained for
\begin{equation}
\duality{x-\RPPP^{D_\Psi}_K(x),\DG\Psi(y)-\DG\Psi(\RPPP^{D_\Psi}_K(x))}_{X\times X^\star}=0\;\;\forall(x,y)\in\intefd{\Psi}\times K,
\label{eqn.general.orthogonality.rppp.condition}
\end{equation}
and
\begin{equation}
\duality{x-\LPPP^{D_\Psi}_K(y),\DG\Psi(y)-\DG\Psi(\LPPP^{D_\Psi}_K(y))}_{X\times X^\star}=0\;\;\forall(x,y)\in K\times\intefd{\Psi},
\label{eqn.general.orthogonality.lppp.condition}
\end{equation}
respectively. One can see \eqref{eqn.general.orthogonality.rppp.condition}--\eqref{eqn.general.orthogonality.lppp.condition} as the conditions of orthogonality (at $\RPPP^{D_\Psi}_K(x)$ and $\LPPP^{D_\Psi}_K(y)$, respectively) between a vector joining the projected point with its projection, and a vector ranging from a projection into an arbitrary point within the constraint set $K$.
\item\label{remark.psi.results.viii} As compared to the original phrasing of \cite[Thm. 3.19]{Alber:2007}, Proposition \ref{prop.alber.decomposition}.\ref{prop.alber.decomposition.i} assumes additionally strict convexity of $\Psi^\lfdual$, since, by the definition of $\hat{\PPP}^{\Psi^\lfdual}$ \cite[Def. 3.1]{Alber:2007}, this condition is necessary for the uniqueness of $\hat{\PPP}^{\Psi^\lfdual}_{K^\circ}(y)$. Proposition \ref{prop.alber.decomposition}.\ref{prop.alber.decomposition.ii} is new.
\item\label{remark.psi.results.viii} For $n\in\NN$, $X=\RR^n$, Euler--Legendre $\Psi\in\pclg(X,\n{\cdot}_X)$, $f\in\pcl(X,\n{\cdot}_X)$, $\efd(f)\cap\intefd{\Psi}\neq\varnothing$, and some additional conditions on $\Psi$, the (norm-to-norm) continuity of $\lprox^{D_\Psi}_{1,f}$ and $\rprox^{D_\Psi}_{1,f}$ was established in \cite[Prop. 3.10]{Bauschke:Combettes:Noll:2006}.
\end{enumerate}
\end{remark}
\subsection{$D_\Psi$ with $\Psi=\Psi_\varphi$}\label{section.convex.new.varphi}
\begin{proposition}\label{prop.supercoercive.Psi.varphi}
For any gauge $\varphi$ and any Banach space $(X,\n{\cdot}_X)$, $\Psi_\varphi$ is supercoercive.
\end{proposition}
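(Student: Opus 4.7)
The plan is to unpack the definition of supercoercivity, reduce it to an elementary estimate on the real line integral defining $\Psi_\varphi$, and then exploit the two defining properties of a gauge: monotonicity and $\lim_{t\to\infty}\varphi(t)=\infty$.

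First I would recall that supercoercivity of $\Psi_\varphi$ means
\begin{equation}
\lim_{\n{x}_X\to\infty}\frac{\Psi_\varphi(x)}{\n{x}_X}=\lim_{s\to\infty}\frac{1}{s}\int_0^s\dd t\,\varphi(t)=\infty,
\end{equation}
where the reduction to a scalar limit uses that $\Psi_\varphi(x)$ depends on $x$ only through $\n{x}_X$. Thus it suffices to prove that the average value of $\varphi$ on $[0,s]$ tends to infinity as $s\to\infty$.

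The key step is the elementary monotonicity bound: since $\varphi$ is strictly increasing (in particular nondecreasing),
\begin{equation}
\int_0^s\dd t\,\varphi(t)\geq\int_{s/2}^{s}\dd t\,\varphi(t)\geq\frac{s}{2}\varphi(s/2)\;\;\forall s>0.
\end{equation}
Dividing by $s$ gives $\Psi_\varphi(x)/\n{x}_X\geq\tfrac{1}{2}\varphi(\n{x}_X/2)$ for all $x\in X\setminus\{0\}$, and the gauge condition $\lim_{t\to\infty}\varphi(t)=\infty$ then yields the required divergence. (Equivalently, one could argue: fix any $M>0$, pick $t_0$ with $\varphi(t)\geq M$ for $t\geq t_0$, and obtain $\frac{1}{s}\int_0^s\dd t\,\varphi(t)\geq M(1-t_0/s)$ for $s>t_0$, so that $\liminf_{s\to\infty}\frac{\Psi_\varphi(x)}{\n{x}_X}\geq M$; since $M$ is arbitrary, this liminf is $+\infty$.)

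There is no substantive obstacle here: the result is a direct consequence of the definition of a gauge and of $\Psi_\varphi$, requiring nothing about the geometry of $(X,\n{\cdot}_X)$. The only minor point worth noting is that the proof as written uses only that $\varphi$ is nondecreasing with $\lim_{t\to\infty}\varphi(t)=\infty$; continuity, strict monotonicity, and $\varphi(0)=0$ are not needed. Hence the proposition in fact holds for a larger class than gauges, but that strengthening is tangential to its role in the sequel.
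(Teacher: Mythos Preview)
Your proof is correct and takes a genuinely different, more elementary route than the paper's. You work directly from the definition of supercoercivity, reduce to a one-variable limit, and use the monotonicity bound $\int_0^s\varphi(t)\,\mathrm{d}t\geq\frac{s}{2}\varphi(s/2)$ together with $\lim_{t\to\infty}\varphi(t)=\infty$. The paper instead goes through the Mandelbrojt--Fenchel dual: it invokes the characterisation ``$\Psi$ is supercoercive iff $\efd(\Psi^\lfdual)=X^\star$ and $\Psi^\lfdual$ is bounded on bounded subsets'' (from Z\u{a}linescu), computes $(\Psi_\varphi)^\lfdual=\Psi_{\varphi^{-1}}$ on $X^\star$, and then shows that $j_{\varphi^{-1}}^\star=\partial((\Psi_\varphi)^\lfdual)$ maps bounded sets to bounded sets (via the Butnariu--Iusem equivalence between boundedness of a continuous convex function and of its subdifferential). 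Your argument is shorter and uses no external results; the paper's approach, while heavier, has the side benefit of establishing the formula for $(\Psi_\varphi)^\lfdual$ and the boundedness of the dual duality map, both of which are reused in later arguments and in the quasigauge generalisation (Proposition~\ref{prop.supercoercive.Psi.quasigauge}). Your closing observation that only nondecreasingness and $\lim_{t\to\infty}\varphi(t)=\infty$ are used is correct and pertinent.
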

\begin{proof}
Let $\Psi\in\pcl(X,\n{\cdot}_X)$. By \cite[Lem. 3.6.1]{Zalinescu:2002}, $\Psi$ is supercoercive if{}f ($\efd(\Psi^\lfdual)=X^\star$ and $\Psi^\lfdual$ is bounded on bounded subsets). If $\intefd{\Psi^\lfdual}=X^\star$, then $\Psi^\lfdual$ is continuous on $X^\star$ \cite[Cor. 7C]{Rockafellar:1966:level}. If $\Psi^\lfdual$ is continuous on $X^\star$, then, by \cite[Prop. 1.1.11]{Butnariu:Iusem:2000}, $\Psi^\lfdual$ is bounded on bounded subsets if{}f $\partial(\Psi^\lfdual)$ is bounded on bounded subsets. By \cite[Thm. 3.7.2.(ii)]{Zalinescu:2002}, 
\begin{equation}
(\Psi_\varphi)^\lfdual(y)=\int_0^{\n{y}_{X^\star}}\dd t\,\varphi^{\inver}(t)\;\;\forall y\in X^\star,
\label{eqn.zalinescu.inverse.fenchel.dual}
\end{equation}
so $(\Psi_\varphi)^\lfdual$ has the same properties as $\Psi_\varphi$, since $\varphi^{\inver}$ is a gauge function (cf. \cite[p. 227]{Zalinescu:2002}). In particular, $(\Psi_\varphi)^\lfdual$ is convex and continuous on $X^\star$, with $\efd((\Psi_\varphi)^\lfdual)=X^\star=\intefd{(\Psi_\varphi)^\lfdual}$. Finally, from definition \eqref{eqn.j.varphi.definition} of $j_\varphi$, it follows:
\begin{equation}
\exists \lambda>0\;\forall z\in Z\subsetneq X^\star\;\;\n{z}_{X^\star}\leq\lambda\;\;\limp\;\;\forall y\in j_{\varphi^{\inver}}^\star(z)\;\;\n{y}_{X^\star{}^\star}=\varphi^{\inver}(\n{z}_{X^\star})\leq \varphi^{\inver}(\lambda),
\label{eqn.bounded.j.varphi.inver}
\end{equation}
where the last inequality holds since $\varphi^{\inver}$ is nondecreasing. Hence, $j_{\varphi^{\inver}}^\star$ maps bounded sets to bounded sets (cf., e.g., \cite[p. 176]{Lions:1969}). Since $j_{\varphi^{\inver}}^\star=\partial((\Psi_\varphi)^\lfdual)$ by \eqref{eqn.asplund} and \eqref{eqn.zalinescu.inverse.fenchel.dual}, this completes the proof.	
\end{proof}

\begin{proposition}\label{prop.legendre}
For any gauge $\varphi$, $\Psi_\varphi$ is Euler--Legendre if{}f $(X,\n{\cdot}_X)$ is strictly convex and Gateaux differentiable.
\end{proposition}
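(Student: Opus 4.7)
The plan is to match the two clauses in the definition of Euler--Legendre---essentially Gateaux differentiable and essentially strictly convex---against the corresponding clauses of Proposition \ref{prop.psi.varphi.geometry}.(ii)--(iii), using \eqref{eqn.asplund} as the bridge. Since $\varphi$ is a gauge, $\Psi_\varphi$ is finite and continuous on all of $X$, hence $\intefd{\Psi_\varphi}=\efd(\Psi_\varphi)=X\neq\varnothing$, $\partial\Psi_\varphi(x)\neq\varnothing$ for every $x\in X$, and $\efd(\partial\Psi_\varphi)=X$; moreover, \eqref{eqn.asplund} identifies $\partial\Psi_\varphi$ with $j_\varphi$ on $X$ (with $j_\varphi(0)=\{0\}$).

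For the essential Gateaux differentiability clause I would argue that, in view of the previous identifications, the defining condition reduces to the statement that $\partial\Psi_\varphi(x)=j_\varphi(x)$ is a singleton for every $x\in\efd(\partial\Psi_\varphi)=X$, which by Proposition \ref{prop.psi.varphi.geometry}.(iii).b) is equivalent to Gateaux differentiability of $(X,\n{\cdot}_X)$.

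For the essential strict convexity clause I would split it into its two constituent parts. The part ``$\Psi_\varphi$ is strictly convex on every convex subset of $\efd(\partial\Psi_\varphi)$'' collapses, since $X$ itself is a convex subset of $\efd(\partial\Psi_\varphi)=X$, to strict convexity of $\Psi_\varphi$ on $X$, which by Proposition \ref{prop.psi.varphi.geometry}.(ii).a) is equivalent to strict convexity of $(X,\n{\cdot}_X)$. The remaining boundedness clause is where I would expect the only subtlety, because the statement does not assume reflexivity of $(X,\n{\cdot}_X)$, so one cannot simply invoke the dual pairing of $\Psi_\varphi$ with $(\Psi_\varphi)^\lfdual=\Psi_{\varphi^{\inver}}$ via \cite[Thm. 5.4]{Bauschke:Borwein:Combettes:2001}. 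However, the defining relation \eqref{eqn.j.varphi.definition} forces every $w\in(j_\varphi)^{\inver}(z)$ to satisfy $\n{w}_X=\varphi^{\inver}(\n{z}_{X^\star})$, and $\varphi^{\inver}$ is itself a gauge, so $(j_\varphi)^{\inver}$ sends norm-bounded subsets of $X^\star$ to norm-bounded subsets of $X$; the supremum in the definition of essential strict convexity is therefore finite for any $\varepsilon>0$, and this clause holds trivially. Combining the two equivalences will yield the desired biconditional.
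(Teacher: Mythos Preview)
Your argument is correct. It agrees with the paper's proof on the essential Gateaux differentiability clause (both reduce it to single-valuedness of $j_\varphi=\partial\Psi_\varphi$ on $\efd(\partial\Psi_\varphi)=X$ and then invoke Proposition~\ref{prop.psi.varphi.geometry}.(iii)), but handles the essential strict convexity clause differently. The paper appeals to \cite[Lemma~5.8]{Bauschke:Borwein:Combettes:2001}, which collapses essential strict convexity to strict convexity on $\intefd{\Psi_\varphi}$ once one knows that both $\efd(\partial\Psi_\varphi)$ and $\efd((\Psi_\varphi)^\lfdual)$ are open; the latter is checked via the explicit formula \eqref{eqn.zalinescu.inverse.fenchel.dual} for $(\Psi_\varphi)^\lfdual$, which gives $\efd((\Psi_\varphi)^\lfdual)=X^\star$. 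You instead verify the two ingredients of essential strict convexity directly, reading off the local boundedness of $(j_\varphi)^{\inver}$ from the norm identity $\n{w}_X=\varphi^{\inver}(\n{z}_{X^\star})$ forced by \eqref{eqn.j.varphi.definition}. Your route is more elementary and self-contained, avoiding both the external lemma and the computation of $(\Psi_\varphi)^\lfdual$; the paper's route, on the other hand, is the one that adapts more readily to the quasigauge generalisation in Proposition~\ref{prop.legendre.quasigauge}, where the boundedness of $(j_\varphi)^{\inver}$ is no longer automatic and the openness conditions of \cite[Lemma~5.8]{Bauschke:Borwein:Combettes:2001} become genuine hypotheses (cf.~\eqref{eqn.euler.legendre.quasigauge.conditions}).
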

\begin{proof} 
\begin{enumerate}[nosep]
\item[1)] By Proposition \ref{prop.psi.varphi.geometry}.\ref{prop.psi.varphi.geometry.iii}, $(X,\n{\cdot}_X)$ is Gateaux differentiable if{}f $j_\varphi$ is single-valued on $X$. By \cite[Thm. 5.6.(i)-(ii)]{Bauschke:Borwein:Combettes:2001}, $\Psi_\varphi$ is essentially Gateaux differentiable if{}f ($\intefd{\Psi_\varphi}\neq\varnothing$ and $\partial\Psi_\varphi$ is single-valued on $\efd(\partial\Psi_\varphi)$).  By \cite[Thm. 1]{Asplund:1967}, $j_\varphi=\partial\Psi_\varphi$.  Since $\efd(\partial\Psi_\varphi)=X$ \cite[Obs. I.3.1]{Cioranescu:1974} and $\efd(\Psi_\varphi)=\intefd{\Psi_\varphi}=X$ for any gauge $\varphi$, we obtain: $(X,\n{\cdot}_X)$ is Gateaux differentiable if{}f $\Psi_\varphi$ is essentially Gateaux differentiable.
\item[2)] By Proposition \ref{prop.psi.varphi.geometry}.\ref{prop.psi.varphi.geometry.ii}, $(X,\n{\cdot}_X)$ is strictly convex if{}f $\Psi_\varphi$ is strictly convex. By \cite[Lemma 5.8]{Bauschke:Borwein:Combettes:2001}, if $\efd(\partial\Psi_\varphi)$ and $\efd((\Psi_\varphi)^\lfdual)$ are open, then ($\Psi_\varphi$ is strictly convex on $\intefd{\Psi_\varphi}$ if{}f $\Psi_\varphi$ is essentially strictly convex). By \cite[Obs. I.3.1]{Cioranescu:1974}, $\efd(\partial\Psi_\varphi)=X$. Furthermore, $\efd(\Psi_\varphi)=\intefd{\Psi_\varphi}=X$. From \eqref{eqn.zalinescu.inverse.fenchel.dual} it it follows that $\efd((\Psi_\varphi)^\lfdual)=X$, which is an open set, since every Banach space is (both a closed and) an open set.
\end{enumerate}
\end{proof}

\begin{corollary}\label{cor.superc.EL.totconv.varphi}
For any gauge $\varphi$ and any Banach space $(X,\n{\cdot}_X)$: 
\begin{enumerate}[nosep,label=(\roman*)]
\item\label{cor.superc.EL.totconv.varphi.i} if $(X,\n{\cdot}_X)$ is Gateaux differentiable, then
\begin{equation}
D_{\Psi_\varphi}(x,y)=\int_0^{\n{x}_X}\dd t\,\varphi(t)+\int_0^{\n{j_\varphi(y)}_{X^\star}}\dd t\,\varphi^\inver(t)-\duality{x,j_\varphi(y)}_{X\times X^\star}\;\forall x,y\in X;
\label{eqn.D.Psi.varphi.formula}
\end{equation}
\item\label{cor.superc.EL.totconv.varphi.ii} if $(X,\n{\cdot}_X)$ is Gateaux differentiable and $\Psi_\varphi$ is totally convex, then $\Psi_\varphi$ is Euler--Legendre;
\item\label{cor.superc.EL.totconv.varphi.iii} in particular, if $(X,\n{\cdot}_X)$ is locally uniformly convex and Gateaux differentiable, then $\Psi_\varphi$ is Euler--Legendre and totally convex.
\end{enumerate}
\end{corollary}
\begin{proof}
\begin{enumerate}[nosep]
\item[(i)] Follows from \eqref{eqn.bregman.fenchel}, \eqref{eqn.zalinescu.inverse.fenchel.dual}, and Proposition \ref{prop.psi.varphi.geometry}.\ref{prop.psi.varphi.geometry.iii}.
\item[(ii)] Follows from Proposition \ref{prop.legendre} combined with the fact \cite[Thm. 3.1, Cor. 3.4]{Resmerita:2004} that total convexity of $\Psi$ implies strict convexity of $(X,\n{\cdot}_X)$.
\item[(iii)] Follows from the fact that local uniform convexity of $(X,\n{\cdot}_X)$ implies its strict convexity, combined with Propositions \ref{prop.psi.varphi.geometry}.\ref{prop.psi.varphi.geometry.xii} and \ref{prop.legendre}.
\end{enumerate}
\end{proof}

\begin{corollary}\label{cor.left.D.psi.varphi.chebyshev.characterisation}
For any gauge $\varphi$, if $(X,\n{\cdot}_X)$ is reflexive, strictly convex, and Gateaux differentiable, and $\varnothing\neq K\subseteq X$ is weakly closed, then $K$ is left $D_{\Psi_\varphi}$-Chebysh\"{e}v if{}f $K$ is convex.
\end{corollary}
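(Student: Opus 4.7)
($\Leftarrow$). Since the weak topology on $X$ is weaker than the norm topology, any weakly closed subset of $X$ is norm-closed. Under the stated assumptions, Proposition \ref{prop.legendre} yields that $\Psi_\varphi$ is Euler--Legendre; and for any gauge $\varphi$, $\efd(\Psi_\varphi)=\intefd{\Psi_\varphi}=X$. Thus $K\cap\intefd{\Psi_\varphi}=K\neq\varnothing$ is norm-closed and convex, and Proposition \ref{prop.left.pythagorean}.(i).c) gives that $K$ is left $D_{\Psi_\varphi}$-Chebysh\"{e}v.

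($\Rightarrow$). Suppose $K$ is weakly closed and left $D_{\Psi_\varphi}$-Chebysh\"{e}v. Put $f:=\iota_K+\Psi_\varphi$ and $h:=f^\lfdual$. Proposition \ref{prop.supercoercive.Psi.varphi} shows $\Psi_\varphi$ is supercoercive, hence $h(z)=\sup_{x\in K}\{\duality{x,z}_{X\times X^\star}-\Psi_\varphi(x)\}$ is finite for every $z\in X^\star$; the supremum is attained for each $z$ by weak closure of $K$ and reflexivity of $X$, so $h\in\pcl(X^\star,\n{\cdot}_{X^\star})$ is continuous on $X^\star$. Since $j_\varphi=\DG\Psi_\varphi$ is a bijection $X\to X^\star$ (Corollary \ref{cor.j.varphi.bijection}.(i)), the change of variables $z=j_\varphi(y)$ turns the Chebysh\"{e}v assumption into: for every $z\in X^\star$ the supremum defining $h(z)$ is uniquely attained at some $x_z\in K$. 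By Fenchel--Young, $x_z\in\partial h(z)$ (with $X^{\star\star}$ identified with $X$) and $h^\lfdual(x_z)=\Psi_\varphi(x_z)$, where $h^\lfdual=\overline{\mathrm{co}}(f)$ is the closed convex envelope of $f$, with effective domain $\overline{\mathrm{conv}}(K)$. Moreover, putting $z=j_\varphi(x)$ for $x\in K$ makes $x$ the unique global minimizer of $\Psi_\varphi(\cdot)-\duality{\cdot,z}_{X\times X^\star}$ (by strict convexity of $\Psi_\varphi$, Proposition \ref{prop.psi.varphi.geometry}.(ii).a)), so that $x_{j_\varphi(x)}=x$ and hence $h^\lfdual=\Psi_\varphi$ on $K$.

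The plan is to use these facts to show $\overline{\mathrm{conv}}(K)=K$, thereby forcing convexity of $K$. The key step is to upgrade the uniqueness of the minimizer \emph{within} $K$ to Gateaux differentiability of $h$ on all of $X^\star$, i.e., $\partial h(z)=\{x_z\}$. Once this upgrade is done, \cite[Thm. 5.4]{Bauschke:Borwein:Combettes:2001} and reflexivity give essential strict convexity of $h^\lfdual=\overline{\mathrm{co}}(f)$; combined with $h^\lfdual|_K=\Psi_\varphi|_K$ and the global inequality $h^\lfdual\leq\iota_K+\Psi_\varphi$, any alleged $c\in\overline{\mathrm{conv}}(K)\setminus K$ would satisfy $h^\lfdual(c)<\infty=\iota_K(c)+\Psi_\varphi(c)$, and writing $c$ as a convex combination of $K$-points and iterating a midpoint/Krein--Milman-type refinement (using essential strict convexity on segments in $\efd(h^\lfdual)$) forces, via weak closure of $K$ and reflexivity, a weak cluster point in $K$ whose $h^\lfdual$-value is attained by the extreme representation, contradicting $c\notin K$. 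The main obstacle is precisely this upgrade: a priori $\partial h(z)$ may contain additional elements in $\overline{\mathrm{conv}}(K)\setminus K$, and ruling them out requires exploiting weak closure of $K$ together with reflexivity of $X$ to approximate any putative extra subgradient $y\in\partial h(z)$ by convex combinations of $K$-points, extracting a weakly convergent subsequence whose weak limit is forced into $K$, and then using the established equality $\partial h(z)\cap K=\{x_z\}$ to conclude $y=x_z$.
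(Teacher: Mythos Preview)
Your ($\Leftarrow$) direction is correct and coincides with the paper's reasoning (Proposition \ref{prop.legendre} plus Proposition \ref{prop.left.pythagorean}.(i).c)).

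Your ($\Rightarrow$) direction has a genuine gap, and you have correctly located it yourself: the ``upgrade'' from \emph{unique maximiser within $K$} to \emph{$\partial h(z)=\{x_z\}$} is the entire content of the hard direction, and the sketch you offer for it does not close. Concretely, if $y\in\partial h(z)\cap(\overline{\mathrm{conv}}(K)\setminus K)$, approximating $y$ by convex combinations of $K$-points and passing to a weak limit simply returns $y$ itself; weak closedness of $K$ gives no leverage here because $y\notin K$ by hypothesis, and nothing in your argument forces the approximating net to have a cluster point different from $y$. The equality $\partial h(z)\cap K=\{x_z\}$ that you do establish says nothing about $\partial h(z)\setminus K$. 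The subsequent contradiction paragraph (``iterating a midpoint/Krein--Milman-type refinement'') inherits the same defect: you never produce a point of $K$ from a point of $\overline{\mathrm{conv}}(K)\setminus K$ in a way that uses the Chebysh\"{e}v hypothesis, so no contradiction materialises.

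The paper does not attempt a self-contained argument for ($\Rightarrow$); it invokes \cite[Cor.~1]{Volle:HiriartUrruty:2012}, a characterisation of $D_\Psi$-Chebysh\"{e}v sets for Euler--Legendre $\Psi$ on reflexive spaces, and then applies Proposition \ref{prop.legendre} to check that $\Psi_\varphi$ is Euler--Legendre under the stated hypotheses. If you want a self-contained proof, you will need a genuine replacement for the upgrade step---for instance, an argument that every $y\in\partial h(z)$ already satisfies $h^\lfdual(y)=\Psi_\varphi(y)$ and hence lies in $K$ (this is where the work in the cited reference goes), rather than the weak-compactness manoeuvre you sketch.
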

\begin{proof}
Follows from \cite[Cor. 1]{Volle:HiriartUrruty:2012} combined with Proposition \ref{prop.legendre}.
\end{proof}

\begin{proposition}\label{prop.left.right.psi.varphi}
For any gauge $\varphi$, if $\varnothing\neq K\subseteq X$, and $(X,\n{\cdot}_X)$ is reflexive, strictly convex, and Gateaux differentiable, then:
\begin{enumerate}[nosep,label=(\roman*)]
\item\label{prop.left.right.psi.varphi.i} if $K$ is convex and closed, then $K$ is left $D_{\Psi_\varphi}$-Chebysh\"{e}v, and $D_{\Psi_\varphi}$ is left pythagorean on $K$;
\item\label{prop.left.right.psi.varphi.ii} if $j_\varphi(K)$ is convex and closed, then $K$ is right $D_{\Psi_\varphi}$-Chebysh\"{e}v, and $D_{\Psi_\varphi}$ is right pythagorean on $K$;
\item\label{prop.left.right.psi.varphi.iii} $D_{\Psi_\varphi}$ (resp., $D_{\Psi_\varphi^\lfdual}$) is an information on $X$ (resp., $X^\star$);
\item\label{prop.left.right.psi.varphi.iv} $\LPPP^{D_{\Psi_\varphi}}_K$ and $\RPPP^{D_{\Psi_\varphi}}_K$ are zone consistent.
\end{enumerate}
\end{proposition}
\begin{proof}
\begin{enumerate}[nosep]
\item[(i)] By Proposition \ref{prop.supercoercive.Psi.varphi}, $\Psi_\varphi$ is supercoercive for any gauge $\varphi$. For any gauge $\varphi$, Proposition \ref{prop.psi.varphi.geometry}.\ref{prop.psi.varphi.geometry.xii} gives that $\Psi_{\varphi}$ is totally convex on any locally uniformly convex Banach space $(X,\n{\cdot}_X)$. The latter implies strict convexity, and the opposite implication is not true in general. Furthermore, by Proposition \ref{prop.psi.varphi.geometry}.\ref{prop.psi.varphi.geometry.xi}, if $(X,\n{\cdot}_X)$ is reflexive, then (it is strictly convex and has the Radon--Riesz--Shmul'yan property) if{}f $\Psi_\varphi$ is totally convex for any gauge $\varphi$. Hence, when applied to $\Psi_\varphi$ (and taking into account Proposition \ref{prop.legendre}), the weakest conditions to be assumed in Proposition \ref{prop.left.pythagorean}.\ref{prop.left.pythagorean.ii} are provided in Proposition \ref{prop.left.pythagorean}.\ref{prop.left.pythagorean.i}.\ref{prop.left.pythagorean.i.b} and \ref{prop.left.pythagorean}.\ref{prop.left.pythagorean.i}.\ref{prop.left.pythagorean.i.c}, which turn out to be equivalent in this situation.
\item[(ii)] Follows from \ref{prop.left.right.psi.varphi.i} and Proposition \ref{prop.right.pythagorean}.\ref{prop.right.pythagorean.i}, taking into account that, for reflexive $(X,\n{\cdot}_X)$, Gateaux differentiability of $(X,\n{\cdot}_X)$ (resp., $(X^\star,\n{\cdot}_{X^\star})$) implies strict convexity of $(X^\star,\n{\cdot}_{X^\star})$ (resp., $(X,\n{\cdot}_X)$) \cite[A.1.1]{Klee:1953}.
\item[(iii)] Follows from Corollary \ref{cor.information} and Proposition \ref{prop.right.pythagorean}.\ref{prop.right.pythagorean.iii}.
\item[(iv)] Follows from $\intefd{\Psi_\varphi}=X$.
\end{enumerate}
\end{proof}

\begin{lemma}\label{lem.quasigauge.inverses}
If $\varphi$ is a quasigauge, then:
\begin{enumerate}[nosep,label=(\roman*)]
\item\label{lem.quasigauge.inverses.i} $\varphi^\join$, $\varphi^\meet$, $(t\mapsto\lim_{s\ra^-t}\varphi(s))^\join$, and $(t\mapsto\lim_{s\ra^+t}\varphi(s))^\meet$ are quasigauges;
\item\label{lem.quasigauge.inverses.ii} $\varphi^\join$ (resp., $\varphi^\meet$) is left (resp., right) continuous.
\end{enumerate}
\end{lemma}
\begin{proof}
\begin{enumerate}[nosep]
\item[(i)] Nondecreasing of $f^\join$ and $f^\meet$ holds for any $f:\RR^+\ra[0,\infty]$ (cf., e.g., \cite[Lem. 2.3.9.a)]{Harjulehto:Haestoe:2019} for $f^\join$ and \cite[Lem. 1.(b)]{Wacker:2023}\footnote{This lemma is stated for $f:\RR^+\ra\RR^+$, however the extension of a proof to $f:\RR^+\ra[0,\infty]$ by an analogy with the proof of  \cite[Lem. 2.3.9]{Harjulehto:Haestoe:2019} is straightforward.}\addtocounter{footnote}{-1}\addtocounter{Hfootnote}{-1}). This implies existence of left and right limits of $f^\join(t)$ and $f^\meet(t)$ at any $t\in\RR^+$. $\varphi\not\equiv0$ (resp., $\varphi\not\equiv\infty$) implies $\varphi^\join\not\equiv\infty\not\equiv\varphi^\meet$ (resp., $\varphi^\join\not\equiv0\not\equiv\varphi^\meet$). Thus, $\exists s,t>0$ such that $\lim_{u\ra^+s}\varphi^\join(u)<\infty$ and $\lim_{u\ra^+t}\varphi^\meet(u)<\infty$. The same reasoning applies to $(t\mapsto\lim_{s\ra^-t}\varphi(s))^\join$ and $(t\mapsto\lim_{s\ra^+t}\varphi(s))^\meet$.
\item[(ii)] This holds for any nondecreasing $f:\RR^+\ra[0,\infty]$, cf., e.g., \cite[Lem. 2.3.9.c)]{Harjulehto:Haestoe:2019} for $f^\join$ and \cite[Lem. 1.(c)]{Wacker:2023}\footnotemark\ for $f^\meet$.
\end{enumerate}
\end{proof}

\begin{lemma}\label{lem.dual.quasigauge.integral}
If $\varphi$ is a quasigauge, and $f_\varphi(u):=\int_0^u\dd t\,\varphi(t)$ $\forall u\in\RR^+$, then
\begin{equation}
(f_\varphi)^\young(u)=\int_0^u\dd t\,(\lim_{s\ra^+t}\varphi(s))^\meet=\int_0^u\dd t\,(\lim_{s\ra^-t}\varphi(s))^\join\;\;\forall u\in\RR^+.
\label{eqn.dual.quasigauge.integral.limits}
\end{equation}
If, furthermore, $\varphi$ is right (resp., left) continuous on $\RR^+$, then\footnote{\eqref{eqn.dual.quasigauge.integral.meet} has been proved earlier, by a different method, in \cite[Thm. 2.11]{Shi:Shi:2019}.}
\begin{align}
(f_\varphi)^\young(u)&=\int_0^u\dd t\,\varphi^\meet(t)\;\;\forall u\in\RR^+
\label{eqn.dual.quasigauge.integral.meet}\\
\mbox{(resp., }(f_\varphi)^\young(u)&=\int_0^u\dd t\,\varphi^\join(t)\;\;\forall u\in\RR^+\mbox{)}.
\label{eqn.dual.quasigauge.integral.join}
\end{align}
\end{lemma}
\begin{proof}
Since $f_\varphi$ and $(f_\varphi)^\young$ are proper, convex, lower semicontinuous functions, taking value 0 at 0 \cite[pp. 367--368]{Zalinescu:1983}, we can use the representation \cite[Thm. 24.2]{Rockafellar:1970} of such type of functions, $g(u)=\int_0^u\dd t\,g_+'(t)=\int_0^u\dd t\,g_-'(t)$ $\forall u\in\RR^+$, and combine it with \cite[Prop. A.2.(i)]{Zalinescu:1983}
\begin{align}
((f_\varphi)^\young)_+'(t)&=(\lim_{s\ra^+t}\varphi(s))^\meet,\\
((f_\varphi)^\young)_-'(t)&=(\lim_{s\ra^-t}\varphi(s))^\join.
\end{align}
\end{proof}

\begin{lemma}\label{lem.fenchel.dual.psi.quasigauge}
For any quasigauge $\varphi$ and any Banach space $(X,\n{\cdot}_X)$:
\begin{enumerate}[nosep,label=(\roman*)]
\item\label{lem.fenchel.dual.psi.quasigauge.i} $(\Psi_\varphi)^\lfdual(y)=\int_0^{\n{y}_{X^\star}}\dd t\,(\lim_{s\ra^+t}\varphi(s))^\meet=\int_0^{\n{y}_{X^\star}}\dd t\,(\lim_{s\ra^-t}\varphi(s))^\join$ $\forall y\in X^\star$;
\item\label{lem.fenchel.dual.psi.quasigauge.ii} if $\varphi$ is right (resp., left) continuous on $\RR^+$, then
\begin{align}
(\Psi_\varphi)^\lfdual(y)&=\int_0^{\n{y}_{X^\star}}\dd t\,\varphi^\meet(t)\;\;\forall y\in X^\star
\label{eqn.fenchel.dual.psi.quasigauge.meet}\\
\mbox{(resp., }(\Psi_\varphi)^\lfdual(y)&=\int_0^{\n{y}_{X^\star}}\dd t\,\varphi^\join(t)\;\;\forall y\in X^\star\mbox{)},
\label{eqn.fenchel.dual.psi.quasigauge.join}\\
((\Psi_\varphi)^\lfdual)^\lfdual(z)&=\int_0^{\n{z}_{X^\star{}^\star}}\dd t\,(\varphi^\meet)^\meet(t)\;\;\forall z\in X^\star{}^\star\\
\mbox{(resp., }((\Psi_\varphi)^\lfdual)^\lfdual(z)&=\int_0^{\n{z}_{X^\star{}^\star}}\dd t\,(\varphi^\join)^\join(t)\;\;\forall z\in X^\star{}^\star\mbox{)},
\end{align}
and $\partial((\Psi_\varphi)^\lfdual)=j_{\varphi^\meet}^\star$ (resp., $\partial((\Psi_\varphi)^\lfdual)=j_{\varphi^\join}^\star$);
\item\label{lem.fenchel.dual.psi.quasigauge.iii} if $\varphi$ is right (resp., left) continuous on $\RR^+$ and $(X,\n{\cdot}_X)$ is reflexive, then $j_{(\varphi^\meet)^\meet}^{\star\star}=j_\varphi$ (resp., $j_{(\varphi^\join)^\join}^{\star\star}=j_\varphi$).
\end{enumerate}
\end{lemma}
\begin{proof}{\tiny\ \\}{\vspace{-0.4cm}}
\begin{enumerate}[nosep]
\item[(i)--(ii)] By \cite[Eqn. (A.6)]{Zalinescu:1983}, $(\Psi_\varphi)^\lfdual(y)=(f_\varphi)^\young(\n{y}_{X^\star})$ $\forall y\in X^\star$, where $f_\varphi(u)=\int_0^u\dd t\,\varphi(t)$ $\forall u\in\RR^+$. The rest follows from Lemmas \ref{lem.quasigauge.inverses} and \ref{lem.dual.quasigauge.integral}.
\item[(iii)] For any $f:\RR^+\ra[0,\infty]$, $f$ is (nondecreasing and right (resp., left) continuous) if{}f $f^{\meet\meet}=f$ (resp., $f^{\join\join}=f$) \cite[Lem. 2.4]{Shi:Shi:2019}\footnote{This lemma states only an implication from left to right, however an implication in the opposite direction can be provided by a direct analogue of the corresponding proof in \cite[Lem. 2.3.11]{Harjulehto:Haestoe:2019}.} (resp., \cite[Lem. 2.3.11]{Harjulehto:Haestoe:2019}). On the other hand, reflexivity of $(X,\n{\cdot}_X)$ and $\Psi_\varphi\in\pcl(X,\n{\cdot}_X)$ give $((\Psi_\varphi)^\lfdual)^\lfdual=\Psi_\varphi$. Hence, $j_{(\varphi^\meet)^\meet}^{\star\star}=\partial(((\Psi_\varphi)^\lfdual)^\lfdual)=\partial\Psi_\varphi=j_\varphi$ in the right continuous case, and analogously in the left continuous case.
\end{enumerate}
\end{proof}

\begin{corollary}\label{cor.VB.quasigauge.formulas}
If $(X,\n{\cdot}_X)$ is a Gateaux differentiable Banach space, and a quasigauge $\varphi$ is continuous on $[0,\sup(\efd(\varphi))[$, then $\forall(x,y)\in X\times\INT(\sup(\efd(\varphi))B(X,\n{\cdot}_X))$
\begin{align}
D_{\Psi_\varphi}(x,y)&=
\int_0^{\n{x}_X}\dd t\,\varphi(t)+\int_0^{\n{j_\varphi(y)}_{X^\star}}\dd t\,(\lim_{s\ra^+t}\varphi(s))^\meet-\duality{x,j_\varphi(y)}_{X\times X^\star}\label{eqn.D.varphi.quasigauge.formula.1}\\
&=\int_0^{\n{x}_X}\dd t\,\varphi(t)+\int_0^{\n{j_\varphi(y)}_{X^\star}}\dd t\,(\lim_{s\ra^-t}\varphi(s))^\join-\duality{x,j_\varphi(y)}_{X\times X^\star}.\label{eqn.D.varphi.quasigauge.formula.2}
\end{align}
If, furthermore, $\varphi$ is right (resp., left) continuous at $\sup(\efd(\varphi))$, then 
\begin{align}
D_{\Psi_\varphi}(x,y)&=
\int_0^{\n{x}_X}\dd t\,\varphi(t)+\int_0^{\n{j_\varphi(y)}_{X^\star}}\dd t\,\varphi^\meet(t)-\duality{x,j_\varphi(y)}_{X\times X^\star}\label{eqn.D.varphi.quasigauge.formula.3}\\
\mbox{(resp., }D_{\Psi_\varphi}(x,y)&=\int_0^{\n{x}_X}\dd t\,\varphi(t)+\int_0^{\n{j_\varphi(y)}_{X^\star}}\dd t\,\varphi^\join(t)-\duality{x,j_\varphi(y)}_{X\times X^\star}\mbox{)}\label{eqn.D.varphi.quasigauge.formula.4}
\end{align}
$\forall(x,y)\in X\times\INT(\sup(\efd(\varphi))B(X,\n{\cdot}_X))$.
\end{corollary}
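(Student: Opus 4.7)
The proposed proof is essentially a direct substitution into the Fenchel-type representation \eqref{eqn.bregman.fenchel} of the Va\u{\i}nberg--Br\`{e}gman functional, combined with Lemma \ref{lemm.fenchel.dual.psi.quasigauge} for the Mandelbrojt--Fenchel dual of $\Psi_\varphi$ and Proposition \ref{prop.psi.quasigauge.geometry}.(i) identifying $\DG\Psi_\varphi$ with the duality map $j_\varphi$. The output of each step is already in the literature (or just proved in this Section), so the work is essentially bookkeeping; there is no single hard step to overcome.

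First, I would verify that the derivative substitution makes sense. By hypothesis, $(X,\n{\cdot}_X)$ is Gateaux differentiable and $\varphi$ is continuous on $[0,\sup(\efd(\varphi))[$, so Proposition \ref{prop.psi.quasigauge.geometry}.(i) gives that $j_\varphi$ is single-valued on $\intefd{\Psi_\varphi}=\INT(\sup(\efd(\varphi))B(X,\n{\cdot}_X))$. Combined with \eqref{eqn.asplund} this yields $\DG\Psi_\varphi(y)=j_\varphi(y)$ for every $y\in\intefd{\Psi_\varphi}$. In particular, $\Psi_\varphi\in\pclg(X,\n{\cdot}_X)$, so definition \eqref{eqn.vainberg.bregman} and the Fenchel equality case give \eqref{eqn.bregman.fenchel}:
\begin{equation}
D_{\Psi_\varphi}(x,y)=\Psi_\varphi(x)+(\Psi_\varphi)^\lfdual(j_\varphi(y))-\duality{x,j_\varphi(y)}_{X\times X^\star}
\end{equation}
for all $(x,y)\in X\times\INT(\sup(\efd(\varphi))B(X,\n{\cdot}_X))$.

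Second, I would substitute the explicit formula $\Psi_\varphi(x)=\int_0^{\n{x}_X}\dd t\,\varphi(t)$ from \eqref{eqn.Psi.varphi} and apply Lemma \ref{lemm.fenchel.dual.psi.quasigauge}.(i) to rewrite $(\Psi_\varphi)^\lfdual(j_\varphi(y))$ with $\n{j_\varphi(y)}_{X^\star}$ as the upper integration limit and either $(\lim_{s\ra^+t}\varphi(s))^\meet$ or $(\lim_{s\ra^-t}\varphi(s))^\join$ as the integrand. This immediately produces \eqref{eqn.D.varphi.quasigauge.formula.1} and \eqref{eqn.D.varphi.quasigauge.formula.2}.

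Third, I would handle the additional continuity case. Joining the continuity of $\varphi$ on $[0,\sup(\efd(\varphi))[$ with right (resp., left) continuity at $\sup(\efd(\varphi))$ and the value $+\infty$ of $\varphi$ on $]\sup(\efd(\varphi)),\infty[$ yields right (resp., left) continuity of $\varphi$ on all of $\RR^+$. Hence Lemma \ref{lemm.fenchel.dual.psi.quasigauge}.(ii) applies and $(\Psi_\varphi)^\lfdual(j_\varphi(y))=\int_0^{\n{j_\varphi(y)}_{X^\star}}\dd t\,\varphi^\meet(t)$ (resp., with $\varphi^\join$), giving \eqref{eqn.D.varphi.quasigauge.formula.3} and \eqref{eqn.D.varphi.quasigauge.formula.4}. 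The only subtlety worth spelling out is that the extension of one-sided continuity past the critical point is automatic, so no extra hypothesis is required on $\varphi$ beyond the one stated; apart from this small check, the argument is a straight assembly of previously established pieces.
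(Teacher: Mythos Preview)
Your proof is correct and follows exactly the same route as the paper's, which simply cites Lemma \ref{lemm.fenchel.dual.psi.quasigauge} and Proposition \ref{prop.psi.quasigauge.geometry}.(i) applied to \eqref{eqn.bregman.fenchel}, \eqref{eqn.Psi.varphi}, and \eqref{eqn.asplund}. Your explicit check that continuity on $[0,\sup(\efd(\varphi))[$ together with one-sided continuity at $\sup(\efd(\varphi))$ upgrades to one-sided continuity on all of $\RR^+$ is a helpful clarification that the paper leaves implicit.
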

\begin{proof}
Follows from Lemma \ref{lem.fenchel.dual.psi.quasigauge} and Proposition \ref{prop.psi.quasigauge.geometry}.\ref{prop.psi.quasigauge.geometry.i}, applied to \eqref{eqn.bregman.fenchel}, \eqref{eqn.Psi.varphi}, and \eqref{eqn.asplund}.
\end{proof}

\begin{proposition}\label{prop.supercoercive.Psi.quasigauge}
Let $\varphi$ be a quasigauge, and let $(X,\n{\cdot}_X)$ be a Banach space. Then:
\begin{enumerate}[nosep,label=(\roman*)]
\item\label{prop.supercoercive.Psi.quasigauge.i} $j_{\varphi^\meet}^\star$ is bounded;
\item\label{prop.supercoercive.Psi.quasigauge.ii} if $\varphi^\join$ is right continuous, then $j_{\varphi^\join}^\star$ is bounded;
\item\label{prop.supercoercive.Psi.quasigauge.iii} if either $\varphi^\meet$ is finite or $\varphi^\join$ is (right continuous and finite), then $\Psi_\varphi$ is supercoercive.
\end{enumerate}
\end{proposition}
\begin{proof}{\tiny\ \\}{\vspace{-0.4cm}}
\begin{enumerate}[nosep]
\item[(i)--(ii)] Lemma \ref{lem.quasigauge.inverses}.\ref{lem.quasigauge.inverses.i} allows us to use \eqref{eqn.asplund} to  \eqref{eqn.fenchel.dual.psi.quasigauge.meet} and \eqref{eqn.fenchel.dual.psi.quasigauge.join}. Replacing $\varphi^\inver$ in \eqref{eqn.bounded.j.varphi.inver} by $\varphi^\meet$ (resp., $\varphi^\join$), and using its right continuity together with \eqref{eqn.j.quasigauge.varphi.definition} instead of \eqref{eqn.j.varphi.definition}, gives the boundedness of $j_{\varphi^\meet}^\star$ (resp., $j_{\varphi^\join}^\star$).
\item[(iii)] The assumption of finiteness of $\varphi^\meet$ (resp., $\varphi^\join$) guarantees that $(f_\varphi)^\young(u)$, as defined by \eqref{eqn.dual.quasigauge.integral.meet} (resp., \eqref{eqn.dual.quasigauge.integral.join}), is finite $\forall u\in\RR^+$. Hence, $\efd((\Psi_\varphi)^\lfdual)=X^\star=\intefd{(\Psi_\varphi)^\lfdual}$. Using \ref{prop.supercoercive.Psi.quasigauge.i}--\ref{prop.supercoercive.Psi.quasigauge.ii}, together with \cite[Cor. 7C]{Rockafellar:1966:level}, \cite[Prop. 1.1.11]{Butnariu:Iusem:2000}, and \cite[Lem. 3.6.1]{Zalinescu:2002}, in the same way as in the proof of Proposition \ref{prop.supercoercive.Psi.varphi}, gives the result.
\end{enumerate}
\end{proof}

\begin{proposition}\label{prop.legendre.quasigauge}
Let $(X,\n{\cdot}_X)$ be a Banach space, and let $\varphi$ be a quasigauge strictly increasing on $\efd(\varphi)$, continuous on $[0,\sup(\efd(\varphi))[$, and let
\begin{equation}
\left\{\begin{array}{l}
j_\varphi\mbox{ is not single-valued on }\efd(j_\varphi)\setminus\INT(\sup(\efd(\varphi))B(X,\n{\cdot}_X))\\
\intefd{\Psi_\varphi}=\efd(\Psi_\varphi)\\
\efd(j_\varphi)\mbox{ is open}\\
\efd((\Psi_\varphi)^\lfdual)\mbox{ is open}.
\end{array}
\right.
\label{eqn.euler.legendre.quasigauge.conditions}
\end{equation}
Then $\Psi_\varphi$ is Euler--Legendre if{}f $(X,\n{\cdot}_X)$ is strictly convex and Gateaux differentiable.
\end{proposition}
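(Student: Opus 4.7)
The plan is to mirror the proof of Proposition \ref{prop.legendre}, replacing each invocation of Proposition \ref{prop.psi.varphi.geometry} with the corresponding quasigauge analogue from Proposition \ref{prop.psi.quasigauge.geometry}. The four conditions in \eqref{eqn.euler.legendre.quasigauge.conditions} will compensate for the loss of the automatic identities $\efd(\Psi_\varphi)=\intefd{\Psi_\varphi}=X$ and $\efd((\Psi_\varphi)^\lfdual)=X^\star$ that held in the gauge case.

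For the essential Gateaux differentiability half, I would use \cite[Thm. 5.6.(i)--(ii)]{Bauschke:Borwein:Combettes:2001}, which characterises this property by the conjunction of $\intefd{\Psi_\varphi}\neq\varnothing$ with single-valuedness of $\partial\Psi_\varphi$ on $\efd(\partial\Psi_\varphi)$. Nonemptiness of $\intefd{\Psi_\varphi}$ follows from $\Psi_\varphi(0)=0$ combined with $\efd(\Psi_\varphi)=\intefd{\Psi_\varphi}$, which places $0\in\intefd{\Psi_\varphi}$. By \eqref{eqn.asplund}, $\partial\Psi_\varphi=j_\varphi$ on $X\setminus\{0\}$ with $j_\varphi(0)=0$ trivially single-valued, so $\efd(\partial\Psi_\varphi)=\efd(j_\varphi)$ and the single-valuedness of $\partial\Psi_\varphi$ on its domain is equivalent to that of $j_\varphi$ on $\efd(j_\varphi)$. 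The first clause of \eqref{eqn.euler.legendre.quasigauge.conditions} forces $j_\varphi$ to be multi-valued on $\efd(j_\varphi)\setminus\intefd{\Psi_\varphi}$, so single-valuedness of $j_\varphi$ on $\efd(j_\varphi)$ is equivalent to $\efd(j_\varphi)\subseteq\intefd{\Psi_\varphi}$ together with single-valuedness of $j_\varphi$ on $\intefd{\Psi_\varphi}$. By Proposition \ref{prop.psi.quasigauge.geometry}.(i), combined with the assumed continuity of $\varphi$ on $[0,\sup(\efd(\varphi))[$, the latter is in turn equivalent to Gateaux differentiability of $(X,\n{\cdot}_X)$.

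For the essential strict convexity half, I would invoke \cite[Lemma 5.8]{Bauschke:Borwein:Combettes:2001}: under openness of both $\efd(\partial\Psi_\varphi)=\efd(j_\varphi)$ and $\efd((\Psi_\varphi)^\lfdual)$ (which are exactly the remaining two clauses of \eqref{eqn.euler.legendre.quasigauge.conditions}), $\Psi_\varphi$ is essentially strictly convex iff it is strictly convex on $\intefd{\Psi_\varphi}$. Since $\efd(\Psi_\varphi)=\intefd{\Psi_\varphi}$ by hypothesis, this coincides with strict convexity on $\efd(\Psi_\varphi)$, which by Proposition \ref{prop.psi.quasigauge.geometry}.(ii) is equivalent to $(X,\n{\cdot}_X)$ being strictly convex together with $\varphi$ being strictly increasing on $\efd(\varphi)$. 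The latter is among the standing assumptions.

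Combining both halves, and recalling that Euler--Legendre means precisely essentially Gateaux differentiable together with essentially strictly convex, yields the claimed equivalence. The principal point of care is in the first half: the structural condition on $j_\varphi$ over $\efd(j_\varphi)\setminus\intefd{\Psi_\varphi}$ has to be read precisely so as to turn single-valuedness of $j_\varphi$ on $\intefd{\Psi_\varphi}$ (equivalent to Gateaux differentiability of $(X,\n{\cdot}_X)$ via Proposition \ref{prop.psi.quasigauge.geometry}.(i)) into single-valuedness on the entire domain $\efd(\partial\Psi_\varphi)$ (equivalent to essential Gateaux differentiability of $\Psi_\varphi$), which is what makes the clean equivalence survive the passage from gauges to quasigauges.
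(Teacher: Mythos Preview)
Your proposal is correct and follows exactly the approach the paper intends: the paper's own proof is a one-liner stating that the argument ``follows the same arguments as the proof of Proposition~\ref{prop.legendre}, with the properties of a gauge $\varphi$ and of $\Psi_\varphi$ replaced by the above assumptions, and with the use of Proposition~\ref{prop.psi.varphi.geometry}.(ii)--(iii) replaced by the use of Proposition~\ref{prop.psi.quasigauge.geometry}.(i)--(ii).'' You have spelled out precisely this substitution, invoking the same external results (\cite[Thm.~5.6.(i)--(ii)]{Bauschke:Borwein:Combettes:2001} for essential Gateaux differentiability and \cite[Lemma~5.8]{Bauschke:Borwein:Combettes:2001} for essential strict convexity) and correctly identifying how each of the four clauses of \eqref{eqn.euler.legendre.quasigauge.conditions} replaces a fact that was automatic in the gauge case.
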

\begin{proof}
Follows the same arguments as the proof of Proposition \ref{prop.legendre}, with the properties of a gauge $\varphi$ and of $\Psi_\varphi$ replaced by the above assumptions, and with the use of Proposition \ref{prop.psi.varphi.geometry}.\ref{prop.psi.varphi.geometry.ii}--\ref{prop.psi.varphi.geometry.iii} replaced by the use of Proposition \ref{prop.psi.quasigauge.geometry}.\ref{prop.psi.quasigauge.geometry.i}--\ref{prop.psi.quasigauge.geometry.ii}.
\end{proof}

\begin{proposition}\label{prop.left.right.psi.quasigauge}
Let $(X,\n{\cdot}_X)$ be a reflexive, strictly convex, and Gateaux differentiable Banach space, let $\varphi$ be a quasigauge strictly increasing on $\efd(\varphi)$ and continuous on $[0,\sup(\efd(\varphi))[$, let $K\cap\intefd{\Psi_\varphi}\neq\varnothing$. Then:
\begin{enumerate}[nosep,label=(\roman*)]
\item\label{prop.left.right.psi.quasigauge.i} if $K$ is convex and closed, and any of the following (inequivalent) conditions holds:
\begin{enumerate}[nosep,label=\alph*)]
\item\label{prop.left.right.psi.quasigauge.i.a} $\varphi^\meet$ is finite or $\varphi^\join$ is (right continuous and finite);
\item\label{prop.left.right.psi.quasigauge.i.b} \eqref{eqn.euler.legendre.quasigauge.conditions},
\end{enumerate}
then:
\begin{enumerate}[nosep,label=\arabic*)]
\item\label{prop.left.right.psi.quasigauge.i.1} $K$ is left $D_{\Psi_\varphi}$-Chebysh\"{e}v, $D_{\Psi_\varphi}$ is left pythagorean on $K$, and $D_{\Psi_\varphi}$ is an information on $X$;
\item\label{prop.left.right.psi.quasigauge.i.2} if \ref{prop.left.right.psi.quasigauge.i.b} or (\ref{prop.left.right.psi.quasigauge.i.a} and $K\subseteq\intefd{\Psi_\varphi}$) holds, then $\LPPP^{D_{\Psi_\varphi}}_K$ are zone consistent;
\end{enumerate}
\item\label{prop.left.right.psi.quasigauge.ii} if $K\subseteq\intefd{\Psi_\varphi}$, $j_\varphi(K)$ is convex and closed, and any of the following (inequivalent) conditions holds:
\begin{enumerate}[nosep,label=\alph*)]
\item\label{prop.left.right.psi.quasigauge.ii.a} $\left\{\begin{array}{l}
\mbox{\ref{prop.left.right.psi.quasigauge.ii.x}}\\
\varphi\mbox{ is finite or }(\varphi^\meet)^\join\mbox{ is right continuous and finite};
\end{array}\right.$
\item\label{prop.left.right.psi.quasigauge.ii.b} $\left\{\begin{array}{l}
\mbox{\ref{prop.left.right.psi.quasigauge.ii.y}}\\
(\varphi^\join)^\meet\mbox{ is finite or }\varphi\mbox{ is right continuous and finite};
\end{array}\right.$
\item\label{prop.left.right.psi.quasigauge.ii.c} $\left\{\begin{array}{l}
\mbox{\ref{prop.left.right.psi.quasigauge.ii.x}}\\
\varphi^\meet\mbox{ is continuous on }[0,\sup(\efd(\varphi^\meet))[\\
j_{\varphi^\meet}^\star\mbox{ is not single-valued on }\efd(j_{\varphi^\meet}^\star)\setminus\INT(\sup(\efd(\varphi^\meet))B(X^\star,\n{\cdot}_{X^\star}))\\
\intefd{\Psi_{\varphi^\meet}}=\efd(\Psi_{\varphi^\meet})\\
\efd(j_{\varphi^\meet}^\star)\mbox{ is open}\\
\efd(\Psi_\varphi)\mbox{ is open};
\end{array}\right.$
\item\label{prop.left.right.psi.quasigauge.ii.d} $\left\{\begin{array}{l}
\mbox{\ref{prop.left.right.psi.quasigauge.ii.y}}\\
\varphi^\join\mbox{ is continuous on }[0,\sup(\efd(\varphi^\join))[\\
j_{\varphi^\join}^\star\mbox{ is not single-valued on }\efd(j_{\varphi^\join}^\star)\setminus\INT(\sup(\efd(\varphi^\join))B(X^\star,\n{\cdot}_{X^\star}))\\
\intefd{\Psi_{\varphi^\join}}=\efd(\Psi_{\varphi^\join})\\
\efd(j_{\varphi^\join}^\star)\mbox{ is open}\\
\efd(\Psi_\varphi)\mbox{ is open},
\end{array}\right.$
\end{enumerate}
where:
\begin{enumerate}[nosep,label=\alph*)]
\setcounter{enumii}{23}
\item\label{prop.left.right.psi.quasigauge.ii.x} $\left\{\begin{array}{l}
\varphi\mbox{ is right continuous on }\RR^+\\
\varphi^\meet\mbox{ is strictly increasing on }\efd(\varphi^\meet)\\
\Psi_{\varphi^\meet}\mbox{ is Gateaux differentiable on }\varnothing\neq j_\varphi(\INT(\sup(\efd(\varphi))B(X,\n{\cdot}_X)))\subseteq\intefd{\Psi_{\varphi^\meet}};
\end{array}\right.$
\item\label{prop.left.right.psi.quasigauge.ii.y} $\left\{\begin{array}{l}
\varphi\mbox{ is left continuous on }\RR^+\\
\varphi^\join\mbox{ is strictly increasing on }\efd(\varphi^\join)\\
\Psi_{\varphi^\join}\mbox{ is Gateaux differentiable on }\varnothing\neq j_\varphi(\INT(\sup(\efd(\varphi))B(X,\n{\cdot}_X)))\subseteq\intefd{\Psi_{\varphi^\join}},
\end{array}\right.$
\end{enumerate}
then:
\begin{enumerate}[nosep,label=\arabic*)]
\item\label{prop.left.right.psi.quasigauge.ii.1} $K$ is right $D_{\Psi_\varphi}$-Chebysh\"{e}v, $D_{\Psi_\varphi}$ is right pythagorean on $K$, $D_{(\Psi_\varphi)^\lfdual}$ is an information on $X^\star$, and $\RPPP^{D_{\Psi_\varphi}}_K$ is zone consistent;
\item\label{prop.left.right.psi.quasigauge.ii.2} if either \ref{prop.left.right.psi.quasigauge.ii.c} or \ref{prop.left.right.psi.quasigauge.ii.d} holds, then $D_{\Psi_\varphi}$ is an information on $X$.
\end{enumerate}
\end{enumerate}
\end{proposition}
\begin{proof}
Follows directly from Propositions \ref{prop.supercoercive.Psi.quasigauge} and \ref{prop.legendre.quasigauge}, applied to Propositions \ref{prop.left.pythagorean} and \ref{prop.right.pythagorean}, and to Corollary \ref{cor.information}. In \ref{prop.left.right.psi.quasigauge.ii}.\ref{prop.left.right.psi.quasigauge.ii.a} (resp., \ref{prop.left.right.psi.quasigauge.ii}.\ref{prop.left.right.psi.quasigauge.ii.b}) we use $\varphi^{\meet\meet}=\varphi$ (resp., $\varphi^{\join\join}=\varphi$) for right (resp., left) continuous $\varphi$ (cf. the proof of Lemma \ref{lem.fenchel.dual.psi.quasigauge}), while in \ref{prop.left.right.psi.quasigauge.ii}.\ref{prop.left.right.psi.quasigauge.ii.c} and \ref{prop.left.right.psi.quasigauge.ii}.\ref{prop.left.right.psi.quasigauge.ii.d} we use also $((\Psi_\varphi)^\lfdual)^\lfdual=\Psi_\varphi$ that follows from $\Psi_\varphi\in\pcl(X,\n{\cdot}_X)$ and reflexivity of $(X,\n{\cdot}_X)$.
\end{proof}

\begin{corollary}\label{cor.prox.varphi.gen.pyth.thm}
If $(X,\n{\cdot}_X)$ is reflexive, strictly convex, and Gateaux differentiable, $\varphi$ is a gauge, and $\lambda\in\,]0,\infty[$, then:
\begin{enumerate}[nosep,label=(\roman*)]
\item if $f\in\pcl(X,\n{\cdot}_X)$, then $\lprox^{D_{\Psi_\varphi}}_{\lambda,f}$ is single-valued on $X$, and satisfies
\begin{equation}
D_{\Psi_\varphi}(x,y)\geq D_{\Psi_\varphi}(x,\lprox^{D_{\Psi_\varphi}}_{\lambda,f}(y))+D_{\Psi_\varphi}(\lprox^{D_{\Psi_\varphi}}_{\lambda,f}(y),y)\;\;\forall(x,y)\in\Fix(\lprox^{D_{\Psi_\varphi}}_{\lambda,f})\times X,
\label{eqn.lprox.varphi.gen.pyth.thm}
\end{equation}
where $\Fix(\lprox^{D_{\Psi_\varphi}}_{\lambda,f})=\arginff{x\in X}{f(x)}$;
\item if a proper $f:X\ra\,]-\infty,\infty]$ satisfies $f\circ(j_\varphi)^\inver\in\pcl(X^\star,\n{\cdot}_{X^\star})$, then $\rprox^{D_{\Psi_\varphi}}_{\lambda,f}$ is single-valued on $X$, and satisfies
\begin{equation}
D_{\Psi_\varphi}(x,y)\geq D_{\Psi_\varphi}(x,\rprox^{D_{\Psi_\varphi}}_{\lambda,f}(x))+D_{\Psi_\varphi}(\rprox^{D_{\Psi_\varphi}}_{\lambda,f}(x),y)\;\;\forall(x,y)\in X\times\Fix(\rprox^{D_{\Psi_\varphi}}_{\lambda,f}),
\label{eqn.rprox.varphi.gen.pyth.thm}
\end{equation}
where $\Fix(\rprox^{D_{\Psi_\varphi}}_{\lambda,f})=(j_\varphi)^\inver\circ\arginff{x\in X^\star}{f\circ(j_\varphi)^\inver(x)}$.
\end{enumerate}
\end{corollary}
\begin{proof}
\begin{enumerate}[nosep,label=(\roman*)]
\item Follows from Propositions \ref{prop.res.pythagorean}.\ref{prop.res.pythagorean.iv} and \ref{prop.legendre}.
\item Follows from Propositions \ref{prop.gen.pyth.thm.rprox} and \ref{prop.legendre}.
\end{enumerate}
\end{proof}

\begin{corollary}\label{cor.res.varphi.gen.pyth.thm}
Let $(X,\n{\cdot}_X)$ be reflexive, strictly convex, and Gateaux differentiable, let $\varphi$ be a gauge, $\lambda\in\,]0,\infty[$, and let $T:X\ra 2^{X^\star}$. If ($T$ is monotone and $\ran(j_\varphi+\lambda T)=X^\star$) or $T$ is maximally monotone, then:
\begin{enumerate}[nosep,label=(\roman*)]
\item $\lres^{\Psi_\varphi}_{\lambda T}$ is single-valued on $\efd(\lres^{\Psi_\varphi}_{\lambda T})$, $\Fix(\lres^{\Psi_\varphi}_{\lambda T})=T^\inver(0)$ is convex, and
\begin{equation}
D_{\Psi_\varphi}(x,y)\geq D_{\Psi_\varphi}(x,\lres^{\Psi_\varphi}_{\lambda T}(y))+D_{\Psi_\varphi}(\lres^{\Psi_\varphi}_{\lambda T}(y),y)\;\;\forall(x,y)\in\Fix(\lres^{\Psi_\varphi}_{\lambda T})\times X;
\label{eqn.lres.varphi.gen.pyth.thm}
\end{equation}
\item $\rres^{\Psi_\varphi}_{\lambda T}$ is single-valued on $\efd(\rres^{\Psi_\varphi}_{\lambda T})$, $\Fix(\rres^{\Psi_\varphi}_{\lambda T})=j_\varphi(T^\inver(0))$ is $j_\varphi$-convex, and
\begin{equation}
D_{(\Psi_\varphi)^\lfdual}(x,y)\geq D_{(\Psi_\varphi)^\lfdual}(x,\rres^{\Psi_\varphi}_{\lambda T}(x))+D_{(\Psi_\varphi)^\lfdual}(\rres^{\Psi_\varphi}_{\lambda T}(x),y)\;\;\forall(x,y)\in X^\star\times\Fix(\rres^{\Psi_\varphi}_{\lambda T}),
\label{eqn.rres.varphi.gen.pyth.thm}
\end{equation}
where $(\Psi_\varphi)^\lfdual$ is given by \eqref{eqn.zalinescu.inverse.fenchel.dual}.
\end{enumerate}
\end{corollary}
\begin{proof}
Follows from Propositions \ref{prop.res.pythagorean}.\ref{prop.res.pythagorean.i}--\ref{prop.res.pythagorean.iii}, \ref{prop.gen.pyth.thm.rres}, and \ref{prop.legendre}.
\end{proof}

\begin{proposition}\label{prop.continuity}
Let $\varnothing\neq K\subseteq X$. For any gauge $\varphi$, if $(X,\n{\cdot}_X)$ is strictly convex, Fr\'{e}chet differentiable, reflexive, and has the Radon--Riesz--Shmul'yan property, then:
\begin{enumerate}[nosep,label=(\roman*)]
\item\label{prop.continuity.i} if $K$ is convex and closed, then $\LPPP^{D_{\Psi_\varphi}}_K$ is norm-to-norm continuous on $X$, while $\inf_{y\in K}\{D_{\Psi_\varphi}(y,\,\cdot\,)\}$ is continuous on $X$;
\item\label{prop.continuity.ii} if $j_\varphi(K)$ is convex and closed, then  $\RPPP^{D_{\Psi_\varphi}}_{K}$ is norm-to-norm continuous on $X$.
\end{enumerate}
\end{proposition}
\begin{proof}
\begin{enumerate}[nosep]
\item[(i)] Taking into account Proposition \ref{prop.psi.varphi.geometry}.\ref{prop.psi.varphi.geometry.xi}, together with equivalence of Fr\'{e}chet differentiability of $(X,\n{\cdot}_X)$ with Fr\'{e}chet differentiability of $\Psi_\varphi$ (Proposition \ref{prop.psi.varphi.geometry}.\ref{prop.psi.varphi.geometry.iv}), we conclude that, for any gauge $\varphi$, if $(X,\n{\cdot}_X)$ is reflexive, strictly convex, Fr\'{e}chet differentiable, and has the Radon--Riesz--Shmul'yan property, then $\Psi_\varphi$ is totally convex and Fr\'{e}chet differentiable on $X$ (as well as supercoercive). Hence, Proposition \ref{prop.continuity.psi}.\ref{prop.continuity.psi.i} applies.
\item[(ii)] \sloppy By Proposition \ref{prop.psi.varphi.geometry}.\ref{prop.psi.varphi.geometry.iv}, $j_\varphi$ (resp., $j_{\varphi^{\inver}}^\star$) is norm-to-norm continuous if{}f $(X,\n{\cdot}_X)$ (resp., $(X^\star,\n{\cdot}_{X^\star})$) is Fr\'{e}chet differentiable (and, in such case, $j_\varphi=\DF\Psi_\varphi$ (resp., $j_{\varphi^{\inver}}^\star=\DF((\Psi_\varphi)^\lfdual)$)). By \cite[Thm. 3.9]{Anderson:1960}, if $(X,\n{\cdot}_X)$ is reflexive, then $(X^\star,\n{\cdot}_{X^\star})$ is Fr\'{e}chet differentiable if{}f ($(X,\n{\cdot}_X)$ is strictly convex and has the Radon--Riesz--Shmul'yan property). Hence, \eqref{eqn.left.to.right} is a composition of norm-to-norm continuous functions, and thus it is norm-to-norm continuous, under the same assumptions on $\Psi$ as in \ref{prop.continuity.ii}.
\item[(ii')] Follows from Proposition \ref{prop.continuity.psi}.\ref{prop.continuity.psi.iii}.
\end{enumerate}
\end{proof}

\begin{proposition}\label{prop.varphi.compositional}
Let $\varphi$ be any gauge.
\begin{enumerate}[nosep,label=(\roman*)]
\item\label{prop.varphi.compositional.i} If $(X,\n{\cdot}_X)$ is uniformly Fr\'{e}chet differentiable and strictly convex, then $\Psi_\varphi$ is LSQ-adapted on any convex and closed $\varnothing\neq K\subseteq X$, and $\LPPP^{D_{\Psi_\varphi}}_C$ is adapted for any $\varnothing\neq C\subseteq K$ with convex and closed $C$.
\item\label{prop.varphi.compositional.ii}\sloppy If $(X,\n{\cdot}_X)$ is uniformly Fr\'{e}chet differentiable, strictly convex, and has the Radon--Riesz--Shmul'yan property, then $\Psi_\varphi$ is LSQ-compositional and RSQ-compositional. 
\item\label{prop.varphi.compositional.iii} If $(X,\n{\cdot}_X)$  is uniformly Fr\'{e}chet differentiable and uniformly convex, then:
\begin{enumerate}[nosep,label=\alph*)]
\item\label{prop.varphi.compositional.iii.a} $\Psi_\varphi$ is RSQ-adapted on any $\varnothing\neq K\subseteq X$, and $\RPPP^{D_{\Psi_\varphi}}_C$ is adapted for any $\varnothing\neq C\subseteq K$ with convex and closed $j_\varphi(C)$;
\item\label{prop.varphi.compositional.iii.b} For any $\varnothing\neq K\subseteq X$ and $T:K\ra X$,
\begin{equation}
T\in\rsq(\Psi_\varphi,K)\;\;\iff\;\;j_\varphi\circ T\circ(j_\varphi)^{\inver}\in\lsq(\Psi_\varphi,j_\varphi(K)).
\end{equation}
\end{enumerate}
\end{enumerate}
\end{proposition}
\begin{proof}{\tiny\ \\}{\vspace{-0.4cm}}
\begin{enumerate}[nosep]
\item[(i)--(ii)] By Proposition \ref{prop.psi.varphi.geometry}.\ref{prop.psi.varphi.geometry.v}, $(X,\n{\cdot}_X)$ is uniformly Fr\'{e}chet differentiable if{}f $\Psi_\varphi$ is uniformly Fr\'{e}chet differentiable on bounded subsets of $X$. By Proposition \ref{prop.psi.varphi.geometry}.\ref{prop.psi.varphi.geometry.xi}, if $(X,\n{\cdot}_X)$ is reflexive, then (it is strictly convex and has the Radon--Riesz--Shmul'yan property) if{}f $\Psi_\varphi$ is totally convex on $X$. The rest follows from Proposition \ref{prop.lsq.rsq.new}.\ref{prop.lsq.rsq.new.i}.\ref{prop.lsq.rsq.new.i.a}, as well \ref{prop.lsq.rsq.new}.\ref{prop.lsq.rsq.new.i}.\ref{prop.lsq.rsq.new.i.b} and \ref{prop.lsq.rsq.new}.\ref{prop.lsq.rsq.new.ii}--\ref{prop.lsq.rsq.new.iii}, due to supercoercivity of $\Psi_\varphi$ (Proposition \ref{prop.supercoercive.Psi.varphi}) and because total convexity on $X$ implies total convexity on bounded subsets of $X$.
\item[(iii)] By Proposition \ref{prop.psi.varphi.geometry}.\ref{prop.psi.varphi.geometry.vi}, uniform convexity of $(X,\n{\cdot}_X)$ is equivalent with uniform Fr\'{e}chet differentiability of $(\Psi_\varphi)^\lfdual$ on bounded subsets of $(X^\star,\n{\cdot}_{X^\star})$. By Propositions \ref{prop.lsq.rsq.new}.\ref{prop.lsq.rsq.new.iv} and \ref{prop.lsq.rsq.new}.\ref{prop.lsq.rsq.new.v}, this gives, respectively, \ref{prop.varphi.compositional.iii}.\ref{prop.varphi.compositional.iii.a} and \ref{prop.varphi.compositional.iii}.\ref{prop.varphi.compositional.iii.b}.
\end{enumerate}
\end{proof}

\begin{proposition}\label{prop.j.star.hoelder}
Let $\gamma\in\,]0,\frac{1}{2}]$.
\begin{enumerate}[nosep,label=(\roman*)]
\item\label{prop.j.star.hoelder.i} If $\varphi(t)\in\left\{\frac{1}{1-\gamma}t^{\frac{\gamma}{1-\gamma}},t^{\frac{\gamma}{1-\gamma}}\right\}$, then $(X,\n{\cdot}_X)$ is $\frac{1}{\gamma}$-uniformly convex if{}f $j_\varphi^\star$ on $(X^\star,\n{\cdot}_{X^\star})$ is single-valued and $\frac{\gamma}{1-\gamma}$-Lipschitz--H\"{o}lder continuous on $X$.
\item\label{prop.j.star.hoelder.ii} Let $\varphi(t)\in\left\{(1-\gamma)^{\frac{1-\gamma}{\gamma}}t^{\frac{1-\gamma}{\gamma}},t^{\frac{1-\gamma}{\gamma}}\right\}$. If $(X,\n{\cdot}_X)$ is $\frac{1}{\gamma}$-uniformly convex, then $(j_\varphi)^{\inver}$ is single-valued and $\frac{\gamma}{1-\gamma}$-Lipschitz--H\"{o}lder continuous on $X$.
\end{enumerate}
\end{proposition}
\begin{proof}
\begin{enumerate}[nosep]
\item[(i)] Follows from equivalence of $\frac{1}{\gamma}$-uniform convexity of $(X,\n{\cdot}_X)$ and $\frac{1}{1-\gamma}$-uniform Fr\'{e}chet differentiability of $(X^\star,\n{\cdot}_{X^\star})$ \cite[p. 63 (Vol. 2)]{Lindenstrauss:Tzafriri:1977:1979} combined with Proposition \ref{prop.psi.varphi.geometry}.\ref{prop.psi.varphi.geometry.viii}.\ref{prop.psi.varphi.geometry.viii.b}--\ref{prop.psi.varphi.geometry.viii.c}.
\item[(ii)] Follows from \ref{prop.j.star.hoelder.i}, Proposition \ref{prop.psi.varphi.geometry}.\ref{prop.psi.varphi.geometry.viii}.\ref{prop.psi.varphi.geometry.viii.b}--\ref{prop.psi.varphi.geometry.viii.c}, and \eqref{eqn.j.j.star.inverse}.
\end{enumerate}
\end{proof}

\begin{proposition}\label{prop.varphi.prox.norm.continuity}
Let $\lambda\in\,]0,\infty[$, let $(X,\n{\cdot}_X)$ be reflexive, let $\varphi$ be a gauge, let $f\in\pcl(X,\n{\cdot}_X)$ be bounded from below, and $\lim_{\n{x}_X\ra\infty}f(x)=\infty$. Then:
\begin{enumerate}[nosep,label=(\roman*)]
\item\label{prop.varphi.prox.norm.continuity.i} if $(X,\n{\cdot}_X)$ is strictly convex and Gateaux differentiable, then $\lprox^{D_{\Psi_\varphi}}_{\lambda,f}$ is single-valued and continuous on $X$;
\item\label{prop.varphi.prox.norm.continuity.ii} if $(X,\n{\cdot}_X)$ is strictly convex, Fr\'{e}chet differentiable, and satisfies Radon--Riesz--Shmul'yan property, then $\rprox^{D_{\Psi_\varphi}}_{\lambda,f}$ is single-valued and continuous on $X$.
\end{enumerate}
\end{proposition}
\begin{proof}
Follows from Proposition \ref{prop.norm.continuity.left.right.prox}.\ref{prop.norm.continuity.left.right.prox.i}--\ref{prop.norm.continuity.left.right.prox.ii}, and the same reasoning as in the proof of Proposition \ref{prop.continuity}.\ref{prop.continuity.i}--\ref{prop.continuity.ii}.
\end{proof}

\begin{proposition}\label{prop.varphi.resolvent.norm.continuity}
Let $(X,\n{\cdot}_X)$ be a Banach space, let $\varphi$ be a gauge, and let $T:X\ra 2^{X^\star}$ be maximally monotone with $0\in\efd(T)$. Then:
\begin{enumerate}[nosep,label=(\roman*)]
\item\label{prop.varphi.resolvent.norm.continuity.i} if $(X,\n{\cdot}_X)$ is Fr\'{e}chet differentiable and uniformly convex, then $\lres^{\Psi_\varphi}_T=(T+j_\varphi)^\inver\circ j_\varphi$ maps $X$ on $\efd(T)$ and is norm-to-norm continuous on $X$;
\item\label{prop.varphi.resolvent.norm.continuity.ii} if $(X,\n{\cdot}_X)$ is strictly convex, uniformly Fr\'{e}chet differentiable, and has Radon--Riesz--Shmul'yan property, then $\rres^{\Psi_\varphi}_T=(j_\varphi)^\inver\circ(T+j_\varphi)^\inver$ maps $X^\star$ on $j_\varphi(\efd(T))$ and is norm-to-norm continuous on $X^\star$.
\end{enumerate}
\end{proposition}
\begin{proof}
\begin{enumerate}[nosep]
\item[(i)] By \cite[Thm. 5.(c)]{Browder:1968}, if $(X,\n{\cdot}_X)$ is Gateaux differentiable and uniformly convex, $\varphi$ is a gauge, and $T:X\ra2^{X^\star}$ is maximally monotone with $0\in\efd(T)$, then $(T+j_\varphi)^\inver$ is a norm-to-norm continuous map from $X^\star$ to $\efd(T)$. (While the explicit statement of this theorem assumes additionally that $T=\partial f$ for $f\in\pcl(X,\n{\cdot}_X)$, its proof does not depend on this assumption.) The rest follows from Proposition \ref{prop.psi.varphi.geometry}.\ref{prop.psi.varphi.geometry.iv}.\ref{prop.psi.varphi.geometry.iv.b} and composability of norm-to-norm continuous maps on Banach spaces.
\item[(ii)] From Proposition \ref{prop.prox.res.left.right}.\ref{prop.prox.res.left.right.i} it follows that $\rres^{\Psi_\varphi}_T=\DG\Psi_\varphi\circ\lres^{\Psi_\varphi}_T\circ\DG(\Psi_\varphi)^\lfdual$ with $\efd(\rres^{\Psi_\varphi}_T)=\DG\Psi_\varphi(\efd(\lres^{\Psi_\varphi}_T))$ and $\ran(\rres^{\Psi_\varphi}_T)=\DG\Psi_\varphi(\ran(\lres^{\Psi_\varphi}_T))$. 
Furthermore, due to \eqref{eqn.asplund} and \eqref{eqn.zalinescu.inverse.fenchel.dual}, $(\Psi_\varphi)^\lfdual=\Psi_{\varphi^\inver}$ and $(j_\varphi)^\inver=j_{\varphi^\inver}^\star$. The rest follows from \ref{prop.varphi.resolvent.norm.continuity.i}, combined with the equivalence of (Gateaux diferentiability and uniform convexity of $(X,\n{\cdot}_X)$) with (strict convexity and uniform Fr\'{e}chet differentiability of $(X^\star,\n{\cdot}_{X^\star})$), Proposition \ref{prop.psi.varphi.geometry}.\ref{prop.psi.varphi.geometry.iv}.\ref{prop.psi.varphi.geometry.iv.b} applied to $j_\varphi$ and $j_{\varphi^\inver}^\star$, and the fact \cite[Thm. 3.9]{Anderson:1960} that (reflexivity, strict convexity, and Radon--Riesz--Shmul'yan property of $(X,\n{\cdot}_X)$) implies Fr\'{e}chet differentiability of $(X^\star,\n{\cdot}_{X^\star})$.
\end{enumerate}
\end{proof}

\begin{proposition}\label{prop.varphi.resolvent.uniform.continuity}
Let $(X,\n{\cdot}_X)$ be a Banach space, let $\beta\in\,]0,1[$, let $T:X\ra 2^{X^\star}$ be maximally monotone, let $f\in\pcl(X,\n{\cdot}_X)$, let $g:X\ra\,]-\infty,\infty]$ satisfy $g\circ j_{\varphi_{1,1-\beta}}\in\pcl(X^\star,\n{\cdot}_{X^\star})$ for $\beta\in[\frac{1}{2},1[$, let $\lambda\in\,]0,\infty[$ and $r\in\,]1,2]$. Then:
\begin{enumerate}[nosep,label=(\roman*)]
\item\label{prop.varphi.resolvent.uniform.continuity.i} if $\beta\in\,]0,\frac{1}{2}]$ and $(X,\n{\cdot}_X)$ is $\frac{1}{\beta}$-uniformly convex and uniformly Fr\'{e}chet differentiable, then $\lres^{\Psi_{\varphi_{1,\beta}}}_{\lambda T}$ and $\lprox^{D_{\Psi_{\varphi_{1,\beta}}}}_{\lambda,f}$ are single-valued and uniformly continuous on bounded subsets of $X$;
\item\label{prop.varphi.resolvent.uniform.continuity.ii} if $\beta\in\,]0,\frac{1}{2}]$ and $(X,\n{\cdot}_X)$ is $\frac{1}{\beta}$-uniformly convex and $r$-uniformly Fr\'{e}chet differentiable, then $\lres^{\Psi_{\varphi_{1,\beta}}}_{\lambda T}$ and $\lprox^{D_{\Psi_{\varphi_{1,\beta}}}}_{\lambda,f}$ are single-valued and $(r-1)\frac{\beta}{1-\beta}$-Lipschitz--H\"{o}lder continuous on $X$;
\item\label{prop.varphi.resolvent.uniform.continuity.iii} if $\beta\in[\frac{1}{2},1[$ and $(X,\n{\cdot}_X)$ is uniformly convex and $\frac{1}{\beta}$-uniformly Fr\'{e}chet differentiable, then $\rres^{\Psi_{\varphi_{1,\beta}}}_{\lambda T}$ is single-valued and uniformly continuous on bounded subsets of $X^\star$, while $\rprox^{D_{\Psi_{\varphi_{1,\beta}}}}_{\lambda,g}$ is single-valued and uniformly continuous on bounded subsets of $X$;
\item\label{prop.varphi.resolvent.uniform.continuity.iv} if $\beta\in[\frac{1}{2},1[$ and $(X,\n{\cdot}_X)$ is $\frac{r}{r-1}$-uniformly convex and $\frac{1}{\beta}$-uniformly Fr\'{e}chet differentiable, then $\rres^{\Psi_{\varphi_{1,\beta}}}_{\lambda T}$ is single-valued and $(\frac{(1-\beta)(r-1)}{\beta})^2$-Lipschitz--H\"{o}lder continuous on bounded subsets of $X^\star$, while $\rprox^{D_{\Psi_{\varphi_{1,\beta}}}}_{\lambda,g}$ is single-valued and $(\frac{(1-\beta)(r-1)}{\beta})^2$-Lipschitz--H\"{o}lder continuous on bounded subsets of $X$.
\end{enumerate}
\end{proposition}
\begin{proof}
\begin{enumerate}[nosep]
\item[(i)] By \cite[Ex. 6.7]{Reem:Reich:2018}, if $(X,\n{\cdot}_X)$ is Gateaux differentiable and $\frac{1}{\beta}$-uniformly convex, then $(\DG\Psi_{\varphi_{1,\beta}}+\lambda T)^\inver$ is single-valued and $\frac{\beta}{1-\beta}$-Lipschitz--H\"{o}lder continuous on $X$. Combining this with Proposition \ref{prop.psi.varphi.geometry}.\ref{prop.psi.varphi.geometry.v}.\ref{prop.psi.varphi.geometry.v.b} gives the result for $\lres^{\Psi_{\varphi_{1,\beta}}}_{\lambda T}$. The result for $\lprox^{D_{\Psi_{\varphi_{1,\beta}}}}_{\lambda,f}$ follows in the same way as in Proposition \ref{prop.lipschitz.hoelder.left.resolvent}. (Equivalently, the same conclusion follows from Proposition \ref{prop.lipschitz.hoelder.left.resolvent}, combined with Propositions \ref{prop.psi.varphi.geometry}.\ref{prop.psi.varphi.geometry.v}.\ref{prop.psi.varphi.geometry.v.b} and \ref{prop.psi.varphi.geometry}.\ref{prop.psi.varphi.geometry.vii}.\ref{prop.psi.varphi.geometry.vii.b}.)
\item[(ii)] Follows from the proof of \ref{prop.varphi.resolvent.uniform.continuity.i}, Proposition \ref{prop.psi.varphi.geometry}.\ref{prop.psi.varphi.geometry.viii}.\ref{prop.psi.varphi.geometry.viii.c}, and the fact that the composition of Lipschitz--H\"{o}lder continuous maps has the exponent given by the multiplication of the exponents of the composite maps.
\item[(iii)] From Proposition \ref{prop.prox.res.left.right}.\ref{prop.prox.res.left.right.i}, we obtain
\begin{equation}
\rres_{\lambda T}^{\Psi_{\varphi_{1,\beta}}}=j_{\varphi_{1,\beta}}\circ\lres_{\lambda T}^{\Psi_{\varphi_{1,\beta}}}\circ(j_{\varphi_{1,\beta}})^\inver.
\label{eqn.left.right.resolvent.varphi.beta}
\end{equation}
The result for $\rres^{\Psi_{\varphi_{1,\beta}}}_{\lambda T}$ follows from combining \ref{prop.varphi.resolvent.uniform.continuity.i} with Propositions \ref{prop.psi.varphi.geometry}.\ref{prop.psi.varphi.geometry.vi}.\ref{prop.psi.varphi.geometry.vi.c} and \ref{prop.psi.varphi.geometry}.\ref{prop.psi.varphi.geometry.v}.\ref{prop.psi.varphi.geometry.v.b}, and with the fact that $\frac{1}{\beta}$-uniform Fr\'{e}chet differentiability of $(X,\n{\cdot}_X)$ implies its Fr\'{e}chet differentiability. The result for $\rprox^{D_{\Psi_{\varphi_{1,\beta}}}}_{\lambda,g}$ follows the same way, using Proposition \ref{prop.prox.res.left.right}.\ref{prop.prox.res.left.right.ii}.
\item[(iv)] By 
\begin{equation}
(j_{\varphi_{1,\beta}})^{\inver}=(\DG\Psi_{\varphi_{1,\beta}})^{\inver}=\DG((\Psi_{\varphi_{1,\beta}})^\lfdual)=j_{(\varphi_{1,\beta})^{\inver}}^\star=j^\star_{\varphi_{1,1-\beta}},
\label{eqn.inver.j.varphi.beta}
\end{equation}
\eqref{eqn.left.right.resolvent.varphi.beta} gives
\begin{equation}
\rres_{\lambda T}^{\Psi_{\varphi_{1,\beta}}}=j_{\varphi_{1,\beta}}\circ\lres_{\lambda T}^{\Psi_{\varphi_{1,\beta}}}\circ j_{\varphi_{1,1-\beta}}^\star.
\label{eqn.left.right.resolvent.varphi.beta.beta}
\end{equation}
By \cite[p. 63 (Vol. 2)]{Lindenstrauss:Tzafriri:1977:1979}, $\frac{1}{\gamma}$-uniform convexity of $(X,\n{\cdot}_X)$ is equivalent with $\frac{1}{1-\gamma}$-uniform Fr\'{e}chet differentiability of $(X^\star,\n{\cdot}_{X^\star})$ $\forall\gamma\in\,]0,\frac{1}{2}]$. By \cite[Cor. 2.35]{Kazimierski:2010}, if a Banach space $(X,\n{\cdot}_X)$ is $s$-uniformly Fr\'{e}chet differentiable, $s\in\,]1,2]$,\footnote{While \cite[Cor. 2.35]{Kazimierski:2010} uses $s\in\,]1,\infty]$, the limitation to $s\in\,]1,2]$ follows from the fact that there are no $s$-uniformly Fr\'{e}chet differentiable spaces for ($s<1$ as well as) $s>2$ (cf., e.g., \cite[pp. 8--9]{Yamada:2006} for a proof).} and $w\in\,]1,\infty[$, then $j_{\varphi_{1,1/w}}$ is single-valued and $\min\{w-1,s-1\}$-Lipschitz--H\"{o}lder continuous on bounded subsets of $X$. Setting $1-\beta=w$ for $\beta\in[\frac{1}{2},1[$ gives $w\in[2,\infty[$, hence $\min\{\frac{1}{1-\beta}-1,r-1\}=r-1$. Combining this with \ref{prop.varphi.resolvent.uniform.continuity.ii} and with Proposition \ref{prop.psi.varphi.geometry}.\ref{prop.psi.varphi.geometry.viii}.\ref{prop.psi.varphi.geometry.viii.c}, we obtain that the map \eqref{eqn.left.right.resolvent.varphi.beta.beta} is $\frac{1-\beta}{\beta}\left(\frac{1-\beta}{\beta}(r-1)\right)(r-1)$-Lipschitz--H\"{o}lder continuous on $X^\star$. The result for $\rprox^{D_{\Psi_{\varphi_{1,\beta}}}}_{\lambda,g}$ follows completely analogously.
\end{enumerate}
\end{proof}

\begin{proposition}\label{prop.varphi.uniform.continuity}
Let $(X,\n{\cdot}_X)$ be a Banach space, $\beta\in\,]0,1[$, $r\in\,]1,2]$, $\varnothing\neq K\subseteq X$. Then:
\begin{enumerate}[nosep,label=(\roman*)]
\item\label{prop.varphi.uniform.continuity.i} if $\beta\in\,]0,\frac{1}{2}]$, $(X,\n{\cdot}_X)$ is $\frac{1}{\beta}$-uniformly convex and uniformly Fr\'{e}chet differentiable, and $K$ is convex and closed, then $\LPPP^{D_{\Psi_{\varphi_{1,\beta}}}}_K$ is uniformly continuous on bounded subsets of $X$;
\item\label{prop.varphi.uniform.continuity.ii} if $\beta\in\,]0,\frac{1}{2}]$, $(X,\n{\cdot}_X)$ is $\frac{1}{\beta}$-uniformly convex and $r$-uniformly Fr\'{e}chet differentiable, and $K$ is convex and closed, then $\LPPP^{D_{\Psi_{\varphi_{1,\beta}}}}_K$ is $\frac{\beta(r-1)}{1-\beta}$-Lipschitz--H\"{o}lder continuous on $X$;
\item\label{prop.varphi.uniform.continuity.iii} if $\beta\in[\frac{1}{2},1[$, $(X,\n{\cdot}_X)$ is $\frac{1}{\beta}$-uniformly Fr\'{e}chet differentiable and uniformly convex, and $j_{\varphi_{1,\beta}}(K)$ is convex and closed, then $\RPPP^{D_{\Psi_{\varphi_{1,\beta}}}}_K$ is uniformly continuous on bounded subsets of $X$; 
\item\label{prop.varphi.uniform.continuity.iv} if $\beta\in[\frac{1}{2},1[$, $(X,\n{\cdot}_X)$ is $\frac{1}{\beta}$-uniformly Fr\'{e}chet differentiable and $\frac{r}{r-1}$-uniformly convex, and $j_{\varphi_{1,\beta}}(K)$ is convex and closed, then $\RPPP^{D_{\Psi_{\varphi_{1,\beta}}}}_K$ is $\frac{(1-\beta)^2}{\beta^2}(r-1)^2$-Lipschitz--H\"{o}lder continuous on bounded subsets of $X$.
\end{enumerate}
\end{proposition}
\begin{proof}{\tiny\ \\}{\vspace{-0.4cm}}
\begin{enumerate}[nosep]
\item[(i)--(ii)] Follows from Propositions \ref{prop.varphi.resolvent.uniform.continuity}.\ref{prop.varphi.resolvent.uniform.continuity.i}--\ref{prop.varphi.resolvent.uniform.continuity.ii} and \ref{prop.proj.prox.res}.\ref{prop.proj.prox.res.ii}, by setting $T=\partial\iota_K$ and $\lambda=1$.
\item[(iii)] Follows from \ref{prop.varphi.uniform.continuity.i}, taken together with \eqref{eqn.left.to.right}, analogously to the proof of Proposition \ref{prop.varphi.resolvent.uniform.continuity}.\ref{prop.varphi.resolvent.uniform.continuity.iii}, using \eqref{eqn.inver.j.varphi.beta}, and replacing \eqref{eqn.left.right.resolvent.varphi.beta.beta} by
\begin{equation}
\RPPP^{D_{\Psi_{\varphi_{1,\beta}}}}_K(x)=j^\star_{\varphi_{1,1-\beta}}\circ\LPPP^{D_{\Psi_{\varphi_{1,1-\beta}}}}_{j_{\varphi_{1,\beta}}(K)}\circ j_{\varphi_{1,\beta}}(x)\;\forall x\in X.
\label{eqn.left.right.varphi}
\end{equation}
\item[(iv)] Follows from \ref{prop.varphi.uniform.continuity.ii}, analogously to the proof of Proposition \ref{prop.varphi.resolvent.uniform.continuity}.\ref{prop.varphi.resolvent.uniform.continuity.iv}, with the same substitution as in \ref{prop.varphi.uniform.continuity.iii}.
\end{enumerate}
\end{proof}

\begin{definition}
Let $\varphi$ be a gauge. A Banach space $(X,\n{\cdot}_X)$ will be called \df{$\varphi$-uniformly convex} (resp., \df{$\varphi$-uniformly Fr\'{e}chet differentiable}) if{}f $\Psi_\varphi$ is uniformly convex (resp., uniformly Fr\'{e}chet differentiable) on $(X,\n{\cdot}_X)$.
\end{definition}

\begin{corollary}\label{cor.varphi.uniformly.convex}
For any gauge $\varphi$, a Banach space $(X,\n{\cdot}_X)$ is $\varphi$-uniformly convex (resp., $\varphi$-uniformly Fr\'{e}chet differentiable) if{}f $(X^\star,\n{\cdot}_{X^\star})$ is $\varphi$-uniformly Fr\'{e}chet differentiable (resp., $\varphi$-uni\-form\-ly convex).
\end{corollary}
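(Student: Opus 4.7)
The plan is to chain the corollary to Lemma \ref{lemma.uFd.uc.duality}.(i) while being careful about what the ambient definition of $\varphi$-uniform convexity actually asserts. First, for any gauge $\varphi$, $\Psi_\varphi$ is everywhere finite, so $\efd(\Psi_\varphi)=X$, and Proposition \ref{prop.supercoercive.Psi.varphi} combined with \cite[Lemm. 3.6.1]{Zalinescu:2002} (already invoked in that Proposition's proof) gives $\efd((\Psi_\varphi)^\lfdual)=X^\star$. These two bilateral domain conditions are exactly what Lemma \ref{lemma.uFd.uc.duality}.(i) needs, and so $\Psi_\varphi$ is uniformly convex on $X$ if{}f $(\Psi_\varphi)^\lfdual$ is uniformly Fr\'echet differentiable on $X^\star$ (and symmetrically with uc/uFd swapped). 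The Z\v{a}linescu identity \eqref{eqn.zalinescu.inverse.fenchel.dual} identifies $(\Psi_\varphi)^\lfdual = \Psi_{\varphi^{\inver}}$ on $X^\star$, where $\varphi^{\inver}$ is a gauge (strict increase, continuity, vanishing at $0$, and blow-up at $\infty$ are all transferred across inversion).

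The main obstacle is then to close the loop: to show that the property ``$\Psi_\psi$ uniformly Fr\'echet differentiable on $(X^\star,\n{\cdot}_{X^\star})$'' does not discriminate between the choices $\psi=\varphi$ and $\psi=\varphi^{\inver}$, so that the equivalence can be written with the \emph{same} gauge on both sides of the corollary. My approach is to interpret the definition preceding the corollary in a way that is consistent with the paper's systematic usage of Proposition \ref{prop.psi.varphi.geometry}.(v)--(vi), namely on bounded subsets. Under that reading, Proposition \ref{prop.psi.varphi.geometry}.(vi).a) yields that $\Psi_\psi$ being uc on bounded subsets of $(X,\n{\cdot}_X)$ is \emph{independent of the gauge $\psi$} — it coincides with the norm-geometric property of $X$ being uniformly convex — and Proposition \ref{prop.psi.varphi.geometry}.(v).a) gives the analogous independence for uFd. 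Thus $\varphi$-uc of $X$ collapses to uc of $(X,\n{\cdot}_X)$ and $\varphi$-uFd of $X^\star$ collapses to uFd of $(X^\star,\n{\cdot}_{X^\star})$, both free of $\varphi$, and the ambiguity between $\varphi$ and $\varphi^{\inver}$ disappears.

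It then remains to invoke the classical norm duality ($X$ uc $\iff X^\star$ uFd) internally to the paper. I will obtain it by combining Proposition \ref{prop.psi.varphi.geometry}.(vi).b) ($(X,\n{\cdot}_X)$ uc $\iff \Psi_\varphi^\lfdual$ uFd on bounded subsets of $X^\star$) with \eqref{eqn.zalinescu.inverse.fenchel.dual} to rewrite $\Psi_\varphi^\lfdual = \Psi_{\varphi^{\inver}}$, and then applying Proposition \ref{prop.psi.varphi.geometry}.(v).a) on the dual space with the gauge $\varphi^{\inver}$ to translate the latter into uFd of $(X^\star,\n{\cdot}_{X^\star})$. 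This gives the first equivalence of the corollary. The ``resp.'' clause follows by the same pattern, with $X$ and $X^\star$ interchanged throughout: use the other direction of Lemma \ref{lemma.uFd.uc.duality}.(i), Proposition \ref{prop.psi.varphi.geometry}.(v).a) on $X$, Proposition \ref{prop.psi.varphi.geometry}.(vi).a) on $X^\star$, and again \eqref{eqn.zalinescu.inverse.fenchel.dual} to transfer between $\varphi$ and $\varphi^{\inver}$. The one delicate point to be spelled out carefully in the writeup is the domain-of-uniformity interpretation of the definition; once that is fixed, each implication follows by a direct citation of an already established proposition or lemma.
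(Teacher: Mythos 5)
The paper's own proof is a one-line citation: the corollary ``follows directly from Lemma \ref{lemma.uFd.uc.duality}'', i.e.\ from the global duality of Lemma \ref{lemma.uFd.uc.duality}.(i) applied to $\Psi=\Psi_\varphi$, the hypotheses $\efd(\Psi_\varphi)=X$ and $\efd((\Psi_\varphi)^\lfdual)=X^\star$ being automatic for a gauge. Your first paragraph reconstructs exactly this step, including the correct identification $(\Psi_\varphi)^\lfdual=\Psi_{\varphi^{\inver}}$ via \eqref{eqn.zalinescu.inverse.fenchel.dual}; up to that point you are on the paper's route, and your instinct that the Lemma hands you the gauge $\varphi^{\inver}$ on the dual side, not $\varphi$, is a legitimate observation about the statement.

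The gap is in how you resolve that mismatch. You reinterpret $\varphi$-uniform convexity as uniform convexity of $\Psi_\varphi$ on \emph{bounded subsets}, so that Proposition \ref{prop.psi.varphi.geometry}.(v)--(vi) makes the notion gauge-independent. But the definition preceding the corollary explicitly uses the global notion (``uniformly convex \ldots\ on $(X,\n{\cdot}_X)$''), which is the one appearing in Lemma \ref{lemma.uFd.uc.duality}.(i), and which genuinely depends on $\varphi$: on a Hilbert space $\H$, $\Psi_\varphi=\n{\cdot}_\H^4$ (for $\varphi(t)=4t^3$) is uniformly convex on $\H$ by Proposition \ref{prop.psi.varphi.geometry}.(vii).a), yet $\DG\Psi_\varphi(x)=4\n{x}_\H^2x$ is not uniformly continuous on all of $\H$, so this $\Psi_\varphi$ is not uniformly Fr\'{e}chet differentiable there, whereas $\Psi_{\varphi^{\inver}}$, a positive multiple of $\n{\cdot}_\H^{4/3}$, is. Under your bounded-subset reading the corollary collapses to the $\varphi$-independent classical norm duality, which contradicts the role assigned to it in Remark \ref{remark.varphi}.(xi) as a generalisation of the exponent-sensitive $\frac{1}{\gamma}$-uniform convexity\slash $\frac{1}{1-\gamma}$-uniform Fr\'{e}chet differentiability duality of Proposition \ref{prop.psi.varphi.geometry}.(vii)--(viii). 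So you have proved a different (weaker) statement rather than the one asserted. What the paper's Lemma actually yields is ``$(X,\n{\cdot}_X)$ is $\varphi$-uniformly convex if{}f $(X^\star,\n{\cdot}_{X^\star})$ is $\varphi^{\inver}$-uniformly Fr\'{e}chet differentiable''; the printed form with the same $\varphi$ on both sides holds verbatim only when $\varphi^{\inver}$ is a positive multiple of $\varphi$ (e.g.\ $\varphi(t)=\lambda t$), and the correct response to the mismatch is to flag it, not to redefine the notion so that it disappears.
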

\begin{proof}
Follows directly from Lemma \ref{lemma.uFd.uc.duality}.
\end{proof}

\begin{proposition}\label{prop.alber.decomposition.psi.varphi}
If $(X,\n{\cdot}_X)$ is reflexive, strictly convex, Gateaux differentiable, $\varphi$ is a gauge, $\varnothing\neq K_1\subset X$ is a closed convex cone with a vertex at $0\in X$, $\varnothing\neq K_2\subset X$, $j_\varphi(K_2)$ is a closed convex cone with a vertex at $0\in X^\star$, then
\begin{equation}
\forall x\in X\;\;\;
\left\{
\begin{array}{l}
x=j^\star_{\varphi^{\inver}}\circ\hat{\PPP}^{(\Psi_\varphi)^\lfdual}_{K_1^\circ}\circ j_\varphi(x)+\LPPP_{K_1}^{D_{\Psi_\varphi}}(x)\\
\duality{\LPPP^{D_{\Psi_\varphi}}_{K_1}(x),\hat{\PPP}_{K_1^\circ}^{(\Psi_\varphi)^\lfdual}\circ j_\varphi(x)}_{X\times X^\star}=0
\end{array}
\right.
\label{eqn.left.alber.varphi.decomposition}
\end{equation}
and
\begin{equation}
\forall y\in X\;\;\;
\left\{
\begin{array}{l}
y=\hat{\PPP}^{\Psi_\varphi}_{(j_\varphi(K_2))^\circ}(y)+\RPPP^{D_{\Psi_\varphi}}_{K_2}(y)\\
\duality{j^\star_{\varphi^{\inver}}\circ\RPPP^{D_{\Psi_\varphi}}_{K_2}(y),\hat{\PPP}^{\Psi_\varphi}_{(j_\varphi(K_2))^\circ}(y)}_{X\times X^\star}=0,
\end{array}
\right.
\label{eqn.right.alber.varphi.decomposition}
\end{equation}
where, for any strictly convex function $f$ and convex closed set $C$, $\hat{\PPP}^f_C(x):=\arginff{z\in C}{\Psi(x-z)}$ $\forall x\in X$ and $\hat{\PPP}^f_C\circ\hat{\PPP}^f_C(x)=\hat{\PPP}^f_C(x)$ $\forall x\in X$. Furthermore, if $\varnothing\neq K_1\subseteq X$ (resp., $\varnothing\neq j_\varphi(K_2)\subseteq X^\star$) is a linear subspace instead of a closed convex cone, then \eqref{eqn.left.alber.varphi.decomposition} (resp., \eqref{eqn.right.alber.varphi.decomposition}) holds under replacement of $(\cdot)^\circ$ with $(\cdot)^\bot$.
\end{proposition}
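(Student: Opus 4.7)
The plan is to derive Proposition \ref{prop.alber.decomposition.psi.varphi} as a direct specialisation of Proposition \ref{prop.alber.decomposition} to $\Psi=\Psi_\varphi$. Thus the main task is twofold: verify that the hypotheses of Proposition \ref{prop.alber.decomposition} are met under the present assumptions, and rewrite its conclusion in terms of the duality maps $j_\varphi$ and $j^\star_{\varphi^{\inver}}$ using the identifications provided by \eqref{eqn.asplund} and \eqref{eqn.j.j.star.inverse}.

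First I would assemble the structural properties of $\Psi_\varphi$ and $(\Psi_\varphi)^\lfdual$. Since $\varphi$ is a gauge, $\efd(\Psi_\varphi)=X$, and the assumed strict convexity and Gateaux differentiability of $(X,\n{\cdot}_X)$, combined with Proposition \ref{prop.psi.varphi.geometry}.(ii)--(iii), yield $\Psi_\varphi\in\pclg(X,\n{\cdot}_X)$ and $\Psi_\varphi$ strictly convex; supercoercivity comes from Proposition \ref{prop.supercoercive.Psi.varphi}. From \eqref{eqn.Psi.varphi} we read off $\Psi_\varphi(0)=0$, and from \eqref{eqn.asplund} that $\DG\Psi_\varphi=j_\varphi$ on $X$, with $j_\varphi(0)=0$. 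Reflexivity of $(X,\n{\cdot}_X)$ together with its strict convexity and Gateaux differentiability implies strict convexity and Gateaux differentiability of $(X^\star,\n{\cdot}_{X^\star})$ (as already used in the proof of Proposition \ref{prop.left.right.psi.varphi}.(ii)); identity \eqref{eqn.zalinescu.inverse.fenchel.dual} gives $(\Psi_\varphi)^\lfdual=\Psi_{\varphi^{\inver}}$ on $X^\star$ with $\varphi^{\inver}$ a gauge, and applying the same three propositions on $X^\star$ then yields $(\Psi_\varphi)^\lfdual\in\pclg(X^\star,\n{\cdot}_{X^\star})$, supercoercive and strictly convex, with $(\Psi_\varphi)^\lfdual(0)=0$ and $\DG(\Psi_\varphi)^\lfdual=j^\star_{\varphi^{\inver}}$ satisfying $j^\star_{\varphi^{\inver}}(0)=0$. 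The oddness conditions $j_\varphi(-x)=-j_\varphi(x)$ on $X$ and $j^\star_{\varphi^{\inver}}(-y)=-j^\star_{\varphi^{\inver}}(y)$ on $X^\star$ follow directly from \eqref{eqn.j.varphi.definition} using $\n{-x}_X=\n{x}_X$, $\varphi(\n{-x}_X)=\varphi(\n{x}_X)$, and $\duality{-x,-y}_{X\times X^\star}=\duality{x,y}_{X\times X^\star}$.

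Second, I would read off the explicit form of the relevant compositions. By \eqref{eqn.j.j.star.inverse}, under the reflexivity identification $J_X\colon X\to X^{**}$, $(\DG\Psi_\varphi)^{\inver}=(j_\varphi)^{\inver}$ coincides with $j^\star_{\varphi^{\inver}}$ viewed as a map $X^\star\to X$, and dually $(\DG(\Psi_\varphi)^\lfdual)^{\inver}=(j^\star_{\varphi^{\inver}})^{\inver}$ coincides with $j_\varphi\colon X\to X^\star$. Substituting $\DG\Psi_\varphi=j_\varphi$ and $(\DG\Psi_\varphi)^{\inver}=j^\star_{\varphi^{\inver}}$ into \eqref{eqn.left.alber.decomposition} delivers \eqref{eqn.left.alber.varphi.decomposition}, with single-valuedness and idempotency of $\hat{\PPP}^{(\Psi_\varphi)^\lfdual}_{K_1^\circ}$ inherited via \cite[Def. 3.1]{Alber:2007} from strict convexity of $(\Psi_\varphi)^\lfdual$. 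Applying Proposition \ref{prop.alber.decomposition}.(ii) to $\Psi_\varphi$, with $\DG\Psi_\varphi(K_2)=j_\varphi(K_2)$ and the corresponding inverse identification, gives \eqref{eqn.right.alber.varphi.decomposition}. The final assertion, covering the case when $K_1$ (respectively $j_\varphi(K_2)$) is a linear subspace rather than a closed convex cone, follows in exactly the same way from Proposition \ref{prop.alber.decomposition}.(iii)--(iv).

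The only delicate point is the bookkeeping of the reflexivity identification $J_X$, which is what permits viewing $(\DG\Psi_\varphi)^{\inver}$ and $(\DG(\Psi_\varphi)^\lfdual)^{\inver}$ as $j^\star_{\varphi^{\inver}}$ and $j_\varphi$ respectively via \eqref{eqn.j.j.star.inverse}; everything else is a direct substitution, and no geometric input beyond Proposition \ref{prop.alber.decomposition} and the facts about $\Psi_\varphi$ collected in Section \ref{section.gauge.functions} is required.
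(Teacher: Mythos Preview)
Your proposal is correct and follows essentially the same route as the paper's proof, which simply cites Propositions \ref{prop.alber.decomposition}, \ref{prop.psi.varphi.geometry}.(ii)--(iii), \ref{prop.supercoercive.Psi.varphi}, and Equation \eqref{eqn.j.j.star.inverse}. You have spelled out in more detail the verification of the auxiliary hypotheses of Proposition \ref{prop.alber.decomposition} (the values at $0$, the oddness of $j_\varphi$ and $j^\star_{\varphi^{\inver}}$, and the dual-space properties via $(\Psi_\varphi)^\lfdual=\Psi_{\varphi^{\inver}}$), which the paper leaves implicit, but the structure and ingredients are the same.
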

\begin{proof}
Follows from Propositions \ref{prop.alber.decomposition}, \ref{prop.psi.varphi.geometry}.\ref{prop.psi.varphi.geometry.ii}--\ref{prop.psi.varphi.geometry.iii}, \ref{prop.supercoercive.Psi.varphi}, and \eqref{eqn.j.j.star.inverse}.
\end{proof}

\begin{remark}\label{remark.varphi}
\begin{enumerate}[nosep,label=(\roman*)]
\item\label{remark.varphi.i} \eqref{eqn.D.Psi.varphi.formula} and \eqref{eqn.D.varphi.quasigauge.formula.1}--\eqref{eqn.D.varphi.quasigauge.formula.4} are new. The same holds for the equations in Lemma \ref{lem.fenchel.dual.psi.quasigauge}. The first implicit appearance of the formula equivalent to a statement of nonnegativity of $D_{\Psi_\varphi}$ (more precisely, of $D^\partial_{\Psi_\varphi}$) can be found in \cite[Thm. 1]{Asplund:1967}. (Nonnegativity of $D_\Psi$ as a condition characterising monotonicity of $\partial\Psi$ appeared earlier, in \cite[Thm. 3]{Kachurovskii:1966}.) For $\Psi=\Psi_{\varphi_{1/2,1/2}}$ and $\Psi=\Psi_{\varphi_{1,\beta}}$, $\beta\in\,]0,1[$, an identification of this formula as corresponding to the Va\u{\i}nberg--Br\`{e}gman functional, together with a study of $\LPPP^{D_\Psi}$, was made in \cite[\S7]{Alber:1993} and \cite[\S7]{Alber:1996}.
\item\label{remark.varphi.ii} Proposition \ref{prop.legendre} clarifies relationships between the Euler--Legendre property and total convexity of $\Psi_\varphi$: the former is equivalent to (strict convexity and Gateaux differentiability) of $(X,\n{\cdot}_X)$, while the latter is implied by the local uniform convexity of $(X,\n{\cdot}_X)$, hence it entails strict convexity and the Radon--Riesz--Shmul'yan property of $(X,\n{\cdot}_X)$ (implication of the latter property is proved in \cite[Prop. (p. 352)]{Vyborny:1956}). The lack of Gateaux differentiability in the latter case should be seen in the context of total convexity being defined by $\DG_+\Psi$ (and thus $D^+_\Psi$) instead of $\DG\Psi$ (and thus $D_\Psi$). For reflexive $(X,\n{\cdot}_X)$, $\Psi_\varphi$ is totally convex if{}f $(X,\n{\cdot}_X)$ is strictly convex and has the Radon--Riesz--Shmul'yan property \cite[Thms. 3.1, 3.3]{Resmerita:2004}. An example of a reflexive, strictly convex, Gateaux differentiable Banach space $(X,\n{\cdot}_X)$ which does not satisfy the Radon--Riesz--Shmul'yan property, so $\Psi_\varphi$ is Euler--Legendre but is not totally convex, is provided in \cite[Ex. 2.5]{Bauschke:Combettes:2003}.
\item\label{remark.varphi.iii} Let $(X,\n{\cdot}_X)$ be a Banach space, let $\varnothing\neq K\subseteq X$ be convex and closed, and consider a \df{metric projection}, defined as a set-valued map $\PPP^{d_{\n{\cdot}_X}}_K:X\ni x\mapsto\arginff{y\in K}{\n{y-x}_X}\subseteq K$. Then:
\begin{enumerate}[nosep,label=\alph*)]
\item\label{remark.varphi.iii.a} $\PPP^{d_{\n{\cdot}_X}}_K$ exists and is unique (i.e. $K$ is a Chebysh\"{e}v set: $\PPP^{d_{\n{\cdot}_X}}_K(x)=\{*\}$ $\forall x\in X$) if{}f $(X,\n{\cdot}_X)$ is strictly convex and reflexive \cite[p. 292]{Klee:1961}\footnote{An implication from right to left was proved earlier in \cite[Lem. (p. 316)]{Day:1941}. Two key components of the characterisation result were: 1) the characterisation of strict convexity in \cite[p. 179]{Krein:1938}, implying equivalence of strict convexity of $(X,\n{\cdot}_X)$ and uniqueness of $\PPP^{d_{\n{\cdot}_X}}_K$; 2) the characterisation of reflexivity in \cite[p. 167]{James:1957} \cite[Thm. 5]{James:1964}, implying equivalence of reflexivity of $(X,\n{\cdot}_X)$ and existence of $\PPP^{d_{\n{\cdot}_X}}_K$ \cite[p. 253]{Phelps:1960}. Cf., e.g., \cite[Thm. 2.9]{Megginson:1984} and \cite[Thm. 5.1.18, p. 436]{Megginson:1998} for more details.};
\item\label{remark.varphi.iii.b} $\PPP^{d_{\n{\cdot}_X}}_K$ is norm-to-norm (resp., norm-to-weak) continuous on $X$ if{}f  $(X,\n{\cdot}_X)$ is strictly convex, reflexive, and satisfies the Radon--Riesz--Shmul'yan property \cite[Thm. (p. 813)]{Vlasov:1981}\footnote{\label{footnote.Fan.Glicksberg}\cite[Thm. 8]{Fan:Glicksberg:1958} proved an implication from right to left, while \cite[Thm. (p. 813)]{Vlasov:1981} established equivalence of norm-to-norm continuity of $\PPP^{d_{\n{\cdot}_X}}_K$ with $(X,\n{\cdot}_X)$ being strictly convex and having the Efimov--Stechkin property (cf. also \cite[Thm. (p. 459), p. 466]{Oshman:1971} for an earlier, and equivalent, characterisation result). By \cite[Cor. 3]{Singer:1964} (cf. also \cite[Prop. 2.5]{Vlasov:1973}), this is equivalent to say that $(X,\n{\cdot}_X)$ is strictly convex, reflexive, and has the Radon--Riesz--Shmul'yan property. (The claim of a counterexample for this characterisation, stated in \cite[Thm. 2.1]{Lambert:1975}, has been shown \cite[\S5]{Deutsch:Lambert:1980} to contain an error, invalidating this claim. On the other hand, the claim of characterisation of norm-to-norm continuity of $\PPP^{d_{\n{\cdot}_X}}_K$ by (reflexivity and strict convexity) of $(X,\n{\cdot}_X)$, stated in \cite[Thm. E]{Li:2004}, is not equipped with any proof, and refers to a paper that has never been published or cited elsewhere.)} (resp., \cite[Thm. 2.16]{Megginson:1984}).
\end{enumerate}
Proposition \ref{prop.left.right.psi.varphi}.\ref{prop.left.right.psi.varphi.i}--\ref{prop.left.right.psi.varphi.ii} (resp., Proposition \ref{prop.continuity}) can be seen as a Va\u{\i}nberg--Br\`{e}gman analogue of implication from right to left in \ref{remark.varphi.iii.a} (resp., \ref{remark.varphi.iii.b}). Furthermore, Proposition \ref{prop.varphi.uniform.continuity} provides a Va\u{\i}nberg--Br\`{e}gman analogue of the facts:
\begin{enumerate}[nosep,label=\alph*)]
\setcounter{enumii}{2}
\item\label{remark.varphi.iii.c} if $(X,\n{\cdot}_X)$ is uniformly convex, then $\PPP^{d_{\n{\cdot}_X}}_K$ are uniformly continuous on bounded subsets of $X$ \cite[Thm. 4.1]{Penot:2005:continuity}\footnote{For uniformly convex and uniformly Fr\'{e}chet differentiable $(X,\n{\cdot}_X)$ this implication has been obtained earlier in \cite[Thm. 2.(ii)]{Xu:Roach:1992} \cite[Thm. 3.1, Rem. 3.2]{Alber:Notik:1995} \cite[Thm. 3.1, Rem. 3.4]{Alber:1996:bound}. For uniformly convex (resp., uniformly convex and uniformly Fr\'{e}chet differentiable) $(X,\n{\cdot}_X)$ the implication of uniform continuity of  $\PPP^{d_{\n{\cdot}_X}}_K$ on bounded neighbourhoods of $K$ has been obtained earlier in \cite[Lem. 2.5]{Benyamini:Lindenstrauss:2000} (resp., \cite[Thm. 4]{Xu:1991}).};
\item\label{remark.varphi.iii.d} if $(X,\n{\cdot}_X)$ is $\frac{1}{\beta}$-uniformly convex, with $\beta\in\,]0,\frac{1}{2}]$, then $\PPP^{d_{\n{\cdot}_X}}_K$ are $\beta$-Lipschitz--H\"{o}lder continuous on bounded neighbourhoods of $K$ \cite[Thm. 5.7]{Adzhiev:2009};
\item\label{remark.varphi.iii.e} if $(X,\n{\cdot}_X)$ is $\frac{1}{\beta}$-uniformly convex and $r$-uniformly Fr\'{e}chet differentiable, with $\beta\in\,]0,\frac{1}{2}]$ and $r\in\,]1,2]$, then $\PPP^{d_{\n{\cdot}_X}}_K$ are $r\beta$-Lipschitz--H\"{o}lder continuous on bounded neighbourhoods of $K$ \cite[Thm. 5.8]{Adzhiev:2009}.
\end{enumerate}
In general, the results on behaviour of $D_{\Psi_\varphi}$-projections (existence and uniqueness, norm-to-norm continuity, uniform continuity, Lipschitz--H\"{o}lder continuity) require stronger sufficient conditions on $\n{\cdot}_X$ than those which are sufficient for the corresponding properties of metric projections. In all of these cases the additional strengthening guarantees a suitable differentiability of $\Psi_\varphi$ (or its Mandelbrojt--Fenchel dual), which is equivalent with a suitable continuity of $j_\varphi$ (or, respectively, $j_{\varphi^{\inver}}^\star$). 
\item\label{remark.varphi.iii.plus} The characterisation results \ref{remark.varphi.iii}.\ref{remark.varphi.iii.a} and \ref{remark.varphi.iii}.\ref{remark.varphi.iii.b}, considered in parallel to the characterisation provided by Proposition \ref{prop.legendre}, leads us to ask:
\begin{enumerate}[nosep,label=\arabic*)]
\item\label{remark.varphi.iii.1} are the conditions for $K$ being left $D_{\Psi_\varphi}$-Chebysh\"{e}v (resp., for norm-to-norm continuity of $\LPPP^{D_{\Psi_\varphi}}_K$), imposed in Proposition \ref{prop.left.right.psi.varphi} (resp., Proposition \ref{prop.continuity}.\ref{prop.continuity.i}), not only sufficient but also necessary?
\end{enumerate}
Additionally, a comparison of \ref{remark.varphi.iii}.\ref{remark.varphi.iii.a} and \ref{remark.varphi.iii}.\ref{remark.varphi.iii.b} with \ref{remark.varphi.iii}.\ref{remark.varphi.iii.c}, as well as a comparison of \ref{remark.varphi.iii}.\ref{remark.varphi.iii.c} and Proposition \ref{prop.varphi.uniform.continuity}.\ref{prop.varphi.uniform.continuity.i}--\ref{prop.varphi.uniform.continuity.ii} with \ref{remark.varphi.iii}.\ref{remark.varphi.iii.d}--\ref{remark.varphi.iii}.\ref{remark.varphi.iii.e}, in the context of Proposition \ref{prop.psi.varphi.geometry}, leads us to ask:
\begin{enumerate}[nosep,label=\arabic*)]
\setcounter{enumii}{1}
\item\label{remark.varphi.iii.2} does uniform continuity of $\PPP^{d_{\n{\cdot}_X}}_K$ on bounded subsets of $(X,\n{\cdot}_X)$ imply (and, thus, characterise) uniform convexity of $(X,\n{\cdot}_X)$?;
\item\label{remark.varphi.iii.3} do the results \ref{remark.varphi.iii.d}--\ref{remark.varphi.iii.e} hold, with the same values of parameters, globally (i.e. for the Lipschitz--H\"{o}lder continuity of $\PPP^{d_{\n{\cdot}_X}}_K$ on bounded subsets of $(X,\n{\cdot}_X)$)?
\end{enumerate}
\item\label{remark.varphi.iv} For any Gateaux differentiable $(X,\n{\cdot}_X)$, $\beta\in\,]0,1[$, and $\alpha\in\,]0,\infty[$, \eqref{eqn.varphi.alpha.beta.Psi.j.varphi} gives us a special case of \eqref{eqn.D.Psi.varphi.formula}, 
\begin{equation}
D_{\Psi_{\varphi_{\alpha,\beta}}}(x,y)=\textstyle\frac{1}{\alpha}\left(\beta\n{x}_X^{1/\beta}+(1-\beta)\n{y}_X^{1/\beta}-\n{y}_X^{1/\beta-2}\duality{x,j(y)}_{X\times X^\star}\right)\;\forall x,y\in X.
\label{eqn.psi.varphi.alpha.beta}
\end{equation}
The formula \eqref{eqn.psi.varphi.alpha.beta} is a tiny generalisation of $D_{\Psi_{\varphi_{1,\beta}}}$. For some discussion of the properties of $\Psi_{\varphi_{\alpha,\beta}}$ see \cite[p. 616]{Iusem:GarcigaOtero:2001}.
\item\label{remark.varphi.v} Proposition \ref{prop.legendre} provides a generalisation of \cite[Lem. 6.2]{Bauschke:Borwein:Combettes:2001}. The latter is recovered for $\varphi=\varphi_{1,\beta}$. For any Gateaux differentiable $(X,\n{\cdot}_X)$, \eqref{eqn.varphi.alpha.beta.Psi.j.varphi} gives gives $\DG\Psi_{\varphi_{1,\beta}}(x)=\n{x}_X^{\frac{1}{\beta}-2}j(x)$ (cf. \rpkmark{\cite{Lorch:1953}}). The corresponding Va\u{\i}nberg--Br\`{e}gman functional appeared implicitly in \cite[p. 68]{Yurgelas:1982}, and was explicitly discussed, together with a study of $\LPPP^{D_{\Psi_{\varphi_{1,\beta}}}}_K$ for nonempty, closed, convex $K\subseteq X$, in \cite[pp. 14--15]{Alber:1993} and \cite[\S7]{Alber:1996}, as well as in \rpkmark{\cite{Schoepfer:Louis:Schuster:2006,Schoepfer:Schuster:Louis:2007,Schuster:Kaltenbach:Hofmann:Kazimierski:2012}}. For any $(X,\n{\cdot}_X)$ which is reflexive, strictly convex, and has the Radon--Riesz--Shmul'yan property, total convexity of $\Psi_{\varphi_{1,\beta}}$ follows directly from \cite[Thm. 3.1]{Resmerita:2004}. 
\item\label{remark.varphi.vi} Proposition \ref{prop.continuity}.\ref{prop.continuity.i} is a generalisation of \cite[Cor. 4.4]{Resmerita:2004}. The latter is recovered for $\varphi=\varphi_{\beta,\beta}$ (for a Gateaux differentiable $(X,\n{\cdot}_X)$, the corresponding $D_{\Psi_{\varphi_{\beta,\beta}}}$ was discussed in \rpkmark{\cite{Alber:Butnariu:1997,Butnariu:Resmerita:2001}}). The use of Proposition \ref{prop.continuity.psi}.\ref{prop.continuity.psi.i} (resp., \ref{prop.continuity.psi}.\ref{prop.continuity.psi.iii}) in the proof of Proposition \ref{prop.continuity}.\ref{prop.continuity.i} (resp., \ref{prop.continuity}.\ref{prop.continuity.ii}), instead of Proposition \ref{prop.continuity.psi}.\ref{prop.continuity.psi.ii} (resp., \ref{prop.continuity.psi}.\ref{prop.continuity.psi.iv}) is due to their larger generality in the $\Psi=\Psi_\varphi$ case. More precisely, since \ref{prop.continuity.psi}.\ref{prop.continuity.psi.ii} requires $\Psi_\varphi$ to be totally convex on bounded subsets of $X$, and any $\Psi\in\pcl(X,\n{\cdot}_X)$ is totally convex on bounded subsets of $X$ if{}f it is uniformly convex on bounded subsets of $X$ \cite[Prop. 4.2]{Butnariu:Iusem:Zalinescu:2003}, Proposition \ref{prop.psi.varphi.geometry}.\ref{prop.psi.varphi.geometry.vi} implies that $(X,\n{\cdot}_X)$ has to be uniformly convex. Thus, using Proposition \ref{prop.continuity.psi}.\ref{prop.continuity.psi.ii} instead of Proposition \ref{prop.continuity.psi}.\ref{prop.continuity.psi.i} in the proof would require us to strengthen an assumption of (reflexivity, strict convexity, and the Radon--Riesz--Shmul'yan property of $(X,\n{\cdot}_X)$) to uniform convexity. Analogous situation holds for the Proposition \ref{prop.continuity}.\ref{prop.continuity.ii}.
\item\label{remark.varphi.vii} If $\alpha=1$ and $\beta=\frac{1}{2}$, then $\varphi_{\alpha,\beta}(t)=t$ and $\Psi_{\varphi_{1,1/2}}(x)=\frac{1}{2}\n{x}^2_X$. If $(X,\n{\cdot}_X)$ is Gateaux differentiable, then $\DG\Psi_{\varphi_{1,1/2}}=\n{\cdot}_X\DG\n{\cdot}_X=j$ and we obtain a special case of \eqref{eqn.psi.varphi.alpha.beta}, given by \cite[p. 1035]{Alber:Notik:1984} \cite[p. 5]{Alber:1986} \cite[\S7]{Alber:1993} \cite[\S7]{Alber:1996} (cf. also \cite[Def. 1]{Zarantonello:1984})
\begin{equation}
D_{\Psi_{\varphi_{1,1/2}}}(x,y)=\textstyle\frac{1}{2}\n{x}_X^2+\textstyle\frac{1}{2}\n{y}_X^2-\duality{x,j(y)}_{X\times X^\star}\;\forall x,y\in X.
\label{eqn.D.Psi.varphi.one.half}
\end{equation}
In general, if $(X,\n{\cdot}_X)$ is not a Hilbert space, then neither left nor right $D_{\Psi_{\varphi_{1,1/2}}}$-projections coincide with metric projections (cf. \cite[p. 39]{Alber:Butnariu:1997} for a simple example). If $(X,\n{\cdot}_X)$ is reflexive, Gateaux differentiable, strictly convex, and $x\in X$, then: left $D_{\Psi_{\varphi_{1,1/2}}}$-projections $\LPPP^{D_{\Psi_{\varphi_{1,1/2}}}}_K(x)$ onto closed convex sets $K$ are characterised as $z\in X$ satisfying variational inequality \cite[Prop. 7.c]{Alber:1996}
\begin{equation}
\duality{z-y,j(x)-j(z)}_{X\times X^\star}\geq0\;\;
\forall y\in K,
\label{eqn.alber.characterisation}
\end{equation}
which is a special case of \eqref{eqn.left.pyth.ii}; if $K\subseteq X$ is left $D_{\Psi_{\varphi_{1,1/2}}}$-Chebysh\"{e}v then $K$ is convex if{}f it is weakly closed \cite[Cor. 4.2]{Li:Song:Yao:2010} (cf. Corollary \ref{cor.left.D.psi.varphi.chebyshev.characterisation}). See \cite{Alber:1993,Alber:1996,Alber:2007} for further properties of left $D_{\Psi_{\varphi_{1,1/2}}}$-projections in this case.
\item\label{remark.varphi.viii} It is quite noticeable that Proposition \ref{prop.supercoercive.Psi.varphi} and Corollary \ref{cor.superc.EL.totconv.varphi}.\ref{cor.superc.EL.totconv.varphi.iii} provide jointly all three key convexity properties of $\Psi_\varphi$ (i.e. $\Psi_\varphi$ being supercoercive, totally convex, and Euler--Legendre) without assuming reflexivity of $(X,\n{\cdot}_X)$. Additionally, the sum of Proposition \ref{prop.supercoercive.Psi.varphi} and Corollary \ref{cor.superc.EL.totconv.varphi}.\ref{cor.superc.EL.totconv.varphi.iii} can be considered as a far generalisation of Example \ref{ex.EL.totalconvex.Rn.Psi}.\ref{ex.EL.totalconvex.Rn.Psi.vi} from $\Psi=\Psi_{\varphi_{1,1/2}}$ and $X=\RR^n$ to $\Psi=\Psi_\varphi$ for any gauge $\varphi$ and any Gateaux differentiable, strictly convex, locally uniformly convex Banach space $(X,\n{\cdot}_X)$. The equation \eqref{eqn.Kivinen.Warmuth.Hassibi.D.Psi} is recovered from \eqref{eqn.D.Psi.varphi.one.half} by setting $(X,\n{\cdot}_X)=(L_{1/\gamma}(\X,\mu),\n{\cdot}_{1/\gamma})$ with purely atomic finite $(\X,\mu)$, and applying the formula for $j$ on $(L_{1/\gamma}(\X,\mu),\n{\cdot}_{1/\gamma})$, which reads \cite[p. 132]{Mazur:1933:schwache} $j(x)=\n{x}_{1/\gamma}^{2-1/\gamma}\ab{x}^{1/\gamma-1}\sgn(x)$.
\item\label{remark.varphi.ix} If $\alpha=1$, $\beta=\frac{1}{2}$, and $(X,\n{\cdot}_X)$ is a Hilbert space $(\H,\s{\cdot,\cdot}_\H)$, then $(\Psi_{\varphi_{1,1/2}})^\lfdual=\Psi_{\varphi_{1,1/2}}=\frac{1}{2}\n{\cdot}^2_\H$, $\DG\Psi_{\varphi_{1,1/2}}=\id_\H$ with $\DG\Psi_{\varphi_{1,1/2}}(y)(x)=\s{x,y}_\H$, and \eqref{eqn.D.Psi.varphi.one.half} turns into \cite[p. 1021]{Bregman:1966} \cite[\S2.1]{Bregman:1966:PhD}
\begin{equation}
D_{\Psi_{\varphi_{1,1/2}}}(x,y)=\textstyle\frac{1}{2}\n{x}^2_\H+\textstyle\frac{1}{2}\n{y}^2_\H-\s{x,y}_\H=\textstyle\frac{1}{2}\n{x-y}^2_\H\;\forall x,y\in\H.
\label{eqn.bregman.hilbert}
\end{equation}
In consequence, Chebysh\"{e}v, left $D_{\Psi_{\varphi_{1,1/2}}}$-Chebysh\"{e}v, and right $D_{\Psi_{\varphi_{1,1/2}}}$-Chebysh\"{e}v subsets of $\H$ coincide, with 
\begin{equation}
\LPPP^{D_{\Psi_{\varphi_{1,1/2}}}}_K(y)=\RPPP^{D_{\Psi_{\varphi_{1,1/2}}}}_K(y)=\PPP^{d_{\n{\cdot}_\H}}_K(y)\;\forall y\in\H\;\;\forall\mbox{ Chebysh\"{e}v }K\subseteq\H.
\end{equation}
In particular, for any convex closed $K\subseteq\H$, the metric projection $\PPP^{d_{\n{\cdot}_\H}}_K(y)$ is characterised as a map $T:\H\ra K$ satisfying \cite[p. 87]{Aronszajn:1950}
\begin{equation}
\s{y-T(x),x-T(x)}_\H\leq0\;\;\forall(x,y)\in\H\times K.
\label{eqn.aronszajn}
\end{equation}
If $K$ is affine, then the generalised pythagorean equation \eqref{eqn.left.pyth} turns into
\begin{equation}
\n{x-y}_\H^2=\n{x-\PPP^{d_{\n{\cdot}_\H}}_K(y)}_\H^2+\n{\PPP^{d_{\n{\cdot}_\H}}_K(y)-y}_\H^2\;\;\forall(x,y)\in K\times\H
\end{equation}
(with $\PPP^{d_{\n{\cdot}_\H}}_K$ given by the bounded linear projection operator $P_K:\H\ra K$ if $K$ is a linear subspace of $\H$), while the generalised cosine equation \eqref{eqn.generalised.cosine} turns into
\begin{equation}
\n{x-z}_\H^2=\n{x-y}_\H^2+\n{y-z}_\H^2-2\s{x-y,z-y}_\H\;\forall x,y,z\in\H
\label{eqn.cosine.hilbert}
\end{equation}
(which, for $\H=\RR^2$ with $\s{x,y}_\H=\sum_{i=1}^2x_iy_i$, gives the cartesian version of a planar cosine theorem of al-K\={a}sh\={a}n\={\i} \cite{alKashani:1427}\footnote{More precisely, al-K\={a}sh\={a}n\={\i} states $c=\sqrt{(a\sin\theta)^2+(b-a\cos\theta)^2}$, that is equivalent to $c^2=a^2+b^2-2ab\cos\theta$ via $(\sin\theta)^2+(\cos\theta)^2=1$. Cf. p. 143 of Russ. transl. or p. 31 of Vol. 2 of Engl. transl.}). On the other hand, left and right strongly quasinonexpansive operators with respect to $\Psi_{\varphi_{1,1/2}}$ on $\H$ do not coincide, in general, with the strongly $\n{\cdot}_\H$-nonexpansive operators of \cite[p. 459]{Bruck:Reich:1977}, although for $\H=\RR^n$ the latter are the subset of the former \cite[Rem. 3]{MartinMarquez:Reich:Sabach:2013}.
\item\label{remark.varphi.xix} While there is no general notion of an angle between two elements of a general Banach space, the relationship between \eqref{eqn.generalised.cosine} and \eqref{eqn.cosine.hilbert} allows us to introduce a \df{$\Psi$-angle} between nonzero vectors $x-y,z-y\in X$, for any reflexive and Gateaux differentiable Banach space $(X,\n{\cdot}_X)$:
\begin{equation}
\measuredangle_\Psi(x-y,z-y):=\arccos\left(\frac{\duality{x-y,\DG\Psi(z)-\DG\Psi(y)}_{X\times X^\star}}{2\n{x-y}_X\n{z-y}_X}\right).
\label{eqn.Psi.angle}
\end{equation}
\item\label{remark.varphi.x} Not much is known so far about weak sequential continuity of $j_\varphi=\DG\Psi_\varphi$ for arbitrary gauge $\varphi$ and arbitrary Banach spaces. It is known to hold for $\varphi=\varphi_{1,\beta}$ on sequence spaces $(l_{1/\beta},\n{\cdot}_{1/\beta})$ with $\beta\in\,]0,1[$ \cite[Lem. 5]{Browder:1966}, and on arbitrary infinite-dimensional Hilbert spaces if{}f $\beta=\frac{1}{2}$ \cite[Prop. 3.3]{Xu:Kim:Yin:2014}. On the other hand, it is known that $j_\varphi$ is not weakly sequentially continuous for arbitrary $\varphi$ on $(L_{1/\gamma}(\X,\mu),\n{\cdot}_{1/\gamma})$ spaces with $\gamma\in\,]0,1[\setminus\{\frac{1}{2}\}$ and nonatomic finite $(\X,\mu)$ \cite[Lem. 3, \S5]{Opial:1967} (cf. \cite[p. 268]{Browder:1966} for $\gamma=\frac{1}{4}$ case), and for $\varphi(t)=t$ on $(l_{1/\gamma},\n{\cdot}_{1/\gamma})$ spaces for $p\in\,]0,1[\setminus\{\frac{1}{2}\}$ \cite[Prop. 3.2]{Xu:Kim:Yin:2014}. These are quite severe limitations, appearing already at the range of elementary model spaces. By this reason, in Propositions \ref{prop.lsq.rsq.new}.\ref{prop.lsq.rsq.new.iv} and \ref{prop.varphi.compositional} we have omitted the case \ref{prop.lsq.rsq.old.iv.a} of Proposition \ref{prop.lsq.rsq.old}.\ref{prop.lsq.rsq.old.iv} in favour of case \ref{prop.lsq.rsq.old.iv.b}, which is much better behaved geometrically, and (as we will show in Section \ref{section.models}) admits a direct application to a range of noncommutative model spaces. In the broader perspective, dependence of weak sequential continuity of $\DG\Psi_\varphi$ on the specific choice of $\varphi$ makes it a property of a different character from all other properties of $\Psi_\varphi$ and $\DG\Psi_\varphi$ used in this paper for the case I--IV models. So, even if it would be available for a larger class of models, relying on it would break the invariance of our framework with respect to the choice of a gauge, and this would be a structurally undesirable feature. Nevertheless, in face of the relationships in Proposition \ref{prop.psi.varphi.geometry}, it is tempting to ask: what kind of differentiability property of $\Psi_\varphi$ (and of $(X,\n{\cdot}_X)$) is equivalent to weak sequential continuity of $j_\varphi$?
\item\label{remark.varphi.xi} Corollary \ref{cor.varphi.uniformly.convex} provides a generalisation of the duality between $\frac{1}{\gamma}$-uniformly convex and $\frac{1}{1-\gamma}$-uniformly Fr\'{e}chet differentiable Banach spaces, $\gamma:=\frac{1}{r}\in\,]0,1[$, as exhibited in Proposition \ref{prop.psi.varphi.geometry}.\ref{prop.psi.varphi.geometry.vii}--\ref{prop.psi.varphi.geometry.viii} (and originally stated in \cite[p. 63 (Vol. 2)]{Lindenstrauss:Tzafriri:1977:1979}; the case $\gamma=\frac{1}{2}$ goes back to \cite[Lem. 4]{Lindenstrauss:1963}). Proposition \ref{prop.psi.varphi.geometry}.\ref{prop.psi.varphi.geometry.viii}.\ref{prop.psi.varphi.geometry.viii.b} leads us to ask: is it possible to identify a specific type of uniform continuity of $j_\varphi$ which would be equivalent to $\varphi$-uniform Fr\'{e}chet differentiability of $(X,\n{\cdot}_X)$? And, if yes, then is it possible to use it to generalise Proposition \ref{prop.varphi.uniform.continuity} by replacing $\varphi_{1,\beta}$ by any gauge $\varphi$, together with replacing $\frac{1}{\beta}$-uniform convexity (resp., $\frac{1}{\beta}$-uniform Fr\'{e}chet differentiability) by $\varphi$-uniform convexity (resp., $\varphi$-uniform Fr\'{e}chet differentiability)? An analogous question of an extension rises with respect to \cite[Thm. 5]{Takahashi:Hashimoto:Kato:2002} (=\cite[Thms. 1, 2]{Kato:Takahashi:Hashimoto:2002}), which states that $r$-uniform convexity (resp., $r$-uniform Fr\'{e}chet differentiability) of a Banach space $(X,\n{\cdot}_X)$ is equivalent with $(X,\n{\cdot}_X)$ having a \textit{strong} type (resp., \textit{strong} cotype) $r$, as defined in \cite[Def. 2]{Takahashi:Hashimoto:Kato:2002} (=\cite[Rems. 2.(iii), 3.(iii)]{Kato:Takahashi:Hashimoto:2002}). Is it possible to define the corresponding notions of a strong $\varphi$-type (resp., strong $\varphi$-cotype), which would be equivalent to $\varphi$-uniform convexity (resp., $\varphi$-uniform Fr\'{e}chet differentiability)?
\item\label{remark.varphi.xii} The special case of Proposition \ref{prop.alber.decomposition.psi.varphi}, for $\varphi(t)=t$ and $\LPPP^{D_{\Psi_\varphi}}_{K_1}$, has been obtained in \cite[Thm. 2.4]{Alber:2000} (cf. \cite[Thm. 2.13]{Alber:2005} for its nontrivial consequence). For $\varphi(t)=t$ and $(X,\n{\cdot}_X)$ given by a Hilbert space, this result has been obtained in \cite[Prop. 1]{Moreau:1962:decomposition}.
\item\label{remark.varphi.xiii} If $(X,\n{\cdot}_X)$ is reflexive and Gateaux differentiable, and $\varphi$ is a gauge, then $\forall x,y\in X$
\begin{equation}
D_{\Psi_\varphi}(x,y)=\left\{
\begin{array}{ll}
\Psi_\varphi(x)+(\Psi_\varphi)^\lfdual(j_\varphi(y))-\lumer{x,y}_\varphi&\st y\neq0\\
\Psi_\varphi(x)&\st y=0,
\end{array}
\right.
\end{equation}
where
\begin{equation}
\lumer{\,\cdot\,,x}_\varphi:=\left\{
\begin{array}{ll}
\frac{\varphi(\n{x}_X)}{\n{x}_X}\lumer{\,\cdot\,,x}&\st x\neq0\\
0&\st x=0,
\end{array}
\right.
\end{equation}
and $\lumer{x,y}:=(j(y))(x)$. For any Banach space $(X,\n{\cdot}_X)$, if $x,y\in X$, then $x$ is said to be \df{orthogonal} to $y$ if{}f $\n{x+\lambda y}_X\geq\n{x}_X$ $\forall\lambda\in\RR$ \cite[p. 169]{Birkhoff:1935}. If $(X,\n{\cdot}_X)$ is Gateaux differentiable, then $x$ is orthogonal to $y$ if{}f $(j(y))(x)=0$ (cf., e.g., \cite[Prop. 1.4.4]{Fleming:Jamison:2002}). Hence, $\lumer{\,\cdot\,,\,\cdot\,}_\varphi$ can be seen as a generalised form of orthogonality. (The notation $\lumer{\,\cdot\,,\,\cdot\,}$ refers to Lumer's semi-inner product \cite[Def. 1]{Lumer:1961}, which in the case of Gateaux differentiable $(X,\n{\cdot}_X)$ is given uniquely by $(j(\cdot))(\cdot)$.) In particular, for $\Psi=\Psi_\varphi$, the formula \eqref{eqn.Psi.angle} turns into
\begin{equation}
\measuredangle_{\Psi_\varphi}(x-y,z-y):=\arccos\left(\frac{\lumer{x-y,z}_{\varphi}-\lumer{x-y,y}_{\varphi}}{2\n{x-y}_X\n{z-y}_X}\right).
\end{equation}
\item\label{remark.varphi.xiv} Corollary \ref{cor.VB.quasigauge.formulas} provides an alternative proof of Corollary \ref{cor.superc.EL.totconv.varphi}.\ref{cor.superc.EL.totconv.varphi.i}. Proposition \ref{prop.supercoercive.Psi.varphi} (resp., \ref{prop.legendre}; \ref{prop.left.right.psi.varphi}) is a special case of Proposition \ref{prop.supercoercive.Psi.quasigauge}.\ref{prop.supercoercive.Psi.quasigauge.iii} (resp., \ref{prop.legendre.quasigauge}; \ref{prop.left.right.psi.quasigauge}). We have separated these propositions in order to illustrate the differences showing up under generalisation from gauges to quasigauges. In principle, provided a quasigauge generalisation of Propositions \ref{prop.psi.varphi.geometry}.\ref{prop.psi.varphi.geometry.iv} and \ref{prop.psi.varphi.geometry}.\ref{prop.psi.varphi.geometry.xii}, one could use Propositions \ref{prop.continuity.psi} and \ref{prop.lsq.rsq.new}, combined with Propositions \ref{prop.psi.quasigauge.geometry}.\ref{prop.psi.quasigauge.geometry.iii}--\ref{prop.psi.quasigauge.geometry.iv}, to obtain a quasigauge generalisation of Propositions \ref{prop.continuity} and \ref{prop.varphi.compositional}. (Even a quasigauge analogue of Proposition \ref{prop.psi.varphi.geometry}.\ref{prop.psi.varphi.geometry.xii} would suffice, although in this case the corresponding generalisation of Proposition \ref{prop.continuity} would be less general, using local uniform convexity of $(X,\n{\cdot}_X)$ and $(X,\n{\cdot}_X)^\star$ instead of their Fr\'{e}chet differentiability.) For our current purposes it is sufficient to compare Proposition \ref{prop.left.right.psi.varphi} with Proposition \ref{prop.left.right.psi.quasigauge}: already at this level, there is a noticeable difference between conditions required for a quasigauge for either left or right pythagoreanity of $D_{\Psi_\varphi}$. Furthermore, in the left case there are three inequivalent conditions available, while in the right case there are six inequivalent conditions. An inspection of the proofs leading to this result, together with a look at Propositions \ref{prop.psi.quasigauge.geometry}.\ref{prop.psi.quasigauge.geometry.iii}--\ref{prop.psi.quasigauge.geometry.iv}, shows that the conditions imposed on $\varphi$ in the quasigauge analogues of Propositions \ref{prop.continuity} and \ref{prop.varphi.compositional} will not be the same as in Proposition \ref{prop.left.right.psi.quasigauge}. Thus, while there is no a priori constraints on the gauge functions used in the Propositions \ref{prop.left.right.psi.varphi}, \ref{prop.continuity}, and \ref{prop.varphi.compositional}, their quasigauge analogues introduce a substantial split of the assumptions on $\varphi$ used in each of the corresponding propositions. So, while the properties of case I--IV models are independent of the choice of a gauge, they are sensitive to the choice of a quasigauge.
\item\label{remark.varphi.xv} In principle, due to Lemma \ref{lem.quasigauge.inverses}.\ref{lem.quasigauge.inverses.i}, given a quasigauge $\varphi$, one can use $(\lim_{s\ra^+t}\varphi(s))^\meet$ (resp, $(\lim_{s\ra^-t}\varphi(s))^\join$) instead of $\varphi^\meet$ (resp., $\varphi^\join$), relying on \eqref{eqn.dual.quasigauge.integral.limits} and Lemma \ref{lem.fenchel.dual.psi.quasigauge}.\ref{lem.fenchel.dual.psi.quasigauge.i} instead of \eqref{eqn.dual.quasigauge.integral.meet} (resp., \eqref{eqn.dual.quasigauge.integral.join}) and \eqref{eqn.fenchel.dual.psi.quasigauge.meet} (resp., \eqref{eqn.fenchel.dual.psi.quasigauge.join}) in Propositions \ref{prop.supercoercive.Psi.quasigauge}.\ref{prop.supercoercive.Psi.quasigauge.iii}, \ref{prop.legendre.quasigauge}, and \ref{prop.left.right.psi.quasigauge}. However, while this would make these propositions a bit more general, it would also make them less readable.
\item\label{remark.varphi.xvi} If $\efd(j_\varphi)=\intefd{j_\varphi}=\efd(\Psi_\varphi)=\intefd{\Psi_\varphi}=\INT(\sup(\efd(\varphi))B(X,\n{\cdot}_X))$, then the assumptions \eqref{eqn.euler.legendre.quasigauge.conditions} simplify, since their first and third line become obsolete. However, in a general case, we know only that $\INT(\sup(\efd(\varphi))B(X,\n{\cdot}_X))\subseteq\efd(\Psi_\varphi)\subseteq\cl(\INT(\sup(\efd(\varphi))B(X,\n{\cdot}_X)))$ \cite[p. 369]{Zalinescu:1983}.
\item\label{remark.varphi.xvii} The result in Proposition \ref{prop.varphi.uniform.continuity}.\ref{prop.varphi.uniform.continuity.i} was obtained earlier, by a different method, in \cite[Prop. 6.27.(a)]{Schuster:Kaltenbach:Hofmann:Kazimierski:2012}. By \cite[Eqn. (6.100)]{Schuster:Kaltenbach:Hofmann:Kazimierski:2012}, under conditions on $(X,\n{\cdot}_X)$ given in \ref{prop.varphi.uniform.continuity}.\ref{prop.varphi.uniform.continuity.i}, $\exists\lambda>0$ $\forall x,y\in X$
\begin{equation}
\n{\LPPP^{D_{\Psi_{\varphi_{1,\beta}}}}_K(x)-\LPPP^{D_{\Psi_{\varphi_{1,\beta}}}}_K(y)}_X
\leq
\lambda\left(\max\left\{\n{\PPP^{d_{\n{\cdot}_X}}_K(0)}_X,\n{x}_X,\n{y}_X\right\}\n{j_{\varphi_{1,\beta}}(x)-j_{\varphi_{1,\beta}}(y)}_X\right)^\beta.
\label{eqn.SKHK}
\end{equation}
Assuming boundedness of $K$, boundedness of domain $Q\subseteq X$ of $\LPPP^{D_{\Psi_{\varphi_{1,\beta}}}}_K$, $r$-uniform Fr\'{e}chet differentiability of $(X,\n{\cdot}_X)$, and using Proposition \ref{prop.psi.varphi.geometry}.\ref{prop.psi.varphi.geometry.viii}.\ref{prop.psi.varphi.geometry.viii.c}, we get
\begin{equation}
\exists\lambda>0\;\forall x,y\in Q\;\;\n{\LPPP^{D_{\Psi_{\varphi_{1,\beta}}}}_K(x)-\LPPP^{D_{\Psi_{\varphi_{1,\beta}}}}_K(y)}_X\leq\lambda\n{x-y}_X^{\beta(r-1)}.
\label{eqn.Hoelder.left.breg.proj}
\end{equation}
Hence, for bounded $K$, and under all assumptions of Proposition \ref{prop.varphi.uniform.continuity}.\ref{prop.varphi.uniform.continuity.i}, this gives $\beta(r-1)$-Lipschitz--H\"{o}lder continuity of $\LPPP^{D_{\Psi_{\varphi_{1,\beta}}}}_K$ on bounded subsets of $X$. In comparison with Proposition \ref{prop.varphi.uniform.continuity}.\ref{prop.varphi.uniform.continuity.ii}, this conclusion is weaker regarding the value of the exponent of Lipschitz--H\"{o}lder continuity (since $1\geq\frac{\beta}{1-\beta}(r-1)>\beta(r-1)$) and regarding the assumptions on its domain (limitation to bounded subsets of $X$), while assuming more (boundedness of $K$). An analogue of a proof of Proposition \ref{prop.varphi.uniform.continuity}.\ref{prop.varphi.uniform.continuity.iv}, using:
\begin{enumerate}[nosep,label=\alph*)]
\item\label{remark.varphi.xvii.a} given a convex and bounded subset $U$ (resp., $W$) of a Banach space $(X_1,\n{\cdot}_{X_1})$ (resp., $(X_2,\n{\cdot}_{X_2})$), and a Banach space $(X_3,\n{\cdot}_{X_3})$, if $f:U\ra X_2$ is $s$-Lipschitz--H\"{o}lder continuous, $g:W\ra X_3$ is $\lambda$-Lipschitz--H\"{o}lder continuous, $f(U)\subseteq W$, and $s,\lambda\in\,]0,1]$, then $g\circ f$ is $s\lambda$-Lipschitz--H\"{o}lder continuous \cite[Thm. 4.3, Prop. 5.2]{delaLlave:Obaya:1999};
\item\label{remark.varphi.xvii.b} by definition of $j_\varphi$, it maps bounded sets to bounded sets (cf. the proof of Proposition \ref{prop.supercoercive.Psi.varphi}),
\end{enumerate}
gives $\frac{(1-\beta)^2}{\beta}(r-1)^2$-Lipschitz--H\"{o}lder continuity of $\RPPP^{D_{\Psi_{\varphi_{1,\beta}}}}_K$ on bounded and convex subsets of $X$, under the assumptions of Proposition \ref{prop.varphi.uniform.continuity}.\ref{prop.varphi.uniform.continuity.iv}, equipped with an additional requirement that $j_{\varphi_{1,\beta}}(K)$ is bounded. This conclusion is weaker than Proposition \ref{prop.varphi.uniform.continuity}.\ref{prop.varphi.uniform.continuity.iv}.
\item\label{remark.varphi.xviii} The continuity results in Proposition \ref{prop.continuity} cannot be improved using Proposition \ref{prop.varphi.prox.norm.continuity}, since $\lim_{\n{x}_X\ra\infty}\iota_K(x)\neq\infty$.
\end{enumerate}
\end{remark}
\subsection{$D_{\ell,\Psi}$}\label{section.convex.new.d.ell.psi}
\begin{definition}\label{def.d.ell.psi}
Given Banach spaces $(X,\n{\cdot}_X)$ and $(Y,\n{\cdot}_Y)$, $Z\subseteq Y$, $\Psi\in\pclg(X,\n{\cdot}_X)$, let $\ell:Z\ra\ell(Z)\subseteq X$ be a bijection such that $\ell(Z)\cap\intefd{\Psi}\neq\varnothing$. Then
\begin{equation}
D_{\ell,\Psi}(\phi,\psi):=D_\Psi(\ell(\phi),\ell(\psi))\;\;
\forall(\phi,\psi)\in Z\times Z
\label{eqn.d.ell.psi}
\end{equation}
will be called an \df{extended Va\u{\i}nberg--Br\`{e}gman functional}.
\end{definition}

\begin{definition}
Given Banach spaces $(X,\n{\cdot}_X)$ and $(Y,\n{\cdot}_Y)$, $\varnothing\neq Z\subseteq Y$, $\varnothing\neq K\subseteq X$, a bijection $\ell:Z\ra\ell(Z)\subseteq X$ with $K\subseteq\ell(Z)$, and a function $T:K\ra\ell(Z)$, the function $T^\ell:=\ell^{\inver}\circ T\circ\ell:\ell^{\inver}(K)\ra Z$ will be called an \df{$\ell$-operator}.
\end{definition}

\begin{definition}\label{def.d.ell.psi.notions}
Under assumptions on $(\ell,\Psi)$ as in Definition \ref{def.d.ell.psi}, and with $\varnothing\neq C\subseteq Z$,
\begin{enumerate}[nosep,label=(\roman*)]
\item\label{def.d.ell.psi.notions.i} $C$ will be called $\df{left}$ (resp., \df{right}) \df{$D_{\ell,\Psi}$-Chebysh\"{e}v} if{}f $\ell(C)$ is left (resp., right) $D_\Psi$-Chebysh\"{e}v, with the corresponding \df{left} (resp., \df{right}) \df{$D_{\ell,\Psi}$-projections} given by
\begin{align}
\LPPP^{D_{\ell,\Psi}}_C(\phi):=\ell^{\inver}\circ\LPPP^{D_\Psi}_{\ell(C)}\circ\ell(\phi)\;\;\forall\phi\in\ell^{\inver}(\intefd{\Psi}\cap\ell(Z))\\
\mbox{(resp., }
\RPPP^{D_{\ell,\Psi}}_C(\phi):=\ell^{\inver}\circ\RPPP^{D_\Psi}_{\ell(C)}\circ\ell(\phi)\;\;\forall\phi\in\ell^{\inver}(\intefd{\Psi}\cap\ell(Z))
\mbox{ )};
\end{align}
\item\label{def.d.ell.psi.notions.ii} $D_{\ell,\Psi}$ will be called \df{left} (resp., \df{right}) \df{pythagorean} on $C$ if{}f $D_\Psi$ is left (resp., right) pythagorean on $\ell(C)$;
\item\label{def.d.ell.psi.notions.iii} $C$ will be called \df{$\ell$-convex} (resp.,  \df{$\ell$-closed};  \df{$\ell$-affine};  \df{$\ell$-bounded};  \df{$\DG\Psi\circ\ell$-convex};  \df{$\DG\Psi\circ\ell$-closed};  \df{$\DG\Psi\circ\ell$-affine}) if{}f $\ell(C)$ is convex (resp.,  closed;  affine; bounded; $\DG\Psi$-convex;  $\DG\Psi$-closed;  $\DG\Psi$-affine);
\item\label{def.d.ell.psi.notions.iv} $\LPPP^{D_{\ell,\Psi}}_C$ (resp., $\RPPP^{D_{\ell,\Psi}}_C$) will be called \df{zone consistent} if{}f $\ell(C)\subseteq\intefd{\Psi}$ and  $\LPPP^{D_{\Psi}}_{\ell(C)}$ (resp., $\RPPP^{D_{\Psi}}_{\ell(C)}$) is zone consistent;
\item\label{def.d.ell.psi.notions.v} $\Psi\circ\ell:Z\ra\,]-\infty,\infty]$ and $\Psi^\lfdual\circ\DG\Psi\circ\ell:\ell^{\inver}(\intefd{\Psi}\cap\ell(Z))\ra\,]-\infty,\infty]$ will be called \df{$(\ell,\Psi)$-potentials};
\item\label{def.d.ell.psi.notions.vi} the topology on $Z$ induced by $\ell$ (resp., $\DG\Psi\circ\ell$) from the norm topology of $(X,\n{\cdot}_X)$ (resp., $(X^\star,\n{\cdot}_{X^\star})$) will be called \df{$\ell$-topology} (resp., \df{$\DG\Psi\circ\ell$-topology});
\item\label{def.d.ell.psi.notions.vii} An $\ell$-operator $T^\ell:C\ra\ell^{\inver}(\intefd{\Psi}\cap\ell(Z))$ will be called \df{left} (resp., \df{right}) \df{strongly quasinonexpansive} with respect to $(\ell,\Psi)$ and $C$ if{}f $\ell(C)\subseteq\intefd{\Psi}$ and $T$ is left (resp., right) strongly quasinonexpansive with respect to $\Psi$ and $\ell(C)$; the set of all $\ell$-operators which are left (resp., right) strongly quasinonexpansive with respect to $(\ell,\Psi)$ and $C$ will be denoted $\lsq(\ell,\Psi,C)$ (resp., $\rsq(\ell,\Psi,C)$);
\item\label{def.d.ell.psi.notions.viii} the set $\lsq(\ell,\Psi,C)$ (resp., $\rsq(\ell,\Psi,C)$) will be called \df{composable} if{}f $\lsq(\Psi,\ell(C))$ (resp., $\rsq(\Psi,\ell(C))$) is composable;
\item\label{def.d.ell.psi.notions.ix} $\LPPP^{D_{\ell,\Psi}}_C$ (resp., $\RPPP^{D_{\ell,\Psi}}_C$) will be called \df{adapted} if{}f $\LPPP^{D_\Psi}_C$ (resp., $\RPPP^{D_\Psi}_C$) is adapted;
\item\label{def.d.ell.psi.notions.x} an $\ell$-operator $T^\ell:C\ra\ell^{\inver}(\intefd{\Psi}\cap\ell(Z))$ will be called \df{completely nonexpansive} with respect to $(\ell,\Psi)$ and $C$ if{}f $\ell(C)\subseteq\intefd{\Psi}$ and $T$ is completely nonexpansive with respect to $\Psi$ and $\ell(C)$; the set of all $\ell$-operators which are completely nonexpansive with respect to $(\ell,\Psi)$ and $C$ will be denoted $\cn(\ell,\Psi,C)$;
\item\label{def.d.ell.psi.notions.xi} if $T:\ell(Z)\ra 2^{X^\star}$, $\Graph(T)\neq\varnothing$, and $\lambda\in\,]0,1[$, then the \df{left $D_{\ell,\Psi}$-resolvent} of $T$ is defined as $\lres^{\ell,\Psi}_{\lambda T}:=\ell^\inver\circ\lres^{\Psi}_{\lambda T}\circ\ell$.
\end{enumerate}
\end{definition}

\begin{corollary}\label{cor.d.ell.psi.properties}
If $(X,\n{\cdot}_X)$ and $(Y,\n{\cdot}_Y)$ are Banach spaces, $\Psi\in\pclg(X,\n{\cdot}_X)$, $\varnothing\neq C\subseteq Z\subseteq Y$, $\ell:Z\ra\ell(Z)\subseteq X$ is a bijection such that $\ell(Z)\cap\intefd{\Psi}\neq\varnothing$, then:
\begin{enumerate}[nosep,label=(\roman*)]
\item\label{cor.d.ell.psi.properties.i} if $D_\Psi$ is an information on $\ell(Z)$, then $D_{\ell,\Psi}$ is an information on $Z$;
\item\label{cor.d.ell.psi.properties.ii} $C$ is $\ell$-closed if{}f it is closed in the topology induced by $\ell$ from the norm topology of $(X,\n{\cdot}_X)$;
\item\label{cor.d.ell.psi.properties.iii} if $(X,\n{\cdot}_X)$ reflexive, $C$ is $\ell$-closed and $\ell$-convex, then:
\begin{enumerate}[nosep,label=\arabic*)]
\item\label{cor.d.ell.psi.properties.iii.1} if any of the following (generally, inequivalent) conditions holds:
\begin{enumerate}[nosep,label=\alph*)]
\item\label{cor.d.ell.psi.properties.iii.1.a} $\Psi$ is totally convex on $\efd(\Psi)$, $\ell(C)\subseteq\intefd{\Psi}$; or
\item\label{cor.d.ell.psi.properties.iii.1.b} $\Psi$ is strictly convex on $\efd(\Psi)$ and supercoercive, $\ell(C)\cap\intefd{\Psi}\neq\varnothing$;
\item\label{cor.d.ell.psi.properties.iii.1.c} $\Psi$ is Euler--Legendre, $\ell(C)\cap\intefd{\Psi}\neq\varnothing$,
\end{enumerate}
then $C$ is left $D_{\ell,\Psi}$-Chebysh\"{e}v and $D_{\ell,\Psi}$ is left pythagorean on $C$;
\item\label{cor.d.ell.psi.properties.iii.2} if (\ref{cor.d.ell.psi.properties.iii.1}.\ref{cor.d.ell.psi.properties.iii.1.c} holds) or (\ref{cor.d.ell.psi.properties.iii.1}.\ref{cor.d.ell.psi.properties.iii.1.a} or \ref{cor.d.ell.psi.properties.iii.1}.\ref{cor.d.ell.psi.properties.iii.1.b} holds, and $\ell(C)\subseteq\intefd{\Psi}$), then $\LPPP^{D_{\ell,\Psi}}_C$ is zone consistent;
\item\label{cor.d.ell.psi.properties.iii.3} if any of \ref{cor.d.ell.psi.properties.iii.1}.\ref{cor.d.ell.psi.properties.iii.1.a}--\ref{cor.d.ell.psi.properties.iii.1}.\ref{cor.d.ell.psi.properties.iii.1.c} holds, then $D_{\ell,\Psi}$ is an information on $Z$;
\item\label{cor.d.ell.psi.properties.iii.4} if \ref{cor.d.ell.psi.properties.iii.1}.\ref{cor.d.ell.psi.properties.iii.1.c} holds, then $D_{\DG\Psi\circ\ell,\Psi^\lfdual}$ is an information on $Z$;
\item\label{cor.d.ell.psi.properties.iii.5} if any of \ref{cor.d.ell.psi.properties.iii.1}.\ref{cor.d.ell.psi.properties.iii.1.a}--\ref{cor.d.ell.psi.properties.iii.1}.\ref{cor.d.ell.psi.properties.iii.1.c} holds, and $C$ is $\ell$-affine, then
\begin{equation}
	D_{\ell,\Psi}(\phi,\LPPP^{D_{\ell,\Psi}}_C(\psi))+D_{\ell,\Psi}(\LPPP^{D_{\ell,\Psi}}_C(\psi),\psi)=D_{\ell,\Psi}(\phi,\psi)\;\;\forall(\phi,y)\in C\times\ell^{\inver}(\intefd{\Psi}\cap\ell(Z));
\label{eqn.left.pyth.d.ell.psi}
\end{equation}
\item\label{cor.d.ell.psi.properties.iii.6} \sloppy if $\ell(C)\subseteq\intefd{\Psi}$, and $\Psi$ is Fr\'{e}chet differentiable on $\intefd{\Psi}$, totally convex on $\efd(\Psi)$, and supercoercive, then $\LPPP^{D_{\ell,\Psi}}_C$ is $\ell$-topology-to-$\ell$-topology continuous on $\ell^{\inver}(\intefd{\Psi}\cap\ell(Z))$, while $\inf_{\phi\in K}\{D_{\ell,\Psi}(\phi,\,\cdot\,)\}$ is continuous in $\ell$-topology on $\ell^{\inver}(\intefd{\Psi}\cap\ell(Z))$;
\end{enumerate}
\item\label{cor.d.ell.psi.properties.iv} if $(X,\n{\cdot}_X)$ is reflexive, $C$ is $\DG\Psi\circ\ell$-closed and $\DG\Psi\circ\ell$-convex, $\ell(C)\subseteq\intefd{\Psi}$, $\Psi^\lfdual$ is Gateaux differentiable on $\varnothing\neq\DG\Psi(\intefd{\Psi})\subseteq\intefd{\Psi^\lfdual}$, then:
\begin{enumerate}[nosep,label=\arabic*)]
\item\label{cor.d.ell.psi.properties.iv.1} if any of the following (generally, inequivalent) conditions holds:
\begin{enumerate}[nosep,label=\alph*)]
\item\label{cor.d.ell.psi.properties.iv.1.a} $\Psi^\lfdual$ is totally convex on $\efd(\Psi^\lfdual)$; or
\item\label{cor.d.ell.psi.properties.iv.1.b} $\Psi^\lfdual$ is strictly convex on $\efd(\Psi^\lfdual)$ and supercoercive; or
\item\label{cor.d.ell.psi.properties.iv.1.c} $\Psi^\lfdual$ is Euler--Legendre,
\end{enumerate}
then $C$ is right $D_{\ell,\Psi}$-Chebysh\"{e}v and $D_{\ell,\Psi}$ is right pythagorean on $C$;
\item\label{cor.d.ell.psi.properties.iv.2} if any of \ref{cor.d.ell.psi.properties.iv.1}.\ref{cor.d.ell.psi.properties.iv.1.a}--\ref{cor.d.ell.psi.properties.iv.1}.\ref{cor.d.ell.psi.properties.iv.1.c} holds, then $\RPPP^{D_{\ell,\Psi}}_C$ is zone consistent;
\item\label{cor.d.ell.psi.properties.iv.3} if \ref{cor.d.ell.psi.properties.iv.1}.\ref{cor.d.ell.psi.properties.iv.1.c} holds, then $D_{\ell,\Psi}$ and $D_{\DG\Psi\circ\ell,\Psi^\lfdual}$ are informations on $C$;
\item\label{cor.d.ell.psi.properties.iv.4} if any of \ref{cor.d.ell.psi.properties.iv.1}.\ref{cor.d.ell.psi.properties.iv.1.a}--\ref{cor.d.ell.psi.properties.iv.1}.\ref{cor.d.ell.psi.properties.iv.1.c} holds, and $C$ is $\DG\Psi\circ\ell$-affine, then
\begin{equation}
D_{\ell,\Psi}(\phi,\psi)=D_{\ell,\Psi}(\phi,\RPPP^{D_{\ell,\Psi}}_C(\phi))+D_{\ell,\Psi}(\RPPP^{D_{\ell,\Psi}}_C(\phi),\psi)\;\;\forall(\phi,\psi)\in\ell^{\inver}(\intefd{\Psi}\cap\ell(Z))\times C;
\label{eqn.right.pyth.eqn.d.ell.psi}
\end{equation}
\item\label{cor.d.ell.psi.properties.iv.5} if $\Psi^\lfdual$ is totally convex on $\efd(\Psi^\lfdual)$, Fr\'{e}chet differentiable on $\intefd{\Psi^\lfdual}$, and supercoercive, then $\RPPP^{D_{\ell,\Psi}}_C$ is $\ell$-topology-to-$\ell$-topology continuous on $\ell^{\inver}(\intefd{\Psi}\cap\ell(Z))$;
\end{enumerate}
\item\label{cor.d.ell.psi.properties.v} if $\ell$ is a norm-to-norm homeomorphism, then:
\begin{enumerate}[nosep,label=\arabic*)]
\item\label{cor.d.ell.psi.properties.v.1} $\ell$-closed sets in $(Y,\n{\cdot}_Y)$ coincide with closed sets;
\item\label{cor.d.ell.psi.properties.v.2} result in \ref{cor.d.ell.psi.properties.iii}.\ref{cor.d.ell.psi.properties.iii.6} is strengthened to norm-to-norm continuity of $\LPPP^{D_{\ell,\Psi}}_C$ onto closed $\ell$-convex $C$, and continuity of $\inf_{\phi\in C}\{D_{\ell,\Psi}(\phi,\,\cdot\,)\}$ in the norm topology;
\item\label{cor.d.ell.psi.properties.v.3} if $\Psi$ is Fr\'{e}chet differentiable, then $\DG\Psi\circ\ell$-closed sets in $(Y,\n{\cdot}_Y)$ coincide with closed sets;
\item\label{cor.d.ell.psi.properties.v.4} if $\Psi$ is Fr\'{e}chet differentiable, then the result in \ref{cor.d.ell.psi.properties.iv}.\ref{cor.d.ell.psi.properties.iv.5} is strengthened to norm-to-norm continuity of $\RPPP^{D_{\ell,\Psi}}_C$ onto $\DG\Psi\circ\ell$-convex set $C$, closed in the norm topology of $(Y,\n{\cdot}_Y)$;
\end{enumerate}
\item\label{cor.d.ell.psi.properties.vi} if $(X,\n{\cdot}_X)$ is reflexive, $\Psi:X\ra\RR$ is uniformly Fr\'{e}chet differentiable on bounded subsets of $X$, and $\Psi^\lfdual$ is supercoercive, then:
\begin{enumerate}[nosep,label=\arabic*)]
\item\label{cor.d.ell.psi.properties.vi.1} if any of the following (generally, inequivalent) condition holds:
\begin{enumerate}[nosep,label=\alph*)]
\item\label{cor.d.ell.psi.properties.vi.1.a} $\Psi$ is totally convex on $X$; or
\item\label{cor.d.ell.psi.properties.vi.1.b} $\Psi$ is totally convex on bounded subsets of $X$ and supercoercive,
\end{enumerate}
then $\lsq(\ell,\Psi,C)$ is composable;
\item\label{cor.d.ell.psi.properties.vi.2} if any of the following (generally, inequivalent) condition holds:
\begin{enumerate}[nosep,label=\alph*)]
\item\label{cor.d.ell.psi.properties.vi.2.a} $\Psi$ is totally convex on bounded subsets of $X$; or
\item\label{cor.d.ell.psi.properties.vi.2.b} $\Psi$ is Euler--Legendre and supercoercive, $\Psi^\lfdual$ is (totally convex and uniformly Fr\'{e}chet differentiable) on bounded subsets of $\efd(\Psi^\lfdual)=X^\star$,
\end{enumerate}
then $\rsq(\ell,\Psi,C)$ is composable;
\item\label{cor.d.ell.psi.properties.vi.3} if $\Psi$ is Euler--Legendre, and $C$ is $\ell$-convex and $\ell$-closed, then $\LPPP^{D_{\ell,\Psi}}_C$ is adapted;
\item\label{cor.d.ell.psi.properties.vi.4} if $\Psi$ is supercoercive and Euler--Legendre, and $\Psi^\lfdual$ is uniformly Fr\'{e}chet differentiable on bounded subsets of $\intefd{\Psi^\lfdual}\neq\varnothing$, then $\RPPP^{D_{\ell,\Psi}}_C$ is adapted for any $\DG\Psi\circ\ell$-convex $\DG\Psi\circ\ell$-closed $C$.
\end{enumerate}
\end{enumerate}
\end{corollary}
\begin{proof}
Follows from Definition \ref{def.d.ell.psi.notions} applied to Propositions \ref{prop.right.pythagorean}, \ref{prop.continuity.psi}, and \ref{prop.lsq.rsq.new}.
\end{proof}

\begin{corollary}\label{corollary.ell.psi.varphi}
Let $(Y,\n{\cdot}_Y)$ be a Banach space, $(X,\n{\cdot}_X)$ a reflexive, Gateaux differentiable, strictly convex Banach space, $\varphi$ a gauge, $\varnothing\neq C\subseteq Z\subseteq Y$, $\ell:Z\ra\ell(Z)\subseteq X$ a bijection. Then:
\begin{enumerate}[nosep,label=(\roman*)]
\item\label{corollary.ell.psi.varphi.i} $D_{\ell,\Psi_\varphi}$ and $D_{\DG\Psi_\varphi\circ\ell,\Psi_\varphi^\lfdual}$ are informations on $Z$;
\item\label{corollary.ell.psi.varphi.ii} if $C$ is $\ell$-convex and $\ell$-closed, then $C$ is left $D_{\ell,\Psi_\varphi}$-Chebysh\"{e}v, $D_{\ell,\Psi_\varphi}$ is left pythagorean on $C$, and $\LPPP^{D_{\ell,\Psi_\varphi}}_C$ are zone consistent;
\item\label{corollary.ell.psi.varphi.iii} if $C$ is $j_\varphi\circ\ell$-convex and $j_\varphi\circ\ell$-closed, then $C$ is right $D_{\ell,\Psi_\varphi}$-Chebysh\"{e}v, $D_{\ell,\Psi_\varphi}$ is right pythagorean on $C$, and $\RPPP^{D_{\ell,\Psi_\varphi}}_C$ are zone consistent;
\item\label{corollary.ell.psi.varphi.iv} if $(X,\n{\cdot}_X)$ is Fr\'{e}chet differentiable  and has the Radon--Riesz--Shmul'yan property, then:
\begin{enumerate}[nosep,label=\arabic*)]
\item\label{corollary.ell.psi.varphi.iv.1} if $C$ is $\ell$-convex and $\ell$-closed, then $\LPPP^{D_{\ell,\Psi_\varphi}}_C$ is $\ell$-topology-to-$\ell$-topology continuous on $Z$, while $\inf_{\phi\in C}\{D_{\ell,\Psi_\varphi}(\phi,\,\cdot\,)\}$ is continuous in $\ell$-topology on $Z$;
\item\label{corollary.ell.psi.varphi.iv.2} if $C$ is $j_\varphi\circ\ell$-convex and $j_\varphi\circ\ell$-closed, then $\ell$-topology coincides with $\DG\Psi_\varphi\circ\ell$-topology (so, $C$ is $\ell$-closed), and $\RPPP^{D_{\ell,\Psi_\varphi}}_C$ is $\ell$-topology-to-$\ell$-topology continuous;
\end{enumerate}
\item\label{corollary.ell.psi.varphi.vi} if $(X,\n{\cdot}_X)$ is uniformly Fr\'{e}chet differentiable and strictly convex, and $C$ is $\ell$-convex and $\ell$-closed, then $\LPPP^{D_{\ell,\Psi_\varphi}}_C$ is adapted;
\item\label{corollary.ell.psi.varphi.vii} if $(X,\n{\cdot}_X)$ is uniformly Fr\'{e}chet differentiable, strictly convex, and has the Radon--Riesz--Shmul'yan property, then the sets $\lsq(\ell,\Psi_\varphi,C)$ and $\rsq(\ell,\Psi_\varphi,C)$ are composable;
\item\label{corollary.ell.psi.varphi.viii} if $(X,\n{\cdot}_X)$ is uniformly Fr\'{e}chet differentiable and uniformly convex, and if $C$ is $j_\varphi\circ\ell$-convex $j_\varphi\circ\ell$-closed, then $\RPPP^{D_{\ell,\Psi}}_C$ is adapted;
\item\label{corollary.ell.psi.varphi.v} if $\ell$ is norm-to-norm homeomorphism, then $\ell$-topological closure and continuity in \ref{corollary.ell.psi.varphi.ii}--\ref{corollary.ell.psi.varphi.iv} and \ref{corollary.ell.psi.varphi.viii} coincide, respectively, with closure and continuity in the norm topology of $(Y,\n{\cdot}_Y)$.
\end{enumerate}
\end{corollary}
\begin{proof}
Follows from Definition \ref{def.d.ell.psi.notions} applied to Propositions \ref{prop.legendre}, \ref{prop.left.right.psi.varphi}, \ref{prop.continuity}, \ref{prop.varphi.compositional}.
\end{proof}
\subsection{Categories of $D_{\ell,\Psi}$-projections and strongly $D_{\ell,\Psi}$-quasinonexpansive maps}\label{section.categories.d.psi.proj}
\begin{definition}\label{def.cats.breg.proj}
Let $(X,\n{\cdot}_X)$ and $(Y,\n{\cdot}_Y)$ be Banach spaces, let $(X,\n{\cdot}_X)$ be reflexive, $\varnothing\neq W\subseteq Z\subseteq Y$, $\Psi\in\pclg(X,\n{\cdot}_X)$, $\ell:Z\ra\intefd{\Psi}$ be a bijection. Consider two conditions:
\begin{enumerate}[nosep]
\item[(\hypertarget{def.cats.breg.proj.L}{L})] $\Psi$ is totally convex on $\efd(\Psi)$ or Euler--Legendre or (strictly convex on $\efd(\Psi)$ and supercoercive);
\item[(\hypertarget{def.cats.breg.proj.R}{R})] $\Psi^\lfdual$ is Gateaux differentiable on $\varnothing\neq\DG\Psi(\intefd{\Psi})\subseteq\intefd{\Psi^\lfdual}$ and (totally convex on $\efd(\Psi^\lfdual)$ or Euler--Legendre or (strictly convex on $\efd(\Psi)$ and supercoercive)).
\end{enumerate}
Then:
\begin{enumerate}[nosep,label=(\roman*)]
\item\label{def.cats.breg.proj.i} if (\hyperlink{def.cats.breg.proj.L}{L}) holds, then $\lCvx(\ell,\Psi,W)$ is a category with: objects given by $\ell$-closed $\ell$-convex subsets of $W$, including $\varnothing$; morphisms given by left $D_{\ell,\Psi}$-projections onto $\ell$-closed $\ell$-convex subsets of these subsets (i.e. $\mathrm{Hom}_{\lCvx(\ell,\Psi,W)}(\,\cdot\,,C)$ consists of $\LPPP^{D_{\ell,\Psi}}_Q$ with $Q$ varying over all $\ell$-closed $\ell$-convex subsets of $C$), including $\varnothing$ (resulting in empty arrows, $\ulcorner\varnothing\urcorner\in\mathrm{Hom}_{\lCvx(\ell,\Psi,W)}(C_1,C_2)$); identity morphisms given by $\LPPP^{D_\Psi}_C(C)=C$; composition of morphisms given by 
\begin{equation}
\LPPP^{D_{\ell,\Psi}}_{C_2}\composition\LPPP^{D_{\ell,\Psi}}_{C_1}:=\LPPP^{D_{\ell,\Psi}}_{C_2\cap C_1},
\label{eqn.left.proj.composition}
\end{equation}
with composition of any morphism with empty arrow resulting in an empty arrow;
\item\label{def.cats.breg.proj.ii} if (\hyperlink{def.cats.breg.proj.L}{L}) holds, then $\lAff(\ell,\Psi,W)$ is a subcategory of $\lCvx(\ell,\Psi,W)$ obtained by restriction from $\ell$-closed $\ell$-convex to $\ell$-closed $\ell$-affine subsets of $W$;
\item\label{def.cats.breg.proj.iii} \sloppy if (\hyperlink{def.cats.breg.proj.L}{L}) holds, then $\lCvx^\subseteq(\ell,\Psi,W)$ (resp., $\lAff^\subseteq(\ell,\Psi,W)$) is a subcategory of $\lCvx(\ell,\Psi,W)$ (resp., $\lAff(\ell,\Psi,W)$) obtained by restriction of composition \eqref{eqn.left.proj.composition} by the condition $C_2\subseteq C_1$ (so the composition of morphisms not satisfying this condition results in $\ulcorner\varnothing\urcorner$);
\item\label{def.cats.breg.proj.iv} if (\hyperlink{def.cats.breg.proj.L}{L}) holds, then $\lCvx(\ell,\Psi)$ (resp., $\lAff(\ell,\Psi)$; $\lCvx^\subseteq(\ell,\Psi)$; $\lAff^\subseteq(\ell,\Psi)$) is defined as $\lCvx(\ell,\Psi,W)$ (resp., $\lAff(\ell,\Psi,W)$; $\lCvx^\subseteq(\ell,\Psi,W)$; $\lAff^\subseteq(\ell,\Psi,W)$) with $W=Z$;
\item\label{def.cats.breg.proj.v} if (\hyperlink{def.cats.breg.proj.L}{L}) holds, then $\lCvx(\Psi)$ (resp., $\lAff(\Psi)$; $\lCvx^\subseteq(\Psi)$; $\lAff^\subseteq(\Psi)$) is a category defined as $\lCvx(\ell,\Psi)$ (resp., $\lAff(\ell,\Psi)$; $\lCvx^\subseteq(\ell,\Psi)$; $\lAff^\subseteq(\ell,\Psi)$) with $(X,\n{\cdot}_X)=(Y,\n{\cdot}_Y)$ and $Z=\intefd{\Psi}$;
\item\label{def.cats.breg.proj.vi} if (\hyperlink{def.cats.breg.proj.R}{R}) holds, then $\rbarCvx(\ell,\Psi,W)$ is a category with objects given by $\DG\Psi\circ\ell$-closed $\DG\Psi\circ\ell$-convex subsets of $W$, including $\varnothing$; morphisms given by right $D_{\ell,\Psi}$-projections onto $\DG\Psi\circ\ell$-closed $\DG\Psi\circ\ell$-convex subsets of these subsets, including $\varnothing$; identity morphisms given by $\RPPP^{D_\Psi}_C(C)=C$; composition of morphisms given by 
\begin{equation}
\RPPP^{D_{\ell,\Psi}}_{C_2}\composition\RPPP^{D_{\ell,\Psi}}_{C_1}:=\ell^{\inver}\circ\DG\Psi^\lfdual\circ\left(\RPPP^{D_{\Psi^\lfdual}}_{(\DG\Psi\circ\ell)(C_2)}\composition\RPPP^{D_{\Psi^\lfdual}}_{(\DG\Psi\circ\ell)(C_1)}\right)\circ\DG\Psi\circ\ell;
\label{eqn.right.proj.composition}
\end{equation}
\item\label{def.cats.breg.proj.vii} if (\hyperlink{def.cats.breg.proj.R}{R}) holds, then $\rbarAff(\ell,\Psi)$, $\rbarCvx^\subseteq(\ell,\Psi)$, $\rbarAff^\subseteq(\ell,\Psi)$, $\rbarCvx(\Psi)$, $\rbarAff(\Psi)$, $\rbarCvx^\subseteq(\Psi)$, $\rbarAff^\subseteq(\Psi)$ are categories defined analogously as in (ii)--(v), with $\lCvx(\ell,\Psi)$ replaced by $\rbarCvx(\ell,\Psi)$;
\item\label{def.cats.breg.proj.viii} $\CN(\ell,\Psi,W)$ is a category with subsets of $W$ as objects, elements of $\cn(\ell,\Psi,W)$ as morphisms, identity maps of subsets as identity morphisms, and composition of morphisms given by composition of elements in $\cn(\ell,\Psi,W)$; $\cn(\ell,\Psi)$ will denote $\cn(\ell,\Psi,W)$ for $W=Z$;
\item\label{def.cats.breg.proj.ix} for any set $V$, let $\Pow(V)$ denote the category of all subsets of $V$ as objects, with functions between them as morphisms, and composition of functions as composition of morphisms.
\end{enumerate}
\end{definition}

\begin{definition}\label{def.functors.proj.cats}
Let $(X,\n{\cdot}_X)$ and $(Y,\n{\cdot}_Y)$ be Banach spaces, let $(X,\n{\cdot}_X)$ be reflexive, $Z\subseteq Y$, $\Psi\in\pclg(X,\n{\cdot}_X)$, $\ell:Z\ra\intefd{\Psi}$ be a bijection. Then:
\begin{enumerate}[nosep,label=(\roman*)]
\item\label{def.functors.proj.cats.i} if (\hyperlink{def.cats.breg.proj.L}{L}) and (\hyperlink{def.cats.breg.proj.R}{R}) hold, then $(\cdot)^{\Psi^\lfdual}:\lCvx(\ell,\Psi)\ra\rbarCvx(\ell,\Psi)$ is a functor, acting by $C\mapsto\ell^{\inver}\circ\DG\Psi\circ\ell(C)$ on objects $C\in\Ob(\lCvx(\ell,\Psi))$, with $(\varnothing)^{\Psi^\lfdual}:=\varnothing$, and by $T\mapsto\ell^{\inver}\circ\DG\Psi^\lfdual\circ T\circ\DG\Psi\circ\ell$ on morphisms $\ell^{\inver}\circ T\circ\ell\in\Arr(\lCvx(\ell,\Psi))$, with $(\ulcorner\varnothing\urcorner)^{\Psi^\lfdual}:=\ulcorner\varnothing\urcorner$;
\item\label{def.functors.proj.cats.ii} if (\hyperlink{def.cats.breg.proj.L}{L}) and (\hyperlink{def.cats.breg.proj.R}{R}) hold, then $(\cdot)^{\Psi}:\rbarCvx(\ell,\Psi)\ra\lCvx(\ell,\Psi)$ is a functor, acting by $C\mapsto\ell^{\inver}\circ\DG\Psi^\lfdual\circ\ell(C)$ on objects $C\in\Ob(\rbarCvx(\ell,\Psi))$, with $(\varnothing)^{\Psi}:=\varnothing$, and by $T\mapsto\ell^{\inver}\circ\DG\Psi\circ T\circ\DG\Psi^\lfdual\circ\ell$ on morphisms $\ell^{\inver}\circ T\circ\ell\in\Arr(\rbarCvx(\ell,\Psi))$, with $(\ulcorner\varnothing\urcorner)^{\Psi}:=\ulcorner\varnothing\urcorner$;
\item\label{def.functors.proj.cats.iii} if (\hyperlink{def.cats.breg.proj.L}{L}) holds, then $\overline{\co^\mathrm{L}_\Psi(\cdot)}^{w}:\Pow(X)\ra\lCvx(\Psi)$ is a functor, defined by:
\begin{enumerate}[nosep,label=\arabic*)]
\item\label{def.functors.proj.cats.iii.1} a map $\overline{\co^\mathrm{L}_\Psi(\cdot)}^{w}:\Ob(\Pow(X))\ra\Ob(\Pow(X))$, assigning to each subset $W$ of $X$ the closure $\overline{\,\cdot\,}^{w}$ of a convex hull $\co(\,\cdot\,)$ of $W\cap\intefd{\Psi}$ in the weak topology of $(X,\n{\cdot}_X)$ (it coincides with the norm closure) if $\overline{\co(W\cap\intefd{\Psi})}^{w}\subseteq\intefd{\Psi}$, and assigning $\varnothing$ otherwise;
\item\label{def.functors.proj.cats.iii.2} a map $\overline{\co^\mathrm{L}_\Psi(\cdot)}^{w}:\Arr(\Pow(X))\ra\Arr(\Pow(X))$, assigning to each function $f:W_1\ra W_2$ a map $\LPPP^{D_\Psi}_Q:\overline{\co^\mathrm{L}_\Psi(W_1)}^{w}\ra\overline{\co^\mathrm{L}_\Psi(W_2)}^{w}$, where $Q:=\overline{\co^\mathrm{L}_\Psi(f(W_1))}^{w}$, with $\LPPP^{D_\Psi}_Q=\ulcorner\varnothing\urcorner$ if either $Q=\varnothing$ or $\overline{\co^\mathrm{L}_\Psi(W_1)}^{w}=\varnothing$;
\end{enumerate}
\item\label{def.functors.proj.cats.iv} if (\hyperlink{def.cats.breg.proj.R}{R}) holds, then $\overline{\co^\mathrm{R}_\Psi(\cdot)}^{w}:\Pow(X)\ra\rbarCvx(\Psi)$ is a functor, defined by:
\begin{enumerate}[nosep,label=\arabic*)]
\item\label{def.functors.proj.cats.iv.1} a map $\overline{\co^\mathrm{R}_\Psi(\cdot)}^{w}:\Ob(\Pow(X))\ra\Ob(\Pow(X))$, assigning to each subset $W$ of $X$ the set $\DG\Psi^\lfdual\left(\overline{\co\left(\DG\Psi(W)\cap\intefd{\Psi^\lfdual}\right)}^{w}\right)$ (with $\overline{\,\cdot\,}^{w}$ denoting a closure in a weak topology of $(X^\star,\n{\cdot}_{X^\star})$) if $\overline{\co(\DG\Psi(W)\cap\intefd{\Psi^\lfdual})}^{w}\subseteq\intefd{\Psi^\lfdual}$, and assigning $\varnothing$ otherwise;
\item\label{def.functors.proj.cats.iv.2} a map $\overline{\co^\mathrm{R}_\Psi(\cdot)}^{w}:\Arr(\Pow(X))\ra\Arr(\Pow(X))$, assigning to each function $f:W_1\ra W_2$ a map $\RPPP^{D_\Psi}_Q:\overline{\co^\mathrm{R}_\Psi(W_1)}^{w}\ra\overline{\co^\mathrm{R}_\Psi(W_2)}^{w}$, where $Q:=\overline{\co^\mathrm{R}_\Psi(f(W_1))}^{w}$, with $\LPPP^{D_\Psi}_Q=\ulcorner\varnothing\urcorner$ if either $Q=\varnothing$ or $\overline{\co^\mathrm{R}_\Psi(W_1)}^{w}=\varnothing$;
\end{enumerate}
\item\label{def.functors.proj.cats.v} $\overline{\co^\mathrm{L}_{\ell,\Psi}(\cdot)}^{\ell}:\Pow(Z)\ra\lCvx(\ell,\Psi)$ and $\overline{\co^\mathrm{R}_{\ell,\Psi}(\cdot)}^{\ell}:\Pow(Z)\ra\rbarCvx(\ell,\Psi)$ are functors defined analogously to the corresponding functors in \ref{def.functors.proj.cats.iii}--\ref{def.functors.proj.cats.iv}, by the bijectivity of $\ell$;
\item\label{def.functors.proj.cats.vi} $\overline{\co^{\mathrm{L},\subseteq}_{\ell,\Psi}(\cdot)}^{\ell}:\Pow(Z)\ra\lCvx^\subseteq(\ell,\Psi)$ (resp., $\overline{\co^{\mathrm{R},\subseteq}_{\ell,\Psi}(\cdot)}^{\ell}:\Pow(Z)\ra\rbarCvx^\subseteq(\ell,\Psi)$) is defined as a restriction of the functor $\overline{\co^\mathrm{L}_{\ell,\Psi}(\cdot)}^{\ell}$ (resp., $\overline{\co^\mathrm{R}_{\ell,\Psi}(\cdot)}^{\ell}$) by the additional condition $\LPPP^{D_{\ell,\Psi}}_{\ell^{\inver}(Q)}=\ulcorner\varnothing\urcorner$ if $Q\not\subseteq\overline{\co^\mathrm{L}_{\ell,\Psi}(W_1)}^{\ell}$ (resp., $\RPPP^{D_{\ell,\Psi}}_{\ell^{\inver}(Q)}=\ulcorner\varnothing\urcorner$ if $Q\not\subseteq\overline{\co^\mathrm{R}_{\ell,\Psi}(W_1)}^{\ell}$);
\item\label{def.functors.proj.cats.vii} if (\hyperlink{def.cats.breg.proj.L}{L}) (resp.,(\hyperlink{def.cats.breg.proj.R}{R})) holds, then $\FrgSet^{\mathrm{L}}_{\ell,\Psi}:\lCvx(\ell,\Psi)\ra\Pow(Z)$ (resp., $\FrgSet^{\mathrm{R}}_{\ell,\Psi}:\rbarCvx(\ell,\Psi)\ra\Pow(Z)$) denotes a forgetful functor, forgetting all properties of domain category, except their structure as sets and functions between them; $\FrgSet_{\ell,\Psi}^{\mathrm{L},\subseteq}$ (resp., $\FrgSet_{\ell,\Psi}^{\mathrm{R},\subseteq}$) will denote a restriction of $\FrgSet_{\ell,\Psi}^{\mathrm{L}}$ (resp., $\FrgSet_{\ell,\Psi}^{\mathrm{R}}$) to the category $\lCvx^\subseteq(\ell,\Psi)$ (resp., $\rbarCvx^\subseteq(\ell,\Psi)$).
\end{enumerate}
\end{definition}

\begin{corollary}\label{cor.functors.breg.proj}
\begin{enumerate}[nosep,label=(\roman*)]
\item\label{cor.functors.breg.proj.i}
\sloppy Functors $(\cdot)^\Psi$ and $(\cdot)^{\Psi^\lfdual}$ establish equivalence of categories $\lCvx(\ell,\Psi)$ and $\rbarCvx(\ell,\Psi)$.
\item\label{cor.functors.breg.proj.ii} There are the following adjunctions of functors: $\overline{\co^{\mathrm{L}}_{\ell,\Psi}(\cdot)}^{\ell}\adj\FrgSet^{\mathrm{L}}_{\ell,\Psi}$, $\overline{\co^{\mathrm{L},\subseteq}_{\ell,\Psi}(\cdot)}^{\ell}\adj\FrgSet^{\mathrm{L},\subseteq}_{\ell,\Psi}$, $\overline{\co^{\mathrm{R}}_{\ell,\Psi}(\cdot)}^{\ell}\adj\FrgSet^{\mathrm{R}}_{\ell,\Psi}$, $\overline{\co^{\mathrm{R},\subseteq}_{\ell,\Psi}(\cdot)}^{\ell}\adj\FrgSet^{\mathrm{R},\subseteq}_{\ell,\Psi}$.
\end{enumerate} 
\end{corollary}
\begin{proof}
\begin{enumerate}[nosep]
\item[(i)] Follows from \eqref{eqn.right.proj.composition}.
\item[(ii)] Follows from the definition of the forgetful functor.
\end{enumerate} 
\end{proof}

\begin{proposition}\label{prop.d.psi.as.functor}
Let $[0,\infty]$ denote a category consisting of one object, $\bullet$, with morphisms given by the elements of the set $\RR^+\cup\{\infty\}$, and their composition defined by addition \textup{\cite[p. 140]{Lawvere:1973}}. Let $\catname{2}$ denote the category consisting of two objects, one arrow between them, and identity arrows on both of the objects. The category $[0,\infty]^\catname{2}$ has morphisms of $[0,\infty]$ as objects, commutative squares in $[0,\infty]$ as morphisms, and commutative compositions of these squares as compositions. Let $Q$ be a closed affine subset of a reflexive Banach space $(X,\n{\cdot}_X)$, $\Psi\in\pclg(X,\n{\cdot}_X)$ satisfies (\hyperlink{def.cats.breg.proj.L}{L}), $\phi\in Q$, and let $\lAff^\subseteq_Q(\Psi)$ denotes a subcategory of  $\lAff^\subseteq(\Psi)$ with objects restricted to sets $C\in\Ob(\lAff^\subseteq(\Psi))$ such that $Q\subseteq C$. Then $D_\Psi(\phi,\,\cdot\,)$ determines a contravariant functor $\lAff^\subseteq_Q(\Psi)\ra[0,\infty]^\catname{2}$ as well as a family of natural transformations in the category of functors $\lAff^\subseteq_Q(\Psi)\ra[0,\infty]$. Analogous statement holds for $\lAff^\subseteq(\Psi)$ (resp., $D_\Psi(\phi,\,\cdot\,)$) replaced by $\lAff^\subseteq(\ell,\Psi)$ (resp., $D_{\ell,\Psi}(\phi,\,\cdot\,)$), or by $\rbarAff^\subseteq(\Psi)$ (resp., $D_\Psi(\,\cdot\,,\phi)$), or by $\rbarAff^\subseteq(\ell,\Psi)$ (resp., $D_{\ell,\Psi}(\,\cdot\,,\phi)$) (last two cases require also to replace (\hyperlink{def.cats.breg.proj.L}{L}) by (\hyperlink{def.cats.breg.proj.R}{R})).
\end{proposition}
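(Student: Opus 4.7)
The plan is to reduce everything to the left generalised pythagorean equation of Proposition \ref{prop.left.pythagorean}.(ii), combined with the idempotency encoded in the composition rule \eqref{eqn.left.proj.composition}. Under condition (L) and for any closed affine $K \subseteq X$ with $Q \subseteq K$ (so $\phi \in K$), one has $D_\Psi(\phi,y) = D_\Psi(\phi,\LPPP^{D_\Psi}_K(y)) + D_\Psi(\LPPP^{D_\Psi}_K(y),y)$ for all $y \in \intefd{\Psi}$, and more generally the same identity with $\phi$ replaced by any $\phi' \in K$. This is exactly an additive decomposition matching the monoidal (additive) composition of arrows in $[0,\infty]$, which is the only structural ingredient needed.

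First, I would fix a test point $y \in \intefd{\Psi}$ and define a candidate contravariant functor $F_y : \lAff^\subseteq_Q(\Psi) \to [0,\infty]^\catname{2}$ by: on objects, $F_y(C) := D_\Psi(\phi,\LPPP^{D_\Psi}_C(y)) \in [0,\infty]$ (well-defined because $\phi \in Q \subseteq C$); on a morphism $\LPPP^{D_\Psi}_{C_2} : C_1 \to C_2$ (with $C_2 \subseteq C_1$ by the $\subseteq$-convention), the assigned square in $[0,\infty]^\catname{2}$ has top edge $0$ and bottom edge $D_\Psi(\LPPP^{D_\Psi}_{C_2}(y),\LPPP^{D_\Psi}_{C_1}(y))$, connecting $F_y(C_2)$ and $F_y(C_1)$ in the contravariant orientation. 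Commutativity $0 + F_y(C_1) = F_y(C_2) + D_\Psi(\LPPP^{D_\Psi}_{C_2}(y),\LPPP^{D_\Psi}_{C_1}(y))$ is exactly pythagorean applied to the affine set $C_2$ at the point $\LPPP^{D_\Psi}_{C_1}(y) \in C_1$, using the idempotency $\LPPP^{D_\Psi}_{C_2} \circ \LPPP^{D_\Psi}_{C_1} = \LPPP^{D_\Psi}_{C_2}$ from \eqref{eqn.left.proj.composition}. Functoriality then amounts to two checks: identity morphisms $\LPPP^{D_\Psi}_C = \id_C$ map to the degenerate square with both horizontal edges equal to $0$; the composite $C_1 \supseteq C_2 \supseteq C_3$ maps, via iterated pythagorean, to the pasted square with bottom edge $D_\Psi(\LPPP^{D_\Psi}_{C_3}(y),\LPPP^{D_\Psi}_{C_1}(y)) = D_\Psi(\LPPP^{D_\Psi}_{C_3}(y),\LPPP^{D_\Psi}_{C_2}(y)) + D_\Psi(\LPPP^{D_\Psi}_{C_2}(y),\LPPP^{D_\Psi}_{C_1}(y))$, exactly matching addition in $[0,\infty]$.

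In parallel, for each $y \in \intefd{\Psi}$ the assignment $G_y : (T = \LPPP^{D_\Psi}_{C_2} : C_1 \to C_2) \mapsto D_\Psi(\LPPP^{D_\Psi}_{C_2}(y),\LPPP^{D_\Psi}_{C_1}(y))$ is a functor $\lAff^\subseteq_Q(\Psi) \to [0,\infty]$ by the same iterated-pythagorean computation (sending identities to $0$ and composition to sum). For varying test points $y,y' \in \intefd{\Psi}$, pythagorean applied to $C$ yields $D_\Psi(\LPPP^{D_\Psi}_C(y), y') = D_\Psi(\LPPP^{D_\Psi}_C(y),\LPPP^{D_\Psi}_C(y')) + D_\Psi(\LPPP^{D_\Psi}_C(y'), y')$, and the family $\eta_C := D_\Psi(\LPPP^{D_\Psi}_C(y),\LPPP^{D_\Psi}_C(y'))$ provides the components of a natural transformation $G_y \Rightarrow G_{y'}$; the required naturality square on a morphism $C_1 \to C_2$ is again a direct consequence of pythagorean applied to $C_2$ at $\LPPP^{D_\Psi}_{C_1}(y)$ and $\LPPP^{D_\Psi}_{C_1}(y')$ together with the composition rule.

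Finally, the $\rbarAff^\subseteq(\Psi)$ statement is obtained by the same argument with left projections replaced by right projections and Proposition \ref{prop.left.pythagorean}.(ii) replaced by the right pythagorean equation \eqref{eqn.right.pyth.eqn} of Proposition \ref{prop.right.pythagorean}.(iv) (valid on $\DG\Psi$-affine $\DG\Psi$-closed sets under condition (R)); the $\ell$-versions follow automatically, since by Definition \ref{def.d.ell.psi.notions} the $\ell$-affine, $(\DG\Psi\circ\ell)$-affine, and $D_{\ell,\Psi}$-projection notions are defined by direct pullback along the bijection $\ell$, so the functors and natural transformations above transport verbatim. The main (and essentially the only) obstacle is bookkeeping: fixing the orientation of the commutative square in $[0,\infty]^\catname{2}$ consistently with contravariance, and keeping track that the $\subseteq$-restriction in the composition \eqref{eqn.left.proj.composition} is precisely what makes iterated projections collapse to a single projection, which in turn is what allows pythagorean to cascade into an additive decomposition matching the composition law of the target functor category.
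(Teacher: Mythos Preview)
Your functor construction is correct and coincides with the paper's: both fix a test point and use the left pythagorean equation \eqref{eqn.left.pyth} to verify that successive projections produce commutative squares in $[0,\infty]$, with functoriality reducing to iterated pythagorean via the $\subseteq$-collapse $\LPPP^{D_\Psi}_{C_2}\composition\LPPP^{D_\Psi}_{C_1}=\LPPP^{D_\Psi}_{C_2}$. The paper's diagram \eqref{gpt.qpl.diag} is exactly your commutative-square verification written out, and the $\ell$- and right-hand variants are handled by both of you in the same way (pullback along $\ell$, respectively swap to \eqref{eqn.right.pyth.eqn} under condition (R)).

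Where you diverge is the natural-transformation clause. You construct explicit functors $G_y:\lAff^\subseteq_Q(\Psi)\to[0,\infty]$ for varying test points $y,y'$ and propose components $\eta_C=D_\Psi(\LPPP^{D_\Psi}_C(y),\LPPP^{D_\Psi}_C(y'))$ between them. The paper instead takes a one-line abstract route: once the contravariant functor into $[0,\infty]^{\catname{2}}$ is in hand, cartesian closedness of $\catname{Cat}$ supplies the currying isomorphism between functors $\catname{C}\to\catname{D}^{\catname{2}}$ and natural transformations in $\catname{D}^{\catname{C}}$, so the natural transformation is obtained for free, with no further appeal to pythagorean. Your route is more concrete but also more fragile: the naturality square for $\eta:G_y\Rightarrow G_{y'}$ reads
\[
D_\Psi(\LPPP_{C_2}(y),\LPPP_{C_2}(y'))+D_\Psi(\LPPP_{C_2}(y),\LPPP_{C_1}(y))
\stackrel{?}{=}
D_\Psi(\LPPP_{C_2}(y'),\LPPP_{C_1}(y'))+D_\Psi(\LPPP_{C_1}(y),\LPPP_{C_1}(y')),
\]
which mixes two test points and two affine sets simultaneously and is \emph{not} a single instance of \eqref{eqn.left.pyth}; straightening it out needs something like the quadruple identity \eqref{eqn.quadruple.property} together with the affine variational characterisation \eqref{eqn.left.pyth.ii} on $C_2$, which you do not supply. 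The paper's cartesian-closedness argument sidesteps this computation entirely.
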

\begin{proof}
Let $K_1,K_2,K_3,K,L\in\Ob(\lAff^\subseteq_Q(\Psi))$, $K\subseteq K_2$ and $L\subseteq K_3$. For each $\phi\in Q$, left pythagorean equation implies commutativity of the diagram 

\begin{equation}
\xymatrix{%
\bullet \ar[rrrr]^{D_\Psi(\phi,x)} &&&&
\bullet \\
\bullet \ar[u]^{0}\ar[rrrr]^{D_\Psi(\phi,\LPPP^{D_\Psi}_K(x))} &&&&
\bullet \ar[u]_{D_\Psi(\LPPP^{D_\Psi}_K(x),x)}\\
\bullet \ar[u]^{0}\ar[rrrr]^{D_\Psi(\phi,\LPPP^{D_\Psi}_L\composition\LPPP^{D_\Psi}_K(x))} &&&&
\;\bullet, \ar[u]_{D_\Psi(\LPPP^{D_\Psi}_L\composition\LPPP^{D_\Psi}_K(x),\LPPP^{D_\Psi}_K(x))}\\
}%
\end{equation}

and hence also of

\begin{equation}
\xymatrix{%
x \ar@{|->}[rr] \ar@{|->}[d]|{\LPPP^{D_\Psi}_K} &&
\left(\;\bullet\;\; \ar[rrrr]_{D_\Psi(\phi,x)}\right. &&&&
\left.\;\;\bullet\;\right) \\
\LPPP^{D_\Psi}_K(x) \ar@{|->}[rr] \ar@{|->}[d]|{\LPPP^{D_\Psi}_L} &&
\left(\;\bullet\;\; \ar[u]^{0}\ar[rrrr]^{D_\Psi(\phi,\LPPP^{D_\Psi}_K(x))}\right. &&&&
\left.\;\;\bullet \ar[u]|{D_\Psi(\LPPP^{D_\Psi}_K(x),x)}\;\right)\\
\LPPP^{D_\Psi}_L\composition\LPPP^{D_\Psi}_K(x) \ar@{|->}[rr] &&
\left(\;\bullet\;\; \ar[u]^{0}\ar[rrrr]^{D_\Psi(\phi,\LPPP^{D_\Psi}_L\composition\LPPP^{D_\Psi}_K(x))}\right. &&&&
\left.\;\;\;\;\bullet \ar[u]|{D_\Psi(\LPPP^{D_\Psi}_L\composition\LPPP^{D_\Psi}_K(x),\LPPP^{D_\Psi}_K(x))}\;\right).\\
}%
\label{gpt.qpl.diag}
\end{equation}
This defines a contravariant functor $D_\Psi(\phi,\cdot):\lAff^\subseteq_Q(\Psi)\ra[0,\infty]^\texttt{2}$.

For any two categories $\texttt{C}$ and $\texttt{D}$, cartesian closedness of the category $\texttt{Cat}$ of all small categories (with natural transformations as morphisms) implies that any functor $\texttt{C}\ra\texttt{D}^\texttt{2}$ corresponds to a natural transformation in $\texttt{D}^\texttt{C}$. 
\end{proof}

\begin{definition}\label{def.LSQ.RSQ.cats}
Let $(X,\n{\cdot}_X)$ and $(Y,\n{\cdot}_Y)$ be Banach spaces, let $(X,\n{\cdot}_X)$ be reflexive, $\varnothing\neq W\subseteq Z\subseteq Y$, $\Psi\in\pclg(X,\n{\cdot}_X)$, $\ell:Z\ra\intefd{\Psi}$ be a bijection. Then:
\begin{enumerate}[nosep,label=(\roman*)]
\item\label{def.LSQ.RSQ.cats.i} if $\Psi$ is LSQ-compositional on $\ell(W)$, then $\LSQcvxsub(\Psi,\ell(W))$ is a category with: objects given by convex closed subsets of $\ell(W)$, including $\varnothing$; morphisms given by
\begin{equation}
	\Hom_{\LSQcvxsub(\Psi,\ell(W))}(K_1,K_2):=
	\left\{
	\begin{array}{ll}
	\ulcorner\varnothing\urcorner&:\;K_2\not\subseteq K_1\\
	\{T_i\in\lsq(\Psi,K_1)\mid K_2\supseteq\ran(T_i)\}&:\;K_2\subseteq K_1;
	\end{array}
	\right.
\end{equation}
composition of morphisms $(f:K_2\ra K_3)\composition(g:K_1\ra K_2)$ given by
\begin{equation}
\left\{
\begin{array}{ll}
(f\circ g):K_1\ra K_2&:\;\aFix(f\circ g)=\aFix(f)\cap\aFix(g)\neq\varnothing\\
\ulcorner\varnothing\urcorner&:\;\mathrm{otherwise};
\end{array}
\right.
\end{equation}
identity given by $\{\id_K:K\ra K\}\in\lsq(\Psi,K)$;
\item\label{def.LSQ.RSQ.cats.ii} $\LSQcvxsub(\ell,\Psi,W)$ is a category defined by pulling back $\LSQcvxsub(\Psi,\ell(W))$ along $\ell$; $\LSQcvxsub(\Psi)$ (resp., $\LSQcvxsub(\ell,\Psi)$) is defined as $\LSQcvxsub(\Psi,\ell(W))$ (resp., $\LSQcvxsub(\ell,\Psi,W)$) with $W=Z$;
\item\label{def.LSQ.RSQ.cats.iii} if $\Psi$ is RSQ-compositional, then the categories $\RbarSQcvxsub(\Psi,\ell(W))$, $\RbarSQcvxsub(\ell,\Psi,W)$, $\RbarSQcvxsub(\ell,\Psi)$, and $\RbarSQcvxsub(\Psi)$ are defined analogously to \ref{def.LSQ.RSQ.cats.i}--\ref{def.LSQ.RSQ.cats.ii}, by replacing $\lsq(\Psi,K)$ with $\rsq(\Psi,K)$, and replacing convex closed subsets of $\ell(W)$ by $\DG\Psi$-convex $\DG\Psi$-closed subsets of $\ell(W)$.
\end{enumerate}
\end{definition}

\begin{definition}\label{def.LSQ.RSQ.functors}
Let $(X,\n{\cdot}_X)$ and $(Y,\n{\cdot}_Y)$ be Banach spaces, let $(X,\n{\cdot}_X)$ be reflexive, $Z\subseteq Y$, $\Psi\in\pclg(X,\n{\cdot}_X)$, $\ell:Z\ra\intefd{\Psi}$ be a bijection. Then:
\begin{enumerate}[nosep,label=(\roman*)]
\item\label{def.LSQ.RSQ.functors.i} if $\Psi$ is LSQ-compositional and RSQ-compositional, then $(\cdot)^{\Psi^\lfdual}:\LSQcvxsub(\ell,\Psi)\ra\RbarSQcvxsub(\ell,\Psi)$ denotes a functor, acting by $C\mapsto\ell^{\inver}\circ\DG\Psi\circ\ell(C)$ on objects $C\in\Ob(\LSQcvxsub(\ell,\Psi))$, with $(\varnothing)^{\Psi^\lfdual}:=\varnothing$, and by $T\mapsto\ell^{\inver}\circ\DG\Psi^\lfdual\circ\DG\Psi\circ\ell$ on morphisms $\ell^{\inver}\circ T\circ\ell\in\Arr(\LSQcvxsub(\ell,\Psi))$, with $(\ulcorner\varnothing\urcorner)^{\Psi^\lfdual}:=\ulcorner\varnothing\urcorner$;
\item\label{def.LSQ.RSQ.functors.ii} if $\Psi$ is LSQ-compositional and RSQ-compositional, then $(\cdot)^{\Psi}:\RbarSQcvxsub(\ell,\Psi)\ra\LSQcvxsub(\ell,\Psi)$ denotes a functor, acting by $C\mapsto\ell^{\inver}\circ\DG\Psi^\lfdual\circ\ell(C)$ on objects $C\in\Ob(\RbarSQcvxsub(\ell,\Psi))$, with $(\varnothing)^\Psi:=\varnothing$, and by $T\mapsto\ell^{\inver}\circ\DG\Psi\circ T\circ\DG\Psi^\lfdual\circ\ell$ on morphisms $\ell^{\inver}\circ T\circ\ell\in\Arr(\RbarSQcvxsub(\ell,\Psi))$, with $(\ulcorner\varnothing\urcorner)^{\Psi}:=\ulcorner\varnothing\urcorner$;
\item\label{def.LSQ.RSQ.functors.iii} if (\hyperlink{def.cats.breg.proj.L}{L}) holds, $\Psi$ is LSQ-compositional, and $\Psi$ is LSQ-adapted on any convex closed $\varnothing\neq K\subseteq\intefd{\Psi}$, then $\iota_{\ell,\Psi}^{\mathrm{L},\subseteq}:\lCvx^\subseteq(\ell,\Psi)\hookrightarrow\LSQcvxsub(\ell,\Psi)$ denotes an embedding functor;
\item\label{def.LSQ.RSQ.functors.iv} if (\hyperlink{def.cats.breg.proj.R}{R}) holds, $\Psi$ is RSQ-compositional, and $\Psi$ is RSQ-adapted on any $\DG\Psi$-convex $\DG\Psi$-closed $\varnothing\neq K\subseteq\intefd{\Psi}$, then $\iota_{\ell,\Psi}^{\mathrm{R},\subseteq}:\rbarCvx^\subseteq(\ell,\Psi)\hookrightarrow\RbarSQcvxsub(\ell,\Psi)$ denotes an embedding functor;
\item\label{def.LSQ.RSQ.functors.v} if (\hyperlink{def.cats.breg.proj.L}{L}) holds and $\Psi$ is LSQ-compositional, then $\Fix_{\ell,\Psi}^{\mathrm{L},\subseteq}:\LSQcvxsub(\ell,\Psi)\ra\lCvx^\subseteq(\ell,\Psi)$ denotes a functor, acting as an identity map on objects, and as an assignment $\ell^{\inver}\circ T\circ\ell\mapsto\LPPP^{D_{\ell,\Psi}}_{\ell^{\inver}(\Fix(T))}$ to each $\ell^{\inver}\circ T\circ\ell\in\Arr(\LSQcvxsub(\ell,\Psi))$;
\item\label{def.LSQ.RSQ.functors.vi} if (\hyperlink{def.cats.breg.proj.R}{R}) holds and $\Psi$ is RSQ-compositional, then $\Fix_{\ell,\Psi}^{\mathrm{R},\subseteq}:\RbarSQcvxsub(\ell,\Psi)\ra\rbarCvx^\subseteq(\ell,\Psi)$ denotes a functor, acting as an identity map on objects, and as an assignment $\ell^{\inver}\circ T\circ\ell\mapsto\RPPP^{D_{\ell,\Psi}}_{\ell^{\inver}(\Fix(T))}$ to each $\ell^{\inver}\circ T\circ\ell\in\Arr(\RbarSQcvxsub(\ell,\Psi))$.
\end{enumerate}
\end{definition}

\begin{proposition}\label{prop.adjointness.cvx}
Let $(X,\n{\cdot}_X)$ and $(Y,\n{\cdot}_Y)$ be Banach spaces, let $(X,\n{\cdot}_X)$ be reflexive, $Z\subseteq Y$, $\Psi\in\pclg(X,\n{\cdot}_X)$, $\ell:Z\ra\intefd{\Psi}$ be a bijection. Then:
\begin{enumerate}[nosep,label=(\roman*)]
\item\label{prop.adjointness.cvx.i} if $\Psi$ is LSQ-compositional, RSQ-compositional, Euler--Legendre, $\DG\Psi$ is (uniformly continuous and bounded) on open subsets of $\intefd{\Psi}$, and $\DG\Psi^\lfdual$ is (uniformly continuous and bounded) on open subsets of $\intefd{\Psi^\lfdual}$, then $(\cdot)^\Psi$ and $(\cdot)^{\Psi^\lfdual}$ establish an equivalence of categories $\LSQcvxsub(\ell,\Psi)$ and $\RbarSQcvxsub(\ell,\Psi)$;
\item\label{prop.adjointness.cvx.ii} if (\hyperlink{def.cats.breg.proj.L}{L}) holds, $\Psi$ is LSQ-adapted on any convex closed $\varnothing\neq K\subseteq\intefd{\Psi}$, and LSQ-compositional, then there are adjunctions $\iota_{\ell,\Psi}^{\mathrm{L},\subseteq}\adj\Fix_{\ell,\Psi}^{\mathrm{L},\subseteq}$ and $\iota_{\ell,\Psi}^{\mathrm{L},\subseteq}\circ\overline{\co^{\mathrm{L},\subseteq}_{\ell,\Psi}(\cdot)}^{\ell}\adj\FrgSet^{\mathrm{L},\subseteq}_{\ell,\Psi}\circ\Fix_{\ell,\Psi}^{\mathrm{L},\subseteq}$, with a monad $\Fix_{\ell,\Psi}^{\mathrm{L},\subseteq}\circ\iota_{\ell,\Psi}^{\mathrm{L},\subseteq}$ on $\lCvx^\subseteq(\ell,\Psi)$, and a comonad $\overline{\co^{\mathrm{L},\subseteq}_{\ell,\Psi}(\cdot)}^{\ell}\circ\FrgSet^{\mathrm{L},\subseteq}_{\ell,\Psi}$ on $\lCvx^\subseteq(\ell,\Psi)$;
\item\label{prop.adjointness.cvx.iii} if (\hyperlink{def.cats.breg.proj.R}{R}) holds, $\Psi$ is RSQ-adapted on any $\DG\Psi$-convex $\DG\Psi$-closed 
$\varnothing\neq K\subseteq\intefd{\Psi}$, and RSQ-compositional, then there are adjunctions $\iota_{\ell,\Psi}^{\mathrm{R},\subseteq}\adj\Fix_{\ell,\Psi}^{\mathrm{R},\subseteq}$ and $\iota_{\ell,\Psi}^{\mathrm{R},\subseteq}\circ\overline{\co^{\mathrm{R},\subseteq}_{\ell,\Psi}(\cdot)}^{\ell}\adj\FrgSet^{\mathrm{R},\subseteq}_{\ell,\Psi}\circ\Fix_{\ell,\Psi}^{\mathrm{R},\subseteq}$, with a monad $\Fix_{\ell,\Psi}^{\mathrm{R},\subseteq}\circ\iota_{\ell,\Psi}^{\mathrm{R},\subseteq}$ on $\rbarCvx^\subseteq(\ell,\Psi)$, and a comonad $\overline{\co^{\mathrm{R},\subseteq}_{\ell,\Psi}(\cdot)}^{\ell}\circ\FrgSet^{\mathrm{R},\subseteq}_{\ell,\Psi}$ on $\rbarCvx^\subseteq(\ell,\Psi)$;
\item\label{prop.adjointness.cvx.iv} if (\hyperlink{def.cats.breg.proj.L}{L}) and (\hyperlink{def.cats.breg.proj.R}{R}) hold, $\Psi$ is LSQ-adapted on any convex closed nonempty subset of $\intefd{\Psi}$, RSQ-adapted on any $\DG\Psi$-convex $\DG\Psi$-closed nonempty subset of $\intefd{\Psi}$, LSQ-compositional, and RSQ-compositional, Euler--Legendre, $\DG\Psi$ is (uniformly continuous and bounded) on bounded subsets of $\intefd{\Psi}$, and $\DG\Psi^\lfdual$ is (uniformly continuous and bounded) on bounded subsets of $\intefd{\Psi^\lfdual}$, then the following diagram holds (with the horizontal arrows denoting adjoint functors, and vertical arrows denoting equivalences of categories):
\begin{equation}
	\xymatrix{
\Pow(Z)
		\ar@/^1pc/[rr]^{\overline{\co_{\ell,\Psi}^{\mathrm{L},\subseteq}(\cdot)}^{\ell}\;\;\;\;\;\;\;\;}_{}="cc"
		&&
		\lCvx^\subseteq(\ell,\Psi)
		\ar@(ul,ur)[]^{\;\;\;\;\overline{\co_{\ell,\Psi}^{\mathrm{L},\subseteq}(\cdot)}^{\ell}\,\circ\,\mathrm{FrgSet}^{\mathrm{L},\subseteq}_{\ell,\Psi}}
		\ar@(dr,dl)[]^{\Fix^{\mathrm{L},\subseteq}_{\ell,\Psi}\,\circ\,\iota^{\mathrm{L},\subseteq}_{\ell,\Psi}}
		\ar@/^1pc/[ll]^{\mathrm{FrgSet}^{\mathrm{L},\subseteq}_{\ell,\Psi}}_{\ }="dd"
				\ar@/^1pc/[rr]^{\iota^{\mathrm{L},\subseteq}_{\ell,\Psi}}_{}="ee"
				\ar@/^2.4pc/[dd]^{(\cdot)^{\Psi^\lfdual}}_{\ }
				&&
		\LSQcvxsub(\ell,\Psi)
					\ar@/^1pc/[ll]^{\Fix^{\mathrm{L},\subseteq}_{\ell,\Psi}}_{\ }="ff"
						\ar@{}^\bot"dd";"cc"
								\ar@{}^{\bot}"ff";"ee"
								\ar@/^1pc/[dd]^{(\cdot)^{\Psi^\lfdual}}_{\ }\\
&&&&\\			
		&&
		\rbarCvx^\subseteq(\ell,\Psi)
\ar@(ul,ur)[]^{\Fix^{\mathrm{R},\subseteq}_{\ell,\Psi}\,\circ\,\iota^{\mathrm{R},\subseteq}_{\ell,\Psi}}
		\ar@(dr,dl)[]^{(\cdot)^{\Psi^\lfdual}\,\circ\,\overline{\co_{\ell,\Psi}^{L,\subseteq}(\cdot)}^{\ell}\,\circ\,\mathrm{FrgSet}^{\mathrm{L},\subseteq}_{\ell,\Psi}\,\circ\,(\cdot)^{\Psi}\;\;\;\;\;\;\;\;\;\;\;\;\;\;\;\;\;\;\;\;\;\;\;\;\;\;\;\;\;\;}
				\ar@/^1pc/[rr]^{\iota^{\mathrm{R},\subseteq}_{\ell,\Psi}}_{}="e"
				\ar@/^2.4pc/[uu]^{(\cdot)^{\Psi}}_{\ }
				&&
		\RbarSQcvxsub(\ell,\Psi).
					\ar@/^1pc/[ll]^{\Fix^{\mathrm{R},\subseteq}_{\ell,\Psi}}_{\ }="f"
								\ar@{}^{\;\;\;\;\bot}"f";"e"
								\ar@/^1pc/[uu]^{(\cdot)^{\Psi}}_{\ }
	}
\label{eqn.bregmanian.cat.adjointness}
\end{equation}
\end{enumerate}
\end{proposition}
\begin{proof}
\begin{enumerate}[nosep]
\item[(i)] Follows from Proposition \ref{prop.lsq.rsq.old}.\ref{prop.lsq.rsq.old.v}.
\item[(ii)] The adjunction $\iota_{\ell,\Psi}^{\mathrm{L},\subseteq}\adj\Fix_{\ell,\Psi}^{\mathrm{L},\subseteq}$ follows from Definition \ref{def.lsq.rsq.adapted}, while the composite adjunction follows from Corollary \ref{cor.functors.breg.proj}.\ref{cor.functors.breg.proj.ii}. The corresponding monad and comonad are determined by the latter adjunction. 
\item[(iii)] Follows from Definition \ref{def.lsq.rsq.adapted} and Corollary \ref{cor.functors.breg.proj}.\ref{cor.functors.breg.proj.ii}.
\item[(iv)] Follows from \ref{prop.adjointness.cvx.i}--\ref{prop.adjointness.cvx.iii}.
\end{enumerate}
\end{proof}

\begin{corollary}\label{cor.LSQ.RSQ.adj.Psi.varphi}
Let $(Y,\n{\cdot}_Y)$ be a Banach space, let $(X,\n{\cdot}_X)$ be a uniformly Fr\'{e}chet differentiable, strictly convex Banach space with the Radon--Riesz--Shmul'yan property, let $Z\subseteq Y$, $\ell:Z\ra\intefd{\Psi}$ be a bijection, let $\varphi$ be a gauge. Then:
\begin{enumerate}[nosep,label=(\roman*)]
\item\label{cor.LSQ.RSQ.adj.Psi.varphi.i} there are adjunctions
\begin{equation}
\xymatrix{
\Pow(Z)
		\ar@/^1pc/[rr]^{\overline{\co_{\ell,\Psi_\varphi}^{\mathrm{L},\subseteq}(\cdot)}^{\ell}\;\;\;\;\;\;\;\;}_{}="cc"
		&&
		\lCvx^\subseteq(\ell,\Psi_\varphi)
		\ar@/^1pc/[ll]^{\mathrm{FrgSet}^{\mathrm{L},\subseteq}_{\ell,\Psi_\varphi}}_{\ }="dd"
				\ar@/^1pc/[rr]^{\iota^{\mathrm{L},\subseteq}_{\ell,\Psi_\varphi}}_{}="ee"
				&&
		\LSQcvxsub(\ell,\Psi_\varphi)
					\ar@/^1pc/[ll]^{\Fix^{\mathrm{L},\subseteq}_{\ell,\Psi_\varphi}}_{\ }="ff"
						\ar@{}^\bot"dd";"cc"
								\ar@{}^{\bot}"ff";"ee"
}
\end{equation}
and
\begin{equation}
\xymatrix{
\Pow(Z)
		\ar@/^1pc/[rr]^{\overline{\co_{\ell,\Psi_\varphi}^{\mathrm{R},\subseteq}(\cdot)}^{\ell}\;\;\;\;\;\;\;\;}_{}="cc"
		&&
		\lCvx^\subseteq(\ell,\Psi_\varphi),
		\ar@/^1pc/[ll]^{\mathrm{FrgSet}^{\mathrm{R},\subseteq}_{\ell,\Psi_\varphi}}_{\ }="dd"
								\ar@{}^\bot"dd";"cc"
}
\end{equation}
with corresponding monads and comonads, and there are also equivalences 
\begin{equation}
\xymatrix{
\rbarCvx^\subseteq(\ell,\Psi_\varphi)
\ar@/^1pc/[rr]^{(\cdot)^{\Psi_\varphi}}
&&
\lCvx^\subseteq(\ell,\Psi_\varphi),
\ar@/^1pc/[ll]^{(\cdot)^{(\Psi_\varphi)^\lfdual}}
}
\end{equation}
\begin{equation}
\xymatrix{
\RbarSQcvxsub(\ell,\Psi_\varphi)
\ar@/^1pc/[rr]^{(\cdot)^{\Psi_\varphi}}
&&
\LSQcvxsub(\ell,\Psi_\varphi);
\ar@/^1pc/[ll]^{(\cdot)^{(\Psi_\varphi)^\lfdual}}
}
\end{equation}
\item\label{cor.LSQ.RSQ.adj.Psi.varphi.ii} if $(X,\n{\cdot}_X)$ is uniformly convex, then there is an adjunction
\begin{equation}
\xymatrix{
		\rbarCvx^\subseteq(\ell,\Psi_\varphi)
				\ar@/^1pc/[rr]^{\iota^{\mathrm{R},\subseteq}_{\ell,\Psi_\varphi}}_{}="ee"
				&&
		\RbarSQcvxsub(\ell,\Psi_\varphi),
					\ar@/^1pc/[ll]^{\Fix^{\mathrm{R},\subseteq}_{\ell,\Psi_\varphi}}_{\ }="ff"
								\ar@{}^{\bot}"ff";"ee"
}
\end{equation}
together with the corresponding monads and comonads.
\end{enumerate}
\end{corollary}
\begin{proof}
Follows from Propositions \ref{prop.adjointness.cvx} and \ref{prop.varphi.compositional}.
\end{proof}

\begin{remark}\label{remark.categorical}
\begin{enumerate}[nosep,label=(\roman*)]
\item\label{remark.categorical.i} Regarding Definition \ref{def.cats.breg.proj}:
\begin{enumerate}[nosep,label=\alph*)]
\item\label{remark.categorical.i.a} the restriction to subsets of $\ell^{\inver}(\intefd{\Psi})$, together with (\hyperlink{def.cats.breg.proj.L}{L}) (resp., (\hyperlink{def.cats.breg.proj.R}{R})), guarantees composability of $\composition$ by means of zone consistency of $\LPPP^{D_{\ell,\Psi}}_C$ (resp.,  $\RPPP^{D_{\ell,\Psi}}_C$), via Corollary \ref{cor.d.ell.psi.properties}.\ref{cor.d.ell.psi.properties.iii}.\ref{cor.d.ell.psi.properties.iii.2} (resp. Corollary \ref{cor.d.ell.psi.properties}.\ref{cor.d.ell.psi.properties.iv}.\ref{cor.d.ell.psi.properties.iv.2});
\item\label{remark.categorical.i.b} $C_1\cap C_2=C_2\cap C_1$ implies commutativity of $\composition$;
\item\label{remark.categorical.i.c} we interpret an empty (resp., identity) arrow as an inference corresponding to overdetermination (resp., underdetermination) of constraints (cf. \cite[p. 35]{Jaynes:1985} for a related discussion);
\item\label{remark.categorical.i.d} the notation $\rCvx(\ell,\Psi)$ (resp., $\rAff(\ell,\Psi)$) is kept reserved for the category of right $D_{\ell,\Psi}$-projections onto $\ell$-closed and $\ell$-convex (resp., $\ell$-closed and $\ell$-afine) subsets of $Z$. An example of $\rCvx(\Psi)$ is provided by \cite[p. 192, Def. 3.1, Lem. 3.5]{Bauschke:Noll:2002}: if $n\in\NN$, $X=\RR^n$, $\Psi$ is Euler--Legendre with $\DG\DG\Psi$ continuous on $\intefd{\Psi}$, $D_\Psi$ is jointly convex, $D_\Psi(x,\,\cdot\,)$ is strictly convex on $\intefd{\Psi}$ $\forall x\in\intefd{\Psi}$, and $\efd(\Psi^\lfdual)$ is open, then every convex closed $K\subseteq X$ with $K\cap\intefd{\Psi}\neq\varnothing$ is a right $D_\Psi$-Chebysh\"{e}v set, with $\RPPP^{D_\Psi}_K(y)\in\intefd{\Psi}$ $\forall y\in\intefd{\Psi}$.
\end{enumerate}
\item\label{remark.categorical.ii}
\begin{enumerate}[nosep,label=\alph*)]
\item\label{remark.categorical.ii.a} The composition rule $\composition$ for left $D_\Psi$-projections has a range of well defined computational meanings. Its quantitative evaluation can be performed by means of an algorithm given in \cite[Rem. 4.5, Alg. 5.1, Cor. 5.2]{Bauschke:Combettes:2003} (valid for any $\{K_i\mid i\in I\}$ with a countable set $I$ and any Euler--Legendre $\Psi$ that is totally convex\footnote{\cite[Rem. 4.5]{Bauschke:Combettes:2003} assumes uniform convexity on bounded subsets of $(X,\n{\cdot}_X)$, yet it is equivalent with total convexity on bounded subsets of $(X,\n{\cdot}_X)$ due to \cite[Thm. 2.10]{Butnariu:Resmerita:2006}.} on bounded subsets of $(X,\n{\cdot}_X)$, hence, in particular, for any LSQ-adapted $\Psi$), or by means of  \cite[Thm. 3.2]{Bauschke:Lewis:2000} \cite[Alg. 2.4, Thm. 3.1]{Bregman:Censor:Reich:1999} \cite[Alg. B]{Dhillon:Tropp:2007} (valid for $\dim X\in\NN$, a finite family $\{K_i\mid i\in\{1,\ldots,n\}, n\in\NN\}$, and Euler--Legendre $\Psi$ satisfying some additional conditions). For $(X,\n{\cdot}_X)$ given by the Hilbert space $(\H,\n{\cdot}_\H)$ and $\Psi_{\varphi_{1,1/2}}=\frac{1}{2}\n{\cdot}_\H^2$, the former algorithm turns to Haugazeau's \cite[Thm. 3-2]{Haugazeau:1968} algorithm, while the latter turns to Dykstra's algorithm \cite[p. 838, Thm. 3.2]{Dykstra:1983} \cite[\S2, Thm. 4.7]{Han:1988} (valid for $\dim\H\in\NN\cup\aleph_0$ \cite[p. 32, Thm. 2]{Boyle:Dykstra:1986}, and extendable to $\{K_i\mid i\in I\}$ with a countable set $I$ \cite[\S2]{Hundal:Deutsch:1997}). Under further restriction of $\{K_i\mid i\in\{1,\ldots,n\}, n\in\NN\}$ to a finite family of closed linear subspaces of $\H$, $\LPPP^{D_{\Psi_{\varphi_{1,1/2}}}}_{K_i}$ turn into orthogonal projection operators $P_{K_i}:\H\ra K_i$, while Dykstra's algorithm turns into Halperin's theorem \cite[Thm. 1]{Halperin:1962} on strong convergence of a cyclic repetition of $P_{K_n}\cdots P_{K_1}$ to $P_{K_1\cap\ldots\cap K_n}$, i.e. 
\begin{equation}
	\lim_{k\ra\infty}\n{\left((P_{K_n}\cdots P_{K_1})^k-P_{K_1\cap\ldots\cap K_n}\right)\xi}_\H=0\;\;\forall\xi\in\H.
\label{eqn.halperin}
\end{equation}
When only two projections are considered, corresponding to a composition $\LPPP^{D_{\Psi_{\varphi_{1,1/2}}}}_{K_1}\composition\LPPP^{D_{\Psi_{\varphi_{1,1/2}}}}_{K_2}$ for linear subspaces $K_1$ and $K_2$ of $\H$, \eqref{eqn.halperin} becomes the von Neumann--Kakutani theorem \cite[Thm. 13.7]{vonNeumann:1933} \cite[pp. 42--44]{Kakutani:1940:Nakano}.
\item\label{remark.categorical.ii.b} All of above algorithms provide evaluation of the left $D_\Psi$-projection $\LPPP^{D_\Psi}_{K_1\cap\ldots\cap K_i}(x)$, for $i\in I$ where $I$ is a finite or countable set, in terms of a norm convergence of a cyclic sequence of algorithmic steps to the unique limit point. The differences in definitions of those algorithms correspond to different ranges of generality and computational effectivity. In particular, while the direct extension on the von Neumann--Kakutani algorithm to closed convex sets converges weakly to an element in the nonempty intersection of $K_1$ and $K_2$ \cite[Thm. 1]{Bregman:1965} (Kaczmarz's algorithm \cite[pp. 355--357]{Kaczmarz:1937} is a special case of this extension, obtained for hyperplanes and $\dim\H\in\NN$), the limit point may be not equal to a projection onto $K_1\cap K_2$ \cite[Fig. 2]{Combettes:1993} and the norm convergence generally does not hold \cite[Thm. 1]{Hundal:2004} (although the latter holds always for $\dim\H\in\NN$, and can be guaranteed under additional conditions for $\dim\H=\aleph_0$ \cite[Thms. 1, 2]{Gurin:Polyak:Raik:1967}). On the other hand, the direct extension of Halperin's theorem to linear  projections, of norm equal to 1, onto subspaces of uniformly convex Banach space is norm convergent and returns a projection, of norm equal to 1, onto an intersection \cite[Thm. 2.1]{Bruck:Reich:1977}. For noncyclic algorithms, see \cite{Browder:1958,Prager:1960,Amemiya:Ando:1965,Bruck:1982,Hundal:Deutsch:1997,Bauschke:Borwein:1997,Bauschke:Combettes:2003}.
\end{enumerate}
\item\label{remark.categorical.iii} If  $(X,\n{\cdot}_X)$ is separable, then $\lAff(\Psi)$ has objects given by the countable sets of polynomial equations, which can be interpreted as \textit{data types}, with morphisms between them interpreted as \textit{programs} (algorithms). More generally, if $(X,\n{\cdot}_X)$ is a separable Banach space, then every convex closed subset $C\subseteq X$ is the intersection of the countable number of its supporting closed half-spaces \cite[Cor. 3]{Bishop:Phelps:1963}, i.e. it is a (countable) polyhedron, which is the set of solutions for a countable system of linear inequalities (see \cite{Borwein:Vanderwerff:2004} for a discussion of the nonseparable case). Hence, also $\lCvx(\Psi)$, at least for separable $(X,\n{\cdot}_X)$, can be represented as a category of specific data types and computations between them.
\item\label{remark.categorical.iv} Functor $\overline{\co^\mathrm{L}_{\ell,\Psi}(\cdot)}^{\ell}$ (resp., $\overline{\co^\mathrm{R}_{\ell,\Psi}(\cdot)}^{\ell}$) can be seen as implementing the following procedure: given the collection of `raw' data sets and maps between them, produce the category of $\ell$-convex $\ell$-closed (resp., $\DG\Psi\circ\ell$-convex $\DG\Psi\circ\ell$-closed) information state spaces and inferences between them, provided by left (resp., right) $D_{\ell,\Psi}$-projections. The construction of objects by means of these functors implements maximum absolute entropy procedure, along the lines of \cite{Elsasser:1937,Stratonovich:1955,Jaynes:1957,Jaynes:1979:where:do:we:stand}, while the construction of morphisms corresponds to maximum relative entropy procedure, along the lines of \cite{Sanov:1957,Kullback:1959,Bregman:1966,Chencov:1968,Hobson:1969}.
\item\label{remark.categorical.v}
\begin{enumerate}[nosep,label=\alph*)]
\item\label{remark.categorical.v.a} The equivalence in Corollary \ref{cor.functors.breg.proj} may seem trivial, since it is a direct consequence of the definition of $\rbarCvx(\Psi)$. Yet, we see it is as a top of an iceberg: currently it is an open question whether $\rCvx(\Psi)$ is a subcategory of $\rbarCvx(\Psi)$ or is it an independent structure (see \cite[Ex. 7.5]{Bauschke:Wang:Ye:Yuan:2009} for an example of $\RPPP^{D_\Psi}_C$ with convex $\DG\Psi(C)$ and nonconvex $C$), the equivalence between LSQ$(\Psi)$ and RSQ$(\Psi)$ classes holds only under special conditions (see Proposition \ref{prop.lsq.rsq.old}.\ref{prop.lsq.rsq.old.v}), and there is an important difference between availability of LSQ- vs RSQ-adaptedness in models (see Propositions \ref{prop.lsq.rsq.new} and \ref{prop.varphi.compositional}). Furthermore, while $\LPPP^{D_{\ell,\Psi}}$ for $D_{\ell,\Psi}=D_1$ correspond to Sanov-type theorems \cite[Thms. 10--13]{Sanov:1957} \cite[Thm. 1]{Csiszar:1984} \cite[Thm. 2]{Bjelakovic:Deuschel:Krueger:Seiler:SiegmundSchulze:Szkola:2005} (and, more generally, for any Csisz\'{a}r--Morimoto information $D_\fff$ \cite[p. 86]{Csiszar:1963} \cite[p. 329]{Morimoto:1963}, $\LPPP^{D_\fff}$ corresponds to conditional laws of large numbers, cf., e.g., \cite[\S7]{Leonard:2010:entropic}), $\RPPP^{D_{\ell,\Psi}}$ correspond to minimum contrast (e.g., maximum likelihood) estimation \cite[pp. 328--330]{Chencov:1968} \cite[\S22]{Chencov:1972} \cite[\S1]{Eguchi:1983} \cite[p. 93 (Engl. rev. ed.)]{Amari:Nagaoka:1993}. 
\item\label{remark.categorical.v.b} The inequality 
\begin{equation}
\s{y-T(x),x-T(x)}_\H\leq0\;\forall(x,y)\in\H\times C
\label{eqn.Aronszajn.characterisation}
\end{equation}
characterises \cite[p. 87]{Aronszajn:1950} metric ($=D_{\Psi_{\varphi_{1,1/2}}}$-) projections, $T=\PPP^{d_{\n{\cdot}_\H}}_C$ onto convex closed subsets $C$ in Hilbert space $\H$.
In general, the dichotomy between $\LPPP^{D_\Psi}$ and $\RPPP^{D_\Psi}$ can be seen as $D_\Psi$-version of the left/right split of \eqref{eqn.Aronszajn.characterisation} under a passage from $\H$ to Banach spaces. More precisely, if $(X,\n{\cdot}_X)$ is a Gateaux differentiable Banach space and $\varnothing\neq C\subseteq X$ is convex and closed, then
\begin{equation}
\duality{y-T(x),j(x-T(x))}_{X\times X^\star}\leq0\;\;\forall(x,y)\in X\times C
\label{eqn.metric.proj}
\end{equation}
characterises metric projections on $C$  \rpkmark{\cite{Deutsch:1965}} \cite[Thm. (p. 711)]{Rubinshtein:1965} \cite[Eqn. (5.11)]{Lions:1969}, while
\begin{equation}
\duality{x-T(x),j(y-T(x))}_{X\times X^\star}\leq0\;\;\forall(x,y)\in X\times C
\label{eqn.sunny.retractions}
\end{equation}
characterises sunny completely $\n{\cdot}_X$-nonexpansive retractions\footnote{Given  nonempty subsets $K_1$ and $K_2$ of a Banach space $(X,\n{\cdot}_X)$, a function $T:K_1\ra K_2$ is called: a \df{retraction} if{}f $T(x)=x$ $\forall x\in K_2$ \cite[\S2]{Borsuk:1931}; \df{sunny} if{}f $T(x)=y$ $\limp$ $T(y+t(x-y))=y$ $\forall x\in K_1$ $\forall t\geq0$ \cite[p. 19]{Efimov:Stechkin:1958}.} on $C$ \cite[Lem. 2.7]{Reich:1973}. On the other hand, if $(X,\n{\cdot}_X)$ is reflexive, Gateaux differentiable, and strictly convex, then \eqref{eqn.alber.characterisation} characterises left $D_{\Psi_{\varphi_{1,1/2}}}$-projections \cite[Prop. 7.c]{Alber:1996}, while, if $(X,\n{\cdot}_X)$ is reflexive, $\Psi$ is totally convex on $\efd(\Psi)$ and Euler--Legendre, with $\efd(\Psi^\lfdual)=X^\star$, then right $D_\Psi$-projections are characterised as sunny quasinonexpansive $D_\Psi$-retractions \cite[Cor. 4.6]{MartinMarquez:Reich:Sabach:2012}.
\item\label{remark.categorical.v.c} This suggests us a tentative conjecture that the Euler--Legendre transform in the Va\u{\i}nberg--Br\`{e}gman setting, under a suitable choice of categories (e.g., a category of left $D_\Psi$-projections and a category of sunny quasinonexpansive right $D_\Psi$-retractions), may be an adjunction, with the above equivalence as a special case (arising as a relationship between reflective and coreflective subcategory of an above adjunction). Can this conjecture be approached via the notion of a nucleus of profunctor, as in \cite[\S5]{Willerton:2015}?
\end{enumerate}
\item\label{remark.categorical.vi} Regarding Proposition \ref{prop.d.psi.as.functor}, dependence of $D_\Psi(\phi,\cdot)$ on $Q$ can be factored out by reducing considerations to singletons $Q=\{\phi\}$ (understood as 0-dimensional closed affine spaces). In (some) analogy to \cite[Thm. 7]{Baez:Fritz:2014} \cite[Thm. 4.4]{Gagne:Panangaden:2018}, this allows us to state a problem of characterisation of $D_\Psi$ as a functor (or a natural transformation) $D_\Psi(\phi,\cdot)$.
\item\label{remark.categorical.vii} Given any $Q\in\Ob(\lCvx(\Psi))$, $\Hom_{\lCvx(\Psi)}(\cdot,Q)$ can be equipped with the structure of a commutative partially ordered monoid (i.e. a monoid $(M,\monoid)$, satisfying $x\monoid y = y\monoid x$ $\forall x,y\in M$, and equipped with a partial order $\leq$ such that $x\leq y$ $\limp$ $z\monoid x\leq z\monoid y$ $\forall x,y,z\in M$), with $\LPPP^{D_\Psi}_{Q_1}\composition\LPPP^{D_\Psi}_{Q_2}=\LPPP^{D_\Psi}_{Q_1}\monoid\LPPP^{D_\Psi}_{Q_2}:=\LPPP^{D_\Psi}_{Q_1\cap Q_2}$, $\LPPP^{D_\Psi}_{Q_1}\leq\LPPP^{D_\Psi}_{Q_2}:=Q_1\subseteq Q_2$, and a distinguished zero object, given by $\LPPP^{D_\Psi}_Q$. (Hence, each $\Hom_{\lCvx(\ell,\Psi)}(\cdot,Q)$ forms a resource theory in the sense of \cite[\S3]{Fritz:2017} (the latter generalises, in particular, the approaches of \cite{Lieb:Yngvason:1999} and \cite{Devetak:Harrow:Winter:2008}).) Viewing the order of extended positive reals as a feature distinct from their composition by addition turns $[0,\infty]$ into a commutative partially ordered monoid (with $x+\infty:=\infty=:\infty+x$ $\forall x\in[0,\infty]$). Thus, each functor $D_{\Psi}(\phi,\cdot)$ can be seen as a morphism $\Hom_{\lAff^\subseteq_Q(\Psi)}(\cdot,Q)\ra[0,\infty]$ inside the category of commutative partially ordered monoids.
\item\label{remark.categorical.viii} The extension from the monoid structure of the composable sets $\lsq(\Psi,K)$ and $\rsq(\Psi,K)$ to the corresponding categories $\LSQcvxsub(\Psi,K)$ and $\RbarSQcvxsub(\Psi,K)$ (which depends on the associativity of composition) is possible due to an observation that the proofs of \cite[Lems. 1, 2]{Reich:1996} and \cite[Props. 3.3, 4.4, 6.6, Fact 6.5]{MartinMarquez:Reich:Sabach:2013:BSN} do not depend on the action of $T_i$ on the domain $K\setminus\ran(T_{i-1})$ for $i\in\{2,\ldots,m\}$, $m\in\NN$. Hence, these results hold in larger generality, namely for $m$-tuples of maps $(T_1:K\ra K,\;T_2:\ran(T_1)\ra K,\;T_3:\ran(T_2)\ra K,\;\ldots,\;T_m:\ran(T_{m-1})\ra K)$.
\item\label{remark.categorical.ix} The restriction of considerations from the category $\RSQcvx^\subseteq(\ell,\Psi)$ (with objects given by any nonempty subsets of $\intefd{\Psi}$) to $\RbarSQcvxsub(\ell,\Psi)$, as exhibited in Definitions \ref{def.LSQ.RSQ.cats} and \ref{def.LSQ.RSQ.functors}, Proposition \ref{prop.adjointness.cvx}, and Corollary \ref{cor.LSQ.RSQ.adj.Psi.varphi}, is due to requirement of compatibility with $\rbarCvx(\ell,\Psi)$, as well as with the use of Proposition \ref{prop.lsq.rsq.old}.\ref{prop.lsq.rsq.old.v}. Thus, the discussion in \ref{remark.categorical.v} applies, mutatis mutandis, to $\RbarSQcvxsub(\ell,\Psi)$.
\item\label{remark.categorical.x} Under the additional assumptions on $\Psi$ provided by Proposition \ref{prop.continuity.psi}, and assuming norm-to-norm continuity of $\ell$, we obtain the categories $\lCvx_{\mathrm{cont}}(\ell,\Psi)$ and $\rbarCvx_{\mathrm{cont}}(\ell,\Psi)$ of norm-to-norm continuous left and right $D_{\ell,\Psi}$-projections, respectively. By Proposition \ref{prop.continuity}, if $\Psi=\Psi_\varphi$ for a gauge $\varphi$, then the above assumptions on $\Psi$ are equivalent with assuming that $(X,\n{\cdot}_X)$ is reflexive, strictly convex, Fr\'{e}chet differentiable, and has the Radon--Riesz--Shmul'yan property. Analogously, Corollary \ref{cor.Lipschitz.Hoelder.projections.Psi} leads to the category $\lCvx_{\mathrm{LH},\;X}^{s/(r-1)}(\Psi)$ (resp., $\rbarCvx_{\mathrm{LH},\;X}^{s^2w/(r-1)}(\Psi)$ of $\frac{s}{r-1}$(resp., $\frac{s^2w}{r-1}$)-Lipschitz--H\"{o}lder continuous left (resp., right) $D_{\ell,\Psi}$-projections on $X$, while Proposition \ref{prop.varphi.uniform.continuity} leads to the category $\lCvx_{\mathrm{LH},\;X}^{\beta(r-1)/(1-\beta)}(\Psi_{\varphi_{1,\beta}})$ (resp., $\rbarCvx_{\text{LH, bound}(X)}^{(1-\beta)^2(r-1)^2/\beta^2}(\Psi_{\varphi_{1,\beta}})$) of $\frac{\beta(r-1)}{1-\beta}$(resp., $\frac{(1-\beta)^2(r-1)^2}{\beta^2}$)-Lipschitz--H\"{o}lder continuous left (resp., right) $D_{\Psi_{\varphi_{1,\beta}}}$-projections on $X$ (resp., bounded subsets of $X$).
\end{enumerate}
\end{remark}
\section{Some models}\label{section.models}
\subsection{$X$ = $L_{1/\gamma}$ space, $\ell$ = Mazur map, $\Psi$ = $\Psi_\varphi$}\label{section.L.gamma.Mazur}
\begin{proposition}\label{prop.dope.gauge.gamma.wstar}
Let $\N$ be a W$^*$-algebra, $\varphi$ a gauge, $\gamma\in\,]0,1[$, $\lambda\in\,]0,\infty[$, $\varnothing\neq C\subseteq\N_\star$, $\B$ a ball in $\N_\star$ (e.g., $\B=B(\N_\star,\n{\cdot}_1)$). Then:
\begin{enumerate}[nosep,label=(\roman*)]
\item\label{prop.dope.gauge.gamma.wstar.i} $D_{\lambda\ell_\gamma,\Psi_\varphi}:\N_\star\times\N_\star\ra[0,\infty]$ is an information on $\N_\star$;
\item\label{prop.dope.gauge.gamma.wstar.ii} if $C$ is $\lambda\ell_\gamma$-convex $\lambda\ell_\gamma$-closed, then $D_{\lambda\ell_\gamma,\Psi_\varphi}$ is left pythagorean on $C$, $\LPPP^{D_{\lambda\ell_\gamma,\Psi_\varphi}}_C$ is zone consistent, and adapted;
\item\label{prop.dope.gauge.gamma.wstar.iii} if $C\subseteq\N_\star^+\cup\B$ is $\lambda\ell_\gamma$-convex closed, then $D_{\lambda\ell_\gamma,\Psi_\varphi}$ is left pythagorean on $C$, $\LPPP^{D_{\lambda\ell_\gamma,\Psi_\varphi}}_C$ is zone consistent, adapted, and norm-to-norm continuous on $\N_\star$, and $\inf_{y\in C}\{D_{\lambda\ell_\gamma,\Psi_\varphi}(y,\,\cdot\,)\}$ is continuous on $\N_\star$;
\item\label{prop.dope.gauge.gamma.wstar.iv} if $C$ is $\lambda\ell_{1-\gamma}$-convex $\lambda\ell_{1-\gamma}$-closed, then $D_{\lambda\ell_\gamma,\Psi_\varphi}$ is right pythagorean on $C$, and $\RPPP^{D_{\lambda\ell_\gamma,\Psi_\varphi}}_{C}$ is zone consistent, and adapted;
\item\label{prop.dope.gauge.gamma.wstar.v} if $C\subseteq\N_\star^+\cup\B$ is $\lambda\ell_{1-\gamma}$-convex closed, then $D_{\lambda\ell_\gamma,\Psi_\varphi}$ is right pythagorean on $C$, and $\RPPP^{D_{\lambda\ell_\gamma,\Psi_\varphi}}_{C}$ is zone consistent, adapted, and norm-to-norm continuous on $\N_\star$;
\item\label{prop.dope.gauge.gamma.wstar.vi} the sets $\lsq(\lambda\ell_\gamma,\Psi_\varphi,C)$ and $\rsq(\lambda\ell_\gamma,\Psi_\varphi,C)$ are composable;
\item\label{prop.dope.gauge.gamma.wstar.vii} the categories $\lCvx^\subseteq(\lambda\ell_\gamma,\Psi_\varphi)$, $\rbarCvx^\subseteq(\lambda\ell_\gamma,\Psi_\varphi)$, $\LSQcvxsub(\lambda\ell_\gamma,\Psi_\varphi)$, and $\RbarSQcvxsub(\lambda\ell_\gamma,\Psi_\varphi)$ satisfy the functorial adjunctions and equivalences given by Corollary \ref{cor.LSQ.RSQ.adj.Psi.varphi}.\ref{cor.LSQ.RSQ.adj.Psi.varphi.i}--\ref{cor.LSQ.RSQ.adj.Psi.varphi.ii} with $Z=\N_\star$;
\item\label{prop.dope.gauge.gamma.wstar.viii} if $T:L_{1/\gamma}(\N)\ra 2^{L_{1/(1-\gamma)}(\N)}$ is maximally monotone with $0\in\efd(T)$, then $\lres^{\Psi_\varphi}_T$ maps $L_{1/\gamma}(\N)$ on $\efd(T)$ and is norm-to-norm continuous on $(L_{1/\gamma}(\N),\n{\cdot}_{1/\gamma})$, $\rres^{\Psi_\varphi}_T$ maps $L_{1/(1-\gamma)}(\N)$ on $j_\varphi(\efd(T))$ and is norm-to-norm continuous on $(L_{1/(1-\gamma)}(\N),\n{\cdot}_{1/(1-\gamma)})$, and $\lres_T^{\ell_\gamma,\Psi_\varphi}$ maps $\N_\star$ on ${\ell_\gamma}^\inver(\efd(T))$ and is norm-to-norm continuous on $\N_\star$.
\end{enumerate}
\end{proposition}
\begin{proof}
Since $\intefd{\Psi_\varphi}=L_{1/\gamma}(\N)$, we have $(\lambda\ell_\gamma)^{\inver}(\intefd{\Psi_\varphi})=\N_\star$. Zone consistency of left and right $D_{\lambda\ell_\gamma,\Psi_\varphi}$-projections follows from Proposition \ref{prop.left.right.psi.varphi}.\ref{prop.left.right.psi.varphi.iv}. For any $\gamma\in\,]0,1[$, $(L_{1/\gamma}(\N),\n{\cdot}_{1/\gamma})$ is uniformly convex (as proved for $\gamma\in\,]0,\frac{1}{2}]$ in \cite[Lem. 5]{Dixmier:1953} and for $\gamma\in\,]0,1[$ in \cite[Cor. 2.1]{Cleaver:1973}\footnote{\cite[Thm. 2.7]{McCarthy:1967} is often cited in this case, however its proof is incorrect (see \cite[p. 299]{Fack:Kosaki:1986}).} for type I $\N$, for $\gamma\in\,]0,1[$ in \cite[Lems. 3.12, 3.22, p. 262]{Zsido:1980}, for $\gamma\in\,]0,1[$ in \cite[Prop. 8.2, Lem. 9.1]{Araki:Masuda:1982} and \cite[Thm. 4.2]{Kosaki:1984:ncLp} for countably finite $\N$, for $\gamma\in\,]0,\frac{1}{2}]$ in \cite[Lem. 3.4.2.(i)]{Kosaki:1980:PhD} \cite[Prop. 31]{Terp:1981} (cf. \cite[Lem. 1.18]{Haagerup:1979:ncLp}) \cite[Lem. 9]{Hilsum:1981} and for $\gamma\in\,]0,1[$ in \cite[Lems. 8.1, 8.2]{Masuda:1983} \cite[Thm. 5.3]{Fack:Kosaki:1986} for any $\N$), hence uniformly Fr\'{e}chet differentiable (due to $(L_{1/\gamma}(\N),\n{\cdot}_{1/\gamma})^\star\iso(L_{1/(1-\gamma)}(\N),\n{\cdot}_{1/(1-\gamma)})$ $\forall\gamma\in\,]0,1[$, proved in \cite[p. 580]{Schatten:vonNeumann:1948} for type I $\N$, \cite[Thm. 7]{Dixmier:1953} and \cite[Thm. 4.2]{Yeadon:1975} for semifinite $\N$, and in \cite[Thm. 3.4.3]{Kosaki:1980:PhD} \cite[Thm. 10.(2)]{Hilsum:1981} \cite[Thm. 32.(2)]{Terp:1981} (cf. \cite[Thm. 1.19]{Haagerup:1979:ncLp}) for arbitary $\N$). Since uniform convexity entails both strict convexity and the Radon--Riesz--Shmul'yan property, while uniform Fr\'{e}chet differentiability entails (Fr\'{e}chet differentiability and thus) Gateaux differentiability, $\Psi_\varphi$ is Euler--Legendre on any $(L_{1/\gamma}(\N),\n{\cdot}_{1/\gamma})$ by means of Proposition \ref{prop.legendre}, and is left (resp., right) pythagorean on convex closed (resp., $\DG\Psi_\varphi$-convex $\DG\Psi_\varphi$-closed) $K=\lambda\ell_\gamma(C)$ by means of Proposition \ref{prop.left.right.psi.varphi} (this proposition implies also zone consistency in \ref{prop.dope.gauge.gamma.wstar.ii} and \ref{prop.dope.gauge.gamma.wstar.iii}). By Proposition \ref{prop.psi.varphi.geometry}.\ref{prop.psi.varphi.geometry.iv}, Fr\'{e}chet differentiability of $(X,\n{\cdot}_X)$ is equivalent with norm-to-norm continuity of $j_\varphi$ for any gauge $\varphi$. Furthermore, for any $\gamma\in\,]0,1[$, $\ell_\gamma$ is a norm-to-norm homeomorphism from the positive cone of $(\N_\star,\n{\cdot}_1)$ to the positive cone of $(L_{1/\gamma}(\N),\n{\cdot}_{1/\gamma})$ \cite[Thm. 4.2]{Kosaki:1984:uniform}, and a uniform homeomorphism from any ball in $\N_\star$ \cite[Lem. 3.2]{Raynaud:2002}. Since $\lambda$ is a multiplicative constant, the same conclusion follows for $\lambda\ell_\gamma$. Since $\DG\Psi_\varphi$-convexity is a convexity in $L_{1/(1-\gamma)}(\N)$, while $j_\varphi=\DG\Psi_\varphi$ is norm-to-norm continuous, $(\DG\Psi_\varphi\circ\lambda\ell_\gamma)$-convex $(\DG\Psi_\varphi\circ\lambda\ell_\gamma)$-closed sets in $\N_\star$ coincide with $\lambda\ell_{1-\gamma}$-convex $\lambda\ell_{1-\gamma}$-closed sets in $\N_\star$. Since uniform convexity implies also the Radon--Riesz--Shmul'yan property \cite[Thm. 5]{Shmulyan:1939:geometrical}, norm-to-norm continuity in \ref{prop.dope.gauge.gamma.wstar.iii} and \ref{prop.dope.gauge.gamma.wstar.v} follows from Proposition \ref{prop.continuity}. Adaptedness in \ref{prop.dope.gauge.gamma.wstar.ii} and \ref{prop.dope.gauge.gamma.wstar.v} and composability in \ref{prop.dope.gauge.gamma.wstar.vi}, follow from Proposition \ref{prop.varphi.compositional}, and these imply \ref{prop.dope.gauge.gamma.wstar.vii}. \ref{prop.dope.gauge.gamma.wstar.viii} follows from Proposition \ref{prop.varphi.resolvent.norm.continuity}. 
\end{proof}

\begin{proposition}\label{prop.lambda.gamma.alpha.beta}
For $\varphi=\varphi_{\alpha,\beta}$, $\alpha,\lambda\in\,]0,\infty[$, $\beta,\gamma\in\,]0,1[$, any W$^*$-algebra $\N$, and $\phi,\psi\in\N_\star$,
\begin{equation}
D_{\lambda\ell_\gamma,\Psi_{\varphi_{\alpha,\beta}}}(\phi,\psi)={\textstyle\frac{\lambda^{1/\beta}}{\alpha}}\left(\beta\n{\phi}^{\gamma/\beta}_1+(1-\beta)\n{\psi}_1^{\gamma/\beta}-\n{\psi}_1^{\gamma(\frac{1}{\beta}-\frac{1}{\gamma})}\re\int u_\phi\ab{\phi}^\gamma u_\psi\ab{\psi}^{1-\gamma}\right).
\label{eqn.lambda.gamma.alpha.beta}
\end{equation}
\end{proposition}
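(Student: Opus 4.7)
The plan is a direct substitution and simplification, reducing the general closed form of $D_{\Psi_{\varphi_{\alpha,\beta}}}$ (given in Remark \ref{remark.varphi}.(iv)) to its concrete realisation under the Mazur embedding $\lambda\ell_\gamma\colon\N_\star\to L_{1/\gamma}(\N)$. The key structural inputs are: the explicit formula for $\Psi_{\varphi_{\alpha,\beta}}$ and its Gateaux derivative (Remark \ref{remark.varphi}.(iv)); the fact that $(L_{1/\gamma}(\N),\n{\cdot}_{1/\gamma})$ is reflexive, uniformly convex, and uniformly Fr\'{e}chet differentiable (collected in the proof of Proposition \ref{prop.dope.gauge.gamma.wstar}), so in particular Gateaux differentiable and $\Psi_{\varphi_{\alpha,\beta}}$ on it is Euler--Legendre (Proposition \ref{prop.legendre}); the explicit expression for the normalised duality map $j=\DG\Psi_{\varphi_{1,1/2}}$ on noncommutative $L_p$ spaces via polar decomposition; and cyclicity of the Haagerup trace on $L_1(\N)\iso\N_\star$.

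By Definition \ref{def.d.ell.psi} and \eqref{eqn.psi.varphi.alpha.beta},
\begin{equation*}
D_{\lambda\ell_\gamma,\Psi_{\varphi_{\alpha,\beta}}}(\phi,\psi)=\frac{1}{\alpha}\left(\beta\n{\lambda\ell_\gamma(\phi)}_{1/\gamma}^{1/\beta}+(1-\beta)\n{\lambda\ell_\gamma(\psi)}_{1/\gamma}^{1/\beta}-\n{\lambda\ell_\gamma(\psi)}_{1/\gamma}^{1/\beta-2}\re\duality{\lambda\ell_\gamma(\phi),j(\lambda\ell_\gamma(\psi))}\right),
\end{equation*}
where the duality $\duality{\cdot,\cdot}$ is between $L_{1/\gamma}(\N)$ and $L_{1/(1-\gamma)}(\N)$, given by the Haagerup trace pairing $(x,y)\mapsto\tau(xy)=\int xy$. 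The first two terms are handled by writing $\phi=u_\phi\ab{\phi}$ (polar decomposition in $\N_\star$), so that $\ell_\gamma(\phi)=u_\phi\ab{\phi}^\gamma$, and computing $\n{\lambda\ell_\gamma(\phi)}_{1/\gamma}=\lambda(\tau(\ab{\phi}))^\gamma=\lambda\n{\phi}_1^\gamma$, hence $\n{\lambda\ell_\gamma(\phi)}_{1/\gamma}^{1/\beta}=\lambda^{1/\beta}\n{\phi}_1^{\gamma/\beta}$, and analogously for $\psi$. This reproduces the first two summands in the claimed formula.

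The third term requires computing $j$ on $L_{1/\gamma}(\N)$. Since $j$ is positively homogeneous of degree one, $j(\lambda\ell_\gamma(\psi))=\lambda j(\ell_\gamma(\psi))$; combined with the linearity of $\duality{\cdot,\cdot}$ in the first argument, one gets a factor $\lambda^2$. Writing $y=\ell_\gamma(\psi)=u_\psi\ab{\psi}^\gamma$, the polar decomposition of $y$ in $L_{1/\gamma}(\N)$ reads $y=u_\psi\ab{y}$ with $\ab{y}=\ab{\psi}^\gamma$, so the standard expression for the normalised duality map in noncommutative $L_p$ spaces, $j(y)=\n{y}_{1/\gamma}^{2-1/\gamma}\ab{y}^{1/\gamma-1}u_\psi^*$, yields $j(\ell_\gamma(\psi))=\n{\psi}_1^{\gamma(2-1/\gamma)}\ab{\psi}^{1-\gamma}u_\psi^*$ (with $\gamma(1/\gamma-1)=1-\gamma$). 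Substituting and applying the cyclic property of $\tau$ rearranges $\tau(u_\phi\ab{\phi}^\gamma\ab{\psi}^{1-\gamma}u_\psi^*)$ into the form $\int u_\phi\ab{\phi}^\gamma u_\psi\ab{\psi}^{1-\gamma}$ (identifying $\phi^\gamma\psi^{1-\gamma}$ under the Haagerup trace, which is independent of the ordering of the polar isometries up to cyclic permutation).

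Finally, the combinatorial bookkeeping: the coefficient of the pairing is $\n{\lambda\ell_\gamma(\psi)}_{1/\gamma}^{1/\beta-2}\cdot\lambda^2=\lambda^{1/\beta}\n{\psi}_1^{\gamma(1/\beta-2)}$, and multiplication by $\n{\psi}_1^{\gamma(2-1/\gamma)}$ gives $\lambda^{1/\beta}\n{\psi}_1^{\gamma(1/\beta-1/\gamma)}$, so that all three summands carry a common factor $\lambda^{1/\beta}/\alpha$, yielding the displayed identity \eqref{eqn.lambda.gamma.alpha.beta}. The only delicate step is the bookkeeping of polar isometry placements when identifying $j(\ell_\gamma(\psi))$ and rewriting the trace pairing in the symmetric form $\int u_\phi\ab{\phi}^\gamma u_\psi\ab{\psi}^{1-\gamma}$; once the conventions for polar decomposition in $\N_\star$ and the Haagerup $L_p(\N)$ pairing are fixed consistently, the rest is a routine scaling calculation.
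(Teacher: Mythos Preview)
Your approach is exactly the paper's: a direct substitution of \eqref{eqn.psi.varphi.alpha.beta} with the explicit duality map on $L_{1/\gamma}(\N)$ and the norm identity $\n{\ell_\gamma(\phi)}_{1/\gamma}=\n{\phi}_1^\gamma$ (the paper cites \cite[Lemm.~3.1]{Kosaki:1984:uniform} for the latter).

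One point of care: the paper uses $j(x)=\n{x}_{1/\gamma}^{2-1/\gamma}u_x\ab{x}^{1/\gamma-1}$ with the partial isometry on the \emph{left}, which under the bilinear pairing $\duality{a,b}=\int ab$ gives directly $\re\int u_\phi\ab{\phi}^\gamma u_\psi\ab{\psi}^{1-\gamma}$, with no cyclicity needed. Your form $j(y)=\n{y}^{2-1/\gamma}\ab{y}^{1/\gamma-1}u_\psi^*$ would instead produce $\re\int u_\phi\ab{\phi}^\gamma\ab{\psi}^{1-\gamma}u_\psi^*$, and trace cyclicity does \emph{not} convert this into the stated expression (it gives $\re\int u_\psi^*u_\phi\ab{\phi}^\gamma\ab{\psi}^{1-\gamma}$, which differs for non-self-adjoint $\psi$). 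So your ``delicate step'' is not resolved by cyclicity; rather, one should use the left form of $j$ from the outset, after which the identity is immediate.
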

\begin{proof}
By a direct calculation from \eqref{eqn.psi.varphi.alpha.beta}, using \cite[Lem. 3.1]{Kosaki:1984:uniform} and $L_{1/\gamma}(\N)\ni x\mapsto j(x)=\n{x}^{2-1/\gamma}_{1/\gamma}u_x\ab{x}^{1/\gamma-1}\in L_{1/(1-\gamma)}(\N)$, the latter following from \cite[Prop. 24]{Terp:1981} \cite[p. 162]{Hilsum:1981} (cf. also \cite[Eqn. (11)]{Jencova:2005}).
\end{proof}

\begin{corollary}\label{cor.d.gamma}
For $\varphi=\varphi_{\alpha,\beta}$, $\alpha,\lambda\in\,]0,\infty[$, $\beta,\gamma\in\,]0,1[$, any W$^*$-algebra $\N$, and $\phi,\psi\in\N_\star$:
\begin{enumerate}[nosep,label=(\roman*)]
\item\label{cor.d.gamma.i}
$D_{\lambda\ell_\gamma,\Psi_{\varphi_{\alpha,\beta}}}=D_{\ell_\gamma,\Psi_{\varphi_{\alpha\lambda^{-1/\beta},\beta}}}$;
\item\label{cor.d.gamma.ii} if $\lambda=1$, $\beta=\gamma$, $\alpha=\gamma(1-\gamma)$, then $\varphi_{\gamma(1-\gamma),\gamma}(t)=\frac{1}{\gamma(1-\gamma)}t^{1/\gamma-1}$, $\Psi_{\varphi_{\gamma(1-\gamma),\gamma}}(x)=\frac{1}{1-\gamma}\n{x}_{1/\gamma}^{1/\gamma}$, and
\begin{align}
D_{\ell_\gamma,\Psi_{\varphi_{\gamma(1-\gamma),\gamma}}}(\phi,\psi)&=D_{\frac{1}{\gamma}\ell_\gamma,\Psi_{\varphi_{\gamma^{1-1/\gamma}(1-\gamma),\gamma}}}(\phi,\psi)
\label{eqn.psi.gamma.equals.psi.gamma}
\\
&={\textstyle\frac{1}{1-\gamma}}\n{\phi}_1+{\textstyle\frac{1}{\gamma}}\n{\psi}_1+{\textstyle\frac{1}{\gamma(1-\gamma)}}\re\int u_\phi\ab{\phi}^\gamma u_\psi\ab{\psi}^{1-\gamma}=:D_\gamma(\phi,\psi);
\label{eqn.d.gamma}
\end{align}
\item\label{cor.d.gamma.iii} $\cptp(\N_\star)\subseteq\cn(\ell_\gamma,\Psi_{\varphi_{\gamma(1-\gamma),\gamma}})$, where $\cptp(\N_\star)$ denotes the set of completely positive trace-preserving maps from $\N_\star$ to $\N_\star$.
\end{enumerate}
\end{corollary}
\begin{proof}
\ref{cor.d.gamma.i} follows directly from \eqref{eqn.lambda.gamma.alpha.beta}, while \ref{cor.d.gamma.ii} is a special case of \ref{cor.d.gamma.i}. \ref{cor.d.gamma.iii} follows from \ref{cor.d.gamma.ii}, combined with the result \cite[(ii) (p. 288)]{Jencova:2005} obtained for $D_\gamma$ on $\N_\star$.
\end{proof}

\begin{proposition}\label{prop.uniform.Lp}
Let $\N$ be a W$^*$-algebra, $\beta,\gamma\in\,]0,1[$, $\varnothing\neq K\subseteq L_{1/\gamma}(\N)$, $\varnothing\neq C\subseteq B(\N_\star,\n{\cdot}_1)$, let $T:L_{1/\gamma}(\N)\ra 2^{L_{1/(1-\gamma)}(\N)}$ and $W:B(L_{1/\gamma}(\N),\n{\cdot}_{1/\gamma})\ra 2^{j_{\varphi_{1,\beta}}(B(L_{1/\gamma}(\N),\n{\cdot}_{1/\gamma}))}$ be maximally monotone, $f\in\pcl(L_{1/\gamma}(\N),\n{\cdot}_{1/\gamma})$, let $g:L_{1/\gamma}(\N)\ra\,]-\infty,\infty]$ satisfy $g\circ j_{\varphi_{1,1-\beta}}\in\pcl(L_{1/(1-\gamma)}(\N),\n{\cdot}_{1/(1-\gamma)})$ for $\beta\in[\frac{1}{2},1[$, and $\lambda\in\,]0,1[$. Then:
\begin{enumerate}[nosep,label=(\roman*)]
\item\label{prop.uniform.Lp.i} if $\gamma\in[\frac{1}{2},1[$ and $\beta\in\,]0,\frac{1}{2}]$, $K$ is convex and closed (resp., $C$ is $\ell_\gamma$-convex and closed), then $\LPPP^{D_{\Psi_{\varphi_{1,\beta}}}}_K$ (resp., $\LPPP^{D_{\ell_\gamma,\Psi_{\varphi_{1,\beta}}}}_C$) is uniformly continuous on bounded subsets of $L_{1/\gamma}(\N)$ (resp., $\ell_\gamma$-bounded subsets of $B(\N_\star,\n{\cdot}_1)$), and $\frac{\beta(\gamma-1)}{1-\beta}$-(resp., $\frac{\beta(\gamma-1)\gamma}{1-\beta}$-)Lipschitz--H\"{o}lder continuous on $L_{1/\gamma}(\N)$ (resp., $B(\N_\star,\n{\cdot}_1)$);
\item\label{prop.uniform.Lp.ii} if $\gamma\in\,]0,\frac{1}{2}]$ and $\beta\in\,]0,\gamma]$, $K$ is convex and closed (resp., $C$ is $\ell_\gamma$-convex and closed), then $\LPPP^{D_{\Psi_{\varphi_{1,\beta}}}}_K$ (resp., $\LPPP^{D_{\ell_\gamma,\Psi_{\varphi_{1,\beta}}}}_C$) is uniformly continuous on bounded subsets of $L_{1/\gamma}(\N)$ (resp., $\ell_\gamma$-bounded subsets of $B(\N_\star,\n{\cdot}_1)$) and $\frac{\beta}{1-\beta}$-(resp., $\frac{\beta\gamma}{1-\beta}$-)Lipschitz--H\"{o}lder continuous on $L_{1/\gamma}(\N)$ (resp., $B(\N_\star,\n{\cdot}_1)$); 
\item\label{prop.uniform.Lp.iii} if $\gamma\in[\frac{1}{2},1[$ and $\beta\in[\gamma,1[$, $j_{\varphi_{1,\gamma}}(K)$ is convex and closed (resp., $C$ is $(j_{\varphi_{1,\gamma}}\circ\ell_\gamma)$-convex and closed), then $\RPPP^{D_{\Psi_{\varphi_{1,\beta}}}}_K$ (resp., $\RPPP^{D_{\ell_\gamma,\Psi_{\varphi_{1,\beta}}}}_C$) is $(\frac{1-\beta}{\beta})^2$-(resp., $\gamma(\frac{1-\beta}{\beta})^2$-)Lipschitz--H\"{o}lder continuous on bounded subsets of $L_{1/\gamma}(\N)$ (resp., $\ell_\gamma$-bounded subsets of $B(\N_\star,\n{\cdot}_1)$);
\item\label{prop.uniform.Lp.iv} if $\gamma\in\,]0,\frac{1}{2}]$ and $\beta\in[\frac{1}{2},1[$, $j_{\varphi_{1,\beta}}(K)$ is convex and closed (resp., $C$ is $(j_{\varphi_{1,\beta}}\circ\ell_\gamma)$-convex and closed), then $\RPPP^{D_{\Psi_{\varphi_{1,\beta}}}}_K$ (resp., $\RPPP^{D_{\ell_\gamma,\Psi_{\varphi_{1,\beta}}}}_C$) is $(\frac{(1-\beta)\gamma}{\beta(1-\gamma)})^2$-(resp., $\gamma^3(\frac{(1-\beta)}{\beta(1-\gamma)})^2$-)Lipschitz--H\"{o}lder continuous on bounded subsets of $L_{1/\gamma}(\N)$ (resp., $\ell_\gamma$-bounded subsets of $B(\N_\star,\n{\cdot}_1)$);
\item\label{prop.uniform.Lp.v} if $\gamma\in[\frac{1}{2},1[$ and $\beta\in\,]0,\frac{1}{2}]$, then $\lres^{\Psi_{\varphi_{1,\beta}}}_{\lambda T}$ and $\lprox^{D_{\Psi_{\varphi_{1,\beta}}}}_{\lambda,f}$ (resp., $\lres^{\ell_\gamma,\Psi_{\varphi_{1,\beta}}}_{\lambda W}$) are (resp., is) single-valued and uniformly continuous on bounded subsets of $L_{1/\gamma}(\N)$ (resp., $\ell_\gamma$-bounded subsets of $B(\N_\star,\n{\cdot}_1)$), as well as single-valued and $\frac{\beta(\gamma-1)}{1-\beta}$-(resp., $\frac{\beta(\gamma-1)\gamma}{1-\beta}$-)Lipschitz--H\"{o}lder continuous on $L_{1/\gamma}(\N)$ (resp., $B(\N_\star,\n{\cdot}_1)$);
\item\label{prop.uniform.Lp.vi}\sloppy if $\gamma\in\,]0,\frac{1}{2}]$ and $\beta\in\,]0,\gamma]$, then $\lres^{\Psi_{\varphi_{1,\beta}}}_{\lambda T}$ and $\lprox^{D_{\Psi_{\varphi_{1,\beta}}}}_{\lambda,f}$ (resp., $\lres^{\ell_\gamma,\Psi_{\varphi_{1,\beta}}}_{\lambda W}$) are (resp., is) single-valued and uniformly continuous on bounded subsets of $L_{1/\gamma}(\N)$ (resp., $\ell_\gamma$-bounded subsets of $B(\N_\star,\n{\cdot}_1)$), as well as single-valued and $\frac{\beta}{1-\beta}$-(resp., $\frac{\beta\gamma}{1-\beta}$-)Lipschitz--H\"{o}lder continuous on $L_{1/\gamma}(\N)$ (resp., $B(\N_\star,\n{\cdot}_1)$);
\item\label{prop.uniform.Lp.vii} if $\gamma\in[\frac{1}{2},1[$ and $\beta\in[\gamma,1[$, then $\rres^{\Psi_{\varphi_{1,\beta}}}_{\lambda T}$ is single-valued and $(\frac{1-\beta}{\beta})^2$-Lipschitz--H\"{o}lder continuous on bounded subsets of $L_{1/(1-\gamma)}(\N)$, and $\rprox^{D_{\Psi_{\varphi_{1,\beta}}}}_{\lambda,g}$ is single-valued and $(\frac{1-\beta}{\beta})^2$-Lipschitz--H\"{o}lder continuous on bounded subsets of $L_{1/\gamma}(\N)$;
\item\label{prop.uniform.Lp.viii} if $\gamma\in\,]0,\frac{1}{2}]$ and $\beta\in[\frac{1}{2},1[$, then $\rres^{\Psi_{\varphi_{1,\beta}}}_{\lambda T}$ is single-valued and $(\frac{(1-\beta)\gamma}{\beta(1-\gamma)})^2$-Lipschitz--H\"{o}lder continuous on bounded subsets of $L_{1/(1-\gamma)}(\N)$, and $\rprox^{D_{\Psi_{\varphi_{1,\beta}}}}_{\lambda,g}$ is single-valued and $(\frac{(1-\beta)\gamma}{\beta(1-\gamma)})^2$-Lipschitz--H\"{o}lder continuous on bounded subsets of $L_{1/\gamma}(\N)$.
\end{enumerate}
\end{proposition}
\begin{proof}
Clarkson inequality for $(L_{1/\gamma}(\N),\n{\cdot}_{1/\gamma})$ spaces (\cite[Thm. 2]{Clarkson:1936} for commutative $\N$, \cite[Cor. 2.1]{Cleaver:1973} for type I $\N$, \cite[Lems. 3.21, 3.22, p. 262]{Zsido:1980} for semifinite $\N$, \cite[Prop. 5.3]{Kosaki:1984:ncLp} for countably finite $\N$, \cite[Thm. 5.3]{Fack:Kosaki:1986} for any $\N$) implies: if $\gamma\in\,]0,\frac{1}{2}]$, then $(L_{1/\gamma}(\N),\n{\cdot}_{1/\gamma})$ is $\frac{1}{\gamma}$-uniformly convex and $\frac{1}{1-\gamma}$-uniformly Fr\'{e}chet differentiable. Furthermore, if $\gamma\in[\frac{1}{2},1[$, then $(L_{1/\gamma}(\N),\n{\cdot}_{1/\gamma})$ is $2$-uniformly convex (\cite[Rem. (p. 244)]{Hanner:1956} \cite[Eqns. (10a)--(10b)]{Kadec:1956} for commutative $\N$, \cite[Thm. 2.2]{TomczakJaegermann:1974} for type I $\N$, \cite[Thm. 1, p. 466]{Ball:Carlen:Lieb:1994} for semifinite $\N$, \cite[Thm. 5.3]{Pisier:Xu:2003} for any $\N$), hence $(L_{1/(1-\gamma)}(\N),\n{\cdot}_{1/(1-\gamma)})$ is 2-uniformly Fr\'{e}chet differentiable. Thus, $(L_{1/\gamma}(\N),\n{\cdot}_{1/\gamma})$ is $\max\{2,\frac{1}{\gamma}\}$-uniformly convex and $\min\{2,\frac{1}{\gamma}\}$-uniformly Fr\'{e}chet differentiable $\forall\gamma\in\,]0,1[$. Combining this with Proposition \ref{prop.varphi.uniform.continuity}, and with the fact that $r_1$-uniform convexity (resp., $r_1$-uniform Fr\'{e}chet differentiability) implies $r_2$-uniform convexity (resp., $r_2$-uniform Fr\'{e}chet differentiability) for $2\leq r_1\leq r_2<\infty$ (resp., $1<r_2\leq r_1\leq 2$) (cf., e.g., \cite[Props. 2.1, 2.2.(iii)]{Yamada:2006} for a proof), we obtain the statements for left and right $D_{\Psi_\varphi}$-projections on $L_{1/\gamma}(\N)$. The corresponding statements for left and right $D_{\ell_\gamma,\Psi_\varphi}$-projections follow from Lipschitz continuity of $(\ell_\gamma)^{\inver}$ on $B(L_{1/\gamma}(\N),\n{\cdot}_{1/\gamma})$ and $\gamma$-Lipschitz--H\"{o}lder continuity of $\ell_\gamma$ on $B(\N_\star,\n{\cdot}_1)$, for any $\gamma\in\,]0,1[$ \cite[Thm. (p. 37)]{Ricard:2015}. \ref{prop.uniform.Lp.v}--\ref{prop.uniform.Lp.viii} follows from Proposition \ref{prop.varphi.resolvent.uniform.continuity} by an analogous reasoning.
\end{proof}

\begin{proposition}\label{prop.mazur}
\sloppy Let $A$ be a semifinite JBW-algebra, $\tau$ a faithful normal semifinite trace on $A$, $\gamma\in\,]0,1[$, $\lambda\in\,]0,\infty[$. Then $\lambda\ell_\gamma$ is a norm-to-norm homeomorphism between $(A_\star,\n{\cdot}_1)^+$ and $(L_{1/\gamma}(A,\tau),\n{\cdot}_{1/\gamma})^+$.
\end{proposition}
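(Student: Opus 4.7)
The strategy is to reduce the statement to its W$^*$-algebraic counterpart, cited from \cite[Thm.~4.2]{Kosaki:1984:uniform} in the proof of Proposition \ref{prop.dope.gauge.gamma.wstar}, by embedding the reversible part of $A$ into an enveloping W$^*$-algebra and treating the purely exceptional summand separately. Since $\lambda>0$ only rescales the Mazur map, it suffices to treat $\lambda=1$. To begin, decompose $A = A_{\mathrm{sp}} \oplus A_{\mathrm{exc}}$ into its JW-summand and its purely exceptional (type I$_3$) summand: the trace $\tau$, the predual $A_\star$, the positive cones, and the nonassociative $L_{1/\gamma}(A,\tau)$ split compatibly, and $\ell_\gamma$ respects this direct sum, so the argument proceeds summand by summand.

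For the JW-summand, realise $A_{\mathrm{sp}}$ as the self-adjoint part of a semifinite W$^*$-algebra $\mathcal{M}$ fixed by a normal $*$-antiautomorphism $\Phi$ of period $2$, with $\tau$ extending to a $\Phi$-invariant faithful normal semifinite trace $\tilde\tau$ on $\mathcal{M}$. The JBW-analogue of the Haagerup--Hilsum noncommutative $L_p$-construction identifies $L_{1/\gamma}(A_{\mathrm{sp}},\tau)$ isometrically with the $\Phi$-fixed self-adjoint part of $L_{1/\gamma}(\mathcal{M},\tilde\tau)$, and $(A_{\mathrm{sp}})_\star$ with the $\Phi$-fixed part of $\mathcal{M}_\star$, each identification restricting to positive cones. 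Under these identifications the Jordan Mazur map is the restriction of the W$^*$-algebraic one, since $\ell_\gamma(\phi)=\phi^\gamma$ is defined through spectral functional calculus and therefore commutes with $\Phi$ on self-adjoint elements. Kosaki's theorem applied to $(\mathcal{M},\tilde\tau)$ gives a norm-to-norm homeomorphism between $\mathcal{M}_\star^+$ and $L_{1/\gamma}(\mathcal{M},\tilde\tau)^+$; restriction to the $\Phi$-fixed positive subcones yields the desired homeomorphism between $(A_{\mathrm{sp}})_\star^+$ and $L_{1/\gamma}(A_{\mathrm{sp}},\tau)^+$.

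For the purely exceptional summand, $A_{\mathrm{exc}}$ is a direct integral, over its abelian centre, of copies of the $27$-dimensional exceptional JBW-factor $H_3(\mathbb{O})$. Fibrewise, the Mazur map acts through the Jordan spectral functional calculus, $\phi \mapsto \phi^\gamma$, which is bicontinuous on the positive cone of the finite-dimensional $H_3(\mathbb{O})$; central integration, together with the monotone dependence of $\n{\cdot}_{1/\gamma}$ and $\n{\cdot}_1$ on the spectral data (in the sense of rearrangement invariance), delivers the norm-to-norm homeomorphism property on $A_{\mathrm{exc}}$.

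The main obstacle will be the first step in the JW-case: the precise isometric identification of $(A_{\mathrm{sp}})_\star^+$ and $L_{1/\gamma}(A_{\mathrm{sp}},\tau)^+$ with the $\Phi$-fixed positive subcones of $\mathcal{M}_\star^+$ and $L_{1/\gamma}(\mathcal{M},\tilde\tau)^+$, together with $\Phi$-equivariance of the Mazur map at the level of spectral functional calculus. Once this structural compatibility between the JBW and W$^*$ $L_p$-constructions is in place, the proof reduces entirely to Kosaki's W$^*$-algebraic theorem via restriction to fixed-point subcones, combined with the fibrewise direct integral argument for the exceptional summand.
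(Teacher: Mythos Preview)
Your reduction-to-W$^*$ strategy is a genuinely different route from the paper's. The paper works entirely inside the nonassociative $L_p$-theory, with no structural embedding. For continuity of $\ell_\gamma$ it simply quotes Iochum's Powers--St{\o}rmer-type inequality $\n{x^\gamma-y^\gamma}_{1/\gamma}^{1/\gamma}\leq\n{x-y}_1$ on $A_\star^+$. For continuity of $(\ell_\gamma)^{-1}$ it argues via the duality mapping: uniform convexity of $(L_{1/\gamma}(A,\tau),\n{\cdot}_{1/\gamma})$ together with $(L_{1/\gamma})^\star\iso L_{1/(1-\gamma)}$ gives Fr\'echet differentiability, hence norm-to-norm continuity of $j_\varphi$ for $\varphi(t)=t^{1/\gamma-1}$; one computes $j_\varphi(x)=s_x\jordan\ab{x}^{1/\gamma-1}$ explicitly, and the nonassociative Rogers--H\"older inequality then makes $x\mapsto j_\varphi(x)\jordan x=x^{1/\gamma}$ continuous on the positive cone. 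The paper notes in Remark~\ref{remark.lp.breg.proj}.(ix) that this duality-map argument for the inverse is itself a departure from Kosaki's original proof.

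Your route is plausible but carries real costs that the paper's direct approach avoids. The obstacle you flag --- the isometric identification of $L_{1/\gamma}(A_{\mathrm{sp}},\tau)$ with the $\Phi$-fixed self-adjoint part of $L_{1/\gamma}(\mathcal{M},\tilde\tau)$ --- is not something available off the shelf and would itself be a substantial lemma. There is also a structural gap earlier: not every JW-algebra arises as the $\Phi$-fixed self-adjoint part of a W$^*$-algebra for a period-$2$ $*$-antiautomorphism; this description is specific to \emph{reversible} JW-algebras, and higher-dimensional spin factors (type I$_2$) are not reversible, so your JW-summand would need a further case split before the embedding argument applies. The direct-integral treatment of the exceptional summand is also sketchier than it looks: fibrewise bicontinuity on $H_3(\mathbb{O})$ is clear, but passing to norm-to-norm continuity on the full direct integral requires a uniformity argument, not just ``monotone dependence on spectral data''. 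By contrast, the paper's inequalities (Iochum 1986, Abdullaev 1984, Ayupov 1986) are proved directly at the JBW level and handle all summands simultaneously.
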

\begin{proof}
\begin{enumerate}[nosep]
\item[1)] Consider $x\in A_\star^+$ and a sequence $\{x_n\in A_\star^+\mid n\in\NN\}$ such that $\lim_{n\ra\infty}\n{x_n-x}_1=0$. From inequality $\n{x^\gamma-y^\gamma}_{1/\gamma}^{1/\gamma}\leq\n{x-y}_1$ $\forall x,y\in A^+_\star$ \cite[Prop. 9.(ii)]{Iochum:1986} it follows that $\lim_{n\ra\infty}\n{x_n^\gamma-x^\gamma}_{1/\gamma}^{1/\gamma}=0$, i.e. $\lim_{n\ra\infty}\n{\ell_\gamma(x_n)-\ell_\gamma(x)}_{1/\gamma}=0$.
\item[2)] The uniform convexity of $(L_{1/\gamma}(A,\tau),\n{\cdot}_{1/\gamma})$, proved in \cite[Thm. V.3.2]{Iochum:1984} for $\gamma\in\,]0,\frac{1}{2}]$ and in \cite[Thm. 2.5]{Ayupov:1986} and \cite[Cors. 12, 13]{Iochum:1986} for $\gamma\in\,]0,1[$, taken together with the duality 
 \cite[Thm. 2.1.10]{Abdullaev:1984} \cite[Thm. V.3.2]{Iochum:1984}
\begin{equation}
(L_{1/\gamma}(A,\tau),\n{\cdot}_{1/\gamma})^\star\iso (L_{1/(1-\gamma)}(A,\tau),\n{\cdot}_{1/(1-\gamma)})\;\;\forall\gamma\in\,]0,1[,
\label{eqn.jbw.lp.lq.duality}
\end{equation}
\sloppy implies uniform Fr\'{e}chet differentiability of $(L_{1/\gamma}(A,\tau),\n{\cdot}_{1/\gamma})$, which in turn implies its Fr\'{e}chet differentiability. Taking $\varphi(t)=t^{1/\gamma-1}$ (i.e. $\varphi_{1,\beta}$ with $\beta=\gamma$), corresponding to $\duality{x,j_\varphi(x)}_{L_{1/\gamma}(A,\tau)\times L_{1/(1-\gamma)}(A,\tau)}=\n{x}_{1/\gamma}^{1/\gamma}$ and $\n{j_\varphi(x)}_{1/(1-\gamma)}=\n{x}_{1/\gamma}^{1/\gamma-1}$, gives norm-to-norm continuity of
\begin{equation}
j_\varphi:L_{1/\gamma}(A,\tau)\ni x=s_x\jordan\ab{x}\mapsto s_x\jordan\ab{x}^{1/\gamma-1}\in L_{1/(1-\gamma)}(A,\tau),
\end{equation}
with $\jordan$ denoting the nonassociative Jordan product in $A$. (The above expression for $j_\varphi$ can be explicitly deduced by noticing that the formula $\n{x}_{1/\gamma}^{1-1/\gamma}s_x\jordan\ab{x}^{1/\gamma-1}$ \cite[p. 51]{Abdullaev:1984} \cite[Lem. V.3.3.2$^o$]{Iochum:1984} (cf. \cite[p. 101]{Ayupov:1986} and \cite[p. 420]{Iochum:1986}) equals to $\DF\n{x}_{1/\gamma}$ by \cite[Lem. 14]{Iochum:1986}. Then, using the general formulas $j(x)=\frac{1}{2}\DF(\n{x}_X^2)=\n{x}_X\DF\n{x}_X$ and $j_\varphi(x)=\n{x}^{-1}_X\varphi(\n{x}_X)j(x)$, valid for any gauge $\varphi$ and any Fr\'{e}chet differentiable $(X,\n{\cdot}_X)$, we obtain $j(x)=\n{x}_{1/\gamma}^{2-1/\gamma}s_x\jordan\ab{x}^{1/\gamma-1}$ and $j_\varphi(x)=s_x\jordan\ab{x}^{1/\gamma-1}$ for $\varphi(t)=t^{1/\gamma-1}$ and $(X,\n{\cdot}_X)=(L_{1/\gamma}(A,\tau),\n{\cdot}_{1/\gamma})$.) The latter, taken together with the nonassociative Rogers--H\"{o}lder inequality $\n{xy}_1\leq\n{x}_{1/\gamma}\n{y}_{1/(1-\gamma)}$ $\forall x,y\in A_\tau$ \cite[Lem. 2.1.1.(a)]{Abdullaev:1984} \cite[Lem. V.3.3.1$^o$]{Iochum:1984} (cf. \cite[Prop. IV.2.4.(i)]{Ayupov:1986} and \cite[Lem. 3.(i)]{Iochum:1986}), entails norm-to-norm continuity of $j_\varphi(x)\jordan x=x^{1/\gamma-1}\jordan x=x^{1/\gamma}$ $\forall x\in L_{1/\gamma}(A,\tau)^+$.
\item[3)] Since $\lambda$ is a multiplicative positive constant, the above reasoning follows from $\ell_\gamma$ to $\lambda\ell_\gamma$.
\end{enumerate}
\end{proof}

\begin{proposition}\label{prop.r.unif.cont.na.Lp}
Let $A$ be a semifinite JBW-algebra, $\tau$ a faithful normal semifinite trace on $A$. If $\gamma\in[\frac{1}{2},1[$, then $(L_{1/\gamma}(A,\tau),\n{\cdot}_{1/\gamma})$ is 2-uniformly convex and $\frac{1}{\gamma}$-uniformly Fr\'{e}chet differentiable.
\end{proposition}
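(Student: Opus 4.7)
The plan is to establish both assertions by running the same argument as in the proof of Proposition~\ref{prop.uniform.Lp}, with the noncommutative Clarkson and Hanner--Ball--Carlen--Lieb inequalities replaced by their nonassociative semifinite JBW-algebraic counterparts available from Iochum's and Ayupov's work. Thus the proof splits into the ``hard'' side (giving $2$-uniform convexity in the regime $p\in\,]1,2]$) and the ``easy''+duality side (giving $\frac{1}{\gamma}$-uniform Fr\'{e}chet differentiability).

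For the $2$-uniform convexity of $(L_{1/\gamma}(A,\tau),\n{\cdot}_{1/\gamma})$ for $\gamma\in[\frac{1}{2},1[$, the parameter $p:=1/\gamma$ lies in $]1,2]$, so the required inequality \eqref{eqn.r.unif.convex} (equivalently, \eqref{eqn.r.unif.convex.reich.xu}) with $r=2$ is precisely the content of the nonassociative Hanner-type inequality established for semifinite JBW-algebras in \cite[Cor.~13]{Iochum:1986}, with a closely related statement in \cite[Thm.~2.5]{Ayupov:1986}. This gives the first assertion directly.

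The $\frac{1}{\gamma}$-uniform Fr\'{e}chet differentiability follows by duality. For $\gamma\in[\frac{1}{2},1[$ we have $1-\gamma\in\,]0,\frac{1}{2}]$, hence $q:=1/(1-\gamma)\geq 2$, so the ``easy'' nonassociative Clarkson inequality \cite[Cor.~12]{Iochum:1986} (already implicit in \cite[Thm.~V.3.2]{Iochum:1984}) yields that $(L_{1/(1-\gamma)}(A,\tau),\n{\cdot}_{1/(1-\gamma)})$ is $\frac{1}{1-\gamma}$-uniformly convex. The nonassociative duality \eqref{eqn.jbw.lp.lq.duality} recalled in the proof of Proposition~\ref{prop.mazur} identifies $(L_{1/\gamma}(A,\tau))^\star\iso L_{1/(1-\gamma)}(A,\tau)$. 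Combining this with the Lindenstrauss--Tzafriri duality between $r$-uniform convexity of $X^\star$ and $\frac{r}{r-1}$-uniform Fr\'{e}chet differentiability of $X$ \cite[p.~63 (Vol.~2)]{Lindenstrauss:Tzafriri:1977:1979} (as already used in the proof of Proposition~\ref{prop.uniform.Lp}) and computing $\frac{1/(1-\gamma)}{1/(1-\gamma)-1}=\frac{1}{\gamma}$, one concludes that $(L_{1/\gamma}(A,\tau),\n{\cdot}_{1/\gamma})$ is $\frac{1}{\gamma}$-uniformly Fr\'{e}chet differentiable.

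The main obstacle is matching the exponents: one must check that the Iochum/Ayupov inequalities are stated with precisely the powers $r=2$ and $r=\frac{1}{1-\gamma}$ that feed \eqref{eqn.r.unif.convex}--\eqref{eqn.r.unif.convex.reich.xu}, rather than some weaker form yielding only qualitative uniform convexity. This amounts to inspecting the explicit algebraic form of \cite[Cor.~12, Cor.~13]{Iochum:1986}, where the powers $1/\gamma$ and $2$ appear in the same positions as in the commutative Hanner \cite{Hanner:1956} and noncommutative Ball--Carlen--Lieb \cite{Ball:Carlen:Lieb:1994} inequalities, and then passing to the form \eqref{eqn.r.unif.convex.reich.xu} by the standard algebraic manipulation of \cite[Thm.~4.3]{Reich:Xu:2003}. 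A subsidiary point to verify is that the argument applies uniformly across the JBW classification, including the exceptional finite-dimensional Jordan factor $M_3(\mathbb{O})^{\mathrm{sa}}$, which is covered by Iochum's corollaries but is not captured by any reduction to an associative enveloping W$^*$-algebra.
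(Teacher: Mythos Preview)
Your argument for $\frac{1}{\gamma}$-uniform Fr\'{e}chet differentiability via Clarkson plus Lindenstrauss--Tzafriri duality is sound, and amounts to the same content as the paper's direct citation of \cite[p.~102]{Ayupov:1986} and \cite[p.~427]{Iochum:1986} (where this is recorded explicitly).

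The gap is in the $2$-uniform convexity part. Iochum's \cite[Cor.~12, Cor.~13]{Iochum:1986} and \cite[Thm.~2.5]{Ayupov:1986} yield Clarkson-type inequalities, giving $\frac{1}{1-\gamma}$-uniform convexity for $\gamma\in[\frac{1}{2},1[$, not the sharper Hanner-type inequality with exponent $r=2$. Historically, even the noncommutative (W$^*$-algebraic) Hanner inequality appeared only with Ball--Carlen--Lieb in 1994, well after Iochum's 1986 paper, so a direct nonassociative analogue is not available in the references you cite. Your own caveat (``one must check that the Iochum/Ayupov inequalities are stated with precisely the powers $r=2$'') correctly identifies this as the crux, and the check fails.

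The paper closes the gap differently: it takes the Ball--Carlen--Lieb inequality
\[
\n{x}_{1/\gamma}^2+\left(\tfrac{1}{\gamma}-1\right)\n{y}_{1/\gamma}^2\leq\left(\tfrac{1}{2}\left(\n{x+y}_{1/\gamma}^{1/\gamma}+\n{x-y}_{1/\gamma}^{1/\gamma}\right)\right)^{2\gamma}
\]
for $L_{1/\gamma}(\N)$ over an arbitrary W$^*$-algebra $\N$ \cite[Thm.~1, p.~466]{Ball:Carlen:Lieb:1994} \cite[Thm.~5.3]{Pisier:Xu:2003}, and transfers it to semifinite JBW-algebras via an extension of the Shirshov--Cohn theorem with weights \cite[Rem.~(p.~94)]{Ayupov:1986}. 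The mechanism is that the Jordan subalgebra generated by any two elements and the unit is special, hence embeds into an associative algebra; since the inequality involves only the pair $x,y$, one pulls it back from the W$^*$-setting element-pair by element-pair. This also dissolves your worry about the exceptional factor $M_3(\mathbb{O})^{\mathrm{sa}}$: the reduction is local (two elements at a time), so no global embedding of the JBW-algebra into a W$^*$-algebra is required.
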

\begin{proof}
$\frac{1}{1-\gamma}$-uniform convexity and $\frac{1}{\gamma}$-uniform Fr\'{e}chet differentiability of $L_{1/\gamma}(A,\tau)$ for $\gamma\in[\frac{1}{2},1[$ is established explicitly in \cite[p. 102]{Ayupov:1986} and \cite[p. 427]{Iochum:1986}. 2-uniform convexity follows from the inequality \cite[Thm. 1, p. 466]{Ball:Carlen:Lieb:1994} \cite[Thm. 5.3]{Pisier:Xu:2003}
\begin{equation}
\n{x}_{1/\gamma}^2+\left({\textstyle\frac{1}{\gamma}}-1\right)\n{y}_{1/\gamma}^2\leq\left({\textstyle\frac{1}{2}}\left(\n{x+y}_{1/\gamma}^{1/\gamma}+\n{x-y}_{1/\gamma}^{1/\gamma}\right)\right)^{2\gamma}\;\;\forall x,y\in L_{1/\gamma}(\N),
\end{equation}
for any W$^*$-algebra $\N$, taken together with an extension of the Shirshov--Cohn theorem to semifinite JBW-algebras with weights \cite[Rem. (p. 94)]{Ayupov:1986}.
\end{proof}

\begin{proposition}\label{prop.dope.gauge.gamma.jbw}
Let $A$ be a semifinite JBW-algebra, $\tau$ a faithful normal semifinite trace on $A$, $\varphi$ a gauge, $\gamma\in\,]0,1[$, $\lambda\in\,]0,\infty[$, $\varnothing\neq C\subseteq A_\star$. Then:
\begin{enumerate}[nosep,label=(\roman*)]
\item\label{prop.dope.gauge.gamma.jbw.i} $D_{\lambda\ell_\gamma,\Psi_\varphi}:A_\star\times A_\star\ra[0,\infty]$ is an information on $A_\star$, independent of the choice of $\tau$;
\item\label{prop.dope.gauge.gamma.jbw.ii} the sets $\lsq(\lambda\ell_\gamma,\Psi_\varphi,C)$ and $\rsq(\lambda\ell_\gamma,\Psi_\varphi,C)$ are composable;
\item\label{prop.dope.gauge.gamma.jbw.iii} if $C$ is $\lambda\ell_\gamma$-convex $\lambda\ell_\gamma$-closed, then $D_{\lambda\ell_\gamma,\Psi_\varphi}$ is left pythagorean on $C$, and $\LPPP^{D_{\lambda\ell_\gamma,\Psi_\varphi}}_C$ is zone consistent and adapted;
\item\label{prop.dope.gauge.gamma.jbw.iv} if $C$ is $\lambda\ell_{1-\gamma}$-convex $\lambda\ell_\gamma$-closed, then $D_{\lambda\ell_\gamma,\Psi_\varphi}$ is right pythagorean on $C$, and $\RPPP^{D_{\lambda\ell_\gamma,\Psi_\varphi}}_C$ is zone consistent and adapted;
\item\label{prop.dope.gauge.gamma.jbw.v} if $C\subseteq A_\star^+$ is $\lambda\ell_\gamma$-convex closed, then (iii) holds, $\LPPP^{D_{\lambda\ell_\gamma,\Psi_\varphi}}_C$ is norm-to-norm continuous on $A_\star^+$ (with respect to $\n{\cdot}_1$), and $\inf_{y\in C}\{D_{\lambda\ell_\gamma,\Psi_\varphi}(y,\,\cdot\,)\}$ is continuous on  $A_\star^+$ (with respect to $\n{\cdot}_1$); 
\item\label{prop.dope.gauge.gamma.jbw.vi} if $C\subseteq A_\star^+$ is $\lambda\ell_{1-\gamma}$-convex closed, then (iv) holds, and $\RPPP^{D_{\lambda\ell_\gamma,\Psi_\varphi}}_C$ is norm-to-norm continuous on $A_\star^+$ (with respect to $\n{\cdot}_1$);
\item\label{prop.dope.gauge.gamma.jbw.vii} the categories $\lCvx^\subseteq(\lambda\ell_\gamma,\Psi_\varphi)$, $\rbarCvx^\subseteq(\lambda\ell_\gamma,\Psi_\varphi)$, $\LSQcvxsub(\lambda\ell_\gamma,\Psi_\varphi)$, and $\RbarSQcvxsub(\lambda\ell_\gamma,\Psi_\varphi)$ satisfy the functorial adjunctions and equivalences given by Corollary \ref{cor.LSQ.RSQ.adj.Psi.varphi}.\ref{cor.LSQ.RSQ.adj.Psi.varphi.i}--\ref{cor.LSQ.RSQ.adj.Psi.varphi.ii} with $Z=A_\star$;
\item\label{prop.dope.gauge.gamma.jbw.viii} if $T:L_{1/\gamma}(A,\tau)\ra 2^{L_{1/(1-\gamma)}(A,\tau)}$ is maximally monotone with $0\in\efd(T)$, then $\lres_T^{\Psi_\varphi}$ maps $L_{1/\gamma}(A,\tau)$ on $\efd(T)$ and is norm-to-norm continuous on $(L_{1/\gamma}(A,\tau),\n{\cdot}_{1/\gamma})$, and $\rres_T^{\Psi_\varphi}$ maps $L_{1/(1-\gamma)}(A,\tau)$ on $j_\varphi(\efd(T))$ and is norm-to-norm continuous on $(L_{1/(1-\gamma)}(A,\tau),\n{\cdot}_{1/(1-\gamma)})$; 
\item\label{prop.dope.gauge.gamma.jbw.ix} if $T:(L_{1/\gamma}(A,\tau))^+\ra 2^{(L_{1/(1-\gamma)}(A,\tau))^+}$ is maximally monotone with $0\in\efd(T)$, then $\lres^{\ell_\gamma,\Psi_\varphi}_T$ maps ${A_\star}^+$ on ${\ell_\gamma}^\inver(\efd(T))$ and is norm-to-norm continuous on ${A_\star}^+$ (with respect to $\n{\cdot}_1$). 
\end{enumerate}
\end{proposition}
\begin{proof}
Due to uniform convexity and uniform Fr\'{e}chet differentiability of $(L_{1/\gamma}(A,\tau),\n{\cdot}_{1/\gamma})$, $\gamma\in\,]0,1[$, Proposition \ref{prop.legendre} implies that $\Psi_\varphi$ is Euler--Legendre on $(L_{1/\gamma}(A,\tau),\n{\cdot}_{1/\gamma})$, while Proposition \ref{prop.left.right.psi.varphi} implies that $D_{\Psi_\varphi}$ is an information. The proof that $D_{\lambda\ell_\gamma,\Psi_\varphi}$ is left (resp., right) pythagorean on $\lambda\ell_\gamma$-convex (resp., $\lambda\ell_{1-\gamma}$-convex) closed sets $C$ is exactly the same as in the proof of Proposition \ref{prop.dope.gauge.gamma.wstar}. By \cite[Cor. 1]{Ayupov:Abdullaev:1989} (cf. also \cite[Cor. 2]{Ayupov:Chilin:Abdullaev:2012} with the choice of an N-function $\orlicz(t)=\gamma\ab{t}^{1/\gamma}$), $(L_{1/\gamma}(A,\tau_1),\n{\cdot}_{1/\gamma})$ and $(L_{1/\gamma}(A,\tau_2),\n{\cdot}_{1/\gamma})$ are isometrically isomorphic for any two faithful normal semifinite traces $\tau_1$ and $\tau_2$ on $A$, with $\gamma\in\,]0,1]$. Hence, $D_{\lambda\ell_\gamma,\Psi_\varphi}$ does not depend on the choice of $\tau$. The rest follows from Propositions \ref{prop.mazur}, \cite[Prop. 4.6]{Kostecki:2017}, \ref{prop.continuity}, \ref{prop.varphi.compositional}, and \ref{prop.varphi.resolvent.uniform.continuity} in the same way as in Proposition \ref{prop.dope.gauge.gamma.wstar}.
\end{proof}

\begin{corollary}\label{cor.r.unif.cont.proj.na.Lp}
Let $A$ be a semifinite JBW-algebra with a faithful normal semifinite trace $\tau$. Then all of the statements of Proposition \ref{prop.uniform.Lp}.\ref{prop.uniform.Lp.i}--\ref{prop.uniform.Lp.viii} hold for $L_{1/\gamma}(\N)$ (resp., $B(\N_\star,\n{\cdot}_1)$) replaced by $L_{1/\gamma}(A,\tau)$ (resp., $(B(A_\star,\n{\cdot}_1))^+$).
\end{corollary}
\begin{proof}
Follows from Propositions \ref{prop.varphi.resolvent.uniform.continuity}, \ref{prop.varphi.uniform.continuity}, \cite[Prop. 4.6]{Kostecki:2017}, and \ref{prop.r.unif.cont.na.Lp}, directly along the lines of the proof of Proposition \ref{prop.uniform.Lp}.
\end{proof}

\begin{remark}\label{remark.lp.breg.proj}
\begin{enumerate}[nosep,label=(\roman*)]
\item\label{remark.lp.breg.proj.i} Identification of $D_\gamma$ as $D_{\ell_\gamma,\Psi_{\varphi_{\gamma(1-\gamma),\gamma}}}$, provided in Corollary \ref{cor.d.gamma}, is new. The right hand side of \eqref{eqn.psi.gamma.equals.psi.gamma} corresponds to $D_{\Psi_\varphi}$ with $\varphi(t)=\frac{1}{1-\gamma}(\gamma t)^{1/\gamma-1}$. Up to reformulation in weight-independent terms, provided in \cite[Eqn. (41)]{Kostecki:2011:OSID}, the formula \eqref{eqn.d.gamma} was obtained in \cite[\S8]{Jencova:2005} (cf. also \cite[Eqn. (42)]{Ojima:2004}) as $D_\Psi(\frac{1}{\gamma}\ell_\gamma(\phi),\frac{1}{\gamma}\ell_\gamma(\psi))$ with $\Psi$ equal to $\Psi_{\varphi_{\gamma^{1-1/\gamma}(1-\gamma),\gamma}}$ (however, it was not identified there as an example of $\Psi_\varphi$, although the corresponding $D_\Psi$ was explicitly identified as a Va\u{\i}nberg--Br\`{e}gman functional). Proposition \ref{prop.dope.gauge.gamma.wstar}.\ref{prop.dope.gauge.gamma.wstar.ii} provides a generalisation of \cite[Props. 8.1.(i)--(ii), 8.2.(ii)]{Jencova:2005} to $\Psi_\varphi$ with any gauge $\varphi$.
\item\label{remark.lp.breg.proj.ii} Another special case of $D_{\Psi_{\varphi_{\alpha,\beta}}}(x,y)$ on $L_{1/\gamma}(\N)$ spaces, with $\alpha=1$, $\beta=\frac{1}{2}$, $\gamma\in\,]0,1[$, and $\N$ limited to type I$_n$ W$^*$-algebras, was considered in \cite[p. 377]{Nock:Magdalou:Briys:Nielsen:2013}. For $\gamma=\frac{1}{2}$, and for any W$^*$-algebra $\N$, they are also a special case of $4D_{\Psi_{\varphi_{\gamma(1-\gamma),\gamma}}}(x,y)=D_{4\Psi_{\varphi_{\gamma(1-\gamma),\gamma}}}(x,y)$, and take the form $2\n{x-y}_{L_2(\N)}$.
\item\label{remark.lp.breg.proj.iii} Plugging the formula for $j:L_{1/\gamma}(A,\tau)\ra L_{1/(1-\gamma)}(A,\tau)$ from the proof of Proposition \ref{prop.mazur} into \eqref{eqn.psi.varphi.alpha.beta}, we obtain a family  belonging to the class $D_{\ell_\gamma,\Psi_\varphi}$ on $A_\star$, which is a nonassociative analogue of \eqref{eqn.lambda.gamma.alpha.beta}, with $(\lambda,\alpha,\beta,\gamma)\in\,]0,\infty[^2\,\times\,]0,1[^2$ and $\phi,\psi\in A_\star$:
\begin{equation}
\hspace{-0.8cm}D_{\lambda\ell_\gamma,\Psi_{\varphi_{\alpha,\beta}}}(\phi,\psi)=\textstyle\frac{\lambda^{1/\beta}}{\alpha}\left(\beta(\tau(\phi))^{\gamma/\beta}+(1-\beta)(\tau(\psi))^{\gamma/\beta}-(\tau(\psi))^{\gamma/\beta-1}\tau((s_\phi\jordan\ab{\phi}^\gamma)\jordan(s_\psi\jordan\ab{\psi}^{1-\gamma}))\right),
\label{eqn.lambda.alpha.beta.gamma.jbw}
\end{equation}
When restricted to $\phi,\psi\in A_\star^+$, corresponding to $s_\phi=\II=s_\psi$, \eqref{eqn.lambda.alpha.beta.gamma.jbw} satisfies the conditions of Proposition \ref{prop.dope.gauge.gamma.jbw}.\ref{prop.dope.gauge.gamma.jbw.v} and \ref{prop.dope.gauge.gamma.jbw}.\ref{prop.dope.gauge.gamma.jbw.vi}.
\item\label{remark.lp.breg.proj.iv} Since Corollary \ref{cor.d.gamma}.\ref{cor.d.gamma.i} applies to \eqref{eqn.lambda.alpha.beta.gamma.jbw} as well, in what follows we will set $\lambda=1$ in both JBW- and W$^*$-algebraic cases.
\item\label{remark.lp.breg.proj.v} Since \cite[Lem. 3.2]{Raynaud:2002} establishes \textit{local} uniform homeomorphy of $\ell_\gamma$ on $(\N_\star,\n{\cdot}_1)$, the statements about uniform continuity of $\LPPP^{D_{\ell_\gamma,\Psi_{\varphi_{1,\beta}}}}_C$ and $\RPPP^{D_{\ell_\gamma,\Psi_{\varphi_{1,\beta}}}}_C$ in Proposition \ref{prop.uniform.Lp} hold for $\ell_\gamma$-bounded subsets of any closed ball in $\N_\star$ (this variant was explicitly used in Proposition \rpkmark{\ref{prop.dope.gauge.gamma.wstar}}).
\item\label{remark.lp.breg.proj.vi} Propositions \ref{prop.dope.gauge.gamma.wstar}.\ref{prop.dope.gauge.gamma.wstar.i} and \ref{prop.dope.gauge.gamma.jbw}.\ref{prop.dope.gauge.gamma.jbw.i}, \ref{prop.dope.gauge.gamma.wstar}.\ref{prop.dope.gauge.gamma.wstar.ii}--\ref{prop.dope.gauge.gamma.wstar.iii} and \ref{prop.dope.gauge.gamma.jbw}.\ref{prop.dope.gauge.gamma.jbw.iii} in their part on left pythagoreanity and zone consistency, as well as \ref{prop.dope.gauge.gamma.wstar}.\ref{prop.dope.gauge.gamma.wstar.iv}--\ref{prop.dope.gauge.gamma.wstar.v} in their part on right pythagoreanity and zone consistency hold also under replacing a gauge $\varphi$ by a quasigauge $\varphi$, provided the latter satisfies the respective conditions of Proposition \ref{prop.left.right.psi.quasigauge}.
\item\label{remark.lp.breg.proj.vii} For an arbitrary W$^*$-algebra $\N$ and $\gamma_1,\gamma_2\in\,]0,\infty[$, the noncommutative Mazur map,
\begin{equation}
\ell_{\gamma_1,\gamma_2}:L_{1/\gamma_1}(\N)\ni x=u_x\ab{x}\mapsto u_x\ab{x}^{\gamma_2/\gamma_1}\in L_{1/\gamma_2}(\N),
\label{eqn.noncommutative.mazur}
\end{equation}
has appeared implicitly in \cite[Prop. 1.9]{Haagerup:1979:ncLp} (cf. \cite[Prop. 12]{Terp:1981}), and then explicitly in \cite[Thm. 4.2]{Kosaki:1984:uniform} (with $u_x=\II$ and $\gamma_1,\gamma_2\in\,]0,1]$) and \cite[p. 58]{Raynaud:2002} (in full generality). In commutative case, this map has been introduced in \cite[p. 83]{Mazur:1929}. As proved independently in \cite[Thm. 4.5, Rem. 4.3]{Adzhiev:2014:I} (for semifinite $\N$ and $\gamma_1,\gamma_2\in\,]0,1[$) and \cite[Thm. (p. 37)]{Ricard:2015} (for any $\N$ and $\gamma_1,\gamma_2\in\,]0,1]$), $\ell_{\gamma_1,\gamma_2}$ is $\min\{\frac{\gamma_2}{\gamma_1},1\}$-Lipschitz--H\"{o}lder from $B(L_{1/\gamma_1}(\N),\n{\cdot}_{1/\gamma_1})$ to $B(L_{1/\gamma_2}(\N),\n{\cdot}_{1/\gamma_2})$, hence, by \cite[Lem. 3.1]{Adzhiev:2014:I}, also from $S(L_{1/\gamma_1}(\N),\n{\cdot}_{1/\gamma_1})$ to $S(L_{1/\gamma_2}(\N),\n{\cdot}_{1/\gamma_2})$. The nonassociative Mazur map with $\gamma_1=1$ and $\gamma_2\in\,]1,\infty[$ has appeared implicitly in \cite[p. 68]{Abdullaev:1984}, and was introduced in full generality in \cite[Def. 4.5]{Kostecki:2017}. 
\item\label{remark.lp.breg.proj.viii} Due to availability of several different (although equivalent) definitions of noncommutative $L_{1/\gamma}$ spaces over general W$^*$-algebras, we should specify the default meaning of this notion (since for general W$^*$-algebras it is defined using the tools essentially beyond the range of the integration theory on semifinite W$^*$-algebras). In \eqref{eqn.noncommutative.mazur}, as well as everywhere else, we use the definition of $L_{1/\gamma}(\N)$ (and thus the functional analytic meaning of the symbol `$\ab{x}^{\gamma_2/\gamma_1}$') as given in \cite[p. 196]{Falcone:Takesaki:2001}. The sign of integral, appearing in \eqref{eqn.lambda.gamma.alpha.beta} below and further, is understood in the sense of \cite[Eqn. (3.12')]{Falcone:Takesaki:2001}.
\item\label{remark.lp.breg.proj.ix} Proposition \ref{prop.mazur} is a generalisation of \cite[Thm. 4.2]{Kosaki:1984:uniform} from (any) W$^*$-algebras to (semifinite) JBW-algebras. The last step of the second part of the proof  (using Rogers--H\"{o}lder inequality for $xx^{1/\gamma-1}$) is exactly the same here as there, however we prove the earlier part (corresponding to \cite[Lem. 4.1]{Kosaki:1984:uniform}), differently, using directly the properties of the duality map, instead of a multiple use of the Rogers--H\"{o}lder inequality. \cite[Lem. 3.2]{Raynaud:2002} establishes local uniform continuity of $\ell_\gamma$ on $\N_\star$ (this term means \cite[p. 70]{Raynaud:1998} uniform continuity on every closed ball in $\N_\star$). Due to structural analogies between noncommutative and nonassociative integration theories, and in accordance with a tradition of \cite[Conj. V.3.10]{Iochum:1984}, we conjecture that $\ell_\gamma$ is norm-to-norm continuous also on unit balls of preduals of JBW-algebras. If this is true, then the continuity results of Proposition \ref{prop.dope.gauge.gamma.jbw}.\ref{prop.dope.gauge.gamma.jbw.v}--\ref{prop.dope.gauge.gamma.jbw.vi} hold for $A_\star$ replacing $A_\star^+$.
\end{enumerate}
\end{remark}
\subsection{Other models with $\Psi$ = $\Psi_\varphi$}\label{section.other.models.Psi.varphi}
\begin{proposition}\label{prop.oscr}
\sloppy If $(X,\n{\cdot}_X)$ is a uniformly convex and uniformly Fr\'{e}chet differentiable Banach function space over a localisable measure space $(\X,\mu)$, ${\ell_X}^{\inver}:S(X,\n{\cdot}_X)\ni x\mapsto\ab{j(x)}x\in S(L_1(\X,\mu),\n{\cdot}_1)$, $\varphi$ is a gauge, $\Psi_\varphi:(X,\n{\cdot}_X)\ra\RR$, and $\varnothing\neq C\subseteq S(L_1(\X,\mu),\n{\cdot}_1)$, $\lambda\in\,]0,\infty[$, $T:S(X,\n{\cdot}_X)\ra 2^{j_{\varphi_{1,\beta}}(S(X,\n{\cdot}_X))}$ is maximally monotone for $\beta\in\,]0,\frac{1}{2}]$, then:
\begin{enumerate}[nosep,label=(\roman*)]
\item\label{prop.oscr.i} \sloppy $\ell_X$ is a bijection, and $\ell_X|_{(S(L_1(\X,\mu),\n{\cdot}_1))^+}=\RPPP^{D_1}_{(S(X,\n{\cdot}_X))^+}$, with $D_1$ understood as a map $(L_\infty(\X,\mu))^+\times X\ra[0,\infty]$;
\item\label{prop.oscr.ii} $D_{\ell_X,\Psi_\varphi}:S(L_1(\X,\mu),\n{\cdot}_1)\times S(L_1(\X,\mu),\n{\cdot}_1)\ra[0,\infty]$ is an information on $S(L_1(\X,\mu),\n{\cdot}_1)$;
\item\label{prop.oscr.iii} if $C$ is $\ell_X$-convex closed set, then $D_{\ell_X,\Psi_\varphi}$ is zone consistent, left pythagorean on $C$, $\LPPP^{D_{\ell_X,\Psi_\varphi}}_C$ is norm-to-norm continuous on $S(L_1(\X,\mu),\n{\cdot}_1)$ and adapted, and $\inf_{y\in C}\{D_{\ell_X,\Psi_\varphi}(y,\,\cdot\,)\}$ is continuous on $S(L_1(\X,\mu),\n{\cdot}_1)$;
\item\label{prop.oscr.iv} if $C$ is $(j_\varphi\circ\ell_X)$-convex closed set, then $D_{\ell_X,\Psi_\varphi}$ is zone consistent, right pythagorean on $C$, and $\RPPP^{D_{\ell_X,\Psi_\varphi}}_C$ are norm-to-norm continuous on $S(L_1(\X,\mu),\n{\cdot}_1)$ and adapted;
\item\label{prop.oscr.v} the sets $\lsq(\ell_X,\Psi_\varphi,C)$ and $\rsq(\ell_X,\Psi_\varphi,C)$ are composable;
\item\label{prop.oscr.vi} the categories $\lCvx^\subseteq(\ell_X,\Psi_\varphi)$, $\rbarCvx^\subseteq(\ell_X,\Psi_\varphi)$, $\LSQcvxsub(\ell_X,\Psi_\varphi)$, and $\RbarSQcvxsub(\ell_X,\Psi_\varphi)$ satisfy the functorial adjunctions and equivalences given by Corollary \ref{cor.LSQ.RSQ.adj.Psi.varphi}.\ref{cor.LSQ.RSQ.adj.Psi.varphi.i}--\ref{cor.LSQ.RSQ.adj.Psi.varphi.ii} with $Z=S(L_1(\X,\mu),\n{\cdot}_1)$;
\item\label{prop.oscr.vii} if $(X,\n{\cdot}_X)$ is $\frac{1}{\beta}$-uniformly convex with $\beta\in\,]0,\frac{1}{2}]$, then:
\begin{enumerate}[nosep,label=\alph*)]
\item\label{prop.oscr.vii.a} if $C$ is $\ell_X$-convex and closed, then $\LPPP^{D_{\ell_X,\Psi_{\varphi_{1,\beta}}}}_C$ is uniformly continuous on $\ell_X$-bounded subsets of $S(L_1(\X,\mu),\n{\cdot}_1)$;
\item\label{prop.oscr.vii.b} $\lres^{\ell_X,\Psi_{\varphi_{1,\beta}}}_{\lambda T}$ is uniformly continuous on $\ell_X$-bounded subsets of $S(L_1(\X,\mu),\n{\cdot}_1)$;
\end{enumerate}
\item\label{prop.oscr.viii} if $(X,\n{\cdot}_X)$ is $\frac{1}{1-\beta}$-uniformly Fr\'{e}chet differentiable with $\beta\in\,]0,\frac{1}{2}]$, and $C$ is $(j_{\varphi_{1,1-\beta}}\circ\ell_X)$-convex and closed, then \rpkmark{$\RPPP^{D_{\ell_X,\Psi_{\varphi_{1,1-\beta}}}}_C$} is uniformly continuous on $\ell_X$-bounded subsets of $S(L_1(\X,\mu),\n{\cdot}_1)$;
\item\label{prop.oscr.ix} if $(X,\n{\cdot}_X)$ is $\frac{1}{\beta}$-uniformly convex and $\frac{1}{\gamma}$-uniformly Fr\'{e}chet differentiable, with $\beta\in\,]0,\frac{1}{2}]$ and $\gamma\in[\frac{1}{2},1[$, then:
\begin{enumerate}[nosep,label=\alph*)]
\item\label{prop.oscr.ix.a} if $C$ is closed and $\ell_X$-convex, then $\LPPP^{D_{\ell_X,\Psi_{\varphi_{1,\beta}}}}_C$ is $\frac{\beta^2(1-\gamma)^2}{\gamma^2(1-\beta)}$-Lipschitz--H\"{o}lder continuous on $S(L_1(\X,\mu),\n{\cdot}_1)$;
\item\label{prop.oscr.ix.b} if $C$ is closed and $(j_{\varphi_{1,\gamma}}\circ\ell_X)$-convex, then $\RPPP^{D_{\ell_X,\Psi_{\varphi_{1,\beta}}}}_C$ is $\frac{\beta^3(1-\gamma)^3}{\gamma^3(1-\beta)^2}$-Lipschitz--H\"{o}lder continuous on $\ell_X$-bounded subsets of $S(L_1(\X,\mu),\n{\cdot}_1)$;
\item\label{prop.oscr.ix.c} $\lres^{\ell_X,\Psi_{\varphi_{1,\beta}}}_{\lambda T}$ is $\frac{\beta^2(1-\gamma)^2}{\gamma^2(1-\beta)}$-Lipschitz--H\"{o}lder continuous on $\ell_X$-bounded subsets of $S(L_1(\X,\mu),\n{\cdot}_1)$;
\end{enumerate}
\item\label{prop.oscr.x} the properties \ref{prop.oscr.ii}--\ref{prop.oscr.ix} hold also for $\ell_X$ (and, resp., $S(L_1(\X,\mu),\n{\cdot}_1)$) replaced by $\widetilde{\ell_X}(x):=\n{x}_1\ell_X\left(\frac{x}{\n{x}_1}\right)$ for $x\in B(L_1(\X,\mu),\n{\cdot}_1)\setminus\{0\}$ and $\widetilde{\ell_X}(0):=0$ (and, resp., $B(L_1(\X,\mu),\n{\cdot}_1)$).
\end{enumerate}
\end{proposition}
\begin{proof}
(i) comes straight from the definition of $\ell_X$ \cite[p. 261]{Odell:Schlumprecht:1994} \cite[pp. 16--17, 20]{Chaatit:1995} (see \cite[\S5]{ChavezDominguez:2023} for an explicit discussion). By \cite[Prop. 2.6]{Odell:Schlumprecht:1994} \cite[Prop. 2.9]{Chaatit:1995} (cf. also \cite[Thm. 12]{Raynaud:1997}), ${\ell_X}^{\inver}$ is a uniform homeomorphism. Hence, by \cite[Prop. 2.9]{Odell:Schlumprecht:1994}, $\widetilde{\ell_X}$ is a uniform homeomorphism on $B(L_1(\X,\mu),\n{\cdot}_1)$. $\frac{1-\gamma}{\gamma}$-(resp., $\beta$-)Lipschitz--H\"{o}lder continuity of $\ell_X$ (resp., ${\ell_X}^{\inver}$) for $\frac{1}{\beta}$-uniformly convex and $\frac{1}{\gamma}$-uniformly Fr\'{e}chet differentiable $(X,\n{\cdot}_X)$ has been established in \cite[Thm. 4.2]{Adzhiev:2014:I} (=\cite[Thm. 5.6]{Adzhiev:2020}) (cf. \cite[Rem. 3.1.b)]{Adzhiev:2009} for equivalence of  $\frac{1}{\beta}$-uniform convexity (resp., $\frac{1}{\gamma}$-uniform Fr\'{e}chet differentiability) with $(\frac{1}{\beta},h_c)$-uniform convexity (resp., $(\frac{1}{\gamma},h_s)$-uniform Fr\'{e}chet differentiability) used in \cite[Thm. 4.2]{Adzhiev:2014:I}). The rest follows from Corollary \ref{corollary.ell.psi.varphi} and Propositions \ref{prop.varphi.resolvent.uniform.continuity} and \ref{prop.varphi.uniform.continuity}. The part of \ref{prop.oscr.x} that refers to \ref{prop.oscr.ii}--\ref{prop.oscr.viii} relies on equivalence of uniform continuity of $\ell$ on $S(Y,\n{\cdot}_Y)$ with uniform continuity of $\widetilde{\ell}$ on $B(Y,\n{\cdot}_Y)$ for any Banach space $(Y,\n{\cdot}_Y)$, as provided by \eqref{eqn.homeo.ext} and \eqref{eqn.unif.homeo.induced.on.spheres}, while the part of \ref{prop.oscr.x} that refers to \ref{prop.oscr.ix} relies on the analogous equivalence for Lipschitz--H\"{o}lder continuity, proved in \cite[Lem. 3.1]{Adzhiev:2014:I}.
\end{proof}

\begin{proposition}\label{prop.nc.oscr}
If $(X,\n{\cdot}_X)$ is a uniformly convex and uniformly Fr\'{e}chet differentiable noncommutative rearrangement invariant space over a type I$_n$ W$^*$-algebra $\N$ with $n\in\NN$, ${\ell_X}^{\inver}:(S(X,\n{\cdot}_X))^+\ni x\mapsto\ab{j(x)}x\in (S(\N_\star,\n{\cdot}_1))^+$, $\varphi$ is a gauge, $\Psi_\varphi:(X,\n{\cdot}_X)\ra\RR$, $\varnothing\neq C\subseteq (S(\N_\star,\n{\cdot}_1))^+$, $\beta\in\,]0,\frac{1}{2}]$, $\lambda\in\,]0,\infty[$, and $T:(S(X,\n{\cdot}_X))^+\ra 2^{j_{\varphi_{1,\beta}}((S(X,\n{\cdot}_X))^+)}$ is maximally monotone, then:
\begin{enumerate}[nosep,label=(\roman*)]
\item\label{prop.nc.oscr.i} $\ell_X$ is a bijection, and $\ell_X=\RPPP^{D_1}_{(S(X,\n{\cdot}_X))^+}$;
\item\label{prop.nc.oscr.ii} $D_{\ell_X,\Psi_\varphi}:((S(\N_\star,\n{\cdot}_1))^+\times(S(\N_\star,\n{\cdot}_1))^+\ra[0,\infty]$ is an information on $(S(\N_\star,\n{\cdot}_1))^+$;
\item\label{prop.nc.oscr.iii} if $C$ is $\ell_X$-convex closed set, then $D_{\ell_X,\Psi_\varphi}$ is zone consistent, left pythagorean on $C$, $\LPPP^{D_{\ell_X,\Psi_\varphi}}_C$ is norm-to-norm continuous on $(S(\N_\star,\n{\cdot}_1))^+$ and adapted, and $\inf_{y\in C}\{D_{\ell_X,\Psi_\varphi}(y,\,\cdot\,)\}$ is continuous on $(S(\N_\star,\n{\cdot}_1))^+$;
\item\label{prop.nc.oscr.iv} if $C$ is $(j_\varphi\circ\ell_X)$-convex closed set, then $D_{\ell_X,\Psi_\varphi}$ is zone consistent, right pythagorean on $C$, and $\RPPP^{D_{\ell_X,\Psi_\varphi}}_C$ are norm-to-norm continuous on $(S(\N_\star,\n{\cdot}_1))^+$ and adapted;
\item\label{prop.nc.oscr.v} the sets $\lsq(\ell_X,\Psi_\varphi,C)$ and $\rsq(\ell_X,\Psi_\varphi,C)$ are composable;
\item\label{prop.nc.oscr.vi} the categories $\lCvx^\subseteq(\ell_X,\Psi_\varphi)$, $\rbarCvx^\subseteq(\ell_X,\Psi_\varphi)$, $\LSQcvxsub(\ell_X,\Psi_\varphi)$, and $\RbarSQcvxsub(\ell_X,\Psi_\varphi)$ satisfy the functorial adjunctions and equivalences given by Corollary \ref{cor.LSQ.RSQ.adj.Psi.varphi}.\ref{cor.LSQ.RSQ.adj.Psi.varphi.i}--\ref{cor.LSQ.RSQ.adj.Psi.varphi.ii} with $Z=(S(\N_\star,\n{\cdot}_1))^+$;
\item\label{prop.nc.oscr.vii} if $(X,\n{\cdot}_X)$ is $\frac{1}{\beta}$-uniformly convex, then:
\begin{enumerate}[nosep,label=(\roman*)]
\item\label{prop.nc.oscr.vii.a} if $C$ is $\ell_X$-convex and closed, then $\LPPP^{D_{\ell_X,\Psi_{\varphi_{1,\beta}}}}_C$ is uniformly continuous on $\ell_X$-bounded subsets of $(S(\N_\star,\n{\cdot}_1))^+$;
\item\label{prop.nc.oscr.vii.b} $\lres^{\ell_X,\Psi_{\varphi_{1,\beta}}}_{\lambda T}$ is is uniformly continuous on $\ell_X$-bounded subsets of $(S(\N_\star,\n{\cdot}_1))^+$;
\end{enumerate}
\item\label{prop.nc.oscr.viii} if $(X,\n{\cdot}_X)$ is $\frac{1}{1-\beta}$-uniformly Fr\'{e}chet differentiable, and $C$ is $(j_{\varphi_{1,1-\beta}}\circ\ell_X)$-convex and closed, then  $\RPPP^{D_{\ell_X,\Psi_{\varphi_{1,1-\beta}}}}_C$ is uniformly continuous on $\ell_X$-bounded subsets of $(S(\N_\star,\n{\cdot}_1))^+$.
\end{enumerate}
\end{proposition}
\begin{proof}
(i) is just \cite[Def. 5.3]{ChavezDominguez:2023}. By \cite[Props. 5.6, 5.7, Lem. 5.8]{ChavezDominguez:2023}, ${\ell_X}^{\inver}$ is a uniform homeomorphism. The rest follows from Corollary \ref{corollary.ell.psi.varphi} and Propositions \ref{prop.varphi.resolvent.uniform.continuity} and \ref{prop.varphi.uniform.continuity}.
\end{proof}

\begin{proposition}\label{prop.weakly.compact.base}
If $(V,\n{\cdot}_V)$ is a radially compact base normed space with a weakly compact base $K$, $\varphi$ is any gauge, $\varnothing\neq C\subseteq V$, $\Psi_\varphi:V\ra\RR$ is Euler--Legendre, $\ell:V\ra V$ is any automorphism of $V$, then:
\begin{enumerate}[nosep,label=(\roman*)]
\item\label{prop.weakly.compact.base.i} $D_{\ell,\Psi_\varphi}$ is an information on $V$;
\item\label{prop.weakly.compact.base.ii} if $C$ is $\ell$-convex and $\ell$-closed, then it is left $D_{\ell,\Psi_\varphi}$-Chebysh\"{e}v, $D_{\ell,\Psi_\varphi}$ is left pythagorean on $C$, and $\LPPP^{D_{\ell,\Psi_\varphi}}_C$ are zone consistent;
\item\label{prop.weakly.compact.base.iii} if $C$ is $j_\varphi\circ\ell$-convex and $j_\varphi\circ\ell$-closed, then it is right $D_{\ell,\Psi_\varphi}$-Chebysh\"{e}v, $D_{\ell,\Psi_\varphi}$ is right pythagorean on $C$, and $\RPPP^{D_{\ell,\Psi_\varphi}}_C$ are zone consistent;
\item\label{prop.weakly.compact.base.iv} if each norm-exposed face of $K$ is projective, then the pair $((V,\n{\cdot}_V),(V^\star,\n{\cdot}_{V^\star}))$ is in spectral duality.
\end{enumerate}
\end{proposition}
\begin{proof}
A radially compact base normed space is reflexive if{}f its base is weakly compact \cite[Lem. 8.71]{Alfsen:Shultz:2003}. Hence, \ref{prop.weakly.compact.base.i}--\ref{prop.weakly.compact.base.iii} follow from Proposition \ref{prop.legendre} and Corollary \ref{corollary.ell.psi.varphi}.\ref{corollary.ell.psi.varphi.i}--\ref{corollary.ell.psi.varphi.iii}. On the other hand, \ref{prop.weakly.compact.base.iv} is just \cite[Prop. 2.5]{Alfsen:Shultz:1979}.
\end{proof}

\begin{proposition}\label{prop.gen.spin.factors}
Let $(V=X\oplus\RR,\n{\cdot}_V)$ be a radially compact base normed space with a base $K$, and a reflexive real Banach space $(X,\n{\cdot}_X)$ such that
\begin{equation}
\forall v=(x,\lambda)\in V\;\;\left\{\begin{array}{l}
v\geq0\;:\iff\;\lambda\geq\n{x}_X\\
\n{v}_V:=\max\{\ab{\lambda},\n{x}_X\},
\end{array}\right.
\end{equation}
let $\ell_{/\RR}:K\ni v=(x,1)\mapsto x\in B(X,\n{\cdot}_X)$, and let $\varphi$ be any gauge. If any of equivalent conditions holds:
\begin{enumerate}[nosep,label=\arabic*)]
\item\label{prop.gen.spin.factors.1} the pair $((V,\n{\cdot}_V),(V^\star,\n{\cdot}_{V^\star}))$ is in spectral duality;
\item\label{prop.gen.spin.factors.2} $(X,\n{\cdot}_X)$ is strictly convex and Gateaux differentiable;
\item\label{prop.gen.spin.factors.3} $\Psi_\varphi:B(X,\n{\cdot}_X)\ra\RR$ is Euler--Legendre,
\end{enumerate}
then:
\begin{enumerate}[nosep,label=(\roman*)]
\item\label{prop.gen.spin.factors.i} $D_{\ell_{/\RR},\Psi_\varphi}$ is an information on $K$;
\item\label{prop.gen.spin.factors.ii} if $\varnothing\neq C\subseteq K$ is $\ell_{/\RR}$-convex and $\ell_{/\RR}$-closed, then it is left $D_{\ell_{/\RR},\Psi_\varphi}$-Chebysh\"{e}v, $D_{\ell_{/\RR},\Psi_\varphi}$ is left pythagorean on $C$, and $\LPPP^{D_{\ell_{/\RR},\Psi_\varphi}}_C$ are zone consistent;
\item\label{prop.gen.spin.factors.iii} if $\varnothing\neq C\subseteq K$ is $j_\varphi\circ\ell_{/\RR}$-convex and $j_\varphi\circ\ell_{/\RR}$-closed, then it is right $D_{\ell_{/\RR},\Psi_\varphi}$-Chebysh\"{e}v, $D_{\ell_{/\RR},\Psi_\varphi}$ is right pythagorean on $C$, and $\RPPP^{D_{\ell_{/\RR},\Psi_\varphi}}_C$ are zone consistent.
\end{enumerate}
Furthermore, if $(X,\n{\cdot}_X)$ satisfies any of \ref{prop.gen.spin.factors.1}--\ref{prop.gen.spin.factors.3} above, is uniformly Fr\'{e}chet differentiable, and has the Radon--Riesz--Shmul'yan property, then:
\begin{enumerate}[nosep,label=(\roman*)]
\setcounter{enumi}{3}
\item\label{prop.gen.spin.factors.iv} $\lsq(\ell_{/\RR},\Psi_\varphi,C)$ and $\rsq(\ell_{/\RR},\Psi_\varphi,C)$ are composable for any $\varnothing\neq C\subseteq K$;
\item\label{prop.gen.spin.factors.v} $\LPPP^{D_{\ell_{/\RR},\Psi_\varphi}}_C$ are adapted and $\ell_{/\RR}$-topology-to-$\ell_{/\RR}$-topology continuous on any $\ell_{/\RR}$-convex and $\ell_{/\RR}$-closed $\varnothing\neq C\subseteq K$;
\item\label{prop.gen.spin.factors.vi} $\RPPP^{D_{\ell_{/\RR},\Psi_\varphi}}_C$ are $\ell_{/\RR}$-topology-to-$\ell_{/\RR}$-topology continuous on any $(j_\varphi\circ\ell_{/\RR})$-convex and $\ell_{/\RR}$-closed $\varnothing\neq C\subseteq K$;
\item\label{prop.gen.spin.factors.vii} the categories  $\lCvx^\subseteq(\ell_{/\RR},\Psi_\varphi)$, $\rbarCvx^\subseteq(\ell_{/\RR},\Psi_\varphi)$, $\LSQcvxsub(\ell_{/\RR},\Psi_\varphi)$, and $\RbarSQcvxsub(\ell_{/\RR},\Psi_\varphi)$ satisfy the functorial adjunctions and equivalences given by Corollary \ref{cor.LSQ.RSQ.adj.Psi.varphi}.\ref{cor.LSQ.RSQ.adj.Psi.varphi.i} with $Z=K$;
\item\label{prop.gen.spin.factors.viii} if $T:B(X,\n{\cdot}_X)\ra 2^{j_\varphi(B(X,\n{\cdot}_X))}$ is maximally monotone with $0\in\efd(T)$, then $\rres_T^{\Psi_\varphi}$ maps $j_\varphi(B(X,\n{\cdot}_X))$ pn $j_\varphi(\efd(T))$ and is norm-to-norm continuous on $j_\varphi(B(X,\n{\cdot}_X))$;
\item\label{prop.gen.spin.factors.ix} if $(X,\n{\cdot}_X)$ is uniformly convex, then:
\begin{enumerate}[nosep,label=\alph*)]
\item\label{prop.gen.spin.factors.ix.a} $\RPPP^{D_{\ell_{/\RR},\Psi_\varphi}}_C$ are adapted for any $(j_\varphi\circ\ell_{/\RR})$-convex and $\ell_{/\RR}$-closed $\varnothing\neq C\subseteq K$;
\item\label{prop.gen.spin.factors.ix.b} the categories $\rbarCvx^\subseteq(\ell_{/\RR},\Psi_\varphi)$ and $\RbarSQcvxsub(\ell_{/\RR},\Psi_\varphi)$ satisfy the functorial adjunction of Corollary \ref{cor.LSQ.RSQ.adj.Psi.varphi}.\ref{cor.LSQ.RSQ.adj.Psi.varphi.ii};
\item\label{prop.gen.spin.factors.ix.c} if $T$ is as in \ref{prop.gen.spin.factors.viii}, then $\lres_{T}^{\Psi_\varphi}$ (resp., $\lres_T^{\ell_{/\RR},\Psi_\varphi}$) maps $B(X,\n{\cdot}_X)$ on $\efd(T)$ (resp., $K$ on ${\ell_{/\RR}}^\inver(\efd(T))$) and is norm-to-norm continuous on $B(X,\n{\cdot}_X)$ (resp., $\ell_{/\RR}$-topology-to-$\ell_{/\RR}$-topology continuous on $K$).
\end{enumerate} 
\end{enumerate}
\end{proposition}
\begin{proof}
Equivalence of \ref{prop.gen.spin.factors.1} and \ref{prop.gen.spin.factors.2} was established in \cite[Thm. 1]{Berdikulov:Odilov:1995} (and recently rediscovered in \cite[Thm. 6.6]{Jencova:Pulmannova:2021}). Equivalence of \ref{prop.gen.spin.factors.2} and \ref{prop.gen.spin.factors.3} follows from Proposition \ref{prop.legendre}. \ref{prop.gen.spin.factors.i}--\ref{prop.gen.spin.factors.iii} follow from Corollary \ref{corollary.ell.psi.varphi}.\ref{corollary.ell.psi.varphi.i}--\ref{corollary.ell.psi.varphi.iii}. \ref{prop.gen.spin.factors.iv}--\ref{prop.gen.spin.factors.vii} and \ref{prop.gen.spin.factors.ix}.\ref{prop.gen.spin.factors.ix.a}--\ref{prop.gen.spin.factors.ix.b} follow from Corollaries \ref{corollary.ell.psi.varphi}.\ref{corollary.ell.psi.varphi.iv}, \ref{corollary.ell.psi.varphi}.\ref{corollary.ell.psi.varphi.vi}--\ref{corollary.ell.psi.varphi.viii}, and \ref{cor.LSQ.RSQ.adj.Psi.varphi}. \ref{prop.gen.spin.factors.viii} and \ref{prop.gen.spin.factors.ix}.\ref{prop.gen.spin.factors.ix.c} follow from Proposition \ref{prop.varphi.resolvent.norm.continuity}.
\end{proof}

\begin{remark}\label{rem.orther.varphi.models}
\begin{enumerate}[nosep,label=(\roman*)]
\item\label{rem.orther.varphi.models.i} Using \cite[Thm. 2.1]{Odell:Schlumprecht:1994} \cite[Thm. 2.1]{Chaatit:1995} \cite[Thm. 6.2]{Adzhiev:2014:II} (resp., \cite[Thm. 6.4]{ChavezDominguez:2023}), combined together with the Maurey--Pisier theorem \cite[p. 46]{Maurey:Pisier:1976} and a fact that $q$-uniform convexity with $q\geq2$ implies cotype $q$ \cite[Thm. 1.e.16.(i) (Vol. 2)]{Lindenstrauss:Tzafriri:1977:1979}, one can state an analogue of Proposition \ref{prop.oscr}.\ref{prop.oscr.ii}--\ref{prop.oscr.x} (resp., Proposition \ref{prop.nc.oscr}.\ref{prop.nc.oscr.ii}--\ref{prop.nc.oscr.viii}) for $q$-uniformly convex $(X,\n{\cdot}_X)$ with $q\geq2$. (In the case of Proposition \ref{prop.nc.oscr}.\ref{prop.nc.oscr.ii}--\ref{prop.nc.oscr.viii}, the resulting proposition includes also a generalisation of $\N$ from type I$_n$ to separable factors of type I.) However, in both cases, the corresponding uniform homeomorphism $\ell$, as well as $\ell^{\inver}$, is constructed via renorming of the convexification of $(X,\n{\cdot}_X)$, and as a result it lacks an explicit formula. 
\item\label{rem.orther.varphi.models.ii} Let $(\X,\mu)$ be a localisable measure space, and let $E(\X,\mu)\subseteq L_0(\X,\mu)$ be a complete Banach vector lattice. Let $E(\X,\mu)$ satisfy also the Fatou property, i.e. if $\{x_n\in (E(\X,\mu))^+\mid n\in\NN\}$ is increasing and $\sup_{n\in\NN}\n{x_n}_{E(\X,\mu)}<\infty$, then there exists $\sup_{n\in\NN}x_n=:x\in E(\X,\mu)$ and $\sup_{n\in\NN}\n{x_n}_{E(\X,\mu)}=\n{x_n}_{E(\X,\mu)}$ \cite[$(\beta)_3$ (p. 45)]{Ogasawara:1944} \cite[(1)--(2) (p. 1)]{Luxemburg:1955}. (In particular, if $E(\X,\mu)$ is reflexive, then it satisfies the Fatou property \cite[Thm. \S3.1]{Ogasawara:1944}.) Lozanovski\u{\i} proved \cite[Thm. 6.3]{Lozanovskii:1972} that $\forall x\in L_1(\X,\mu)$ $\exists (y,z)\in E(\X,\mu)\times(E(\X,\mu))^\koethe$ such that $x=yz$ and $\n{x}_1=\n{y}_{E(\X,\mu)}\n{z}_{(E(\X,\mu))^\koethe}$. The uniqueness of this factorisation, under additional assumption $\supp(x)=\supp(y)=\supp(z)$, has been established in \cite[\S3.(a)]{Gillespie:1981} (cf. also \cite[1117]{Lozanovskii:1969:VIII}). The map $(S(E(\X,\mu),\n{\cdot}_{E(\X,\mu)}))^+\ni y\mapsto\ab{j(y)}y\in(S(L_1(\X,\mu)))^+$, together with the explicit formula for its inverse, as well as with the proof of its uniform homeomorphy, has been given by Odell and Schlumprecht in \cite[Prop. 2.6]{Odell:Schlumprecht:1994} for uniformly convex, uniformly Fr\'{e}chet differentiable $E(\X,\mu)$ with a 1-unconditional basis and atomic infinite $(\X,\mu)$. The assumptions of 1-unconditional basis and atomic infinite $(\X,\mu)$ were shown to be obsolete in \cite[Prop. 2.9]{Chaatit:1995}. The formula given in Proposition \ref{prop.oscr}.\ref{prop.oscr.i} has been established in \cite[\S5]{ChavezDominguez:2023}. So, while the right $D_1$-projection was introduced by Chencov \cite[Eqn. (16)]{Chencov:1968} at nearly the same time as Lozanovski\u{\i} introduced his factorisation, it took over a half of century to have the special case of the former map identified as the inverse of the latter.
\item\label{rem.orther.varphi.models.iii} Proposition \ref{prop.oscr} (when restricted to the positive parts of unit spheres) and Proposition \ref{prop.nc.oscr} deal with the special cases of left and right $D_{\ell,\Psi}$-projections with $\ell$ given by another special case of right $D_{\ell,\Psi}$-projection, i.e. $\LPPP^{D_{\RPPP^{D_1}_{(S(X,\n{\cdot}_X))^+},\Psi_\varphi}}_C$ and $\RPPP^{D_{\RPPP^{D_1}_{(S(X,\n{\cdot}_X))^+},\Psi_\varphi}}_C$. This leads to a question: is it possible to use the latter projection as $\ell$ in some appropriate context?
\item\label{rem.orther.varphi.models.iv} In Propositions \ref{prop.oscr} and  \ref{prop.nc.oscr}, one can replace $j$ with $j_{\widetilde{\varphi}}$ for any gauge $\widetilde{\varphi}$ satisfying $\widetilde{\varphi}(1)=1$, since \cite[Thm. 12]{Raynaud:1997} and \cite[Props. 5.6, 5.7, Lem. 5.8]{ChavezDominguez:2023} are proved by evaluation of $j$ exclusively on the unit spheres (or their positive parts).
\item\label{rem.orther.varphi.models.v} Examples of radially compact base normed spaces with weakly compact base include all finite dimensional base normed spaces, type I$_2$ JBW-factors \cite[Prop. 3.38]{Alfsen:Shultz:2003} (their reflexivity was established already in \cite[Ex. 2]{Lowdenslager:1957}), and state spaces of orthomodular posets satisfying the \rpkmark{Jordan--Hahn decomposition property} \cite[Thm. 3]{Fischer:Ruettimann:1978:geometry}.
\item\label{rem.orther.varphi.models.vi} Order unit spaces $(V^\star,\n{\cdot}_{V^\star})$ satisfying the condition \ref{prop.gen.spin.factors.2} in Proposition \ref{prop.gen.spin.factors} were introduced in \cite[Def. 4]{Berdikulov:Odilov:1995}, and named the \df{generalised spin factors} (cf. also \cite[\S2.3.3]{Lami:2017}, where their finite dimensional version was rediscovered as `centrally symmetric models'). For $X$ given by a real Hilbert space with a scalar product $\s{\cdot,\cdot}_X$, $(X^\star\oplus\RR,\n{\cdot}_{X^\star\oplus\RR})$ turn into spin factors.
\item\label{rem.orther.varphi.models.vii} Relationship between the properties of $\Psi_\varphi$ and the spectral properties of $(V,\n{\cdot}_V)$ differ strongly between Propositions \ref{prop.weakly.compact.base} and \ref{prop.gen.spin.factors}: in the first case there is no relationship between them, in the second case it is a characterisation.
\item\label{rem.orther.varphi.models.viii} Proposition \ref{prop.gen.spin.factors}.\ref{prop.gen.spin.factors.i}--\ref{prop.gen.spin.factors.iii} holds also under replacing a gauge $\varphi$ by a quasigauge $\varphi$, provided the latter satisfies the respective conditions of Proposition \ref{prop.left.right.psi.quasigauge}.
\end{enumerate}
\end{remark}
\subsection{Some models with $\Psi\neq\Psi_\varphi$}\label{section.lewis.type.models}
\begin{proposition}\label{prop.hilbert.operator.psi}
Let $\H$ (resp., $\ell_{1/2}$) be either an $(L_2(\N))^\sa$ space for any W$^*$-algebra $\N$ (resp., a map $\N_\star^\sa\ni\phi=u_\phi\ab{\phi}\mapsto u_\phi\ab{\phi}^{1/2}\in(L_2(\N))^\sa$) or an $L_2(A,\tau)$ space for a semifinite JBW-algebra $A$ with a faithful normal semifinite trace $\tau$ (resp., a map $A_\star\iso L_1(A,\tau)\ni s_\phi\jordan\ab{\phi}\mapsto s_\phi\jordan\ab{\phi}^{1/2}\in L_2(A,\tau)$), let $\B$ be a ball in $\N_\star$ (e.g., $\B=B(\N_\star,\n{\cdot}_1)$), and let $T:\H\ra\H$ be a continuous linear map such that $\exists\lambda>0$ $\forall x,y\in\H$ $\s{Ty-Tx,y-x}_\H\geq\lambda\n{x-y}_\H^2$ (or, equivalently, $\inf\{\s{T\xi,\xi}_\H\mid\xi\in\H,\;\n{\xi}_\H=1\}\geq1$). Then:
\begin{enumerate}[nosep,label=(\roman*)]
\item\label{prop.hilbert.operator.psi.i} $T$ is invertible;
\item\label{prop.hilbert.operator.psi.ii} $\Psi_T:\H\ra\RR$ given by $\Psi_T(x):=\frac{1}{2}\s{Tx,x}_\H$ is Euler--Legendre;
\item\label{prop.hilbert.operator.psi.iii} $\Psi_T^\lfdual=\frac{1}{2}\s{T^{\inver}y,y}_\H$ $\forall y\in\H$, $\DG\Psi_T=T$, $\DG\Psi^\lfdual_T=T^{\inver}$;
\item\label{prop.hilbert.operator.psi.iv} $D_{\Psi_T}(x,y)=\frac{1}{2}\s{Tx-Ty,x-y}_\H$;
\item\label{prop.hilbert.operator.psi.v} $D_{\ell_{1/2},\Psi_T}$ is an information on $\N_\star^\sa$ (resp., $A_\star$);
\item\label{prop.hilbert.operator.psi.vi} if $\varnothing\neq C\subseteq\N_\star^+\cup\B$ (resp., $\varnothing\neq C\subseteq A_\star^+$) is $\ell_{1/2}$-convex and closed, then it is left $D_{\ell_{1/2},\Psi_T}$-Chebysh\"{e}v, $D_{\ell_{1/2},\Psi_T}$ is left pythagorean on $C$, and $\LPPP^{D_{\ell_{1/2},\Psi_T}}_C$ are zone consistent;
\item\label{prop.hilbert.operator.psi.vii} if $\varnothing\neq C\subseteq\N_\star^+\cup\B$ (resp., $\varnothing\neq C\subseteq A_\star^+$) is $T\circ\ell_{1/2}$-convex and $T$-closed, then it is right $D_{\ell_{1/2},\Psi_T}$-Chebysh\"{e}v, $D_{\ell_{1/2},\Psi_T}$ is right pythagorean on $C$, and $\RPPP^{D_{\ell_{1/2},\Psi_T}}_C$ are zone consistent.
\end{enumerate}
\end{proposition}
\begin{proof}
\ref{prop.hilbert.operator.psi.i}--\ref{prop.hilbert.operator.psi.iv} are special cases of results which hold for any real Hilbert space $\H$. \ref{prop.hilbert.operator.psi.i} follows from \cite[Thm. 2.1]{Lax:Milgram:1954}; \ref{prop.hilbert.operator.psi.ii} follows from \cite[p. 64]{Reem:Reich:DePierro:2019}; \ref{prop.hilbert.operator.psi.iii} follows from \cite[Ex. 3.2]{Reem:Reich:2018}; \ref{prop.hilbert.operator.psi.iv} follows by a direct calculation; \ref{prop.hilbert.operator.psi.v} follows from conjunction of \ref{prop.hilbert.operator.psi.ii}, Corollary \ref{cor.information}, and \ref{cor.d.ell.psi.properties}.\ref{cor.d.ell.psi.properties.i}; \ref{prop.hilbert.operator.psi.vi} follows from Corollary \ref{cor.d.ell.psi.properties}.\ref{cor.d.ell.psi.properties.iii} and norm-to-norm continuity of $\ell_{1/2}$ on $\N_\star^+\cup\B$ (resp., $A_\star^+$) \cite[Thm. 4.2]{Kosaki:1984:uniform} \cite[Lem. 3.2]{Raynaud:2002} (resp., Proposition \ref{prop.mazur}); \ref{prop.hilbert.operator.psi.vii} follows from Corollary \ref{cor.d.ell.psi.properties}.\ref{cor.d.ell.psi.properties.iv} and norm-to-norm continuity of $\ell_{1/2}$.
\end{proof}

\begin{proposition}\label{prop.finite.spectral.EL.case.I}
Let $\H$ be a Hilbert space with $\dim\H\in\NN$, let $\dim((\schatten_2(\H))^\sa)=:n$, and let $\Psi\in\pclg((\schatten_2(\H))^\sa,\n{\cdot}_2)$ be spectral, with $\Psi=f\circ\lewis$, where $f\in\pclg(\RR^n,\n{\cdot}_{\RR^n})$ is Euler--Legendre. Let $\ell$ (resp., $C\neq\varnothing$; $K\neq\varnothing$) be given either by a bijection $\ell:Y:=(\schatten_2(\H))^\sa\ra(\schatten_2(\H))^\sa$ (resp., a closed $\ell$-convex subset of $(\schatten_2(\H))^\sa$; a closed $(\grad\Psi)\circ\ell$-convex subset of $(\schatten_2(\H))^\sa$) or by a bijection $\ell_{1/2}:Y:=(\schatten_1(\H))^\sa\ni x\mapsto x^{1/2}\in(\schatten_2(\H))^\sa$ (resp., a closed $\ell_{1/2}$-convex subset of $(\schatten_1(\H))^\sa$; a closed $(\grad\Psi)\circ\ell_{1/2}$-convex subset of $(\schatten_1(\H))^\sa$). Let $\ell(C)\subseteq\intefd{\Psi}\supseteq\ell(K)$. Then:
\begin{enumerate}[nosep,label=(\roman*)]
\item\label{prop.finite.spectral.EL.case.I.i} $D_{\ell,\Psi}$ is an information on $Y$;
\item\label{prop.finite.spectral.EL.case.I.ii} $C$ is left $D_{\ell,\Psi}$-Chebysh\"{e}v, $D_{\ell,\Psi}$ is left pythagorean on $C$, and $\LPPP^{D_{\ell,\Psi}}_C$ are zone consistent;
\item\label{prop.finite.spectral.EL.case.I.iii} $K$ is right $D_{\ell,\Psi}$-Chebysh\"{e}v, $D_{\ell,\Psi}$ is right pythagorean on $K$, and $\RPPP^{D_{\ell,\Psi}}_K$ are zone consistent.
\end{enumerate}
\end{proposition}
\begin{proof}
Follows from the fact that $f$ is Euler--Legendre if{}f $f\circ\lewis$ is Euler--Legendre (see Example \ref{ex.spectral.convex}), combined with Propositions \ref{prop.left.pythagorean}.\ref{prop.left.pythagorean.i}.\ref{prop.left.pythagorean.i.c}, \ref{prop.left.pythagorean}.\ref{prop.left.pythagorean.iii}, \ref{prop.right.pythagorean}.\ref{prop.right.pythagorean.c}, and Corollary \ref{cor.information}.
\end{proof}

\begin{corollary}\label{cor.finite.spectral.EL}
Proposition \ref{prop.finite.spectral.EL.case.I} holds, in particular, for $\Psi=f\circ\lewis$ and $D_\Psi=D_{f\circ\lewis}$ given in Example \ref{ex.spectral.convex}.\ref{ex.spectral.convex.i}--\ref{ex.spectral.convex.iii}, as well as for:
\begin{enumerate}[nosep,label=(\roman*)]
\setcounter{enumi}{3}
\item\label{cor.finite.spectral.EL.iv} $f(x)=\sum_{i=1}^n(x_i\log(x_i)+(1-x_i)\log(1-x_i))$ on $\efd(f)=[0,1]^n$ and $f(x)=\infty$ otherwise, which gives spectral Euler--Legendre $(f\circ\lewis)(\xi)=\tr_\H(\xi\log(\xi)+(\II-\xi)\log(\II-\xi))$ for $\xi\in\efd(f\circ\lewis)=B(\schatten_2(\H),\n{\cdot}_2)\cap(\schatten_2(\H))^+$ and $(f\circ\lewis)(\xi)=\infty$ otherwise. The corresponding Va\u{\i}nberg--Br\`{e}gman functional reads 
\begin{equation}
D_{f\circ\lewis}(\xi,\zeta)=\tr_\H(\xi(\log\xi-\log\zeta)+(\II-\xi)(\log(\II-\xi)-\log(\II-\zeta)))
\label{eqn.spectral.Fermi.Dirac}
\end{equation}
for $(\xi,\zeta)\in\efd(f\circ\lewis)\times\intefd{f\circ\lewis}$, and $D_{f\circ\lewis}(\xi,\zeta)=\infty$ otherwise;
\item\label{cor.finite.spectral.EL.v} $f$ given by $\Psi_\alpha$ in \eqref{eqn.Psi.alpha}, which gives spectral Euler--Legendre
\begin{equation}
(f\circ\lewis)(\xi)=\left\{
\begin{array}{ll}
\frac{1}{\alpha-1}\tr_\H(\xi^\alpha-1)
&\st\xi\in(\schatten_2(\H))^+,\;\alpha\in\,]0,1[\\
\frac{1}{1-\alpha}\tr_\H(\xi^\alpha-1)
&\st\xi\in(\schatten_2(\H))^+_0,\;\alpha\in\,]-\infty,0[\\
\infty
&\st\mbox{otherwise}.
\end{array}
\right.
\end{equation}
The corresponding Va\u{\i}nberg--Br\`{e}gman functional $D_{f\circ\lewis}(\xi,\zeta)$ reads
\begin{equation}
\left\{
\begin{array}{ll}
\frac{1}{1-\alpha}\tr_\H(-\xi^\alpha+(1-\alpha)\zeta^\alpha+\alpha\zeta^{\alpha-1}\xi)
&\st(\xi,\zeta)\in(\schatten_2(\H))^+\times(\schatten_2(\H))^+_0,\;\alpha\in\,]0,1[\\
\frac{1}{\alpha-1}\tr_\H(-\xi^\alpha+(1-\alpha)\zeta^\alpha+\alpha\zeta^{\alpha-1}\xi)
&\st(\xi,\zeta)\in(\schatten_2(\H))^+_0\times(\schatten_2(\H))^+_0,\;\alpha\in\,]-\infty,0[\\
\infty
&\st\mbox{otherwise}.
\end{array}
\right.
\label{eqn.Bregman.spectral.Psi.alpha}
\end{equation}
\end{enumerate}
\end{corollary}
\begin{proof}
\ref{cor.finite.spectral.EL.iv}--\ref{cor.finite.spectral.EL.v} follow by application of $\lewis$ to Examples \ref{ex.EL.totalconvex.Rn.Psi}.\ref{ex.EL.totalconvex.Rn.Psi.iv}--\ref{ex.EL.totalconvex.Rn.Psi.v}.
\end{proof}

\begin{corollary}
Let $n\in\NN$, let $\N$ be a type I$_n$ W$^*$-algebra, and $f(x)=\sum_{i=1}^n(x_i\log(x_i)-x_i)$ if $x\geq0$ and $f(x)=\infty$ $\forall x\in\RR^n\setminus(\RR^n)^+$. Then $\cptp(\N_\star)\subseteq\cn(\ell_{1/2},f\circ\lewis)$.
\end{corollary}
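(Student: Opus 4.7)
The plan is to recast $D_{\ell_{1/2},f\circ\lewis}$ as the denormalised Umegaki--Lindblad relative entropy of square roots, reduce to a partial-trace inequality via Stinespring, and close with operator-convex machinery. By Example \ref{ex.spectral.convex}.(\hyperlink{Breg.refl.example5b}{ii}), the spectral Euler--Legendre function $\Psi:=f\circ\lewis$ on $(\schatten_2(\H))^\sa$ (with $\dim\H=n$) satisfies $\intefd{\Psi}=(\schatten_2(\H))^+_0$, $\DG\Psi(\xi)=\log\xi$, and
\[
D_\Psi(\xi,\zeta)=\tr_\H(\xi\log\xi-\xi\log\zeta-\xi+\zeta)=:S_U(\xi\|\zeta).
\]
Since $\N_\star=\schatten_1(\H)$ and $\ell_{1/2}$ restricts on $\N_\star^+$ to $\phi\mapsto\phi^{1/2}\in(\schatten_2(\H))^+$, we obtain $D_{\ell_{1/2},\Psi}(\phi,\psi)=S_U(\phi^{1/2}\|\psi^{1/2})$. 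By Definition \ref{def.d.ell.psi.notions}.(x), establishing $\cptp(\N_\star)\subseteq\cn(\ell_{1/2},f\circ\lewis)$ amounts to proving, for every $T\in\cptp(\N_\star)$,
\[
S_U\!\bigl((T\phi)^{1/2}\,\big\|\,(T\psi)^{1/2}\bigr)\leq S_U(\phi^{1/2}\|\psi^{1/2})\quad\forall\phi,\psi\in\N_\star^+.\qquad(\star)
\]

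Next I would invoke the Stinespring--Kraus dilation: there exist a finite-dimensional Hilbert space $\mathcal{K}$ and an isometry $V:\H\to\H\otimes\mathcal{K}$ with $T(\rho)=\tr_{\mathcal{K}}(V\rho V^*)$. Two elementary facts then reduce $(\star)$ to a partial-trace inequality. First, $(V\rho V^*)^{1/2}=V\rho^{1/2}V^*$: this follows from $V^*V=I_\H$ via $(V\rho^{1/2}V^*)^2=V\rho V^*$ together with uniqueness of the positive square root. Second, $S_U$ is isometrically invariant, $S_U(VAV^*\|VBV^*)=S_U(A\|B)$, using $\tr$-cyclicity together with $\log(VAV^*)=V(\log A)V^*$ on $\ran(V)$ and the convention $0\log 0=0$ on $\ran(V)^\perp$. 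Setting $\widetilde\phi:=V\phi V^*$, $\widetilde\psi:=V\psi V^*\in\schatten_1(\H\otimes\mathcal{K})^+$, claim $(\star)$ becomes
\[
S_U\!\bigl((\tr_{\mathcal{K}}\widetilde\phi)^{1/2}\,\big\|\,(\tr_{\mathcal{K}}\widetilde\psi)^{1/2}\bigr)\leq S_U(\widetilde\phi^{1/2}\|\widetilde\psi^{1/2}).\qquad(\star\star)
\]

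The main obstacle is securing $(\star\star)$, because the square root does not commute with partial trace: operator concavity of $x\mapsto x^{1/2}$ (L\"{o}wner) yields only $\tr_{\mathcal{K}}(\widetilde\phi^{1/2})\leq(\tr_{\mathcal{K}}\widetilde\phi)^{1/2}$ in the operator order, and $S_U(\cdot\|\cdot)$ is not monotone in its first argument, so a direct application of the Lindblad--Uhlmann monotonicity of $S_U$ under partial trace is insufficient. I would attempt to close $(\star\star)$ via one of two routes: (a)~the integral representations $x^{1/2}=\frac{1}{\pi}\int_0^\infty t^{-1/2}\frac{x}{x+t}\dd t$ and $\log x=\int_0^\infty\bigl((1+t)^{-1}-(x+t)^{-1}\bigr)\dd t$ (valid for $x>0$), which rewrite $S_U(A^{1/2}\|B^{1/2})$ as an iterated integral of resolvent expressions for which partial-trace monotonicity is verifiable termwise using the Choi--Davis operator Jensen inequality; or (b)~couple the joint operator convexity of $S_U$ (Lindblad--Lieb) with a Bhatia--Ando block-matrix simulation of the square-root map, reducing $(\star\star)$ to an averaging over conjugations by isometries on $\H\otimes\mathcal{K}$. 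Once $(\star\star)$ is secured, the corollary follows by chaining the reductions above; all steps remain within the finite-dimensional operator calculus available under the type I$_n$ hypothesis.
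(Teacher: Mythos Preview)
Your reformulation of the claim as $(\star)$ is correct, and your instinct that the square root is the crux is well-founded. The paper's own argument is a single line: it identifies $D_{f\circ\lewis}$ with the Umegaki relative entropy (Example~\ref{ex.spectral.convex}.(\hyperlink{Breg.refl.example5b}{ii})) and then invokes Lindblad's monotonicity theorem. But Lindblad's theorem asserts $S_U(T\phi\|T\psi)\leq S_U(\phi\|\psi)$, not $(\star)$; the composition with $\ell_{1/2}$ is simply not addressed there.

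In fact $(\star)$ is \emph{false}, already in the commutative $2\times2$ case. Take $\phi=\diag(4,\delta)$, $\psi=\diag(1,\delta)$ with small $\delta>0$, and let $T(\rho)=\tfrac{1}{2}(\tr_\H\rho)\,\II$ be the completely depolarising channel (a mixture of unitaries, hence CPTP). Then $\phi^{1/2}=\diag(2,\sqrt{\delta})$ and $\psi^{1/2}=\diag(1,\sqrt{\delta})$ give
\[
S_U(\phi^{1/2}\|\psi^{1/2})=(2\log 2-2+1)+0=2\log 2-1\approx 0.386,
\]
whereas $T\phi=\tfrac{4+\delta}{2}\II$, $T\psi=\tfrac{1+\delta}{2}\II$, so that as $\delta\to 0^+$ a direct computation yields
\[
S_U\bigl((T\phi)^{1/2}\big\|(T\psi)^{1/2}\bigr)\longrightarrow 2\sqrt{2}\log 2-\sqrt{2}\approx 0.546>0.386.
\]
The structural reason is that $(\phi,\psi)\mapsto S_U(\phi^{1/2}\|\psi^{1/2})$ is positively homogeneous of degree~$\tfrac12$ rather than degree~$1$, hence is not an $f$-divergence and carries no data-processing inequality. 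Consequently your routes (a) and (b) cannot close $(\star\star)$: for instance the termwise resolvent inequality envisaged in (a) would integrate to the false global bound.

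So the genuine gap lies not in your reduction but in the corollary itself as stated with $\ell_{1/2}$. What Lindblad's theorem \emph{does} yield is the version with $\ell=\id_{(\schatten_2(\H))^{\sa}}$ (using the finite-dimensional identification $\N_\star=(\schatten_2(\H))^{\sa}$ as vector spaces), for which $D_{\id,f\circ\lewis}=S_U$ on the nose; this is almost certainly what was intended.
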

\begin{proof}
Follows from Corollary \ref{cor.finite.spectral.EL} for $\Psi=f\circ\lewis$ from Example \ref{ex.spectral.convex}.\ref{ex.spectral.convex.ii}, combined with \cite[Thm. (p. 149)]{Lindblad:1975}.
\end{proof}

\begin{proposition}\label{prop.BRLZ.spectral.EL.just.EL}
Let  $(l,\n{\cdot}_l)$ be a reflexive separable rearrangement invariant sequence space, and let $(\schatten(\H),\n{\cdot}_{\schatten(\H)})$ be a rearrangement invariant space of compact operators on a separable Hilbert space $\H$, corresponding to $(l,\n{\cdot}_l)$  via $\schatten(\H)=\{x\in\mathfrak{C}(\H)\mid\rearr{x}{\tau}\in l\}$ and $\n{\cdot}_{\schatten(\H)}=\n{\rearr{(\cdot)}{\tau}}_{l}$, where $\rearr{x}{\tau}$ denotes a decreasing rearrangement of eigenvalues of $x$, while $\mathfrak{C}(\H)$ denotes the space of compact operators on $\H$. For any $x\in l$ consider sets $I_>(x):=\{i\in\NN\mid x_i>0\}$, $I_=(x):=\{i\in\NN\mid x_i=0\}$, $I_<(x):=\{i\in\NN\mid x_i<0\}$. Let $\widehat{\lewis}:l\ra l$ be defined by the following procedure:
\begin{enumerate}[nosep,label=(\arabic*)]
\item\label{prop.BRLZ.spectral.EL.just.EL.0} let $j:=1$;
\item\label{prop.BRLZ.spectral.EL.just.EL.1} if $I_>(x)\neq\varnothing$, then:
\begin{enumerate}[nosep,label=(\roman*)]
\item\label{prop.BRLZ.spectral.EL.just.EL.1.i} choose $i\in I_>(x)$ maximising $x_i$;
\item\label{prop.BRLZ.spectral.EL.just.EL.1.ii} $(\widehat{\lewis})_j:=x_i$;
\item\label{prop.BRLZ.spectral.EL.just.EL.1.iii} redefine $\left\{\begin{array}{l}I_>(x):=I_>(x)\setminus\{i\}\\j:=j+1;\end{array}\right.$
\end{enumerate}
\item\label{prop.BRLZ.spectral.EL.just.EL.2} if $I_=(x)\neq\varnothing$, then:
\begin{enumerate}[nosep,label=(\roman*)]
\item\label{prop.BRLZ.spectral.EL.just.EL.2.i} choose $i\in I_=(x)$;
\item\label{prop.BRLZ.spectral.EL.just.EL.2.ii} $(\widehat{\lewis})_j:=0$;
\item\label{prop.BRLZ.spectral.EL.just.EL.2.iii} redefine $\left\{\begin{array}{l}I_=(x):=I_=(x)\setminus\{i\}\\j:=j+1;\end{array}\right.$
\end{enumerate}
\item\label{prop.BRLZ.spectral.EL.just.EL.3} if $I_<(x)\neq\varnothing$, then:
\begin{enumerate}[nosep,label=(\roman*)]
\item\label{prop.BRLZ.spectral.EL.just.EL.3.i} choose $i\in I_<(x)$ minimising $x_i$;
\item\label{prop.BRLZ.spectral.EL.just.EL.3.ii} $(\widehat{\lewis})_j:=x_i$;
\item\label{prop.BRLZ.spectral.EL.just.EL.3.iii} redefine $\left\{\begin{array}{l}I_<(x):=I_<(x)\setminus\{i\}\\j:=j+1;\end{array}\right.$
\end{enumerate}
\item\label{prop.BRLZ.spectral.EL.just.EL.4} go to \ref{prop.BRLZ.spectral.EL.just.EL.1}.
\end{enumerate}
Let $\widetilde{\lewis}:(\schatten(\H))^\sa\ra l$ be defined by $\widetilde{\lewis}(y):=\widehat{\lewis}(\widetilde{y})$, where $\widetilde{y}$ is any sequence of eigenvalues of $y\in(\schatten(\H))^\sa$, counted with multiplicities. Let $f\in\pclg(l,\n{\cdot}_l)$, with $\intefd{f^\lfdual}\neq\varnothing$, $\intefd{f\circ\widetilde{\lewis}}\neq\varnothing$, $\intefd{f^\lfdual\circ\widetilde{\lewis}}\neq\varnothing$, and $(f\circ\widetilde{\lewis})(uxu^*)=(f\circ\widetilde{\lewis})(x)$ $\forall x\in(\schatten(\H))^\sa$ $\forall$ unitary $u\in\BH$. Then $f\circ\widetilde{\lewis}$ is Euler--Legendre if{}f $f$ is Euler--Legendre, with $(f\circ\widetilde{\lewis})^\lfdual=f^\lfdual\circ\widetilde{\lewis}$.
\end{proposition}
\begin{proof}
Follows directly from \cite[Thms. 3.3, 5.9]{Borwein:Read:Lewis:Zhu:2000}, combined with the characterisation of Euler--Legendre functions on reflexive Banach spaces given in \cite[\S2.1]{Reich:Sabach:2009}. Reflexivity of $((\schatten(\H))^\sa,\n{\cdot}_{(\schatten(\H))^\sa})$ follows from \textup{\cite[Prop. 6.8.15]{Dodds:dePagter:Sukochev:2023}} (cf. also \textup{\cite[p. 153]{Arazy:1981:ball}}). The equivalence of single-valuedness of $\partial f$ on $\efd(\partial f)$ with single-valuedness of $\partial(f\circ\widetilde{\lewis})$ on $\efd(f\circ\widetilde{\lewis})$, as well as the corresponding property of their Mandelbrojt--Fenchel duals, is a direct consequence of the proof of \cite[Thm. 5.9]{Borwein:Read:Lewis:Zhu:2000}, when used without the restriction to the points of Gateaux differentiability. While \cite[Thm. 5.9]{Borwein:Read:Lewis:Zhu:2000} is stated only for $((\schatten_{1/\gamma}(\H))^\sa,\n{\cdot}_{(\schatten_{1/\gamma}(\H))^\sa})$, $\gamma\in\,]0,1[$, it holds also in the more general case considered here, since the only property of $((\schatten_{1/\gamma}(\H))^\sa,$ $\n{\cdot}_{(\schatten_{1/\gamma}(\H))^\sa})$ it relies upon (apart from reflexivity), is \cite[Prop. 5.3]{Borwein:Read:Lewis:Zhu:2000}. However, by \cite[Thm. 3.5]{Schatten:vonNeumann:1948}, the latter holds in every rearrangement invariant $((\schatten(\H))^\sa,\n{\cdot}_{(\schatten(\H))^\sa})$.
\end{proof}

\begin{proposition}\label{prop.BRLZ.spectral.EL}
Let  $(l,\n{\cdot}_l)$ be a reflexive separable rearrangement invariant sequence space, and let $(\schatten(\H),\n{\cdot}_{\schatten(\H)})$ be a rearrangement invariant space of compact operators on a separable Hilbert space $\H$, corresponding to $(l,\n{\cdot}_l)$ as in Proposition \ref{prop.BRLZ.spectral.EL.just.EL}. Let $\widetilde{\lewis}$ be defined as in Proposition \ref{prop.BRLZ.spectral.EL.just.EL}. Let $f\in\pclg(l,\n{\cdot}_l)$ be Euler--Legendre, $\intefd{f\circ\widetilde{\lewis}}\neq\varnothing$, $\intefd{f^\lfdual\circ\widetilde{\lewis}}\neq\varnothing$, and $(f\circ\widetilde{\lewis})(uxu^*)=(f\circ\widetilde{\lewis})(x)$ $\forall x\in(\schatten(\H))^\sa$ $\forall$ unitary $u\in\BH$. If one of the following conditions holds:
\begin{enumerate}[nosep,label=\alph*)]
\item\label{prop.BRLZ.spectral.EL.a} $\ell=\ell_{1/\gamma}|_{(\schatten_{1}(\H))^\sa}$, $(\schatten(\H),\n{\cdot}_{\schatten(\H)})=(\schatten_{1/\gamma}(\H),\n{\cdot}_{1/\gamma})$, $\varnothing\neq C\subseteq(\schatten_{1}(\H))^+\cup B((\schatten_1(\H))^\sa,\n{\cdot}_1)$, $Z=(\schatten_{1}(\H))^\sa$;
\item\label{prop.BRLZ.spectral.EL.b} $\ell=\ell_X$, with a finite dimensional uniformly convex and uniformly Fr\'{e}chet differentiable rearrangement invariant space $(X,\n{\cdot}_X)=(\schatten(\H),\n{\cdot}_{\schatten(\H)})$, $\varnothing\neq C\subseteq(B(\schatten_1(\H),\n{\cdot}_1))^+$, $Z=(B(\schatten_1(\H),\n{\cdot}_1))^+$,
\end{enumerate}
then:
\begin{enumerate}[nosep,label=(\roman*)]
\item\label{prop.BRLZ.spectral.EL.i} $D_{\ell,f\circ\widetilde{\lewis}}$ is an information on $Z$;
\item\label{prop.BRLZ.spectral.EL.ii} if $C$ is $\ell$-convex and closed, then $C$ is left $D_{\ell,f\circ\widetilde{\lewis}}$-Chebysh\"{e}v, $D_{\ell,f\circ\widetilde{\lewis}}$ is left pythagorean on $C$, and $\LPPP^{D_{\ell,f\circ\widetilde{\lewis}}}_C$ are zone consistent;
\item\label{prop.BRLZ.spectral.EL.iii} if $C$ is $(\DG(f\circ\widetilde{\lewis}))\circ\ell$-convex and $(\DG(f\circ\widetilde{\lewis}))$-closed, then $C$ is right $D_{\ell,f\circ\widetilde{\lewis}}$-Chebysh\"{e}v, $D_{\ell,f\circ\widetilde{\lewis}}$ is right pythagorean on $C$, and $\RPPP^{D_{\ell,f\circ\widetilde{\lewis}}}_C$ are zone consistent.
\end{enumerate}
\end{proposition}
\begin{proof}
Follows from Proposition \ref{prop.BRLZ.spectral.EL.just.EL}, combined with Propositions \ref{prop.left.pythagorean}.\ref{prop.left.pythagorean.i}.\ref{prop.left.pythagorean.i.c}, \ref{prop.left.pythagorean}.\ref{prop.left.pythagorean.iii}, \ref{prop.right.pythagorean}.\ref{prop.right.pythagorean.c}, and Corollary \ref{cor.information}. Homeomorphy of $\ell_\gamma$ follows from combination of \cite[Thm. 4.2]{Kosaki:1984:uniform} and \cite[Lem. 3.2]{Raynaud:2002}, while homeomorphy of $\ell_X$ follows from \cite[Props. 5.6, 5.7, Lem. 5.8]{ChavezDominguez:2023}.
\end{proof}

\begin{remark}\label{rem.models.non.varphi}
\begin{enumerate}[nosep,label=(\roman*)]
\item\label{rem.models.non.varphi.i} The condition on $T:\H\ra\H$ assumed in Proposition \ref{prop.hilbert.operator.psi} holds, in particular, when $T$ is positive semidefinite and invertible \cite[Ex. 3.2]{Reem:Reich:2018}, as well as when $\H$ is finite dimensional and $T$ is symmetric and positive definite \cite[p. 64]{Reem:Reich:DePierro:2019} (this case goes back to \cite[p. 15]{Bregman:1966:PhD}). For $T=\II_\H$ with arbitrary dimensional $\H$, one obtains $D_{\Psi_T}=D_{\varphi_{1,1/2}}$ (see Remark \ref{remark.varphi}.\ref{remark.varphi.ix}), which was considered as an example (for $\dim\H<\infty$) already in \cite[p. 1021]{Bregman:1966} and \cite[\S2.1]{Bregman:1966:PhD}.
\item\label{rem.models.non.varphi.ii} The formula \eqref{eqn.spectral.Fermi.Dirac} has appeared earlier in \cite[p. 376]{Nock:Magdalou:Briys:Nielsen:2013}, however without using spectral convex functions (the Taylor expansion formula, and the finite dimensional quantum analogue $D^{\tr_\H}_\Psi$ of the Brunk--Ewing--Utz functional were used instead). Formula \eqref{eqn.Bregman.spectral.Psi.alpha} is new.
\item\label{rem.models.non.varphi.iii} The main open problem posed by Proposition \ref{prop.BRLZ.spectral.EL}, is it provide examples of unitarily invariant Euler--Legendre functions $f\circ\widetilde{\lewis}$ on the suitable separable reflexive spaces $(\schatten(\H),\n{\cdot}_{\schatten(\H)})$. In particular, is it so for the countable extensions of functions listed in Proposition \ref{cor.finite.spectral.EL}?
\end{enumerate}
\end{remark}
\section*{Acknowledgments}
\addcontentsline{toc}{section}{{\vspace{-0.3cm}}Acknowledgments}
I would like to thank Francesco Buscemi and Marcin Marciniak for hospitality, as well as Micha{\l} Eckstein and Pawe{\l} Horodecki for support. This work was partially founded by MAB/2018/5 grant of Foundation for Polish Science, 2015/18/E/ST2/00327 and 2021/42/A/ST2/00356 grants of Polish National Center of Science, IMPULZ IM-2023-79 and VEGA 2/0164/25 grants of Slovak Academy of Sciences, APVV-22-0570 grant of Slovak Research and Development Agency, and FellowQUTE 2024-02 program of Slovak National Center for Quantum Technologies. Part of this research was conducted at Department of Mathematical Informatics, Graduate School of Information Science, Nagoya University, on an academic leave from University of Gda\'{n}sk.%
%
\section*{References}
\addcontentsline{toc}{section}{{\vspace{-0.3cm}}References}
\begin{spacing}{0.7}
{\scriptsize%
The symbol * in front of a bibliographic item indicates that I have not seen this work. Unless explicitly stated otherwise, all citations refer to the first editions of the corresponding texts in their original language. All Cyrillic Russian names and words were transliterated from original using the following system (which is bijective due to the lack of {\fontencoding{T2A}\selectfont ыа} and {\fontencoding{T2A}\selectfont ыу} combinations): {\fontencoding{T2A}\selectfont ц} = c, {\fontencoding{T2A}\selectfont ч} = ch, {\fontencoding{T2A}\selectfont х} = kh, {\fontencoding{T2A}\selectfont ж} = zh, {\fontencoding{T2A}\selectfont ш} = sh, {\fontencoding{T2A}\selectfont щ} = \v{s}, {\fontencoding{T2A}\selectfont и} = i, {\fontencoding{T2A}\selectfont й} = \u{\i}, i = \={\i}, {\fontencoding{T2A}\selectfont ы} = y, {\fontencoding{T2A}\selectfont ю} = yu, {\fontencoding{T2A}\selectfont я} = ya, {\fontencoding{T2A}\selectfont ё} = \"{e}, {\fontencoding{T2A}\selectfont э} = \`{e}, {\fontencoding{T2A}\selectfont ъ} = `, {\fontencoding{T2A}\selectfont ь} = ', and analogously for capitalised letters, with an exception of {\fontencoding{T2A}\selectfont Х} = H at the beginnings of words. When\-ev\-er possible, Chinese Mandarin (resp., Cantonese) names and titles were nonbijectively romanised from original, using p\={\i}ny\={\i}n (resp., toneless Yale).}
\end{spacing}

{\scriptsize
\begingroup
\raggedright

\ifx\VBver\sthundefined
\bibliographystyle{rpkbib}
\renewcommand\refname{\vskip -1cm}
\bibliography{rpk}  
\else
\bibliographystyle{../../rpkbib}
\renewcommand\refname{\vskip -1cm}
\bibliography{../../rpk}  
\fi

\endgroup        


\end{document}